\title{A categorification of non-crossing partitions}
\dedicatory{Dedicated to the memory of Dieter Happel.}
\thanks{Version from May 19, 2015.}
\author{Andrew Hubery}
\address{Andrew Hubery\\Fakult\"at f\"ur Mathematik\\
  Universit\"at Bielefeld\\ D-33501 Bielefeld\\ Germany.}
\email{hubery@math.uni-bielefeld.de}
\author{Henning Krause}
\address{Henning Krause\\ Fakult\"at f\"ur Mathematik\\
  Universit\"at Bielefeld\\ D-33501 Bielefeld\\ Germany.}
\email{hkrause@math.uni-bielefeld.de}
\newtheorem{lem}{Lemma}[section]
\newtheorem{prop}[lem]{Proposition}
\newtheorem{cor}[lem]{Corollary}
\newtheorem{thm}[lem]{Theorem}
\theoremstyle{remark}
\newtheorem{rem}[lem]{Remark}
\theoremstyle{definition}
\newtheorem{exm}[lem]{Example}
\numberwithin{equation}{section}
\renewcommand{\mod}{\operatorname{mod}\nolimits}
\newcommand{\Aut}{\operatorname{Aut}\nolimits}
\newcommand{\proj}{\operatorname{proj}\nolimits}
\newcommand{\rad}{\operatorname{rad}\nolimits}
\newcommand{\add}{\operatorname{add}\nolimits}
\newcommand{\id}{\operatorname{id}\nolimits}
\newcommand{\End}{\operatorname{End}\nolimits}
\newcommand{\Hom}{\operatorname{Hom}\nolimits}
\renewcommand{\Im}{\operatorname{Im}\nolimits}
\newcommand{\Ker}{\operatorname{Ker}\nolimits}
\newcommand{\diag}{\operatorname{diag}\nolimits}
\renewcommand{\dim}{\operatorname{dim}\nolimits}
\newcommand{\Exc}{\operatorname{Exc}\nolimits}
\newcommand{\Ext}{\operatorname{Ext}\nolimits}
\newcommand{\Sub}{\operatorname{Sub}\nolimits}
\newcommand{\height}{\operatorname{ht}\nolimits}
\newcommand{\dimv}{\operatorname{\mathbf{dim}}}
\newcommand{\rk}{\operatorname{rk}\nolimits}
\newcommand{\cox}{\operatorname{cox}\nolimits}
\newcommand{\NC}{\operatorname{NC}\nolimits}
\newcommand{\op}{\mathrm{op}}
\newcommand{\can}{\mathrm{can}}
\newcommand{\lto}{\longrightarrow}
\newcommand{\xto}{\xrightarrow}
\def\a{\alpha}
\def\e{\varepsilon}
\def\d{\delta}
\def\p{\phi}
\def\s{\sigma}
\def\t{\tau}
\def\De{\Delta}
\def\Ga{\Gamma}
\def\C{{\mathsf C}}
\def\D{{\mathsf D}}
\def\bbZ{{\mathbb Z}}
\newcommand{\dimvector}[7]{\displaystyle\genfrac{}{}{0pt}{}{#1}{#2}#3\,#4\,#5\genfrac{}{}{0pt}{}{#6}{#7}}
\begin{document}

\begin{abstract}
We present a categorification of the non-crossing partitions given by crystallographic Coxeter groups. This involves a category of certain bilinear lattices, which are essentially determined by a symmetrisable generalised Cartan matrix together with a particular choice of a Coxeter element. Examples arise from Grothendieck groups of hereditary artin algebras.
\end{abstract}

\maketitle
\setcounter{tocdepth}{1}
\tableofcontents

\section{Introduction}

It has long been understood that the exceptional objects, or more generally the exceptional sequences and tilting objects, play a central role in understanding categories of modules or sheaves, and more recently also in the theory of cluster categories. Over a finite dimensional hereditary algebra, the dimension vectors of the exceptional modules, the so-called real Schur roots, also occur in the canonical decomposition, and so describe the indecomposable summands of a general module of fixed dimension vector. In this setting it is therefore of interest to be able to determine combinatorially the subset of real Schur roots inside the set of all real roots. Work in this direction includes \cite{Sc1990,Sc1992,DW2002}.

Inspired by \cite{Le1996} we introduce the notion of generalised Cartan lattice $(\Gamma,E)$, which is a lattice $\Gamma$ equipped with a non-degenerate bilinear form, together with a choice of orthogonal exceptional sequence $E$. The prototypical example of such a generalised Cartan lattice is the Grothendieck group $K_0(A)$ of a finite dimensional hereditary algebra $A$ equipped with the Euler form, together with the classes of the simple modules (suitably ordered). Each such lattice has an associated symmetrisable generalised Cartan matrix, and hence we can define the Weyl group (more precisely the Coxeter system) $W(\Gamma,E)$ and the set of real roots $\Phi(\Gamma,E)$. We also have a natural choice of Coxeter element, denoted $\cox(\Gamma)$, and thus the poset of non-crossing partitions $\NC(\Gamma,E)$.

We recall that non-crossing partitions were introduced by Kreweras \cite{Kr1972} and later generalised in the context of Coxeter groups by Brady and Watt \cite{Br2001,BW2002}, and independently by Bessis \cite{Be2003}; see also Armstrong's memoir \cite{Ar2009}. One connection between non-crossing partitions and representations of quivers has already been pointed out by Ingalls and Thomas \cite{IT2009}; it arises from the categorification of cluster algebras \cite{FZ02} via cluster categories \cite{BMRT07,MRZ03}.

We also introduce the notion of a (mono-)morphism between generalised Cartan lattices, and thus construct the category $\mathfrak C$. We then show that the map $(\Ga,E)\mapsto W(\Ga,E)$, sending a generalised Cartan lattice to its associated Weyl group, is functorial. More precisely, we have the following result, summarising Theorems~\ref{th:main} and \ref{th:natural}.

\begin{thm}
Let $\p\colon(\Gamma',E')\to(\Gamma,E)$ be a morphism of generalised Cartan lattices.
\begin{enumerate}
\item The map $\p$ restricts to an inclusion $\Phi(\Gamma',E')\to\Phi(\Gamma,E)$.
\item There is an injective group homomorphism $\p_*\colon W(\Gamma',E')\to W(\Gamma,E)$, acting on reflections as $s_a\mapsto s_{\p(a)}$.
\item The map $\p_*$ identifies $\NC(\Gamma',E')$ with the subposet $\{w\le\p_*(\cox(\Gamma'))\}$ of $\NC(\Gamma,E)$.
\end{enumerate}
\end{thm}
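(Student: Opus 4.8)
The plan is to split the statement into its two halves, establishing (1)--(2) as Theorem~\ref{th:main} and (3) as Theorem~\ref{th:natural}. In both halves the only properties of a morphism $\p$ that will enter are that it is an injective homomorphism of lattices preserving the bilinear form, and that it carries the members of $E'$ to real roots of $(\Gamma,E)$.

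For (1) and (2): since $\p$ preserves the form it intertwines the reflection formulae, i.e.\ $\p(s_a x)=s_{\p(a)}\p(x)$ for every real root $a$ of $(\Gamma',E')$ and every $x\in\Gamma'$. As the real roots of $(\Gamma',E')$ are exactly the $W(\Gamma',E')$-orbit of the members of $E'$ (and likewise for $(\Gamma,E)$), $\p$ maps $\Phi(\Gamma',E')$ into $\Phi(\Gamma,E)$, and this is an inclusion because $\p$ is injective on $\Gamma'$; this gives (1). For (2), the Coxeter relations of $W(\Gamma',E')$ are read off from the Gram matrix of $E'$, which $\p$ preserves, so $s_{e'_i}\mapsto s_{\p(e'_i)}$ extends to a group homomorphism $\p_*\colon W(\Gamma',E')\to W(\Gamma,E)$; since every reflection of $W(\Gamma',E')$ is a conjugate of some $s_{e'_i}$ and conjugation is respected, $\p_*$ acts on reflections by $s_a\mapsto s_{\p(a)}$. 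Injectivity is then immediate from the intertwining identity $\p\circ w=\p_*(w)\circ\p$ on $\Gamma'$ together with the faithfulness of the action of the Coxeter system $W(\Gamma,E)$ on $\Gamma$: if $\p_*(w)=\id$ then $\p(wx)=\p(x)$ for all $x$, hence $wx=x$, hence $w=\id$.

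For (3) I would invoke the description of $\NC(\Gamma,E)$ obtained in the body of the paper, namely the order-preserving bijection between $\NC(\Gamma,E)$ and the partial orthogonal exceptional sequences $(a_1,\dots,a_k)$ of $(\Gamma,E)$ taken up to mutation, under which such a sequence corresponds to $s_{a_1}\cdots s_{a_k}$ and its length $k$ equals the reflection length of this element. Applying the same description to $(\Gamma',E')$, statement (3) becomes the assertion that $\p$ induces a length-preserving bijection between the partial orthogonal exceptional sequences of $(\Gamma',E')$ and those of $(\Gamma,E)$ lying below $\p(E')$, i.e.\ obtained from $\p(E')$ by deletions and mutations. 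Two facts make this work. First, $\p(E')=(\p(e'_1),\dots,\p(e'_n))$ is a partial orthogonal exceptional sequence of $(\Gamma,E)$ that extends to a complete one; by the invariance of the Coxeter element under mutation, $\cox(\Gamma,E)$ is the product of reflections along any such completion, so $\p_*(\cox(\Gamma'))=s_{\p(e'_1)}\cdots s_{\p(e'_n)}$ is an initial subword of a reduced reflection word for $\cox(\Gamma,E)$ and therefore lies in $\NC(\Gamma,E)$ with $\p_*(\cox(\Gamma'))\le\cox(\Gamma,E)$. Second, every deletion or mutation of $\p(E')$ takes place inside the sublattice $\p(\Gamma')$, and since $\p$ is injective and form-preserving it is the image under $\p$ of the corresponding operation on $E'$; hence $\p$ matches the partial orthogonal exceptional sequences of $(\Gamma',E')$ bijectively with those below $\p(E')$. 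Translating back through the two bijections shows that $\p_*$ maps $\NC(\Gamma',E')$ bijectively onto $\{w\le\p_*(\cox(\Gamma'))\}$, and this bijection is an isomorphism of posets because $\p_*$ preserves reflections and reflection length (the latter being the length of the associated partial exceptional sequence on either side), so the additivity $\ell_T(u)+\ell_T(u^{-1}v)=\ell_T(v)$ defining the absolute order transfers along $\p_*$ in both directions.

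The main obstacle is (3), and inside it the two bridging facts just used: that $\p(E')$ genuinely completes to a full orthogonal exceptional sequence of $(\Gamma,E)$, and that each $w\le\p_*(\cox(\Gamma'))$ is realised, up to mutation, by a sub-sequence of $\p(E')$. These are exactly where the completion and mutation-invariance results for orthogonal exceptional sequences, together with the hypotheses built into the definition of a morphism, must be brought to bear. A smaller technical point is checking that the reflections of $W(\Gamma,E)$ contained in the subgroup $\p_*(W(\Gamma',E'))$ are precisely $\p_*$ of the reflections of $W(\Gamma',E')$ --- needed so that reflection length is preserved and not merely sub-additive under $\p_*$ --- which I would deduce from (1) together with the fact that $\p_*(W(\Gamma',E'))$ acts trivially on a complement of $\p(\Gamma')$ in $\Gamma\otimes\mathbb Q$.
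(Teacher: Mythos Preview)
Your argument for (1) is fine and matches the paper. Your argument for (2) is correct and is in fact \emph{simpler} than the paper's route. The paper defines $\p_*$ as the inverse of the restriction map $\langle s_{\p(e'_i)}\rangle\to W(\Ga',E')$, and proving that restriction injective is done via the technical Lemma~\ref{le:weyl} (a corank-one statement with a delicate affine case), together with Lemma~\ref{le:orbits}~(1), which in turn invokes hereditary algebras. By contrast, you use that $(W(\Ga',E'),S')$ is a Coxeter system and simply check the Coxeter relations for the images $s_{\p(e'_i)}$; since $\p$ preserves the form, the rank-two computation shows $(s_{\p(e'_i)}s_{\p(e'_j)})^{m_{ij}}=\id$ whenever $m_{ij}<\infty$, and injectivity then drops out from the intertwining $\p\circ w=\p_*(w)\circ\p$. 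This bypasses both Lemma~\ref{le:weyl} and the appeal to hereditary algebras in Proposition~\ref{pr:weyl}, so for part~(2) your approach is genuinely more elementary.

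For (3) your strategy coincides with the paper's: both route through the poset isomorphism $\Exc(\Ga,E)\cong\NC(\Ga,E)$ of Theorem~\ref{th:main} and the compatibility of $\p$ with the braid group action. One comment: your ``smaller technical point'' about reflections of $W(\Ga,E)$ lying in $\p_*(W(\Ga',E'))$ being exactly $\p_*$ of reflections is \emph{not needed}. The map on $\Exc$ is already a poset embedding with image $\{[G]\le[\p E']\}$, and Theorem~\ref{th:main} gives poset isomorphisms on both sides, so the poset isomorphism $\NC(\Ga',E')\cong\{w\le\p_*(\cox(\Ga'))\}$ follows without any separate analysis of reflection length. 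Your suggested method for that technical point, namely that $\p_*(W(\Ga',E'))$ acts trivially on a complement of $\p(\Ga')$ in $\Ga\otimes\mathbb Q$, does not work as stated: the reflections $s_{\p(e'_i)}$ do not in general preserve the $\langle-,-\rangle$-complement $(\p E')^\perp$, and the complement with respect to the symmetrised form can fail to exist when that form is degenerate on $\p(\Ga')$ (e.g.\ an affine-type sublattice). Since the point is unnecessary, this does not damage your overall argument.
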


This theorem is an analogue (for Weyl groups of symmetrisable Kac--Moody Lie algebras) of a result of Bessis \cite{Be2003} which describes for finite Coxeter groups the non-crossing partitions as Coxeter elements of parabolic subgroups. However, $W(\Ga',E')$ need not be parabolic when $W(\Ga,E)$ is infinite (Example~\ref{ex:parabolic}). It turns out that the subgroups of $W(\Ga,E)$ arising from subobjects of $(\Ga',E')\subseteq(\Ga,E)$ form a distinguished class of subgroups which are determined by their Coxeter elements $\cox(\Ga')$ (Corollary~\ref{co:subgroup}).

We can also regard our results as providing a combinatorial model for the category $\mathfrak H$ of hereditary abelian categories arising in the represention theory of algebras. More precisely, the objects in $\mathfrak H$ are the categories $\mod A$ of finitely generated modules over an hereditary artin algebra $A$. The morphisms in $\mathfrak H$ are fully faithful exact functors, modulo natural isomorphisms, having an extension closed essential image.

The map sending an abelian category to its Grothendieck group yields a faithful functor
\[ \mathfrak H \lto \mathfrak C, \qquad \mod A \mapsto K_0(A), \]
and provides the link between representation theory and combinatorics (Corollary~\ref{co:faithful}). Applying our results to $\mod A$ we now obtain the following formulation (Corollary~\ref{co:epi}) of a result by Ingalls--Thomas \cite{IT2009} and Igusa--Schiffler \cite{IS2010}.

\begin{thm}
Let $A$ be a finite dimensional hereditary algebra. Let $\Sub(\mod A)$ denote the poset of subcategories of $\mod A$ of the form $\C(X)$ for some exceptional sequence $X$, ordered by inclusion, and let $\NC(K_0(A))$ be the poset of non-crossing partitions attached to the generalised Cartan lattice $K_0(A)$. Then there is a natural isomorphism of posets $\Sub(\mod A)\cong\NC(K_0(A))$ sending the subcategory $\C(X)$ to the non-crossing partition $\cox(\C(X))$.

In particular, two exceptional sequences $X$ and $Y$ are equivalent under the braid group action if and only if they determine the same non-crossing partition.
\end{thm}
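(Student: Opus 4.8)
The plan is to deduce this statement from the first Theorem (combining Theorems~\ref{th:main} and \ref{th:natural}) by applying it to the faithful functor $\mathfrak H\to\mathfrak C$, together with a direct dictionary between exceptional sequences in $\mod A$ and morphisms into the generalised Cartan lattice $K_0(A)$. First I would recall that an exceptional sequence $X=(X_1,\dots,X_r)$ in $\mod A$ generates, by taking the smallest extension-closed subcategory containing the $X_i$, a thick abelian subcategory $\C(X)$ which is again equivalent to $\mod A'$ for an hereditary artin algebra $A'$; this is the classical theory of exceptional sequences over hereditary algebras (Ringel, Crawley-Boevey, Schofield). The inclusion $\C(X)\hookrightarrow\mod A$ is then a morphism in $\mathfrak H$, hence induces a morphism of generalised Cartan lattices $\p_X\colon K_0(A')\to K_0(A)$, and the classes $[X_1],\dots,[X_r]$ form (up to the appropriate ordering) the orthogonal exceptional sequence $E'$ for $K_0(A')$. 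Conversely, Schofield's theorem that every complete exceptional sequence is obtained from the simples by the braid group action shows that every such $\C(X)$, and hence every subobject of $K_0(A)$ in $\mathfrak C$ coming from $\mathfrak H$, arises this way.

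The key steps, in order, are: (1) set up the bijection $X\mapsto\C(X)$ between braid-orbits of exceptional sequences and subcategories of the listed form, and check it is order-reversing-free, i.e.\ that $\C(X)\subseteq\C(Y)$ corresponds exactly to the containment of the associated subobjects of $K_0(A)$; here one uses that $K_0$ reflects inclusions because the functor $\mathfrak H\to\mathfrak C$ is faithful and, more importantly, that a morphism of lattices is determined by its effect on the distinguished exceptional sequence. (2) Identify $\cox(\C(X))$: by definition $\cox(\C(X))$ is the Coxeter element of the Weyl group $W(K_0(\C(X)))$, which under $(\p_X)_*$ maps into $W(K_0(A))$; by part~(3) of the first Theorem this image is exactly the non-crossing partition below $(\p_X)_*(\cox(\C(X)))$, and the subposet $\NC(K_0(\C(X)))$ is identified with $\{w\le\cox(\C(X))\}\subseteq\NC(K_0(A))$. (3) Assemble these identifications over all exceptional sequences $X$: every non-crossing partition $w\in\NC(K_0(A))$ lies below $\cox(K_0(A))=\cox(\C(\text{simples}))$, and the map $w\mapsto$ (the subgroup it generates, realised as $W$ of a subobject) is the inverse of $\C(X)\mapsto\cox(\C(X))$. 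This yields the poset isomorphism $\Sub(\mod A)\cong\NC(K_0(A))$.

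For the final sentence, two exceptional sequences $X,Y$ are in the same braid orbit if and only if $\C(X)=\C(Y)$ as subcategories — one direction is immediate since $\C(-)$ is visibly braid-invariant, and the other is Crawley-Boevey's transitivity of the braid group action on complete exceptional sequences of the hereditary category $\C(X)$, which forces $X$ and $Y$ (both complete in $\C(X)=\C(Y)$) to be braid-equivalent. Combining with the poset isomorphism, $\C(X)=\C(Y)$ is equivalent to $\cox(\C(X))=\cox(\C(Y))$, i.e.\ to $X$ and $Y$ determining the same non-crossing partition.

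The main obstacle I anticipate is step~(1): verifying that the correspondence is genuinely an isomorphism of \emph{posets}, i.e.\ that inclusion of subcategories on the representation-theoretic side matches inclusion of the combinatorially defined subobjects, and in particular that distinct braid orbits give distinct subcategories. This requires knowing that $\C(X)$ determines $X$ up to braid action (transitivity of the braid group action, which is where the hereditary hypothesis is essential) and that the functor $\mathfrak H\to\mathfrak C$ not only is faithful but reflects the relevant order relation — equivalently, that a subobject of $K_0(A)$ in $\mathfrak C$ which happens to lie in the image of $\mathfrak H$ is determined by, and determines, its Coxeter element, which is Corollary~\ref{co:subgroup} applied in this situation.
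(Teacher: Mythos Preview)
Your approach is essentially the paper's own: identify $\Sub(\mod A)$ with $\Sub(K_0(A))$ via Proposition~\ref{pr:exceptional-ncp} and Theorem~\ref{th:epi}, then invoke Theorem~\ref{th:main} for the isomorphism $\Sub(K_0(A))\cong\NC(K_0(A))$, and use Crawley-Boevey/Ringel transitivity (Theorem~\ref{th:braid}) for the braid-orbit statement. One slip to correct: the classes $[X_1],\dots,[X_r]$ do \emph{not} in general form the orthogonal exceptional sequence $E'$ for $K_0(\C(X))$ --- that role is played by the classes of the \emph{simple objects} of $\C(X)$, while $([X_1],\dots,[X_r])$ is merely some real exceptional sequence generating the same sublattice --- but since $s_{E'}=s_{([X_1],\dots,[X_r])}$ by Lemma~\ref{le:preservation-of-Coxeter}~(2), this does not affect the identification of $\cox(\C(X))$ and your argument goes through unchanged.
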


Note that this point-of-view is also apparent in the work of Happel \cite{Ha1987}, see Theorem~\ref{th:Happel}. Also, the study of the categories $\C(X)\subset\mod A$ is quite natural, since they can be characterised in a number of different ways: they are the thick subcategories such that the inclusion admits a left or right adjoint; or as the thick subcategories either generated by, or perpendicular to, an exceptional sequence; or the subcategories obtained by restriction of scalars along a homologial epimorphism, see Theorem \ref{th:subcat}. In particular, all finitely generated thick subcategories arise in this way, see Remark \ref{re:subcat}.

Much of the proofs of these theorems can be done entirely in the language of generalised Cartan lattices, exploiting the transitive braid group action on factorisations of the Coxeter element \cite{BDSW14,IS2010}. In particular, we introduce the notion of a \emph{real exceptional sequence}, and use these to define the morphisms in $\mathfrak C$. We then show in Proposition~\ref{pr:real-seq} that the map $F\mapsto\cox(F)$ determines a surjective map from real exceptional sequences to non-crossing partitions. However, we do not know of any combinatorial proof of the facts that the fibres of this map are precisely the orbits under the braid group action, and that each fibre contains an orthogonal exceptional sequence.

To prove these two results we need that every generalised Cartan lattice arises as the Grothendieck group $K_0(A)$ of an hereditary artin algebra $A$. We then show that under any such realisation, the real exceptional sequences in $K_0(A)$ correspond precisely to the exceptional sequences in $\mod A$, Proposition~\ref{pr:exceptional-ncp}. We can then apply the theory of perpendicular categories to finish the proof.

As an application to Coxeter systems, we note that in \cite{BDSW14} it is shown that the factorisations of a \emph{parabolic} Coxeter element form a single orbit under the braid group action. It follows however from our results that the factorisations of \emph{any} non-crossing partition form a single orbit, and moreover there is one factorisation which forms a simple system.

As an application to representation theory, we show that the set of dimension vectors of exceptional $A$-modules depends only on $K_0(A)$, leading in turn to an essentially root-theoretic proof of Gabriel's Theorem~\ref{th:Gabriel}. This answers the question posed by Gabriel in \cite[Section~4]{Ga1973}, but now for all Dynkin types, not just $ADE$-type. We also show that the theorem of Crawley-Boevey \cite{CB1992} and Ringel \cite{Ri1994} is a consequence of the transitivity of the braid group action for Coxeter systems.

We also give an algorithm, based on the work of Schofield (Proposition~\ref{pr:Schofield}) and Derksen--Weyman \cite{DW2002}, of how one can check whether a given exceptional sequence of (pseudo-real) roots is actually a real exceptional sequence. An explicit example of this is given in Example~\ref{exm:Schofield}.

In the last section we also relate our approach to the study of Hom-free sets, which are collections of exceptional objects having pairwise  only zero homomorphisms. In finite representation type there is an obvious bijection between the two points of view, given by sending a subcategory $\C$ closed under kernels, cokernels and extensions to its set of simple objects (Proposition~\ref{pr:hom-free}). It is therefore interesting to note that this approach linking Catalan combinatorics and the representation theory of algebras was already observed in the early 1980s by Gabriel and his school \cite{Rie1980,Ga1981}. In \cite{GP1987} Gabriel and de la Pe\~na counted the Hom-free sets of indecomposable modules for Dynkin quivers and obtained the Coxeter-Catalan numbers of $ADE$-type. On the other hand, Riedtmann used such sets to classify the representation-finite self-injective algebras of type $A$ \cite{Rie1980}.

For another intriguing correspondence between representations of hereditary algebras and Weyl group elements see \cite{ORT2012}.

For the convenience of the reader we include in the appendix a survey of the perpendicular calculus, as well as a discussion on the various notions of crystallographic Coxeter groups.

\subsection*{Acknowledgements}
The idea for this work goes back to an Oberwolfach meeting in 2005 when the results of Ingalls and Thomas involving generalised non-crossing partitions were presented. Since then the second named author discussed this with many colleagues -- too many to be listed here -- and received valuable comments. He wishes to express special thanks to Christof Gei{\ss}, Lutz Hille, Claus Michael Ringel, Christian Stump, and Hugh Thomas. Both authors would also like to thank the referee for their careful reading of the article and helpful comments.

\section{Bilinear lattices and exceptional sequences}

The Grothendieck group of an abelian or triangulated category is an abelian group with some additional structure given by the corresponding bilinear Euler form. In this section we provide an axiomatic treatment which is inspired by work of Lenzing on Grothendieck groups of canonical algebras \cite{Le1996}. In particular, the following definition of a bilinear lattice is taken from there.  We then consider exceptional sequences and the action of the braid group in this setting, modelling their properties in the Grothendieck group of an abelian or triangulated category. Exceptional sequences were introduced in the Moscow school of vector bundles, see for instance \cite{Bo1990,Go1988,GR1987,Ru1990}; later they appeared in representation theory \cite{CB1992,Ri1994}.  The axiomatic treatment in the context of bilinear lattices seems to be new.

\subsection*{Bilinear lattices}

A \emph{bilinear lattice} is an abelian group $\Ga$ together with a non-degenerate bilinear form
\[ \langle -,-\rangle\colon \Ga\times \Ga\lto \bbZ. \]
Here, \emph{non-degenerate} means that $\langle x,-\rangle=0$ implies $x=0$, and $\langle -,y\rangle=0$ implies $y=0$.  Note that $\Ga$ is torsion-free.  The corresponding \emph{symmetrised form} is
\[ (x,y) = \langle x,y\rangle+\langle y,x\rangle\quad\text{for } x,y\in \Ga. \]
For a subset $S$ of $\Ga$ one defines the right and left orthogonal complements
\begin{align*} 
S^\perp &:= \{x\in \Ga\mid \langle s,x\rangle=0\text{ for all }s\in S\}\\
^\perp S &:= \{x\in \Ga\mid \langle x,s\rangle=0\text{ for all }s\in S\}
\end{align*}
In the following $\Ga$ denotes a bilinear lattice.  

\subsection*{Roots}

An element $a\in \Ga$ is called a \emph{pseudo-real root}, or just a \emph{root}, if $\langle a,a\rangle >0$ and $\frac{\langle   a,x\rangle}{\langle a,a\rangle}, \frac{\langle x,a\rangle}{\langle a,a\rangle}\in\bbZ$ for all $x\in \Ga$. For such a root $a$ one has the following transformations:
\begin{align*}
l_a\colon &\Ga\lto \Ga,\qquad x\mapsto x-\frac{\langle a,x\rangle}{\langle a,a\rangle}a\\
r_a\colon &\Ga\lto \Ga,\qquad x\mapsto x-\frac{\langle x,a\rangle}{\langle a,a\rangle}a\\
s_a\colon &\Ga\lto \Ga,\qquad x\mapsto x-2\frac{(x,a)}{(a,a)}a
\end{align*}
Note that $r_a$ and $l_a$ are adjoint with respect to the bilinear from, in the sense that
\[ \langle r_a(x),y \rangle = \langle x,l_a(y)\rangle \quad\textrm{for all }x,y\in\Ga, \]
and each $s_a$ is a reflection, so fixes a subgroup of corank one and sends $a\mapsto-a$.

If $\Ga'$ is another bilinear lattice, with bilinear form $\langle-,-\rangle'$, then an \emph{isometry} $\p\colon\Ga'\to\Ga$ is a group homomorphism preserving the bilinear forms, so $\langle\p(x),\p(y)\rangle=\langle x,y\rangle'$ for all $x,y\in\Ga'$.

We will also need the group $\Aut(\Ga):=\Aut(\Ga,(-,-))$, the group of all automorphisms of $\Ga$ preserving the \emph{symmetrised} bilinear form.

\begin{lem}\label{le:roots}
Let $a\in\Ga'$ be a root and let $\p\colon\Ga'\to\Ga$ be a group homomorphism preserving the symmetrised bilinear forms. Then $s_{\p(a)}\p=\p s_a$ (as maps $\Ga'\to\Ga$).

In particular, if $a,b\in\Ga$ are roots, then so too is $s_b(a)$ and $s_{s_b(a)} = s_bs_as_b$.
\end{lem}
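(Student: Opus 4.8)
The plan is to treat the first assertion as a change of variables, unwinding definitions carefully. Write $(-,-)'$ for the symmetrised form on $\Ga'$. For $x\in\Ga'$ one has $s_a(x)=x-\la a$, where $\la=2\frac{(x,a)'}{(a,a)'}$ is an integer; this makes sense since $(a,a)'=2\langle a,a\rangle'>0$. Applying the group homomorphism $\phi$ gives $\phi(s_a(x))=\phi(x)-\la\,\phi(a)$. Since $\phi$ preserves the symmetrised forms, $(a,a)'=(\phi(a),\phi(a))$ and $(x,a)'=(\phi(x),\phi(a))$, so $\la=2\frac{(\phi(x),\phi(a))}{(\phi(a),\phi(a))}$ and hence $\phi(s_a(x))=s_{\phi(a)}(\phi(x))$, where $s_{\phi(a)}$ is the reflection at $\phi(a)$ with respect to $(-,-)$ (defined since $(\phi(a),\phi(a))=(a,a)'>0$, and $\phi(a)\ne 0$). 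This is exactly $s_{\phi(a)}\phi=\phi s_a$; the computation also shows that $s_{\phi(a)}$ carries $\phi(\Ga')$ into $\Ga$, though it need not map all of $\Ga$ into $\Ga$ when $\phi$ is not surjective.

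For the ``in particular'' I would first record that every reflection $s_b$ lies in $\Aut(\Ga)$, by the one-line check $(s_b(x),s_b(y))=(x,y)$. Thus the first part applies with $\Ga'=\Ga$ and $\phi=s_b$, once we know that $s_b(a)$ is genuinely a root of $\Ga$, so that the symbol $s_{s_b(a)}$ is legitimate; this is the only step that is not purely formal, and I would argue it directly. Write $s_b(a)=a-\la b$ with $\la=2\frac{(a,b)}{(b,b)}=\frac{\langle a,b\rangle+\langle b,a\rangle}{\langle b,b\rangle}$, an integer because $b$ is a root. Positivity is immediate from $2\langle s_b(a),s_b(a)\rangle=(s_b(a),s_b(a))=(a,a)=2\langle a,a\rangle>0$. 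For the divisibility conditions, fix $x\in\Ga$ and split $\langle s_b(a),x\rangle=\langle a,x\rangle-\la\langle b,x\rangle$; then $\frac{\langle a,x\rangle}{\langle a,a\rangle}\in\bbZ$ because $a$ is a root, while $\frac{\la\langle b,x\rangle}{\langle a,a\rangle}=\frac{(a,b)}{\langle a,a\rangle}\cdot\frac{\langle b,x\rangle}{\langle b,b\rangle}$ is a product of two integers, again because $a$ and $b$ are roots. Hence $\frac{\langle s_b(a),x\rangle}{\langle s_b(a),s_b(a)\rangle}\in\bbZ$, and the same manipulation with the arguments of the form interchanged gives $\frac{\langle x,s_b(a)\rangle}{\langle s_b(a),s_b(a)\rangle}\in\bbZ$. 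So $s_b(a)$ is a root.

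Finally, feeding $\phi=s_b$ and the root $a$ into the first part yields $s_{s_b(a)}s_b=s_bs_a$ as maps $\Ga\to\Ga$, and composing on the right with $s_b=s_b^{-1}$ gives $s_{s_b(a)}=s_bs_as_b$. I expect the only real obstacle to be the bookkeeping showing that $s_b(a)$ is a root; everything else is formal, provided one keeps in mind that in the first part $s_{\phi(a)}$ must be read as the reflection determined by $(-,-)$ and $\phi(a)$, which the computation shows behaves correctly along $\phi$.
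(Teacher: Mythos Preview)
Your proof is correct and follows the same approach as the paper, which simply records ``straightforward computations'' and notes that the second statement comes from specialising to $\Ga'=\Ga$, $\phi=s_b$. You have additionally spelled out the verification that $s_b(a)$ is a (pseudo-real) root, which the paper leaves implicit; your divisibility computation is accurate.
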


\begin{proof}
Straightforward computations, where for the second statement we put $\Ga'=\Ga$ and $\p=s_b$. Note that in the first part we have abused notation somewhat, since $\p(a)\in\Ga$ need not be a root, but $s_{\p(a)}$ is well-defined on the image of $\p$.
\end{proof}

The \emph{radical} of $\Ga$ is by definition
\[ \rad \Ga := \{x\in \Ga\mid \langle x,-\rangle=-\langle -,x\rangle\} = \{ x \mid (x,-)=0 \}. \]
This is clearly invariant under $\Aut(\Ga)$.

\subsection*{Exceptional sequences}

A sequence of roots $E=(e_1,\ldots, e_r)$ in $\Ga$ is called \emph{exceptional} of \emph{length} $r$ if $\langle e_i,e_j\rangle=0$ for all $i>j$. The sequence $E$ is \emph{complete} if $\bbZ E=\Ga$. The \emph{empty sequence} $E=\varnothing$ is exceptional of length zero. An exceptional sequence of length two is also called an \emph{exceptional pair}.

Given a sequence of roots $E=(e_1,\ldots, e_r)$ we write
\[ l_E := l_{e_1}\cdots l_{e_r} \qquad r_E := r_{e_r}\cdots r_{e_1} \qquad s_E := s_{e_1}\cdots s_{e_r} \]
and denote by $\bbZ E$ the subgroup of $\Ga$ generated by $e_1,\ldots,e_r$. We observe that, for each $x\in\Ga$, the following all lie in $\bbZ E$
\[ l_E(x)-x, \quad r_E(x)-x, \quad s_E(x)-x. \]
Of particular interest are the transformations $s_E$ for exceptional sequences $E$.

We begin with some elementary observations.

\begin{lem}\label{le:lin-indept}
Let $E=(e_1,\ldots,e_r)$ be an exceptional sequence in $\Gamma$. Then $\bbZ E\cap E^\perp=0$. In particular, the $e_i$ are linearly independent, so $\bbZ E$ has rank $r$.
\end{lem}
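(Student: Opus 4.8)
The plan is to read the exceptional condition off the Gram matrix and exploit its triangular shape. Form the $r\times r$ matrix $M=(\langle e_i,e_j\rangle)_{1\le i,j\le r}$. The hypothesis $\langle e_i,e_j\rangle=0$ for $i>j$ says precisely that $M$ is upper triangular, and since each $e_i$ is a root, the diagonal entries $\langle e_i,e_i\rangle$ are strictly positive. Hence $M$, regarded as a homomorphism $\bbZ^r\to\bbZ^r$, is injective: if $M\bfn=0$, then reading the equations from the bottom up --- $\langle e_r,e_r\rangle n_r=0$ forces $n_r=0$, then $\langle e_{r-1},e_{r-1}\rangle n_{r-1}=0$ forces $n_{r-1}=0$, and so on --- gives $\bfn=0$. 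Equivalently, $\det M=\prod_i\langle e_i,e_i\rangle\neq 0$. Note that no passage to $\mathbb{Q}$ is needed here, as $\bbZ$ is a domain.

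Next I would note that for $x=\sum_j n_je_j\in\bbZ E$ one has $\langle e_i,x\rangle=\sum_j M_{ij}n_j$, so the tuple $(\langle e_i,x\rangle)_i$ is exactly $M\bfn$. Thus $x\in E^\perp$, i.e.\ $\langle e_i,x\rangle=0$ for all $i$, is equivalent to $M\bfn=0$; by the previous step this forces $\bfn=0$ and hence $x=0$. This gives $\bbZ E\cap E^\perp=0$.

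For the final assertion, a $\bbZ$-linear relation $\sum_j n_je_j=0$ yields $\langle e_i,\sum_j n_je_j\rangle=0$ for every $i$, i.e.\ again $M\bfn=0$, hence $\bfn=0$; so the $e_i$ are linearly independent and $\bbZ E$ is free abelian of rank $r$. It is worth flagging that linear independence does not follow purely formally from $\bbZ E\cap E^\perp=0$ --- it uses the exceptional structure a second time --- so this short computation should not be omitted.

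The argument is entirely elementary; the only things to watch are the direction of the triangularity --- which hinges on the convention that $E^\perp$ is the \emph{right} orthogonal complement, so that the relevant pairings are $\langle e_i,-\rangle$, matching the rows of $M$ --- and the sign hypothesis $\langle e_i,e_i\rangle>0$ coming from ``$e_i$ is a root''. I do not expect any genuine obstacle.
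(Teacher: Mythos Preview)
Your proof is correct and is essentially the same argument as the paper's, just phrased in matrix language: the paper's one-line proof ``apply $\langle e_i,-\rangle$ for $i=r,\ldots,1$ in turn'' is precisely your back-substitution through the upper-triangular Gram matrix $M$. Your extra paragraph on linear independence is fine but not strictly needed, since the paper's argument (like yours) actually shows that any formal combination $\sum a_ie_i$ with $\langle e_j,\sum a_ie_i\rangle=0$ for all $j$ has all $a_i=0$, which simultaneously yields both conclusions.
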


\begin{proof}
Take $\sum_ia_ie_i\in\bbZ E\cap E^\perp$ and apply $\langle e_i,-\rangle$ for $i=r,\ldots,1$ in turn.
\end{proof}

\begin{lem}\label{le:except}
Let $E=(e_1,\ldots, e_r)$ be an exceptional sequence in $\Ga$. Then the following hold:
\begin{enumerate}
\item $l_E(x)\in E^\perp$ for $x\in \Ga$ and $l_E(x)=x$ for $x\in E^\perp$.
\item $l_E(x)=0$ iff $x\in \bbZ E$.
\item $\Ga=\bbZ E\oplus E^\perp$.
\item $\langle l_E(x),y\rangle= \langle x,y\rangle$ for $x\in \Ga$ and $y\in E^\perp$.
\item $s_E(x)=l_E(x)$ for $x\in {^\perp E}$.
\end{enumerate}
In particular, $l_E$ is the projection from $\Ga$ onto $E^\perp$ along $\bbZ E$.
\end{lem}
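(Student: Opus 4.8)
The plan is to prove (1)--(5) in this order, since each of (2)--(5) and the closing assertion rests on (1) together with Lemma~\ref{le:lin-indept} and the observation recorded just above it that $l_E(x)-x\in\bbZ E$ for every $x\in\Ga$. Throughout, the only structural input is the defining property $\langle e_i,e_j\rangle=0$ for $i>j$.

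For (1): if $x\in E^\perp$ then $\langle e_i,x\rangle=0$ for all $i$, so each $l_{e_i}$ fixes $x$ and hence $l_E(x)=x$. For arbitrary $x$, fix $j$ and note that $l_{e_j}(w)$ always satisfies $\langle e_j,l_{e_j}(w)\rangle=0$, while applying $l_{e_i}$ with $i<j$ preserves the property $\langle e_j,-\rangle=0$, because $\langle e_j,e_i\rangle=0$ whenever $j>i$. Writing $l_E=(l_{e_1}\cdots l_{e_{j-1}})\comp l_{e_j}\comp(l_{e_{j+1}}\cdots l_{e_r})$ and feeding in $x$, the middle map makes the result left-orthogonal to $e_j$ and the outer maps keep it so; as $j$ was arbitrary, $l_E(x)\in E^\perp$.

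Parts (2)--(4) are then short. For (2): if $x\in\bbZ E$ then $l_E(x)\in\bbZ E$ (since $l_E(x)-x\in\bbZ E$) and $l_E(x)\in E^\perp$ by (1), so $l_E(x)\in\bbZ E\cap E^\perp=0$ by Lemma~\ref{le:lin-indept}; conversely $l_E(x)=0$ gives $x=-(l_E(x)-x)\in\bbZ E$. For (3): every $x$ equals $(x-l_E(x))+l_E(x)$ with the first summand in $\bbZ E$ and the second in $E^\perp$ by (1), and the sum is direct by Lemma~\ref{le:lin-indept}. For (4): write $l_E(x)-x=\sum_i a_ie_i\in\bbZ E$ and pair with $y\in E^\perp$ on the right; each $\langle e_i,y\rangle$ vanishes, so $\langle l_E(x),y\rangle=\langle x,y\rangle$.

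The only part needing an induction is (5). First I would record that $(e,e)=2\langle e,e\rangle$, so whenever $\langle z,e\rangle=0$ one has $s_e(z)=z-\tfrac{\langle e,z\rangle}{\langle e,e\rangle}e=l_e(z)$. Then, for $x\in{}^\perp E$, I prove by downward induction on $k$ that $s_{e_k}\cdots s_{e_r}(x)=l_{e_k}\cdots l_{e_r}(x)$ and that this element lies in ${}^\perp\{e_1,\dots,e_{k-1}\}$: the orthogonality to $e_k$ carried over from step $k+1$ lets one replace $s_{e_k}$ by $l_{e_k}$, and subtracting the resulting multiple of $e_k$ does not disturb ${}^\perp\{e_1,\dots,e_{k-1}\}$ since $\langle e_k,e_i\rangle=0$ for $i<k$; the base case $k=r$ is just $x\in{}^\perp\{e_1,\dots,e_r\}$. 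Taking $k=1$ yields $s_E(x)=l_E(x)$. Finally the closing statement is formal: $l_E$ is $\bbZ$-linear, by (1) it maps $\Ga$ into $E^\perp$ and fixes $E^\perp$ pointwise, hence is idempotent with image $E^\perp$; by (2) its kernel is $\bbZ E$; and by (3) $\Ga=\bbZ E\oplus E^\perp$; so $l_E$ is precisely the projection onto $E^\perp$ along $\bbZ E$. I do not expect a genuine obstacle; the only real care is bookkeeping of left- versus right-orthogonality and of the direction $i>j$ in $\langle e_i,e_j\rangle=0$, since a reversed inequality would break (1) and (5).
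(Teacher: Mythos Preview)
Your proof is correct and follows essentially the same approach as the paper: both rely on the key observation $l_E(x)-x\in\bbZ E$ together with Lemma~\ref{le:lin-indept}, and both ultimately use induction for (5). The only organizational difference is that the paper runs a single induction on $r$ for all parts (using the adjointness $\langle r_{E'}(e_r),-\rangle=\langle e_r,l_{E'}(-)\rangle$ for (1), and an inductive argument for the converse of (2)), whereas you argue (1)--(4) directly and reserve induction for (5); your converse of (2), via $x=-(l_E(x)-x)\in\bbZ E$, is in fact slightly slicker than the paper's inductive version.
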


\begin{proof}
The proofs are by induction on $r$. Set $E'=(e_1,\ldots,e_{r-1})$.

(1) Let $x\in \Ga$. We have $r_{E'}(e_r)=e_r$, so
\[ \langle e_r,l_{E'}l_{e_r}(x)\rangle = \langle r_{E'}(e_r),l_{e_r}(x)\rangle = \langle e_r,l_{e_r}(x)\rangle = 0. \]
Thus $l_E(x)\in(E')^\perp\cap e_r^\perp=E^\perp$.

If $x\in E^\perp$, then $l_{e_i}(x)=x$ for all $i$ by definition, so $l_E(x)=x$. 

(2) If $x\in\bbZ E$, then
\[ l_E(x) \in \bbZ E \cap E^\perp = 0.\]
If $l_E(x)=0$, then $l_{e_r}(x)\in\bbZ E'$ by induction, and therefore
\[ x = l_{e_r}(x)+\frac{\langle e_r,x\rangle}{\langle e_r,e_r\rangle}e_r\in\bbZ E. \]

(3) This follows from (1) and (2).

(4) Use that $l_E(x)-x\in\bbZ E$ for all $x\in \Ga$.

(5) If $x\in {^\perp E}$, then $s_{e_i}(x)=l_{e_i}(x)$ for all $i$ by definition.
\end{proof}

\begin{prop}\label{pr:cox}
Let $E=(e_1,\ldots, e_r)$ be an exceptional sequence in $\Ga$ and $x\in \Ga$. Then
\[ \langle y,s_E(x)\rangle = 
\begin{cases}
-\langle x,y\rangle\quad &\text{if } y\in\bbZ E,\\
\langle y,x\rangle\quad &\text{if } y\in {^\perp E}.
\end{cases} \]
\end{prop}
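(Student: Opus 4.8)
The plan is to prove both cases by induction on $r$, mirroring the structure of the proof of Lemma~\ref{le:except}. Write $E'=(e_1,\ldots,e_{r-1})$, so that $s_E = s_{e_1}\cdots s_{e_{r-1}}s_{e_r} = s_{E'}s_{e_r}$. The base case $r=0$ (the empty sequence) is trivial since then $s_E=\id$, $\bbZ E = 0$, and the condition $y\in{}^\perp E$ is vacuous, so there is nothing to check; alternatively one can take $r=1$ as the base case and verify directly from the defining formula for $s_{e_1}$ together with the root condition on $e_1$, which ensures $2(x,e_1)/(e_1,e_1)\in\bbZ$.

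For the case $y\in{}^\perp E$: here $y\in{}^\perp E$ means $\langle y,e_i\rangle=0$ for all $i$, hence in particular $y\in{}^\perp E'$ and $\langle y,e_r\rangle=0$. The idea is to first understand $s_{e_r}(x)$, then apply the inductive hypothesis to $s_{E'}$. Since $s_{e_r}(x) = x - 2\frac{(x,e_r)}{(e_r,e_r)}e_r$ differs from $x$ by a multiple of $e_r$, and since $y\perp e_r$ on the left (actually we want to pair $\langle y,-\rangle$), we get $\langle y, s_{e_r}(x)\rangle = \langle y,x\rangle$. Now apply the inductive hypothesis for $E'$ at the vector $s_{e_r}(x)$, using $y\in{}^\perp E'$, to obtain $\langle y, s_{E'}(s_{e_r}(x))\rangle = \langle y, s_{e_r}(x)\rangle = \langle y,x\rangle$, which is what we want. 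One subtlety: the inductive hypothesis is stated for $\langle y, s_{E'}(z)\rangle$ with $z\in\Ga$ arbitrary, so this goes through cleanly.

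For the case $y\in\bbZ E$: write $y = \sum_i a_i e_i = z + a_r e_r$ with $z = \sum_{i<r}a_ie_i\in\bbZ E'$. The plan is to handle the $e_r$-component and the $\bbZ E'$-component separately, using the inductive hypothesis for the latter and a direct computation with the reflection formula for $s_{e_r}$ for the former; the key structural input is Lemma~\ref{le:except}, especially part (5) ($s_E = l_E$ on ${}^\perp E$) and part (2), together with the fact that $s_{E'}(s_{e_r}(x)) - s_{e_r}(x)\in\bbZ E'$, so that pairing against $e_r$ (which lies in $(E')^\perp$ on the right since $\langle e_i,e_r\rangle=0$ for $i<r$) lets one replace $s_E(x)$ by $s_{e_r}(x)$ when computing $\langle e_r, s_E(x)\rangle$. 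For $\langle e_r, s_E(x)\rangle$ one then computes directly: $\langle e_r, s_{e_r}(x)\rangle = \langle e_r,x\rangle - 2\frac{(x,e_r)}{(e_r,e_r)}\langle e_r,e_r\rangle$, and since $\langle e_r,e_r\rangle = \frac12(e_r,e_r)$ this equals $\langle e_r,x\rangle - (x,e_r) = \langle e_r,x\rangle - \langle e_r,x\rangle - \langle x,e_r\rangle = -\langle x,e_r\rangle$, as required. For the $z\in\bbZ E'$ part, one wants $\langle z, s_E(x)\rangle = -\langle x,z\rangle$; here one uses that $s_E(x) = s_{E'}(s_{e_r}(x))$ and applies the inductive hypothesis to $s_{E'}$ at the vector $s_{e_r}(x)$ with $z\in\bbZ E'$, giving $\langle z, s_{E'}(s_{e_r}(x))\rangle = -\langle s_{e_r}(x), z\rangle$; it then remains to see $\langle s_{e_r}(x), z\rangle = \langle x,z\rangle$, which holds because $s_{e_r}(x)-x$ is a multiple of $e_r$ and $\langle e_r, z\rangle = 0$ (as $z\in\bbZ E'$ and $\langle e_r,e_i\rangle=0$ for $i<r$).

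The main obstacle is keeping the left/right orthogonality bookkeeping straight: the bilinear form is not symmetric, so one must be careful which argument of $\langle-,-\rangle$ the various vanishing conditions apply to, and in particular that $\langle e_i,e_j\rangle=0$ for $i>j$ gives $e_r\in(E')^\perp$ but says nothing about $\langle z,e_r\rangle$. Matching the correct orthogonality side to the correct step of the induction is the one place where care is needed; once the splitting $y = z + a_re_r$ is in place and Lemma~\ref{le:except} is invoked to reduce modulo $\bbZ E'$, the remaining computations are the routine ones indicated above.
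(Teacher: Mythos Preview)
Your approach is essentially the same as the paper's: induction on $r$ with $E'=(e_1,\ldots,e_{r-1})$, splitting $y\in\bbZ E$ into its $\bbZ E'$ and $\bbZ e_r$ components and handling each via the inductive hypothesis and a direct computation with $s_{e_r}$, respectively. Two small points:

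First, the case $y\in{}^\perp E$ does not need induction at all. Since $s_E(x)-x\in\bbZ E$ (this is the observation recorded just before Lemma~\ref{le:except}) and $\langle y,\bbZ E\rangle=0$ by definition of ${}^\perp E$, you get $\langle y,s_E(x)\rangle=\langle y,x\rangle$ in one line. This is how the paper handles it.

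Second, you did make exactly the bookkeeping slip you warned yourself about. In the $e_r$-component step you write that $e_r\in(E')^\perp$ because ``$\langle e_i,e_r\rangle=0$ for $i<r$''. Neither the membership nor the vanishing is what the exceptional condition gives: the definition says $\langle e_i,e_j\rangle=0$ for $i>j$, so what holds is $\langle e_r,e_i\rangle=0$ for $i<r$, i.e.\ $e_r\in{}^\perp E'$ (equivalently $\bbZ E'\subseteq e_r^\perp$). Fortunately this is precisely what your argument actually needs, since you are computing $\langle e_r,\,s_E(x)-s_{e_r}(x)\rangle$ with $s_E(x)-s_{e_r}(x)\in\bbZ E'$; so the conclusion stands once the sides are swapped. (You got it right two lines later when treating $z\in\bbZ E'$.) The references to parts (2) and (5) of Lemma~\ref{le:except} are not used; what you use is only the elementary fact $s_{E'}(w)-w\in\bbZ E'$.
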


\begin{proof}
For $y\in {^\perp E}$ use that $s_E(x)-x\in\bbZ E$. Suppose therefore that $y\in\bbZ E$. The proof is by induction on $r$. Set $E'=(e_1,\ldots,e_{r-1})$. If $y\in\bbZ E'$, then $e_r\in{^\perp y}$, so
\[ \langle y,s_E(x)\rangle = \langle y,s_{E'}s_{e_r}(x)\rangle = -\langle s_{e_r}(x),y\rangle = -\langle x,y\rangle. \]
If $y\in\bbZ e_r$, then since $s_{E'}(z)-z\in\bbZ E'\subseteq e_r^\perp$ for all $z$, we have
\[ \langle y,s_{E'}s_{e_r}(x)\rangle = \langle y,s_{e_r}(x)\rangle, \]
and a direct computation shows that this equals $-\langle x,y\rangle$.
\end{proof}

\subsection*{The Coxeter transformation}

Let $\Ga$ be a bilinear lattice and suppose that $\Ga$ admits a complete exceptional sequence $E=(e_1,\ldots, e_n)$. The \emph{Coxeter transformation} of $\Ga$ is by definition
\[ \cox(\Ga) := s_E. \]
This does not depend on the choice of $E$ by Proposition~\ref{pr:cox}. Now identify $\Ga=\bbZ^n$ and define an $n\times n$ matrix $C$ by $\langle x,y\rangle=x^t Cy$.

\begin{prop}
The matrix $C$ is invertible. The automorphism of $\Ga$ given by
\[ x \longmapsto c(x) := (-C^{-1}C^t)x \]
equals $\cox(\Ga)$, and satisfies $\langle -,c(x)\rangle = -\langle x,-\rangle$ for all $x\in\Ga$.
\end{prop}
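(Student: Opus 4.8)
The plan is to relate the matrix $C$ and the Coxeter transformation $\cox(\Ga)=s_E$ via the two transformations $l_E$ and $r_E$, whose matrices are easy to write down and whose composite is precisely $s_E$ on all of $\Ga$. First I would observe that $C$ is invertible: this is just the non-degeneracy hypothesis restated. Indeed, if $Cy=0$ then $\langle x,y\rangle=x^tCy=0$ for all $x$, forcing $y=0$ by non-degeneracy, and similarly a left-kernel vector would violate the other half of non-degeneracy; hence $C\in\mathrm{GL}_n(\bbZ)\otimes\bbQ$ is invertible over $\bbQ$, and in fact the defining integrality condition on roots plus completeness will make $C$ unimodular, though only invertibility is needed here.

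Next I would identify $\cox(\Ga)$ with the claimed matrix. The clean route is to use Proposition~\ref{pr:cox}: taking $E$ complete, every $y\in\Ga$ lies in $\bbZ E$, so for all $x,y\in\Ga$ we have $\langle y,s_E(x)\rangle=-\langle x,y\rangle$. Writing $c$ for $s_E$ as a matrix, this reads $y^tCc(x)=-x^tCy=-y^tC^tx$ for all $x,y$, hence $Cc=-C^t$ as matrices, i.e.\ $c=-C^{-1}C^t$. This simultaneously proves the displayed identity $\langle -,c(x)\rangle=-\langle x,-\rangle$, since that is exactly $Cc(x)=-C^tx$ read off entrywise (pairing on the left with arbitrary $y$). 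So the relation $\langle-,c(x)\rangle=-\langle x,-\rangle$ and the formula $c=-C^{-1}C^t$ are really the same statement, and both drop out of Proposition~\ref{pr:cox} applied with a complete $E$.

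It remains to check that $x\mapsto(-C^{-1}C^t)x$ is an automorphism of $\Ga=\bbZ^n$, i.e.\ that $-C^{-1}C^t$ has integer entries with integer inverse; but this is automatic once we know it equals $\cox(\Ga)=s_E$, which is a composite of the integral maps $s_{e_i}$ (each $s_{e_i}$ is integral because $a=e_i$ is a root, so $2(x,a)/(a,a)\in\bbZ$), and whose inverse is $s_{e_r}\cdots s_{e_1}$, again integral. Alternatively one can factor $s_E=l_E\comp r_E^{-1}$-type identities, but invoking the already-established fact that $s_E$ is the Coxeter transformation makes this immediate, so no separate argument is needed.

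I do not expect a genuine obstacle here: the only mildly delicate point is bookkeeping with the transpose — making sure the adjunction $\langle r_a(x),y\rangle=\langle x,l_a(y)\rangle$ and the convention $\langle x,y\rangle=x^tCy$ are applied on the correct sides so that the sign and the order $C^{-1}C^t$ (rather than $C^tC^{-1}$) come out right. Everything else is a direct translation of Proposition~\ref{pr:cox} into matrix language.
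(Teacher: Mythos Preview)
Your approach is essentially the same as the paper's: both hinge on Proposition~\ref{pr:cox} applied with $E$ complete (so $\bbZ E=\Ga$), giving $\langle y,s_E(x)\rangle=-\langle x,y\rangle$, and then translating this into the matrix identity $Cc=-C^t$. The paper simply does the matrix manipulation first and then invokes Proposition~\ref{pr:cox}, whereas you start from $s_E$ and derive the matrix; these are the same argument in reverse order.

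One small correction: your aside that $C$ is unimodular is false in general. With respect to the basis $E$ the matrix $C$ is upper triangular with diagonal entries $\langle e_i,e_i\rangle>0$, so $\det C=\prod_i\langle e_i,e_i\rangle$, which need not be $\pm1$ (e.g.\ in any non-simply-laced example). As you yourself note, only invertibility over $\bbQ$ is needed, and the integrality of $-C^{-1}C^t$ follows a posteriori from $c=s_E$, so this does not affect the argument.
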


\begin{proof}
We have
\[ \langle x,y\rangle= x^t Cy=y^t C^tx =-y^tC(-C^{-1} C^t)x =-\langle y, c(x)\rangle. \]
Thus $c=\cox(\Ga)$ by Proposition~\ref{pr:cox}.
\end{proof}

\subsection*{The braid group action}

For an integer $r\ge 1$ let $B_r$ be the \emph{braid group} on $r$ strands, so with generators $\s_1,\ldots,\s_{r-1}$ and relations
\begin{alignat*}{2}
\s_i\s_j &= \s_j\s_i &&\quad\textrm{for }|i-j|>1\\
\s_i\s_{i+1}\s_i &= \s_{i+1}\s_i\s_{i+1} &&\quad\textrm{for }1\le i\le r-2.
\end{alignat*}
We also consider the wreath product $\{\pm1\}\wr B_r$, so the semi-direct product $\{\pm1\}^r\rtimes B_r$ of the braid group with the \emph{sign group}, with multiplication given by
\[ \s_i(\e_1,\ldots,\e_r)\s_i^{-1} := (\e_1,\ldots,\e_{i-1},\e_{i+1},\e_i,\e_{i+2},\ldots,\e_r). \]

\begin{prop}\label{pr:braidaction} 
Let $r\ge 1$ be an integer. Then the wreath product $\{\pm1\}\wr B_r$ acts on exceptional sequences of length $r$ via
\begin{align*}
\s_i(e_1,\ldots,e_r) &:= (e_1,\ldots,e_{i-1},e_{i+1},s_{e_{i+1}}(e_i),e_{i+2},\ldots,e_r)\\
\s_i^{-1}(e_1,\ldots,e_r) &:= (e_1,\ldots,e_{i-1},s_{e_{i}}(e_{i+1}),e_i,e_{i+2},\ldots,e_r)\\
\e(e_1,\ldots,e_r) &:= (\e_1 e_1,\ldots,\e_r e_r).
\end{align*}
\end{prop}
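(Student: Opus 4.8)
The plan is to verify that each of $\s_i$, $\s_i^{-1}$ and the coordinate sign change $\tau_j:=(1,\ldots,1,-1,1,\ldots,1)$ (with $-1$ in position $j$) defines a transformation of the set of exceptional sequences of length $r$, and that the generators $\s_1,\ldots,\s_{r-1},\tau_1,\ldots,\tau_r$ of $\{\pm1\}\wr B_r$ satisfy the defining relations of that group; the asserted action then follows formally from the standard presentation of the wreath product by these generators and relations. Since the $\tau_j$ are manifestly commuting involutions, the points needing attention are: (a) well-definedness (exceptional sequences are sent to exceptional sequences); (b) $\s_i$ and $\s_i^{-1}$ are mutually inverse; (c) the braid relations among the $\s_i$; and (d) the relations $\s_i\tau_j\s_i^{-1}=\tau_{s_i(j)}$, where $s_i$ is the transposition exchanging $i$ and $i+1$.

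For (a), first note that $s_a(b)$ is a root whenever $a,b$ are (Lemma~\ref{le:roots}) and that $-b$ is a root whenever $b$ is, so all entries produced are again roots. For the exceptionality of $\s_i(E)$, the only genuinely new pairs to check involve the modified entries $e_{i+1}$ and $s_{e_{i+1}}(e_i)$ in positions $i$ and $i+1$. The key point is that, since $\langle e_{i+1},e_i\rangle=0$, we have $(e_i,e_{i+1})=\langle e_i,e_{i+1}\rangle$, so
\[ s_{e_{i+1}}(e_i)=e_i-\tfrac{\langle e_i,e_{i+1}\rangle}{\langle e_{i+1},e_{i+1}\rangle}e_{i+1}=r_{e_{i+1}}(e_i); \]
in particular $s_{e_{i+1}}(e_i)\in\bbZ e_i+\bbZ e_{i+1}$ (the coefficient is an integer because $e_{i+1}$ is a root), and by the adjointness of $r_{e_{i+1}}$ and $l_{e_{i+1}}$ we get $\langle s_{e_{i+1}}(e_i),e_{i+1}\rangle=\langle e_i,l_{e_{i+1}}(e_{i+1})\rangle=0$. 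Together with $\langle e_a,e_b\rangle=0$ for $a>b$ in $E$ this yields all the required vanishing; $\s_i^{-1}$ is handled symmetrically using $s_{e_i}(e_{i+1})=l_{e_i}(e_{i+1})$, and that $\e$ preserves exceptionality is clear.

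For (b)--(d), everything reduces to short computations on the two or three affected entries, the tools being: each reflection is an involution, $s_a^2=\id$; $s_{-a}=s_a$; and the conjugation formula $s_{s_b(a)}=s_bs_as_b$ from Lemma~\ref{le:roots}. Writing $(a,b)$ for the entries in positions $i,i+1$, one has $\s_i(a,b)=(b,s_b(a))$ and $\s_i^{-1}(a,b)=(s_a(b),a)$, and $\s_i\s_i^{-1}=\s_i^{-1}\s_i=\id$ follows from $s_a^2=\id$. The $\s_i,\s_j$ with $|i-j|>1$ act on disjoint entries and hence commute; the relation $\s_i\tau_j\s_i^{-1}=\tau_{s_i(j)}$ is obvious for $j\ne i,i+1$, and for $j\in\{i,i+1\}$ a one-line substitution (using $s_a^2=\id$ and $s_{-a}=s_a$) shows that $\s_i\tau_i\s_i^{-1}$ negates the entry in position $i+1$ and $\s_i\tau_{i+1}\s_i^{-1}$ negates the entry in position $i$. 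The only substantive computation is the braid relation: with $(a,b,c)$ the entries in positions $i,i+1,i+2$, tracing through the three applications gives
\[ \s_i\s_{i+1}\s_i(a,b,c)=\bigl(c,s_c(b),s_cs_b(a)\bigr), \qquad \s_{i+1}\s_i\s_{i+1}(a,b,c)=\bigl(c,s_c(b),s_{s_c(b)}s_c(a)\bigr), \]
and the third entries coincide since $s_{s_c(b)}s_c(a)=(s_cs_bs_c)s_c(a)=s_cs_b(a)$ by Lemma~\ref{le:roots} and $s_c^2=\id$.

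The main obstacle is purely the bookkeeping in the braid relation --- correctly following which entry lands in which position over the three moves --- but with the conjugation identity of Lemma~\ref{le:roots} and the involutivity $s_a^2=\id$ in hand, nothing deeper is required.
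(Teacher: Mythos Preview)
Your proof is correct and follows essentially the same approach as the paper's, just with more detail filled in: the paper dismisses the well-definedness and the sign-group relations as ``clear'' or ``a quick computation'' and focuses on the braid relation, which it verifies via the same reduction to three entries and the same identity $s_{s_c(b)}=s_cs_bs_c$ from Lemma~\ref{le:roots}. Your added explanation of why $\langle s_{e_{i+1}}(e_i),e_{i+1}\rangle=0$ via $s_{e_{i+1}}(e_i)=r_{e_{i+1}}(e_i)$ and adjointness is a nice way to make explicit what the paper leaves implicit.
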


\begin{proof}
We check that the relations for the braid group hold, the rest being clear. Let $E=(e_1,\ldots,e_r)$ be an exceptional sequence. A quick computation using Lemma~\ref{le:roots} shows that $\s_iE$ and $\s_i^{-1}E$ are again exceptional sequences and that $\s_i\s_i^{-1}E=E=\s_i^{-1}\s_iE$. The identity $\s_i\s_jE=\s_j\s_iE$ for $|i-j|>1$ is immediate. For the identity $\s_i\s_{i+1}\s_iE=\s_{i+1}\s_i\s_{i+1}E$, it is enough to show this when $i=1$ and $r=3$. In this case we have $E=(e,f,g)$ and
\begin{align*}
\s_1\s_2\s_1E &= (g,s_g(f),s_gs_f(e))\\
\s_2\s_1\s_2E &= (g,s_g(f),s_{s_g(f)}s_g(e)).
\end{align*}
Now use the identity $s_{s_g(f)}=s_gs_fs_g$ from Lemma~\ref{le:roots}.
\end{proof}

Note that, if $(e,f)$ is an exceptional pair, then $s_e(f)=l_e(f)$ by Lemma~\ref{le:except}, and dually $s_f(e)=r_f(e)$, so we can express the action of the braid group in terms of the maps $l$ and $r$.

\begin{lem}\label{le:preservation-of-Coxeter}
Let $E$ and $F$ be exceptional sequences in $\Ga$ and $\s\in \{\pm1\}\wr B_r$.
\begin{enumerate}
\item $\bbZ\sigma E=\bbZ E$.
\item If $\bbZ E=\bbZ F$, then $s_E=s_F$. In particular, $s_{\sigma E}=s_E$.
\item If $s_E=s_F$, then $\bbZ E +\rad \Ga=\bbZ F +\rad \Ga$.
\item If $e, f\in \Ga$ are roots and $s_e=s_f$, then $e=\pm f$.
\end{enumerate}
\end{lem}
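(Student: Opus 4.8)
The plan is to prove the four statements roughly in order, using the structural results of Lemma~\ref{le:except} and Proposition~\ref{pr:cox} as the main tools, and treating (2) and (4) as the substantive points.

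For (1), the action of $\s_i$ replaces a consecutive pair $(e_i,e_{i+1})$ by $(e_{i+1},s_{e_{i+1}}(e_i))$, and since $s_{e_{i+1}}(e_i)=e_i-\la e_{i+1}$ for some integer $\la$, the generated subgroup is unchanged; the sign action clearly does not change $\bbZ E$ either. So $\bbZ\s E=\bbZ E$ for every generator of $\{\pm1\}\wr B_r$, hence for every element. For (2), suppose $\bbZ E=\bbZ F=:\U$. By Lemma~\ref{le:except}(3) we have $\Ga=\U\oplus E^\perp=\U\oplus F^\perp$. First I would check that $s_E$ and $s_F$ agree on $\U$: by Proposition~\ref{pr:cox}, for $y\in\U$ one has $\langle y,s_E(x)\rangle=-\langle x,y\rangle=\langle y,s_F(x)\rangle$ for all $x\in\U$, and also $s_E(x)-x\in\U$, $s_F(x)-x\in\U$ for all $x$; combining these with non-degeneracy of the form restricted appropriately pins down $s_E|_\U=s_F|_\U$. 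Then on a complement one argues similarly using the ${}^\perp E$ clause of Proposition~\ref{pr:cox} — here one must be a little careful because $E^\perp$ and ${}^\perp E$ differ in general, so the cleanest route is: $s_E$ is determined by knowing $\langle y,s_E(x)\rangle$ for all $y$ in a spanning set, and Proposition~\ref{pr:cox} gives this for $y\in\bbZ E$ and $y\in{}^\perp E$; since $\bbZ E+{}^\perp E$ has finite index issues one instead observes directly that the formula $\langle y,s_E(x)\rangle$ from the proposition depends only on the decomposition $\Ga=\bbZ E\oplus E^\perp$ and the values on $\bbZ E$, which depend only on $\bbZ E$ and not on $E$. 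The ``in particular'' then follows from (1).

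For (3), if $s_E=s_F$ then in particular $s_E(x)-x\in\bbZ E$ and $s_F(x)-x\in\bbZ F$ agree for all $x$, so $\bbZ E$ and $\bbZ F$ contain the same subgroup $M:=\{s_E(x)-x\mid x\in\Ga\}$; it remains to see $\bbZ E\subseteq M+\rad\Ga$ and symmetrically. For $e\in\bbZ E$ a root, $s_E(e) - e$ need not recover $e$ directly, so instead I would use Proposition~\ref{pr:cox}: for $y\in\bbZ E$, $\langle y,(s_E-\id)(x)\rangle=-\langle x,y\rangle-\langle y,x\rangle=-(x,y)$, which vanishes for all $x$ exactly when $y\in\rad\Ga$; so $\bbZ E\cap{}^\perp M=\bbZ E\cap\rad\Ga$, and comparing ranks via Lemma~\ref{le:lin-indept} gives $\bbZ E+\rad\Ga=M+\rad\Ga=\bbZ F+\rad\Ga$. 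For (4), apply (3) with $E=(e)$, $F=(f)$ to get $\bbZ e+\rad\Ga=\bbZ f+\rad\Ga$, and then use that $s_e$ and $s_f$ both act as $-1$ on the respective rank-one summand: $s_e(e)=-e$ forces $e\in\Im(s_e-\id)=\bbZ e$ likewise $f$, and $(s_e-\id)(\Ga)$ is spanned by $e$ (and by $f$), so $e=\pm f$ after checking primitivity, which follows since $a\mapsto \langle a,x\rangle/\langle a,a\rangle\in\bbZ$ forces $e,f$ to be primitive vectors.

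The main obstacle I expect is in (2) and (3): being careful about the distinction between $E^\perp$, ${}^\perp E$, and $\rad\Ga$, and in particular that $\bbZ E+{}^\perp E$ need not be all of $\Ga$ when the form is degenerate on $\bbZ E$ modulo $\rad\Ga$. The resolution is to phrase everything in terms of the direct sum decomposition $\Ga=\bbZ E\oplus E^\perp$ from Lemma~\ref{le:except}(3) together with the explicit formula of Proposition~\ref{pr:cox}, which shows that $s_E$ is uniquely determined by the pair $(\bbZ E, E^\perp)$ and, by Lemma~\ref{le:roots}-style adjointness, that $E^\perp$ is itself determined by $\bbZ E$; the rank count via Lemma~\ref{le:lin-indept} then closes the gap modulo the radical in (3).
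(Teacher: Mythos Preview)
Your plan for (1) is fine and matches the paper. For (4), your primitivity argument works, though the paper's version is slicker: from (3) write $e=\a f+r$ with $r\in\rad\Ga$, then $-e=s_e(e)=s_f(e)=e-2\a f$ forces $r=0$ and $\bbZ e\subseteq\bbZ f$, and symmetry finishes it.

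Your main confusion is in (2). The obstacle you anticipate --- that $\bbZ E+{}^\perp E$ might fail to be all of $\Ga$ --- does not exist: the dual of Lemma~\ref{le:except}(3) (apply it to the reversed sequence in the opposite bilinear lattice) gives $\Ga=\bbZ E\oplus{}^\perp E$. With this in hand, the paper's argument is one line: $\bbZ E=\bbZ F$ implies ${}^\perp E={}^\perp F$, and Proposition~\ref{pr:cox} then determines $\langle y,s_E(x)\rangle$ for every $y\in\Ga=\bbZ E\oplus{}^\perp E$ purely in terms of $\bbZ E$; non-degeneracy gives $s_E=s_F$. Your attempted workaround, using $E^\perp$ instead, can be made to work, but you do not state the key step: since $s_E(x)-x$ and $s_F(x)-x$ both lie in $\bbZ E$, and since $\langle y,s_E(x)-x\rangle=-(x,y)=\langle y,s_F(x)-x\rangle$ for all $y\in\bbZ E$ on which the form is non-degenerate, one gets $s_E(x)=s_F(x)$. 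As written, your sentence ``the formula depends only on the decomposition $\Ga=\bbZ E\oplus E^\perp$'' is not justified by Proposition~\ref{pr:cox}, which says nothing about $y\in E^\perp$.

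There is a genuine gap in (3). Your intermediate claim $\bbZ E+\rad\Ga=M+\rad\Ga$, where $M=\Im(s_E-\id)$, is \emph{false} in general: already for $\Ga=\bbZ$ with $\langle x,y\rangle=xy$ and $E=(1)$ one has $M=2\bbZ$ while $\bbZ E=\bbZ$ and $\rad\Ga=0$. A rank comparison can at best show that $\bbZ E+\rad\Ga$ and $\bbZ F+\rad\Ga$ have the same rank, not that they coincide. The paper instead works directly: given $x\in\bbZ F$, decompose $x=x'+x''$ with $x'\in\bbZ E$ and $x''\in{}^\perp E$ (using the decomposition you were missing), and compute $\langle y,x''\rangle=-\langle x'',y\rangle$ for all $y\in\Ga$ via Proposition~\ref{pr:cox} applied to both $s_E$ and $s_F$; this forces $x''\in\rad\Ga$, so $\bbZ F\subseteq\bbZ E+\rad\Ga$, and symmetry gives equality.
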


\begin{proof}
(1) It is clear that $\bbZ\s E=\bbZ E$ for each generator of $B_r$ and each element of the sign group. Thus $\bbZ\s E=\bbZ E$ for all $\s\in\{\pm1\}\wr B_r$.

(2) Suppose that $\bbZ E=\bbZ F$. Then ${^\perp E}={^\perp F}$, so $s_E=s_F$ by Proposition~\ref{pr:cox}, using that the form $\langle-,-\rangle$ is non-degenerate and $\Ga=\bbZ E\oplus {^\perp E}$.

(3) Suppose that $s_E=s_F$. Given $x\in\bbZ F$ write $x=x'+x''$ with $x'\in\bbZ E$ and $x''\in {^\perp E}$. Using Proposition~\ref{pr:cox} we have for all $y\in \Ga$ that
\begin{align*}
-\langle y,x'\rangle -\langle y,x''\rangle&=-\langle y,x\rangle\\
&=\langle x,s_F(y)\rangle\\
&=\langle x,s_E(y)\rangle\\
&=\langle x',s_E(y)\rangle+\langle x'',s_E(y)\rangle\\
&=-\langle y,x'\rangle +\langle x'',y\rangle.
\end{align*}
Thus $x''\in\rad \Ga$, and it follows that $\bbZ F\subseteq \bbZ E+\rad \Ga$. The other inclusion holds by symmetry.

(4) Using (3) we have $e=\a f + r$ with $\a\in\bbZ$ and $r\in\rad\Ga$. Thus
\[ -e = s_e(e) = s_f(e) = e-2\a f \]
and so $r=0$. It follows that $\bbZ e\subseteq \bbZ f$. The other inclusion holds by symmetry.
\end{proof}

\section{Generalised Cartan lattices}

In this section we introduce the main object of interest, namely the category of generalised Cartan lattices, and show how to associate to every generalised Cartan lattice a symmetrisable generalised Cartan matrix, and hence a Weyl group and root system, as well as the poset of non-crossing partitions.

\subsection*{Generalised Cartan lattices}

An exceptional sequence $E=(e_1,\ldots,e_r)$ in a bilinear lattice $\Ga$ is said to be \emph{orthogonal} provided $\langle e_i,e_j\rangle\le 0$ for all $i\neq j$. A \emph{generalised Cartan lattice} is a pair $(\Ga,E)$ consisting of a bilinear lattice $\Ga$ and a complete orthogonal exceptional sequence $E=(e_1,\ldots,e_n)$.

We fix a partial order on $\Ga$ by saying $a\geq0$ provided $a=\sum_i\alpha_ie_i$ with $\alpha_i\geq0$ for all $i$.

If $(\Ga,E)$ is a generalised Cartan lattice, then the matrix
\begin{equation}\label{eq:cartan}
C(\Ga,E) := (\langle  e_i,e_i\rangle^{-1}(e_i,e_j))_{i,j}
\end{equation}
is a symmetrisable generalised Cartan matrix.\footnote{
Following Kac \cite{Ka1990} we call an integral square matrix $C$ a \emph{symmetrisable generalised Cartan matrix} if $c_{ii}=2$ and $C=D^{-1}B$ for some diagonal matrix $D=\diag(d_i)$ and symmetric matrix $B$ with $d_i>0$ and $b_{ij}\le0$ for $i\neq j$.
}

The converse also holds.

\begin{lem}\label{le:gcm}
Every symmetrisable generalised Cartan matrix $C = D^{-1}B$ is of the form $C(\Ga,E)$ for some generalised Cartan lattice $(\Ga,E)$.
\end{lem}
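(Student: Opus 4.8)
The plan is to construct $(\Gamma, E)$ explicitly from the data $C = D^{-1}B$ with $D = \diag(d_i)$ and $B = (b_{ij})$ symmetric. The natural guess is to take $\Gamma = \bbZ^n$ with standard basis $e_1, \ldots, e_n$ and to define the bilinear form by prescribing $\langle e_i, e_j\rangle$ for all $i, j$ in such a way that (a) the sequence $E = (e_1, \ldots, e_n)$ is exceptional, i.e.\ $\langle e_i, e_j\rangle = 0$ for $i > j$; (b) it is orthogonal, i.e.\ $(e_i, e_j) = \langle e_i, e_j\rangle + \langle e_j, e_i\rangle \le 0$ for $i \neq j$; (c) each $e_i$ is a root, i.e.\ $\langle e_i, e_i\rangle > 0$ and the divisibility conditions hold; and (d) the resulting Cartan matrix $C(\Gamma, E) = (\langle e_i, e_i\rangle^{-1}(e_i, e_j))_{i,j}$ equals $C$. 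Since $E$ must be exceptional, $\langle e_i, e_j\rangle$ is forced to be $0$ for $i > j$, so the form is determined by an upper-triangular matrix; the diagonal entries should be $\langle e_i, e_i\rangle = d_i$ (so that $\langle e_i, e_i\rangle^{-1}(e_i, e_i) = d_i^{-1} \cdot 2 d_i = 2$, wait — one needs $(e_i,e_i) = 2\langle e_i, e_i\rangle = 2 d_i$, and indeed $b_{ii} = d_i c_{ii} = 2 d_i$, consistent), and for $i < j$ one should set $\langle e_i, e_j\rangle = b_{ij}$.

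More precisely, I would define the matrix $M$ by $M_{ii} = d_i$, $M_{ij} = b_{ij}$ for $i < j$, and $M_{ij} = 0$ for $i > j$, and set $\langle x, y\rangle := x^t M y$ on $\Gamma = \bbZ^n$. Then $\langle e_i, e_j\rangle = M_{ij}$, so condition (a) holds by construction, $\langle e_i, e_i\rangle = d_i > 0$, and the symmetrised form is $(x,y) = x^t(M + M^t)y = x^t B y$ since $M + M^t = B$ (the strictly-upper part of $M$ contributes $b_{ij}$ above and $b_{ji} = b_{ij}$ below, the diagonal contributes $2d_i = b_{ii}$). Hence $(e_i, e_j) = b_{ij} \le 0$ for $i \neq j$, giving (b), and $\langle e_i, e_i\rangle^{-1}(e_i, e_j) = d_i^{-1} b_{ij} = c_{ij}$, giving (d). For (c), the divisibility $\frac{\langle e_i, x\rangle}{\langle e_i, e_i\rangle}, \frac{\langle x, e_i\rangle}{\langle e_i, e_i\rangle} \in \bbZ$ for all $x \in \Gamma$ reduces to checking it on basis vectors: $\frac{\langle e_i, e_j\rangle}{d_i}$ and $\frac{\langle e_j, e_i\rangle}{d_i}$; for $j \neq i$ one of these is $0$ and the other is $b_{ij}/d_i = c_{ij} \in \bbZ$, and for $j = i$ it is $d_i/d_i = 1 \in \bbZ$. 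Finally one must check that $\langle -, -\rangle$ is non-degenerate: this follows because $C$ is invertible over $\bbQ$ when — hmm, actually $C$ need not be invertible for a general GCM.

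The main obstacle is precisely non-degeneracy of the form, which is required for a bilinear lattice. The matrix $M$ is upper triangular with diagonal entries $d_i > 0$, hence $\det M = \prod_i d_i \neq 0$, so $M$ is invertible over $\bbQ$ and $\langle x, - \rangle = 0 \Rightarrow x^t M = 0 \Rightarrow x = 0$, and similarly on the right; so in fact non-degeneracy is automatic and costs nothing — the triangular shape of $M$ saves us even when $C$ itself is singular. So there is no real obstacle; the only point requiring a little care is bookkeeping the relation $M + M^t = B$ and confirming all the root/orthogonality/Cartan conditions translate correctly, which is the routine verification I would then carry out. One should also remark that $\bbZ E = \Gamma$ by construction, so $E$ is complete, and thus $(\Gamma, E)$ is indeed a generalised Cartan lattice with $C(\Gamma, E) = C$.
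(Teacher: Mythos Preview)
Your construction is correct and is exactly the one given in the paper: define the form on $\bbZ^n$ by the upper-triangular matrix with diagonal entries $d_i$ and superdiagonal entries $b_{ij}$. Your write-up is in fact more thorough than the paper's, since you explicitly verify non-degeneracy (via $\det M=\prod_i d_i\neq0$) and the root divisibility conditions, both of which the paper leaves implicit.
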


\begin{proof}
Let $C = D^{-1}B$ be a symmetrisable generalised Cartan matrix of size $n$ with $D=\diag(d_i)$. Take $\Ga=\bbZ^n$ with standard basis $\{e_1,\ldots,e_n\}$ and equip this with the bilinear form given by
\[ \langle e_i,e_j\rangle := \begin{cases} b_{ij}&\textrm{if }i<j;\\d_i&\textrm{if }i=j;\\0&\textrm{if }i>j. \end{cases} \]
Then $E=(e_1,\ldots,e_n)$ is a complete, orthogonal exceptional sequence, $(\Ga,E)$ is a generalised Cartan lattice, and $C(\Gamma,E)=C$.
\end{proof}

\subsection*{Weyl groups and non-crossing partitions}

The \emph{Weyl group} $W=W(\Ga,E)$ of a generalised Cartan lattice is defined to be the subgroup of $\Aut(\Ga)$ generated by the \emph{simple reflections} $S:=\{s_{e_1},\ldots,s_{e_n}\}$. Then $(W,S)$ is a Coxeter system \cite[Proposition~3.13]{Ka1990}. In general, a  \emph{Coxeter element} in $(W,S)$ is a product of all the generators in $S$, in some order. Thus $\cox(\Gamma)$ is always a Coxeter element in the Weyl group $W(\Ga,E)$. 

Note that the Weyl group depends only on the Cartan matrix $C(\Ga,E)$, and that different choices of orthogonal exceptional sequences in $\Ga$ can give rise to the same Cartan matrix.

The set of \emph{real roots} is
\[ \Phi = \Phi(\Ga,E) := \{ w(e_i) \mid w\in W(\Ga,E),\,1\le i\le n \} \subseteq\Ga. \]
By Lemma~\ref{le:roots} we see that each real root is a (pseudo-real) root. Moreover, every real root is either positive or negative (combine Theorem~1.2 and Proposition~3.7~(b) from \cite{Ka1990}). Finally, if $a=w(e_i)\in\Phi$, then $-a=ws_{e_i}(e_i)\in\Phi$ and
\[ s_{w(e_i)} = w s_{e_i} w^{-1} \in W(\Ga,E). \]
A \emph{reflection} in $W$ is thus defined to be an element of the form $s_a$ for $a\in\Phi$, so the set of all reflections is
\[ T := \{s_a \mid a\in\Phi\} = \{ wsw^{-1} \mid w\in W,\, s\in S \}. \]

\begin{rem}
The set of reflections depends on the choice of Coxeter system. For example, the dihedral group $D_{12}$ of order 12 has two presentations as a Coxeter group
\begin{align*}
D_{12} &= \langle s,t\mid s^2=t^2=(st)^6=1 \rangle\\
&= \langle s,u,v \mid s,u,v \mid s^2=u^2=v^2=(su)^3=(sv)^2=(uv)^2=1\rangle
\end{align*}
coming from the isomorphism $D_{12}\cong D_6\times C_2$. Note that $t=uv$ and $v=(st)^3$, so that the Coxeter elements agree, $st=suv$. In the first presentation there are six reflections, whereas there are only four in the second presentation.
\end{rem}

The \emph{absolute length} $\ell(w)$ of $w\in W$ is the minimal $r\ge 0$ such that $w$ can be written as product $w=t_1\ldots t_r$ of reflections $t_i\in T$. The \emph{absolute order} on $W$ is then defined as
\[ u\le v \quad\textrm{provided}\quad \ell (u)+\ell(u^{-1}v) = \ell(v). \]
For another description of this length we refer to \cite{Dy2001}.

Relative to a Coxeter element $c$ one defines the poset of \emph{non-crossing partitions}
\[ \NC(W,c) := \{ w\in W \mid \id \le w\le c \}. \]
When $(\Ga,E)$ is a generalised Cartan lattice, with Weyl group $W$ and Coxeter element $s_E$, we also write $\NC(\Ga,E)$ instead of $\NC(W,s_E)$.

Observe that if $u\leq w$ are non-crossing partitions, say $w=uv$, then since $w=v(v^{-1}uv)$ also $v$ is a non-crossing partition.

The braid group $B_r$ acts on the set of all $r$ element sequences in any group via
\[ \s_i (x_1,\ldots,x_r) := (x_1,\ldots,x_{i-1},x_{i+1}, x_{i+1}^{-1}x_ix_{i+1}, x_{i+2},\ldots,x_r). \]
Note that the product of the group elements remains the same.

The braid group action on factorisations of the Coxeter element is transitive whenever $W$ is a Coxeter group; we record this important
result for later use. For finite Coxeter groups, a proof can be found in a letter of Deligne \cite{De1974}. For the absolute length of the Coxeter element see Dyer \cite{Dy2001}.

\begin{thm}[Igusa--Schiffler \cite{IS2010}, see also \cite{BDSW14}]\label{th:IS2010}
Let $(W,S)$ be a Coxeter system of rank $|S|=n$ and let $c$ be a Coxeter element. Then $\ell(c)=n$ and the braid group $B_n$ acts transitively on the set of sequences of reflections $(t_1,\ldots,t_n)$ such that $t_1\cdots t_n=c$. \qed
\end{thm}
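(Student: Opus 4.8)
The plan is to prove both assertions simultaneously by induction on the rank $n=|S|$, the case $n=1$ being trivial (then $W=\{\id,s\}$ has the unique reflection $s$ and the unique factorisation $s=s$). Fix for the induction an expression $c=s_1\cdots s_n$; this linear order on $S$ is the same datum as an acyclic orientation of the Coxeter graph, and a simple reflection $s$ is a \emph{source} of this orientation precisely when it can be moved to the front of the word, in which case $sc$ is a Coxeter element of the standard parabolic subgroup $W_s:=W_{S\setminus\{s\}}$, of rank $n-1$. Every such word has at least one source (e.g.\ $s_1$), and any two expressions of $c$ as a product of the simple reflections are obtained from one another by successively transposing adjacent \emph{commuting} simple reflections — this is just the statement that the linear extensions of a finite poset are connected by swaps of adjacent incomparable elements, and each such transposition is realised by a braid move $\s_i^{\pm1}$, which transposes two adjacent entries of a factorisation whenever they commute.

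That $\ell(c)\le n$ is clear, since $c$ is a product of $n$ reflections. Everything else rests on the following \textbf{reduction step}: any reflection factorisation $c=t_1\cdots t_m$ with $m=\ell(c)$ is carried by braid moves to one, $c=s\cdot t_2'\cdots t_m'$, whose first factor $s$ is a source of the chosen orientation. Granting this, $t_2'\cdots t_m'=sc$ lies in $W_s$; invoking the \textbf{parabolic-closure property} — that $\ell_W$ restricts to $\ell_{W_s}$ on $W_s$ and that a shortest reflection factorisation of an element of $W_s$ uses only reflections of $W_s$, which is part of Dyer's reflection-subgroup calculus \cite{Dy2001} — together with the inductive hypothesis $\ell_{W_s}(sc)=n-1$, the tail $(t_2',\ldots,t_m')$ has length $m-1\ge\ell_W(sc)=n-1$, so with $m\le n$ we get $\ell(c)=m=n$. (The naive bound $\ell(c)\ge\dim\Im(c-\id)=n-\dim_{\bbR}\rad\Ga$ coming from the action on $\Ga\otimes\bbR$ is not sharp when $\rad\Ga\ne0$, which is exactly why one needs either Dyer's refinement or this inductive bootstrap.) Furthermore $(t_2',\ldots,t_m')$ is then a shortest reflection factorisation of $sc$ inside $W_s$, hence by induction braid-equivalent to the standard one coming from the induced orientation of $W_s$; so every shortest factorisation of $c$ is braid-equivalent to a factorisation of $c$ by all the simple reflections, and by the first paragraph any two of the latter are braid-equivalent. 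Thus $B_n$ acts transitively.

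The reduction step is the genuine obstacle, and I would carry it out following \cite{IS2010} and \cite{BDSW14}. The idea is a monovariant argument performed on the braid orbit: to each reflection $t$ attach its positive root $\alpha_t$ (well defined, since every real root is positive or negative and $s_\alpha$ determines $\alpha$ up to sign by Lemma~\ref{le:preservation-of-Coxeter}(4)), and to a tuple $(t_1,\ldots,t_m)$ with product $c$ attach a nonnegative integer statistic built from the positive roots $\alpha_{t_i}$ and the partial products $t_1\cdots t_i$ (for instance a sum of heights of roots, or a count of inversions created by the partial products). One then shows that whenever $t_1$ is \emph{not} a source, some braid move $\s_i^{\pm1}$ strictly decreases this statistic, so that a tuple of minimal value in the orbit must begin with a source. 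The precise bookkeeping — tracking how the signs and supports of roots change under the moves, and reducing the analysis to the rank-two sub-situations on two simple reflections where the braid relation $s_{s_g(f)}=s_gs_fs_g$ of Lemma~\ref{le:roots} is applied — is the combinatorial heart of those papers. (If one takes $\ell(c)=n$ as known from Dyer \cite{Dy2001} at the outset, the transitivity part can instead be organised inside the interval $[\id,c]$ of the absolute order using the recursive structure of $\NC(W,c)$, but the reduction step remains the essential point.)
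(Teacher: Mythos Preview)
The paper does not prove this theorem: it is stated with a terminal \verb|\qed| and attributed to Igusa--Schiffler \cite{IS2010} and Baumeister--Dyer--Stump--Wegener \cite{BDSW14}, then used as a black box. So there is no ``paper's own proof'' to compare against; your write-up is an outline of the argument in those references.

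As such an outline, your structure is accurate: induction on rank, reduction by braid moves to a factorisation beginning with a source simple reflection, then descent into the standard parabolic $W_{S\setminus\{s\}}$. You correctly flag the reduction step as the substantive content and are honest that you are deferring the actual monovariant argument to \cite{IS2010,BDSW14}. Two small caveats. First, to conclude that the tail $(t'_2,\ldots,t'_n)$ consists of reflections \emph{of} $W_s$ (needed for the inductive transitivity step, not just the length count), you are invoking the prefix-closure of reduced $T$-words under parabolic subgroups; this is indeed available, but it is a consequence of the subword/exchange property for absolute length rather than a direct corollary of Dyer's length formula, so the citation to \cite{Dy2001} alone is a bit thin. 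Second, your parenthetical ``for instance a sum of heights of roots, or a count of inversions'' is speculative: neither cited proof uses quite those statistics, and a reader could not reconstruct the argument from this hint. Since the reduction step is, as you say, the genuine obstacle, the present text is a proof \emph{plan} rather than a proof.
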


\subsection*{Real exceptional sequences}

Let $(\Ga,E)$ be generalised Cartan lattice of rank $n$.

A \emph{subsequence} of a sequence $(f_1,\ldots,f_r)$ of elements of $\Ga$ is one of the form $(f_{i_1},\ldots,f_{i_s})$ for $1\le i_1<i_2<\cdots<i_s\le r$; it is an \emph{initial subsequence} if $i_j=j$ for all $j$. A \emph{real exceptional sequence} is a subsequence of a complete exceptional sequence $(f_1,\ldots,f_n)$ where each $f_i$ is a real root.

Observe that the action of the wreath product $\{\pm1\}\wr B_r$ on exceptional sequences of length $r$ restricts to an action on real exceptional sequences. This is clear from the definition of the action, using that $\Phi=-\Phi$.

\begin{lem}\label{le:equivar}
The map $(f_1,\ldots,f_r)\mapsto(s_{f_1},\ldots,s_{f_r})$ sending a real exceptional sequence to the sequence of reflections in the Weyl group is equivariant for the action of the braid group $B_r$.
\end{lem}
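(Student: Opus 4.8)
The plan is to verify directly that the two braid actions—on real exceptional sequences in $\Ga$ and on factorisations in the Weyl group $W$—commute with the map $(f_1,\ldots,f_r)\mapsto(s_{f_1},\ldots,s_{f_r})$. Since $B_r$ is generated by $\s_1,\ldots,\s_{r-1}$, it suffices to check equivariance for each generator $\s_i$ (and its inverse, which follows formally). Moreover, since $\s_i$ only alters the entries in positions $i$ and $i+1$, leaving all others untouched on both sides, the whole matter reduces to the case $r=2$: given an exceptional pair $(e,f)$ of real roots, we must show that the reflection data attached to $\s_1(e,f)=(f,s_f(e))$ equals $\s_1(s_e,s_f)=(s_f,\,s_f s_e s_f)$.

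The first component is immediate: the reflection attached to $f$ is $s_f$, matching the first entry of $\s_1(s_e,s_f)$. For the second component I would invoke Lemma~\ref{le:roots}: it tells us that $s_f(e)$ is again a root and that $s_{s_f(e)}=s_f s_e s_f$, which is exactly the second entry of $\s_1(s_e,s_f)$. (One should also note that $s_f(e)$ is a \emph{real} root when $e,f$ are, so that $(f,s_f(e))$ really is a real exceptional sequence and the right-hand map is defined on it—this uses $\Phi=-\Phi$ and $s_{w(e_i)}=ws_{e_i}w^{-1}\in W$ as recorded just before the lemma.) The inverse generator $\s_i^{-1}$ is handled the same way: $\s_1^{-1}(e,f)=(s_e(f),e)$ has associated reflections $(s_e s_f s_e,\,s_e)$, again by Lemma~\ref{le:roots}, matching $\s_1^{-1}(s_e,s_f)$.

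There is essentially no serious obstacle here; the content is entirely encapsulated in the conjugation formula $s_{s_b(a)}=s_b s_a s_b$ of Lemma~\ref{le:roots}. The only point requiring a moment's care is bookkeeping: one must make sure the braid action on group-element sequences as defined in the text, $\s_i(x_1,\ldots,x_r)=(x_1,\ldots,x_{i+1},\,x_{i+1}^{-1}x_ix_{i+1},\ldots)$, is being compared against the correct formula $\s_i(f_1,\ldots,f_r)=(\ldots,f_{i+1},s_{f_{i+1}}(f_i),\ldots)$ for real exceptional sequences, and that in $W$ we have $x_{i+1}^{-1}x_ix_{i+1}=s_{f_{i+1}}s_{f_i}s_{f_{i+1}}$ since each $s_{f_j}$ is an involution—so $s_{f_{i+1}}^{-1}=s_{f_{i+1}}$. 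Once the indices are lined up, the verification is a one-line appeal to the lemma in each of the two cases.
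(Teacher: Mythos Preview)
Your proposal is correct and follows essentially the same approach as the paper: reduce to generators, hence to $r=2$, and then appeal to the identity $s_{s_b(a)}=s_bs_as_b$ from Lemma~\ref{le:roots}. The paper's proof is simply a terser version of what you wrote, omitting the bookkeeping about involutions and the remark that $s_f(e)$ remains a real root.
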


\begin{proof}
It is enough to check this for the generators $\sigma_i^{\pm1}$, and hence just for $r=2$. The result now follows from the identity $s_{s_e(f)}=s_es_fs_e$ from Lemma~\ref{le:roots}.
\end{proof}

\begin{lem}\label{le:real-seq}
The real exceptional sequences in $(\Ga,E)$ are precisely the initial subsequences of the sequences $\s E$ for $\s\in\{\pm1\}\wr B_n$. In particular, the wreath product $\{\pm1\}\wr B_n$ acts transitively on the set of complete real exceptional sequences.
\end{lem}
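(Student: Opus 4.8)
The plan is to prove the two assertions of Lemma~\ref{le:real-seq} in turn, deriving the transitivity statement from the characterisation of real exceptional sequences.

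First I would establish the easy inclusion: every initial subsequence of $\s E$ for $\s\in\{\pm1\}\wr B_n$ is a real exceptional sequence. Indeed, by Lemma~\ref{le:preservation-of-Coxeter}(1) together with the fact that $\Phi=-\Phi$, each $\s E$ is a complete exceptional sequence all of whose entries are real roots, and so any initial subsequence of it is a real exceptional sequence by definition. This already contains the transitivity claim for \emph{complete} real exceptional sequences as a special case, \emph{provided} I can show every complete real exceptional sequence has this form; so the whole lemma reduces to the converse inclusion.

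For the converse, let $F=(f_1,\ldots,f_r)$ be a real exceptional sequence, so by definition $F$ is a subsequence of some complete exceptional sequence $(f_1,\ldots,f_n)$ in which every entry is a real root. The key observation is that from a complete \emph{real} exceptional sequence one can, using the braid group action of $B_n$, move any chosen subset of entries to the front while keeping the sequence real at every stage — this is a standard "sorting" argument with the $\s_i$, using that the $\s_i^{\pm1}$ preserve realness. So it suffices to treat the case $r=n$, i.e.\ to show that every complete real exceptional sequence $(f_1,\ldots,f_n)$ lies in the orbit $\{\pm1\}\wr B_n\cdot E$. Apply the reflection map of Lemma~\ref{le:equivar} to get a sequence of reflections $(s_{f_1},\ldots,s_{f_n})$ in $W(\Ga,E)$; since the $f_i$ form a complete exceptional sequence, $s_{f_1}\cdots s_{f_n}=s_F=\cox(\Ga)=s_E$ by Proposition~\ref{pr:cox}, so this is a factorisation of the Coxeter element into $n=\ell(\cox(\Ga))$ reflections. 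By Theorem~\ref{th:IS2010} the braid group $B_n$ acts transitively on such factorisations, so there is $\s\in B_n$ with $\s\cdot(s_{f_1},\ldots,s_{f_n})=(s_{e_1},\ldots,s_{e_n})$. By the equivariance of Lemma~\ref{le:equivar}, $\s F$ and $E$ then have the same sequence of associated reflections, i.e.\ $s_{(\s F)_i}=s_{e_i}$ for all $i$, whence $(\s F)_i=\pm e_i$ by Lemma~\ref{le:preservation-of-Coxeter}(4). Thus $\e\s F=E$ for a suitable sign vector $\e\in\{\pm1\}^n$, so $F$ lies in the $\{\pm1\}\wr B_n$-orbit of $E$, as required; and transitivity on complete real exceptional sequences is immediate.

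The main obstacle is the reduction from general $r$ to $r=n$: one must check that a real exceptional sequence of length $r<n$ can be extended to a \emph{complete} one that is still real (the definition only gives a complete \emph{exceptional} sequence, with the extra $n-r$ entries merely roots), and then that the braid group can reposition entries without leaving the class of real sequences. The first point is where one genuinely uses that the ambient sequence has real entries — but in fact the definition of real exceptional sequence already supplies the completion $(f_1,\ldots,f_n)$ with all entries real, so the only real work is the sorting step, which is the routine observation that $B_n$ acts transitively on $r$-element initial subsequences of a fixed complete sequence up to the braid action, combined with Lemma~\ref{le:equivar} reducing everything to the complete case.
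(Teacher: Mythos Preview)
Your proof is correct and follows essentially the same route as the paper: reduce to the complete case via the braid action (the paper dispatches this in one sentence, ``Using the braid group action it is clear that every real exceptional sequence is an initial sequence of a complete real exceptional sequence''), then for complete $F$ use $s_F=s_E=\cox(\Ga)$, invoke Theorem~\ref{th:IS2010} to get $\s\in B_n$ matching the reflection sequences, and finish with Lemma~\ref{le:equivar} and Lemma~\ref{le:preservation-of-Coxeter}(4) to conclude $F$ and $\s E$ agree up to signs. Your final paragraph raises and then correctly dismisses a non-issue --- the definition already guarantees all $n$ entries of the ambient completion are real roots --- so you could simply delete that hesitation.
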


\begin{proof}
Using the braid group action it is clear that every real exceptional sequence is an initial sequence of a complete real exceptional sequence. Now let $F=(f_1,\ldots,f_n)$ be any complete real exceptional sequence. Then $s_F=s_E=\cox(\Ga)$, so by Theorem~\ref{th:IS2010} there exists $\s\in B_n$ such that $\s(s_{e_1},\ldots,s_{e_n})=(s_{f_1},\ldots,s_{f_n})$. Consider $(g_1,\ldots,g_n):=\s(e_1,\ldots,e_n)$. Then each $g_i$ is a real root and $s_{f_i}=s_{g_i}$ by the previous lemma, so $f_i=\pm g_i$ by Lemma~\ref{le:preservation-of-Coxeter}~(4).
\end{proof}

The following relates real exceptional sequences to non-crossing partitions, and improves upon Lemma~\ref{le:preservation-of-Coxeter}~(3).

\begin{prop}\label{pr:real-seq}
Let $(\Ga,E)$ be a generalised Cartan lattice and consider the map $F\mapsto s_F$ from real exceptional sequences to the Weyl group. Then the image is precisely $\NC(\Ga,E)$, and $s_F=s_{F'}$ if and only if $\bbZ F=\bbZ F'$.
\end{prop}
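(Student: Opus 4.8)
The plan is to prove the two assertions in tandem: first identify the image of $F\mapsto s_F$ with $\NC(\Ga,E)$, then pin down the fibres. For the image, one inclusion is easy. If $F=(f_1,\ldots,f_r)$ is a real exceptional sequence, then by Lemma~\ref{le:real-seq} it extends to a complete real exceptional sequence $(f_1,\ldots,f_n)$, and by the same lemma this is obtained from $E$ by an element $\s\in\{\pm1\}\wr B_n$. Now $s_F=s_{f_1}\cdots s_{f_r}$ is a product of $r$ reflections, and $s_{(f_1,\ldots,f_n)}=s_E=\cox(\Ga)$; so writing $\cox(\Ga)=s_F\cdot s_{f_{r+1}}\cdots s_{f_n}$ expresses $\cox(\Ga)$ as a product of $r+(n-r)=n$ reflections. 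Since $\ell(\cox(\Ga))=n$ by Theorem~\ref{th:IS2010}, this factorisation is length-additive, which forces $\ell(s_F)=r$ and $\ell(s_F^{-1}\cox(\Ga))=n-r$, i.e. $s_F\le\cox(\Ga)$, so $s_F\in\NC(\Ga,E)$. (One should note $\id=s_\varnothing$ is the image of the empty sequence, so $\id\le s_F$ trivially.)

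For the reverse inclusion, take $w\in\NC(\Ga,E)$, say $\cox(\Ga)=wv$ with $\ell(w)+\ell(v)=n$. Choose reduced reflection factorisations $w=t_1\cdots t_r$ and $v=t_{r+1}\cdots t_n$; concatenating gives a length-$n$ factorisation of $\cox(\Ga)=s_E$ into reflections. By Theorem~\ref{th:IS2010} there is $\s\in B_n$ with $\s(s_{e_1},\ldots,s_{e_n})=(t_1,\ldots,t_n)$. Applying the \emph{same} $\s$ to $E$ produces $F':=\s E=(f_1,\ldots,f_n)$, a complete real exceptional sequence, and by Lemma~\ref{le:equivar} we have $s_{f_i}=t_i$ for all $i$. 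Then the initial subsequence $F=(f_1,\ldots,f_r)$ is a real exceptional sequence with $s_F=t_1\cdots t_r=w$. This shows the image is exactly $\NC(\Ga,E)$.

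**The fibres.** For the second assertion, the implication $\bbZ F=\bbZ F'\Rightarrow s_F=s_{F'}$ is already Lemma~\ref{le:preservation-of-Coxeter}~(2) (applied after padding: strictly one wants it for exceptional sequences of possibly different lengths, but two real exceptional sequences with the same span have the same rank, hence the same length). The interesting direction is $s_F=s_{F'}\Rightarrow\bbZ F=\bbZ F'$; this is where the improvement over Lemma~\ref{le:preservation-of-Coxeter}~(3) lies, since that lemma only gives equality modulo $\rad\Ga$. The idea is to induct on $r=|F|$ (noting $|F|=\ell(s_F)=|F'|$ by the first part). The base $r=0$ is trivial. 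For the inductive step, set $w=s_F=s_{F'}$ with $r=\ell(w)\ge1$, and write $F=(f_1,\ldots,f_r)$, $F'=(f_1',\ldots,f_r')$. Extend both to complete real exceptional sequences; since $s_{(f_1,\ldots,f_n)}=s_E=s_{(f_1',\ldots,f_n')}$, by Lemma~\ref{le:real-seq} and Theorem~\ref{th:IS2010} the tails can be chosen so that, after applying a suitable braid, $F$ and $F'$ become initial subsequences of a common complete real exceptional sequence — but this is too strong and would collapse the problem prematurely, so instead I would argue more carefully. The key point I expect to use is that by the first part both $w$ and $s_{f_r}^{-1}\cdots s_{f_1}^{-1}\cdot(\text{something})$ behave well, but cleanest is: the last reflection $s_{f_r}$ satisfies $s_{f_r}\le w$ (it sits as the final factor of a reduced factorisation), and dually so does $s_{f_r'}$; using the braid group action (Lemma~\ref{le:real-seq}) one may move within the fibre of $F$ under $\{\pm1\}\wr B_r$ without changing $s_F$ or $\bbZ F$ (Lemma~\ref{le:preservation-of-Coxeter}~(1),(2)), so one can assume the last entries agree up to sign, $f_r=\pm f_r'$; then $\bbZ f_r=\bbZ f_r'$, and $s_{(f_1,\ldots,f_{r-1})}=w s_{f_r}=w s_{f_r'}=s_{(f_1',\ldots,f_{r-1}')}$, so by induction $\bbZ(f_1,\ldots,f_{r-1})=\bbZ(f_1',\ldots,f_{r-1}')$, whence $\bbZ F=\bbZ(f_1,\ldots,f_{r-1})+\bbZ f_r$ equals $\bbZ F'$.

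**Main obstacle.** The delicate point is justifying that within the braid-orbit of the real exceptional sequence $F$ one can arrange the last entry to coincide (up to sign) with that of $F'$. This requires knowing that the reflection $s_{f_r}$ is determined by $w=s_F$ — i.e. that all reduced reflection factorisations of $w$ have a common available "last letter" reachable by braiding — which is exactly the transitivity of the $B_r$-action on factorisations of $w$ as a product of $r$ reflections. That transitivity is \emph{not} immediate from Theorem~\ref{th:IS2010} (which is about the Coxeter element of the whole group); deducing it for the non-crossing partition $w$ in place of $\cox$ is the crux. I expect this to follow by lifting: a reduced factorisation of $w$ extends (append a reduced factorisation of $s_F^{-1}\cox(\Ga)$) to one of $\cox(\Ga)$, apply Theorem~\ref{th:IS2010}, and check the braid moves used to match the two factorisations of $w$ can be taken within the first $r$ strands — which works because the braid moves relating two factorisations of $\cox$ that agree on the last $n-r$ reflections can be normalised to fix those strands. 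Making this normalisation precise, using the structure of $B_n$ acting on factorisations, is the step I anticipate will need the most care.
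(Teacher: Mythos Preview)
Your argument that the image of $F\mapsto s_F$ is exactly $\NC(\Ga,E)$ is correct and matches the paper's proof.

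For the fibre characterisation, however, your inductive strategy has a genuine gap at precisely the point you flag as the ``main obstacle''. You need that the reflection $s_{f_r'}$ can be moved to the last position of \emph{some} reduced factorisation of $w$ lying in the $\{\pm1\}\wr B_r$-orbit of $F$. This is equivalent to transitivity of the $B_r$-action on reduced reflection factorisations of $w$, and it is \emph{not} a consequence of Theorem~\ref{th:IS2010} by the normalisation argument you sketch: even if two length-$n$ factorisations of $\cox(\Ga)$ agree on the last $n-r$ letters, the braid in $B_n$ relating them need not lie in, nor be replaceable by one in, the subgroup $B_r$ acting on the first $r$ strands. Indeed, transitivity for a general non-crossing partition $w$ is exactly what is established \emph{later} in the paper (Lemma~\ref{le:orbits}, via the representation theory of hereditary algebras; cf.\ Remark~\ref{re:need-hered}), so invoking it here would be circular.

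The paper sidesteps this with a short trick that uses only the surjectivity already proved. Since $s_F^{-1}\cox(\Ga)$ is again a non-crossing partition, it equals $s_G$ for some real exceptional sequence $G$. Then $s_Fs_G=s_{F'}s_G=\cox(\Ga)$ are both length-$n$ reflection factorisations of the Coxeter element, so by Theorem~\ref{th:IS2010} (applied to $\cox(\Ga)$, where it \emph{is} available), together with Lemmas~\ref{le:equivar} and~\ref{le:preservation-of-Coxeter}~(4), both $(F,G)$ and $(F',G)$ are complete real exceptional sequences. Lemma~\ref{le:except} then gives $\bbZ F=G^\perp=\bbZ F'$ directly. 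No induction and no transitivity for $w$ are needed.
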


\begin{proof}
Let $F$ be a real exceptional sequence. By the previous lemma $F$ is an initial subsequence of $\sigma E$ for some $\sigma\in\{\pm1\}\wr B_n$, say $\sigma E=(F,G)$. Then $\cox(\Gamma)=s_E=s_Fs_G$, and so $s_F\in\NC(\Gamma,E)$.

Next let $w\in\NC(\Gamma,E)$ be a non-crossing partition. By Theorem~\ref{th:IS2010} we can write $w=t_1\cdots t_r$ with $(t_1,\ldots,t_n)=\sigma(s_{e_1},\ldots,s_{e_n})$ for some $\sigma\in B_n$. Set $(f_1,\ldots,f_n):=\sigma(e_1,\ldots,e_n)$ and $F=(f_1,\ldots,f_r)$. Then $F$ is a real exceptional sequence, $t_i=s_{f_i}$ for all $i$, and $w=s_F$.

Finally, suppose $s_F=s_{F'}$ for two real exceptional sequences $F$ and $F'$. Set $w=s_F^{-1}\cox(\Gamma)$. Then $w$ is again a non-crossing partition (since $\cox(\Gamma)=wv$ for $v=w^{-1}s_Fw$), so $w=s_G$ for some real exceptional sequence $G$. It follows that both $(F,G)$ and $(F',G)$ are complete real exceptional sequences, and hence that $\bbZ F=G^\perp=\bbZ F'$.
\end{proof}

\begin{rem}\label{re:need-hered}
One of the main results in this article, Theorem~\ref{th:main}, is that the fibres of the map $F\mapsto s_F$ are precisely the orbits of the braid group, and moreover each such orbit contains an orthogonal real exceptional sequence (so the sublattice $\bbZ F$ is naturally a generalised Cartan lattice). To prove this, however, we will need to relate generalised Cartan lattices to Grothendieck groups of hereditary algebras.
\end{rem}

\subsection*{Morphisms of generalised Cartan lattices}

A \emph{morphism}\footnote{
We do not know a reasonable definition of a morphism which covers, for instance, morphisms of the form $K_0(A')\to K_0(A)$ induced by an arbitrary algebra homomorphism $A\to A'$; cf.\ Theorem~\ref{th:epi}.
}
$\p\colon(\Ga',E')\to(\Ga,E)$ between generalised Cartan lattices is an isometry $\p\colon\Ga'\to\Ga$ such that $\p E'$ is a real exceptional sequence in $\Gamma$.

We observe that every such morphism $\p$ necessarily preserves the bilinear form $\langle-,-\rangle$, not just the symmetric form $(-,-)$.

\begin{lem}
The generalised Cartan lattices form a category in which all morphisms are monomorphisms. \qed
\end{lem}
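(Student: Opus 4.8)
The plan is to verify the two claims of the lemma separately: first that generalised Cartan lattices together with the morphisms just defined form a category, and second that every morphism is a monomorphism.

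For the category structure, I would first check that the identity map $\id_\Ga\colon(\Ga,E)\to(\Ga,E)$ is a morphism: it is trivially an isometry, and $\id_\Ga E=E$ is a complete orthogonal exceptional sequence, hence in particular a real exceptional sequence (every $e_i$ is a root and $e_i=s_E s_{e_1}\cdots s_{e_{i-1}}(e_i)$, or more simply one notes $E$ is itself a complete exceptional sequence all of whose entries are roots, so by Lemma~\ref{le:real-seq} any initial subsequence is real — actually $E$ is literally such a sequence). Next I would check that the composite of two morphisms $\p\colon(\Ga'',E'')\to(\Ga',E')$ and $\psi\colon(\Ga',E')\to(\Ga,E)$ is again a morphism. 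The composite $\psi\p$ is certainly an isometry $\Ga''\to\Ga$. The point to verify is that $\psi\p(E'')$ is a real exceptional sequence in $\Ga$. Here I would use Lemma~\ref{le:real-seq}: $\p(E'')$ is a real exceptional sequence in $\Ga'$, so it is an initial subsequence of $\s E'$ for some $\s\in\{\pm1\}\wr B_n$. Applying the isometry $\psi$ and using that $\psi$ commutes with the braid group action (by Lemma~\ref{le:roots}, $\psi$ intertwines reflections, so it is equivariant for the $\{\pm1\}\wr B_r$-action on sequences), we get that $\psi\p(E'')$ is an initial subsequence of $\s(\psi E')$. Now $\psi E'$ is a real exceptional sequence in $\Ga$, hence itself an initial subsequence of $\t E$ for some $\t$; combining, $\psi\p(E'')$ is a real exceptional sequence in $\Ga$. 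Associativity of composition and the unit laws are inherited from composition of group homomorphisms, so there is nothing to check there.

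For the monomorphism claim, suppose $\p\colon(\Ga',E')\to(\Ga,E)$ is a morphism and $\alpha,\beta\colon(\Ga'',E'')\to(\Ga',E')$ are morphisms with $\p\alpha=\p\beta$. Since $\p$ is an isometry and $\Ga'$ is non-degenerate (indeed torsion-free, being a bilinear lattice), $\p$ is injective as a group homomorphism: if $\p(x)=0$ then $\langle x,y\rangle'=\langle\p(x),\p(y)\rangle=0$ for all $y\in\Ga'$, so $x=0$ by non-degeneracy. Hence $\p\alpha=\p\beta$ forces $\alpha=\beta$, so $\p$ is a monomorphism.

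The main obstacle, and the only genuinely non-formal step, is the closure of morphisms under composition — specifically showing that the image of a real exceptional sequence under a morphism remains a real exceptional sequence in the target. This is exactly where Lemma~\ref{le:real-seq}'s characterisation of real exceptional sequences as initial subsequences of $\{\pm1\}\wr B_n$-translates of $E$, together with the braid-equivariance of isometries coming from Lemma~\ref{le:roots}, does the work; everything else is routine bookkeeping about group homomorphisms. I would therefore organise the write-up around that single computation and dispatch the rest in a sentence or two.
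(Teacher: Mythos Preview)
Your proof is correct and supplies the details the paper omits (the lemma carries a bare \qed). The only substantive step is closure under composition, and your route via Lemma~\ref{le:real-seq} together with the braid-equivariance of isometries coming from Lemma~\ref{le:roots} is precisely what is needed; indeed this argument is essentially the content of Proposition~\ref{pr:morph}, which the paper records immediately afterwards, also without proof.
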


The following gives a more conceptual description of the morphisms.

\begin{prop}\label{pr:morph}
The morphisms $(\Ga',E')\to(\Ga,E)$ between generalised Cartan lattices are precisely those isometries $\p\colon\Ga'\to\Ga$ sending real exceptional sequences in $\Ga'$ to real exceptional sequences in $\Ga$. \qed
\end{prop}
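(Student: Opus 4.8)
The plan is to prove both implications, the ``only if'' direction being essentially a restatement of the definition together with Lemma~\ref{le:real-seq}, and the ``if'' direction requiring the extra observation that the Cartan matrix is recoverable from the image. First suppose $\p\colon(\Ga',E')\to(\Ga,E)$ is a morphism. By definition $\p$ is an isometry and $\p E'$ is a real exceptional sequence in $\Ga$. Given any real exceptional sequence $F'$ in $\Ga'$, Lemma~\ref{le:real-seq} says $F'$ is an initial subsequence of $\s E'$ for some $\s\in\{\pm1\}\wr B_n$. Applying $\p$ and using that $\p$ is an isometry (hence commutes with the braid/sign action on exceptional sequences, since that action is defined purely in terms of the form via $s_{e}$ and the signs), we get that $\p F'$ is an initial subsequence of $\s(\p E')$, which is a real exceptional sequence in $\Ga$ because $\p E'$ is; an initial subsequence of a complete real exceptional sequence is again a real exceptional sequence, so $\p F'$ is real exceptional. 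Here I should check the small point that $\p$ being an isometry preserving $\langle-,-\rangle$ implies $s_{\p(a)}\p=\p s_a$ on $\Ga'$, which is Lemma~\ref{le:roots}, so $\p$ genuinely intertwines the two braid group actions on sequences.

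For the converse, suppose $\p\colon\Ga'\to\Ga$ is an isometry sending real exceptional sequences in $\Ga'$ to real exceptional sequences in $\Ga$. In particular $E'$ itself is a (complete) real exceptional sequence in $\Ga'$ — each $e_i'$ is one of its own generators, hence a real root, and the orthogonality condition $\langle e_i',e_j'\rangle=0$ for $i>j$ is exactly exceptionality — so $\p E'$ is a real exceptional sequence in $\Ga$. But that is precisely the condition in the definition of a morphism $(\Ga',E')\to(\Ga,E)$. Hence $\p$ is a morphism.

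The only genuine subtlety is the first direction: one must be careful that the hypothesis in the definition of ``morphism'' is only that $\p E'$ is real exceptional, yet we want \emph{all} real exceptional sequences to be carried to real exceptional sequences, so the content is really the transitivity statement of Lemma~\ref{le:real-seq} plus equivariance of $\p$ under the wreath-product action. I expect the main (mild) obstacle to be phrasing the equivariance cleanly: one needs that for a root $a\in\Ga'$ the transformation $s_{\p(a)}$ restricted to $\Im\p$ agrees with $\p s_a\p^{-1}$ — which follows from Lemma~\ref{le:roots} since $\p$ preserves the symmetrised form — and that the sign action obviously commutes with any group homomorphism. Granting this, $\p(\s E')=\s(\p E')$ for every $\s\in\{\pm1\}\wr B_n$, and the argument above goes through. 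Everything else is bookkeeping with the definitions of exceptional sequence, initial subsequence, and real root.
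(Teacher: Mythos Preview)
Your proposal is correct and matches the paper's intent: the paper marks this proposition with an immediate \qed, relying precisely on Lemma~\ref{le:real-seq} (every real exceptional sequence is an initial subsequence of some $\sigma E'$), the equivariance $\p(\sigma E')=\sigma(\p E')$ coming from Lemma~\ref{le:roots}, and the observation (stated just before Lemma~\ref{le:equivar}) that the wreath-product action preserves real exceptional sequences. Your write-up is exactly the unpacking one would give of that \qed.
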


We observe that a morphism $\p\colon(\Ga',E')\to(\Ga,E)$ maps $\Phi(\Gamma',E')$ into $\Phi(\Gamma,E)$. It is not \emph{a priori} clear, however, that we have induced morphisms $W(\Ga',E')\to W(\Ga,E)$ and $\NC(\Ga',E')\to\NC(\Ga,E)$. This functoriality will be established in \S\ref{se:subobjects}.

\subsection*{Real roots for generalised Cartan lattices}

We discuss the difference between pseudo-real and real roots.

\begin{lem}
Let $E=(e_1,\ldots,e_n)$ be a complete exceptional sequence in a bilinear lattice $\Ga$. Then $a=\sum_i\a_ie_i$ is a pseudo-real root if and only if $\langle a,a\rangle>0$ and $\a_i\frac{\langle e_i,e_i\rangle}{\langle a,a\rangle}\in\bbZ$ for all $i$.

In particular, if $\langle e_i,e_i\rangle=1$ for all $i$, so $C(\Gamma,E)$ is symmetric, then $a$ is a pseudo-real root if and only if $\langle a,a\rangle=1$.
\end{lem}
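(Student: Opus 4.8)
The plan is to compute $\langle a,a\rangle$ directly in terms of the coordinates $\a_i$ and the diagonal entries $\langle e_i,e_i\rangle$, and then to analyse the integrality condition from the definition of a pseudo-real root by testing against the basis elements $e_j$. Write $a=\sum_i\a_ie_i$. Since $E$ is exceptional, $\langle e_i,e_j\rangle=0$ for $i>j$, and the symmetrised form $(e_i,e_j)=\langle e_i,e_j\rangle+\langle e_j,e_i\rangle$ collapses to the single nonzero term on whichever side the pair is ordered. A short computation gives
\[ \langle a,a\rangle = \sum_i\a_i^2\langle e_i,e_i\rangle + \sum_{i<j}\a_i\a_j\langle e_i,e_j\rangle = \tfrac12\sum_{i,j}\a_i\a_j(e_i,e_j), \]
so in particular $(a,a)=2\langle a,a\rangle$, which will be the key bookkeeping identity.

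Next I would unwind the definition of pseudo-real root. The condition is $\langle a,a\rangle>0$ together with $\frac{\langle a,x\rangle}{\langle a,a\rangle},\frac{\langle x,a\rangle}{\langle a,a\rangle}\in\bbZ$ for all $x\in\Ga$. First I would observe that, since the $e_j$ form a basis, it suffices to impose these integrality conditions for $x=e_j$, $j=1,\ldots,n$ (the general case follows by $\bbZ$-linearity). Now compute, using the exceptional property,
\[ \langle e_j,a\rangle = \sum_{i\le j}\a_i\langle e_j,e_i\rangle \quad\text{with $i=j$ term}\;\a_j\langle e_j,e_j\rangle, \qquad \langle a,e_j\rangle = \sum_{i\ge j}\a_i\langle e_i,e_j\rangle \quad\text{with $i=j$ term}\;\a_j\langle e_j,e_j\rangle. \]
The point is that $\langle e_j,a\rangle+\langle a,e_j\rangle = (a,e_j) = \sum_i\a_i(e_i,e_j)$, and one checks from the displayed formula for $\langle a,a\rangle$ that summing $\a_j(a,e_j)$ over $j$ recovers $2\langle a,a\rangle$; hence the "symmetrised" integrality is automatic modulo the individual ones. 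The substantive claim is then that the $2n$ integrality conditions $\langle e_j,a\rangle,\langle a,e_j\rangle\in\langle a,a\rangle\bbZ$ are together equivalent to the single family $\a_j\langle e_j,e_j\rangle\in\langle a,a\rangle\bbZ$.

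For the reverse implication here I would argue by induction on $j$ descending: knowing $\a_i\langle e_i,e_i\rangle\in\langle a,a\rangle\bbZ$ for all $i$, the off-diagonal terms $\a_i\langle e_j,e_i\rangle$ for $i<j$ need to be controlled --- this is where one uses that $C(\Ga,E)=(\langle e_i,e_i\rangle^{-1}(e_i,e_j))_{i,j}$ is a (symmetrisable) generalised Cartan matrix, so $\langle e_i,e_i\rangle$ divides $(e_i,e_j)$, hence $\langle e_j,e_i\rangle$ is a multiple of... no, rather $\langle e_i,e_i\rangle\mid\langle e_j,e_i\rangle$ need not hold, so instead I would run the computation symmetrically using both $\langle e_j,a\rangle$ and $\langle a,e_j\rangle$ and the adjointness/structure built into $(\Ga,E)$. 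The cleanest route: show that each $\langle e_j,a\rangle$ is congruent mod $\langle a,a\rangle$ to $\a_j\langle e_j,e_j\rangle$ plus a $\bbZ$-combination of the $\a_i\langle e_i,e_i\rangle$ with $i\ne j$ --- this reduces everything to the diagonal statement. I expect the main obstacle to be exactly this bookkeeping: getting the off-diagonal contributions $\a_i\langle e_i,e_j\rangle$ to be integer multiples of $\langle a,a\rangle$ once the diagonal conditions hold, which requires carefully invoking that the entries $\langle e_i,e_i\rangle^{-1}(e_i,e_j)$ are integers.

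Finally, the special case is immediate from the main statement: if $\langle e_i,e_i\rangle=1$ for all $i$ then $C(\Ga,E)$ is visibly symmetric (it equals $((e_i,e_j))_{i,j}$), the condition $\a_i\langle e_i,e_i\rangle/\langle a,a\rangle\in\bbZ$ becomes $\langle a,a\rangle\mid\a_i$ for all $i$, and combining this with $\langle a,a\rangle>0$ and the quadratic formula $\langle a,a\rangle=\sum_i\a_i^2+\sum_{i<j}\a_i\a_j\langle e_i,e_j\rangle$ forces $\langle a,a\rangle\mid\langle a,a\rangle^2\cdot(\text{integer})$ consistently only when $\langle a,a\rangle=1$; concretely, if $\langle a,a\rangle=d$ then $d\mid\a_i$ for all $i$ gives $d^2\mid\langle a,a\rangle=d$, whence $d=1$. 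Conversely $\langle a,a\rangle=1$ trivially satisfies all divisibility conditions, so $a$ is a pseudo-real root.
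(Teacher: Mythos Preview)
Your overall plan---reduce to basis elements, handle the diagonal quantities $\a_j\langle e_j,e_j\rangle$ by induction, and deduce the special case via $d^2\mid d$---is exactly the paper's approach, and your argument for the special case is correct. But there is a genuine gap in the main equivalence: you explicitly write that ``$\langle e_i,e_i\rangle\mid\langle e_j,e_i\rangle$ need not hold'' and then cast about for a workaround via the Cartan matrix. In fact this divisibility \emph{does} hold, and it is the one observation you are missing. By definition an exceptional sequence is a sequence of (pseudo-real) roots, so for each $i$ and every $x\in\Ga$ we have $\langle x,e_i\rangle/\langle e_i,e_i\rangle\in\bbZ$ and $\langle e_i,x\rangle/\langle e_i,e_i\rangle\in\bbZ$. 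This is precisely what the paper invokes (``Since $e_j$ is a root we know that $\langle e_j,e_i\rangle\in\langle e_j,e_j\rangle\bbZ$''), and once you use it both directions become short and clean.

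Concretely: for the forward direction, $\langle a,e_1\rangle=\a_1\langle e_1,e_1\rangle$ starts an ascending induction, and for $i>1$ one writes
\[
\a_i\langle e_i,e_i\rangle=\langle a,e_i\rangle-\sum_{j<i}\a_j\langle e_j,e_i\rangle,
\]
where each summand on the right equals $(\a_j\langle e_j,e_j\rangle)\cdot\big(\langle e_j,e_i\rangle/\langle e_j,e_j\rangle\big)\in\langle a,a\rangle\bbZ$ by induction. For the converse, no induction is needed: each $\a_i\langle e_i,e_r\rangle$ and $\a_i\langle e_r,e_i\rangle$ is $(\a_i\langle e_i,e_i\rangle)$ times an integer, hence in $\langle a,a\rangle\bbZ$, so $\langle a,e_r\rangle,\langle e_r,a\rangle\in\langle a,a\rangle\bbZ$. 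Note also that you should not appeal to $C(\Ga,E)$ being a \emph{generalised} Cartan matrix, since the lemma does not assume the sequence is orthogonal; only the fact that the $e_i$ are roots is used.
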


\begin{proof}
Let $a=\sum_i\a_ie_i$ be a pseudo-real root. We want to show that $\a_i\langle e_i,e_i\rangle\in\langle a,a\rangle\bbZ$ for all $1\leq
  i\leq n$. Clearly
\[ \a_1\langle e_1,e_1\rangle = \langle a,e_1\rangle \in \langle a,a\rangle\bbZ, \]
so the result holds for $i=1$. Now let $i>1$ and assume by induction that the result holds for all $j<i$. Since $e_j$ is a root we know that $\langle e_j,e_i\rangle\in\langle e_j,e_j\rangle\bbZ$, so, by induction, for each $j<i$ we can find an integer $\lambda_j$ such that $\a_j\langle e_j,e_i\rangle=\lambda_j\langle a,a\rangle$. Since $\langle a,e_i\rangle\in\langle a,a\rangle\bbZ$, the same holds for
\[ \a_i\langle e_i,e_i\rangle = \langle a,e_i\rangle-\sum_{j<i}\a_j\langle e_j,e_i\rangle = \langle a,e_i\rangle-\sum_{j<i}\lambda_j\langle a,a\rangle. \]

Conversely, suppose $a=\sum_i\a_ie_i$ satisfies $\langle a,a\rangle>0$ and $\a_i\langle e_i,e_i\rangle\in\langle a,a\rangle\bbZ$ for all $i$. Fix $r\in\{1,\ldots,n\}$. Again, since $e_i$ is a root, we have $\a_i\langle e_i,e_r\rangle,\a_i\langle e_r,e_i\rangle\in\langle
  a,a\rangle\bbZ$. It follows that $\frac{\langle a,e_r\rangle}{\langle a,a\rangle},\frac{\langle e_r,a\rangle}{\langle a,a\rangle}\in\bbZ$, and hence $a$ is a pseudo-real root.

For the second statement, it is clear that $\langle a,a\rangle=1$ implies that $a$ is a pseudo-real root. Suppose therefore that $a=\sum_i\alpha_ie_i$ is a pseudo-real root and that $\langle e_i,e_i\rangle=1$ for all $i$. Then by considering $\langle a,e_i\rangle$ for $i=1,\ldots,n$ in turn, we deduce that $d:=\langle a,a\rangle>0$ divides each $\alpha_i$, so $d^2$ divides $d$, and hence $d=1$.
\end{proof}

Using this, it is easy to see that in general there are roots which are not real.

\begin{exm} 
Consider the generalised Cartan lattice $\Ga=\bbZ^4$ with bilinear form given by the matrix
\[ \begin{pmatrix}1&-2&0&0\\0&1&-1&0\\0&0&1&-2\\0&0&0&1\end{pmatrix}. \]
Thus $\Ga$ is the Grothendieck group of the path algebra of the quiver (see \S\ref{se:hereditary}).
\[ \cdot \mathrel{\vcenter{\offinterlineskip\vskip1.2pt
    \hbox{$\longrightarrow$}\vskip1pt\hbox{$\longrightarrow$}}} \cdot
\longrightarrow \cdot \mathrel{\vcenter{\offinterlineskip\vskip1.2pt
    \hbox{$\longrightarrow$}\vskip1pt\hbox{$\longrightarrow$}}} \cdot \]
We take $E=(e_1,\ldots,e_4)$ to be the standard basis in order. Then the element $a=(1,1,3,1)$ is a pseudo-real root but not a real root.
\end{exm}

Let $(\Ga,E)$ be a generalised Cartan lattice with Cartan matrix $C$. We say that $C$ is \emph{indecomposable} if we cannot permute the rows and columns to obtain a block diagonal matrix. When $C$ is indecomposable we say that $C$ is of
\begin{enumerate}
\item \emph{Dynkin type} if it is positive-definite, which is if and only if there exists $a>0$ such that $Ca>0$.
\item \emph{affine type} if it is positive semi-definite but not positive-definite, which is if and only if there exists $a>0$ such that $Ca=0$.
\item \emph{indefinite type} otherwise, which is if and only if there exists $a>0$ such that $Ca<0$.
\item \emph{hyperbolic type} if it is indefinite, but all its proper principal submatrices\footnote{
A \emph{principal submatrix} is obtained by deleting a set of columns and the matching rows.
}
are of Dynkin or affine type.
\end{enumerate}
See for example \cite[Sections 4 and 5]{Ka1990}. 

\begin{thm}\label{th:root}
Let $(\Ga,E)$ be a generalised Cartan lattice. If $(\Ga,E)$ is of Dynkin, affine or hyperbolic type, or else has rank two, then every pseudo-real root is in fact a real root.
\end{thm}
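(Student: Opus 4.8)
The plan is to prove that in the stated types, every pseudo-real root $a$ is real. Since the real roots and the pseudo-real roots are both symmetric under $s_b$ for $b$ a real root (Lemma~\ref{le:roots}), and since $W$ permutes positive and negative real roots, it suffices to treat positive pseudo-real roots $a>0$. First I would reduce to the indecomposable case: if $C$ is decomposable then $\Ga$ splits as an orthogonal direct sum of sublattices and a pseudo-real root is supported on a single block (because $\langle a,a\rangle>0$ and the blocks are mutually orthogonal), so we may assume $C$ indecomposable of Dynkin, affine or hyperbolic type, or of rank two.

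The Dynkin case is classical: the form is positive definite, the pseudo-real roots are exactly the finitely many roots of the finite root system, and all of them are real (they are $W$-conjugate to the simple roots by the standard theory of finite root systems, cf.\ \cite{Ka1990}). The affine case follows by a well-known argument: a positive pseudo-real root $a$ of positive norm is, after translating by a multiple of the (unique up to sign) radical generator $\delta$, brought into the ``finite part'', where one applies the Dynkin case; here one uses that $\langle a,a\rangle>0$ forces $a$ to have nonzero component transverse to $\rad\Ga$, and one reduces the $\delta$-component by applying simple reflections, each of which changes $a$ by a multiple of a simple root while preserving norm. The rank-two case can be done by hand: writing $a=\alpha e_1+\beta e_2$ with $\alpha,\beta\ge0$, the norm form is a binary quadratic form, and a short computation (using the integrality condition from the preceding lemma) shows that the orbit of $\{e_1,e_2\}$ under $\langle s_{e_1},s_{e_2}\rangle$ already exhausts all positive solutions of $\langle a,a\rangle=\langle e_i,e_i\rangle$ with the required divisibility — this is essentially the statement that for rank-two the real roots and pseudo-real roots coincide, which one checks separately in the finite, affine and indefinite (free product / hyperbolic) subcases.

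The main case, and the main obstacle, is the hyperbolic one. Here I would argue by induction on the rank $n$, using the defining property that every proper principal submatrix is of Dynkin or affine type. Given a positive pseudo-real root $a=\sum_i\alpha_ie_i$, if $\supp(a)\ne\{1,\dots,n\}$ then $a$ lies in the sublattice $\bbZ E_J$ spanned by a proper subset $E_J$ of the $e_i$, which is a generalised Cartan lattice of Dynkin or affine type, so $a$ is a real root there, and by Lemma~\ref{le:roots} (applied to the inclusion $\bbZ E_J\hookrightarrow\Ga$, which preserves the symmetrised form) the reflections $s_{e_j}$, $j\in J$, carry $e_j$ to $a$ inside $\Ga$; hence $a\in\Phi(\Ga,E)$. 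If instead $a$ has full support, I would apply a simple reflection $s_{e_i}$ chosen to decrease the ``height'' $\sum_i\alpha_i$: since $C$ is of indefinite (indeed hyperbolic) type and $a>0$ has full support while $\langle a,a\rangle>0$, one shows some $\langle e_i,a\rangle>0$ or $\langle a,e_i\rangle>0$, so that $s_{e_i}(a)=a-\frac{2(a,e_i)}{(e_i,e_i)}e_i$ is again a positive pseudo-real root (positivity because a positive real root, once it is not $\pm e_i$, stays positive under $s_{e_i}$ — but here $a$ is only pseudo-real, so one must instead argue directly that the $e_i$-coefficient cannot become the only obstruction to positivity, using full support) of strictly smaller height. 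Iterating, we reach either a root of non-full support, handled above, or a contradiction; either way $a$ is real. The delicate point I expect to fight with is precisely controlling positivity under $s_{e_i}$ for a merely pseudo-real root: the usual ``$w$ permutes $\Phi^+\setminus\{$simple$\}$'' lemma is not available a priori, so one needs the combinatorial input that, in hyperbolic (or rank-two) type, a positive vector of full support with positive norm always admits a norm-preserving, height-decreasing simple reflection keeping it positive — this is where the hyperbolicity hypothesis (all proper principal submatrices Dynkin or affine, so the ``imaginary cone'' is well understood) is really used.
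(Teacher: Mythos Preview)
Your proposal has a genuine gap, and you have actually put your finger on it yourself at the end: the ``delicate point'' you flag---controlling the sign of $s_{e_i}(a)$ when $a$ is only pseudo-real---is not a technicality to be wrestled with later, it is the whole content of the theorem. Your opening sentence already assumes it: you write that it suffices to treat positive pseudo-real roots $a>0$, but nothing you have said guarantees that a pseudo-real root is either positive or negative. In general bilinear lattices this is false, and it is exactly what fails outside the stated types.

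The paper's proof is organised around this observation. It first proves that in Dynkin, affine, or hyperbolic type every pseudo-real root is positive or negative by quoting \cite[Lemma~5.10~(b)]{Ka1990}, and gives a short direct argument in rank two (if $x=me_1-ne_2$ with $m,n$ of the same sign, then $d=\langle x,x\rangle$ is a sum of three positive integers yet divides both $\langle x,e_1\rangle=m\langle e_1,e_1\rangle$ and $\langle e_2,x\rangle=n\langle e_2,e_2\rangle$, which is impossible). Once this dichotomy is in hand, a single uniform argument finishes all four cases at once: take $x>0$ of minimal height in its $W$-orbit; since $\langle x,x\rangle>0$ some $(e,x)>0$, so $s_e(x)<x$; by minimality and the dichotomy, $s_e(x)<0$, forcing $x$ to be a positive multiple of $e$, hence $x=e$. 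There is no need for your separate treatments of the Dynkin, affine, and hyperbolic cases, nor for the induction on rank or the support analysis.

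In short: the step you identified as delicate is the step you should prove first, and once you do, everything else collapses into the standard minimal-height argument.
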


\begin{proof}
We first show that every pseudo-real root is either positive or negative. For Dynkin, affine or hyperbolic types this is \cite[Lemma~5.10~(b)]{Ka1990}, so suppose that $\Gamma$ has rank two and write $E=(e,f)$. Set $a:=\langle e,e\rangle$, $b:=\langle f,f\rangle$ and $c:=-\langle e,f\rangle$. Let $x=me-nf$ be a pseudo-real root where $m,n$ are integers having the same sign. Then $d:=\langle x,x\rangle=am^2+bn^2+cmn$ is a sum of three positive integers. On the other hand, $d$ divides both $am=\langle x,e\rangle$ and $bn=\langle f,x\rangle$, yielding a contradiction.

The proof now follows as for \cite[Proposition~5.10~(b)]{Ka1990}. Explicitly, let $x>0$ be a pseudo-real root of minimal height in its $W$ orbit. Since $\langle x,x\rangle>0$ we have $s_e(x)<x$ for some $e\in E$, so by minimality $s_e(x)<0$. Hence $x=me$ for some $m$, and then necessarily $x=e$.
\end{proof}

\subsection*{Generalised Cartan lattices of Dynkin type}

We finish this section by discussing the special case when $(\Gamma,E)$ is of Dynkin type.

Recall that every Coxeter system $(W,E)$ gives rise to a symmetric bilinear form (see Appendix B).

\begin{thm}[\cite{Hum1990}, Theorem 6.4]\label{th:finite}
Let $(W,S)$ be a Coxeter system. Then $W$ is finite if and only the corresponding symmetric bilinear form is positive definite. \qed
\end{thm}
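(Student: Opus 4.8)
The plan is to work with the geometric (Tits) representation $\sigma\colon W\to\mathrm{GL}(V)$ on the space $V=\bigoplus_{s\in S}\bbR\alpha_s$, where $\sigma(s)$ is the $B$-reflection $v\mapsto v-2B(\alpha_s,v)\alpha_s$ and $B$ is the symmetric form with $B(\alpha_s,\alpha_s)=1$ and $B(\alpha_s,\alpha_t)=-\cos(\pi/m_{st})$ for $s\ne t$ (here $m_{st}$ is the order of $st$). I would use the standard properties of $\sigma$ recalled in Appendix~B: $B$ is $W$-invariant, $\sigma$ is faithful, and $\sigma(s)$ is the reflection in the hyperplane $H_s=\{v\in V:B(\alpha_s,v)=0\}$.

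First I would treat ``$W$ finite $\Rightarrow$ $B$ positive definite'', which is pure linear algebra. Since $W$ is finite, averaging any positive definite form over $W$ produces a $W$-invariant positive definite form $B'$ on $V$. Fix $s\in S$. As $\sigma(s)$ is an involution preserving $B'$ whose $(-1)$-eigenspace is exactly $\bbR\alpha_s$ (since $\sigma(s)v=-v$ forces $v\in\bbR\alpha_s$, using $B(\alpha_s,\alpha_s)=1$), its $(+1)$-eigenspace is the $B'$-orthogonal complement of $\alpha_s$; but that eigenspace visibly contains $H_s$, and both have dimension $|S|-1$, so they coincide. Hence $B'(\alpha_s,-)$ and $B(\alpha_s,-)$ have the same kernel, so $B'(\alpha_s,-)=c_sB(\alpha_s,-)$ with $c_s=B'(\alpha_s,\alpha_s)>0$. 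Evaluating on $\alpha_t$ and using the symmetry of $B$ and $B'$ gives $(c_s-c_t)B(\alpha_s,\alpha_t)=0$, so $c_s=c_t$ whenever $m_{st}\ge3$, i.e.\ $c$ is constant on connected components of the Coxeter graph. Since $(W,S)$ and $B$ decompose as direct sums over these components (the off-component entries of $B$ vanish as $-\cos(\pi/2)=0$), and $W$ is finite iff each component's Coxeter group is, I may assume the graph connected; then all the $c_s$ equal a single $c>0$, so $B'=cB$ on the spanning set $\{\alpha_s\}$ and hence on all of $V$, and $B=c^{-1}B'$ is positive definite.

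For the converse ``$B$ positive definite $\Rightarrow$ $W$ finite'' I would invoke the structure theory of Coxeter groups. Viewing $V$ as Euclidean via $B$, the group $\sigma(W)$ acts by isometries, hence lies in the compact group $\mathrm{O}(V,B)$; it remains to see it is finite. The cleanest route is via the Tits cone: the chambers $w\overline{C}$, where $\overline{C}=\{v:B(v,\alpha_s)\ge0\text{ for all }s\}$ is the closed fundamental chamber (a full-dimensional simplicial cone), have pairwise disjoint interiors, and when $B$ is positive definite their union is all of $V$. Intersecting with the unit sphere yields a covering of the sphere by congruent spherical simplices of equal positive solid angle with disjoint interiors, so there are only finitely many chambers; as $\sigma$ is faithful and $W$ acts simply transitively on chambers, $W$ is finite. (Equivalently, one shows that the set of roots $\{w(\alpha_s)\}$, which lies on the unit sphere of $(V,B)$, is discrete, hence finite, and then embeds $W$ into its symmetric group via faithfulness.)

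The main obstacle is exactly this last direction: the fact that the Tits cone exhausts $V$ in the positive-definite case---equivalently, the finiteness of the root system---is not formal, and rests on the exchange condition and the convexity of the Tits cone from the general theory of Coxeter groups. I would isolate it as a lemma, proving it through the chamber geometry or else citing it from \cite{Hum1990}; granting it, everything else is routine.
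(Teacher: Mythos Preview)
The paper does not provide its own proof of this statement: the theorem is stated with a citation to \cite[Theorem~6.4]{Hum1990} and closed with \qed, so there is nothing to compare your argument against beyond Humphreys' original.

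Your sketch is the standard proof and is correct. The forward direction (averaging an arbitrary positive definite form over the finite group and showing the result is a positive multiple of $B$ on each connected component) is complete as written. For the converse you have correctly isolated the one genuinely nontrivial ingredient: that in the positive definite case the Tits cone is all of $V$ (equivalently, that $\sigma(W)$ is discrete in the compact group $\mathrm{O}(V,B)$). This is precisely \cite[Theorem~6.4 and its proof]{Hum1990}, resting on the chamber geometry developed in the preceding sections there; your plan to either cite it or reprove it via the chamber decomposition of the unit sphere is exactly right. One small remark: the alternative route you mention at the end---showing the root system is discrete on the unit sphere---also requires an argument (typically via the strong exchange condition or the topology of the Tits cone), so it is not really a shortcut around the same obstacle.
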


In the finite case we also have the following fundamental result giving a geometric interpretation of the absolute length.

\begin{lem}[Carter's Lemma \cite{Ca1972}]\label{le:Carter}
Let $(W,S)$ be a Coxeter system, and suppose that $W$ is finite. Then $\ell(w)=\dim\Im(\id-w)$. Moreover, $w=s_{a_1}\cdots s_{a_r}$ is reduced if and only if the roots $a_i$ are linearly independent.
\end{lem}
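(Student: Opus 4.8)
The plan is to work inside the (faithful) geometric representation of $(W,S)$: by Theorem~\ref{th:finite} the finite group $W$ acts on a finite-dimensional real vector space $V$ preserving a positive-definite symmetric form $(-,-)$, each reflection $t\in T$ acting as the orthogonal reflection $s_a$ in a root $a$, so that $\id-s_a$ has rank one with image $\bbR a$ and kernel $a^\perp$. As $W$ is finite every $w$ is orthogonal, so $\id-w$ and $\id-w^{-1}$ are adjoint and $V=\Im(\id-w)\oplus\Ker(\id-w)$ orthogonally; write $d(w):=\dim\Im(\id-w)$. For the inequality $d(w)\le\ell(w)$ I would use the telescoping identity
\[ \id-s_{a_1}s_{a_2}\cdots s_{a_r}=\sum_{k=1}^r(s_{a_1}\cdots s_{a_{k-1}})(\id-s_{a_k}), \]
whose right-hand side maps into the span of the $r$ vectors $s_{a_1}\cdots s_{a_{k-1}}(a_k)$; applied to a factorisation of $w$ into $r=\ell(w)$ reflections this gives $d(w)\le\ell(w)$. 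Since moreover $s_{a_1}\cdots s_{a_{k-1}}(a_k)\in\bbR a_1+\cdots+\bbR a_k$, a \emph{reduced} factorisation $w=s_{a_1}\cdots s_{a_r}$ forces $\ell(w)=d(w)\le\dim(\bbR a_1+\cdots+\bbR a_r)\le\ell(w)$, so the $a_k$ are linearly independent; this is one direction of the ``moreover''.

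The harder inequality $\ell(w)\le d(w)$ I would prove by induction on $\dim V$. If $\Ker(\id-w)\neq0$, pick $0\neq v$ with $wv=v$; the point stabiliser $W_v$ is again a finite reflection group acting on $v^\perp$, generated by the reflections of $W$ it contains (Steinberg's theorem, \cite{Hum1990}), it contains $w$, and $\Im(\id-w)\subseteq v^\perp$, so the inductive hypothesis in $v^\perp$ gives $\ell_{W_v}(w)=d(w)$, hence $\ell_W(w)\le d(w)$. If $\Ker(\id-w)=0$, so that $\id-w$ is invertible, the crucial observation is that for \emph{any} root $a$ the nonzero vector $x:=(\id-w)^{-1}(a)$ is fixed by $s_aw$: from $(\id-w)^{-1}+(\id-w^{-1})^{-1}=\id$ and the fact that $(\id-w^{-1})^{-1}$ is the adjoint of $(\id-w)^{-1}$, the symmetry of the form gives $\bigl((\id-w)^{-1}a,a\bigr)=\tfrac12(a,a)$, whence $wx=x-a=s_a(x)$ and so $s_aw(x)=x$. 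Thus $\Ker(\id-s_aw)\neq0$, so $d(s_aw)\le\dim V-1$, and applying the first part of the argument to $s_aw$ gives $\ell(w)\le\ell(s_aw)+1\le d(s_aw)+1\le\dim V=d(w)$. Combined with $d(w)\le\ell(w)$ this proves $\ell(w)=d(w)$. The hard part here is precisely this fixed-point-free step: the argument stands or falls on producing one reflection that strictly shrinks the moved space, for which the vector $(\id-w)^{-1}(a)$ is the key device, the other essential ingredient being Steinberg's theorem on point stabilisers.

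It remains to show that if $a_1,\dots,a_r$ are linearly independent then $w:=s_{a_1}\cdots s_{a_r}$ is reduced. Here I would put $U:=\bbR a_1+\cdots+\bbR a_r$; then $w$ preserves $U$ and $U^\perp$, is the identity on $U^\perp$, and $\Im(\id-w)\subseteq U$, so by the equality just proved $\ell(w)=d(w)=\dim U-\dim(\Ker(\id-w)\cap U)$ and it suffices to check that $w$ has no nonzero fixed vector in $U$. This is a short induction on $r$: from $wx=x$ with $x\in U$ one gets $w'x-x=s_{a_1}(x)-x\in\bbR a_1$ for $w':=s_{a_2}\cdots s_{a_r}$; writing $U=U'\oplus((U')^\perp\cap U)$ with $U':=\bbR a_2+\cdots+\bbR a_r$ shows $w'x-x\in U'$, hence it lies in $U'\cap\bbR a_1=0$, so $w'x=x$; the inductive hypothesis kills the $U'$-component of $x$, and then $(x,a_1)=0$ forces $x\perp U$, i.e.\ $x=0$. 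Hence $\ell(w)=\dim U=r$, as required.
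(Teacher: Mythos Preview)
Your proof is correct and shares the paper's overall architecture: both obtain $d(w)\le\ell(w)$ from a telescoping formula for $\id-w$, and both obtain $\ell(w)\le d(w)$ by producing a reflection $s_a$ such that $s_aw$ has a strictly larger fixed space. The details differ in two places. First, the paper does not case-split on $\Ker(\id-w)$: it quotes Humphreys to obtain a root $a\in\Im(\id-w)$ whenever $w\neq\id$, picks any $x$ with $a=x-w(x)$, and reads off $2(a,x)=(a,a)$ directly from $(x,x)=(w(x),w(x))$; since $a\perp\mathrm{Fix}(w)$, the element $s_aw$ then fixes $\mathrm{Fix}(w)$ together with the new vector $x$. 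Your operator identity $(\id-w)^{-1}+(\id-w^{-1})^{-1}=\id$ is this same computation repackaged, and you trade the Humphreys citation for Steinberg's theorem on point stabilisers; the paper's version is shorter and uniform, while yours makes explicit that in the fixed-point-free case \emph{any} root does the job. Second, for ``linearly independent $\Rightarrow$ reduced'' the paper reuses its formula $\id-w=\sum_i(\lambda_i,-)a_i$ with $\lambda_i=2s_{a_r}\cdots s_{a_{i+1}}(a_i)/(a_i,a_i)$: the $\lambda_i$ are orthogonal transforms of the $a_i$ and hence independent, so dual vectors $x_j$ with $(\lambda_i,x_j)=\delta_{ij}$ give $(\id-w)(x_j)=a_j$ and $\Im(\id-w)=U$ directly. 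Your inductive proof that $w|_U$ has no nonzero fixed vector is a legitimate alternative. One small ordering issue: you invoke the equality $\ell(w)=d(w)$ when deducing ``reduced $\Rightarrow$ independent'' before establishing the harder inequality; in a final write-up that deduction should come after both inequalities are in hand.
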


\begin{proof}
Since $W$ is finite, the symmetric bilinear form is positive definite.

By induction on $r$ we see that, if $w=s_{a_1}\cdots s_{a_r}$ as a product of reflections, then
\[ \id-w = \sum_i(\lambda_i,-)a_i \quad\textrm{where } \lambda_i := 2s_{a_r}\cdots s_{a_{i+1}}(a_i)/(a_i,a_i). \]
In particular, $\Im(\id-w)\subseteq\mathrm{Span}(a_i)$, and so $\dim\Im(\id-w)\le\ell(w)$.

Conversely, observe that $\Im(\id-w)$ is the orthogonal complement to $\mathrm{Fix}(w)=\mathrm{Fix}(w^{-1})$. Moreover, by \cite[Theorem 1.1(d)]{Hum1990} we know that $w$ can be written as a product of reflections $s_a$ for roots $a\in\Im(\id-w)$. In particular, if $w\neq\id$, then there exists some root $a\in\Im(\id-w)$. Write $a=x-w(x)$. Then $(x,x)=(w(x),w(x))$ implies $2(a,x)=(a,a)$, and hence $s_a(x)=x-a=w(x)$. Thus $s_aw(x)$ fixes everything in $\mathrm{Fix}(w)$ as well as $x$. By induction on dimension we deduce that $w$ can be written as a product of at most $\dim\Im(\id-w)$ reflections, so $\ell(w)\le\dim\Im(\id-w)$.

This proves that $\ell(w)=\dim\Im(\id-w)$.

Next suppose that $w=s_{a_1}\cdots s_{a_r}$ with the $a_i$ linearly independent. Then so too are the $\lambda_i$ above, and hence we can find elements $x_i\in V$ such that $(\lambda_i,x_j)=\delta_{ij}$. It follows that $(\id-w)(x_i)=a_i$, and so $\dim\Im(\id-w)=r$. Thus $\ell(w)=r$ and the expression is reduced.

On the other hand, if the expression $w=s_{a_1}\cdots s_{a_r}$ is reduced, then $r=\dim\Im(\id-w)\le\dim\mathrm{Span}(a_i)$, and so the $a_i$ must be linearly independent.
\end{proof}

Finally, we have another characterisation of Dynkin type. Again, this is true for all Coxeter groups, but we offer a simple proof for Weyl groups exhibiting the usefulness of generalised Cartan lattices (cf.\ \cite{Ho1982}).

\begin{thm}\label{th:Dynkin-type}
Let $(\Gamma,E)$ be a generalised Cartan lattice, with Weyl group $W$ and Coxeter element $c$. Then $(\Gamma,E)$ is of Dynkin type if and only if $T$ is finite, if and only if $\NC(\Gamma,E)$ is finite, if and only if $c$ has finite order.
\end{thm}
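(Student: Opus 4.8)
The plan is to prove the chain of equivalences by a cycle of implications, using the structure theory of Kac--Moody root systems (Theorem~\ref{th:root} and the facts about real roots being positive or negative quoted from \cite{Ka1990}) together with the combinatorics of $\NC(\Ga,E)$ already developed. First I would observe the easy implications: Dynkin type $\Rightarrow$ $W$ finite (this is Theorem~\ref{th:finite}, since the Cartan matrix being positive-definite forces the associated symmetric bilinear form to be positive-definite), $W$ finite $\Rightarrow$ $T$ finite (reflections are a subset of $W$), $T$ finite $\Rightarrow$ $\NC(\Ga,E)$ finite (every non-crossing partition is a product of at most $n$ reflections, so $|\NC(\Ga,E)|\le|T|^n$), and $T$ finite $\Rightarrow$ $c$ has finite order (the conjugates $c^k s_{e_1} c^{-k}$ all lie in $T$, so some power of $c$ centralises each $s_{e_i}$, hence lies in the centre and is therefore... actually better: the cyclic group $\langle c\rangle$ acts on the finite set $T$ by conjugation, so a power of $c$ centralises all of $T$, but $T$ generates $W$ and $W$ has trivial centre when it is infinite irreducible --- cleaner to argue directly that $\NC$ finite $\Rightarrow c$ finite order, below).

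The real content is to close the cycle by showing that if $(\Ga,E)$ is \emph{not} of Dynkin type then $T$ is infinite, $\NC(\Ga,E)$ is infinite, and $c$ has infinite order. I would first reduce to the indecomposable case: if $C(\Ga,E)$ decomposes as a block matrix, then $\Ga$ splits as an orthogonal direct sum of generalised Cartan lattices, $W$, $T$, $\NC$ and $c$ all factor accordingly, and $(\Ga,E)$ is of Dynkin type iff each block is; so it suffices to treat indecomposable $C$ that is of affine or indefinite type. The key geometric input is that an indecomposable non-Dynkin $C$ admits $a>0$ with $Ca\le 0$, equivalently a vector $\d$ in the radical (affine case) or a vector $\d$ with $\langle \d,\d\rangle\le 0$ and $\d>0$; in terms of the symmetrised form, $(\d,\d)\le 0$. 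For such a $\d$ the orbit $W\cdot\d$ is infinite: each $s_{e_i}$ moves $\d$ and, because $(\d,\d)\le 0$, there is no positivity obstruction forcing the orbit to be finite --- concretely one shows $s_{e_i}(\d)-\d$ is a nonzero multiple of $e_i$ and iterates along a ``non-finite'' direction. From an infinite $W$-orbit on $\Ga$ one gets $W$ infinite, and since for an infinite irreducible Coxeter group the set of reflections $T$ is infinite (a standard fact: in an infinite irreducible Coxeter group there are infinitely many reflections), $T$ is infinite.

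For $\NC(\Ga,E)$ infinite and $c=\cox(\Ga)$ of infinite order I would argue as follows. If $c$ had finite order $m$, then $c^{m}=\id$, and I claim $\NC(\Ga,E)$ would be finite: every $w\le c$ can be written $w=t_1\cdots t_r$ with $t_1\cdots t_n=c$ and, by Theorem~\ref{th:IS2010}, the sequence $(t_1,\dots,t_n)$ lies in a single $B_n$-orbit, which however does not immediately bound $|\NC|$ --- so instead I use the cleaner route: $c$ finite order $\Rightarrow$ $\langle c\rangle$ finite $\Rightarrow$ (by a theorem, e.g.\ the one underlying \cite[\S5]{Ka1990}) the Cartan matrix is of Dynkin type, since for affine or indefinite indecomposable type the Coxeter element acts with an eigenvalue that is not a root of unity (in the affine case $c$ acts unipotently but non-trivially on the radical quotient complement, so has infinite order; in the indefinite case the spectral radius of $-C^{-1}C^t$ exceeds $1$). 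Thus $c$ has finite order iff Dynkin type. Finally, for $\NC$: in the Dynkin case $W$ is finite so $\NC$ is finite; conversely, if $(\Ga,E)$ is not Dynkin then $c$ has infinite order, and since $\{c^k : k\ge 0\}$ need not lie in $\NC$, I instead exhibit infinitely many distinct non-crossing partitions directly --- for instance the elements $s_{f}$ as $f$ ranges over the real roots appearing as first terms of complete real exceptional sequences (Proposition~\ref{pr:real-seq}): the map $F\mapsto s_F$ surjects onto $\NC(\Ga,E)$ with $s_F=s_{F'}$ iff $\bbZ F=\bbZ F'$, so $|\NC(\Ga,E)|\ge$ the number of rank-one sublattices $\bbZ f$ spanned by a real root appearing in a complete real exceptional sequence, and in non-Dynkin type there are infinitely many real roots (the infinite $W$-orbit produces them), with only finitely many lying in any fixed finite-rank sublattice, forcing infinitely many such $\bbZ f$.

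\textbf{Main obstacle.} The crux is the non-Dynkin direction: producing an \emph{infinite} $W$-orbit and, relatedly, showing $c$ has infinite order outside Dynkin type. For this I rely essentially on the Kac--Moody dichotomy --- that a non-Dynkin indecomposable symmetrisable generalised Cartan matrix has a positive vector $\d$ with $C\d\le 0$, hence $(\d,\d)\le 0$ --- and on the fact that a reflection group preserving a form and containing a vector of non-positive norm in the ``positive cone'' cannot be finite (else it would be a finite group preserving a positive-definite form on the span, contradicting $(\d,\d)\le 0$ together with $\d$ being a genuine combination of the $e_i$ with the $e_i$ having positive norm). Once $W$ is infinite and irreducible, $T$ infinite and $c$ of infinite order follow from standard Coxeter-group facts, and $\NC$ infinite follows from Proposition~\ref{pr:real-seq} as above.
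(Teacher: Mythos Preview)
Your overall architecture is sound, but the route diverges from the paper's and has a genuine gap in the $\NC$-infinite step. You argue by contrapositive (``not Dynkin $\Rightarrow$ everything infinite''), leaning on spectral properties of the Coxeter transformation: unipotence in affine type, spectral radius $>1$ in indefinite type. These facts are true but are external results (A'Campo, Howlett, or the Kac--Moody structure theory) not developed in the paper. The paper instead closes the cycle \emph{forward}: it proves directly that $c$ of finite order $h$ forces $T$ finite. The idea is to define the roots $p_i=s_n\cdots s_{i+1}(e_i)$ and $q_i=-c(p_i)$, observe that these are exactly the positive roots sent negative by $c^{\pm1}$, and then argue that if $T$ were infinite there would exist a positive root $a$ with $c^r(a)>0$ for all $r$; summing over a period gives a $c$-invariant $\delta>0$, which must lie in $\rad\Ga$, and then a short computation with $\langle\delta,p_i\rangle<0$ contradicts finiteness of the order of $c$. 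This is entirely internal to the bilinear-lattice formalism and avoids any spectral input.

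The gap in your argument is the claim that $\NC(\Ga,E)$ is infinite in non-Dynkin type. You propose to exhibit infinitely many $s_f$ with $f$ a first term of a complete real exceptional sequence, then say ``in non-Dynkin type there are infinitely many real roots.'' But infinitely many real roots does \emph{not} give infinitely many real roots that begin a real exceptional sequence---those are exactly the real Schur roots, and in general they form a proper subset (cf.\ Corollary~\ref{co:braid2} and the example following it). The fix is what the paper does for the implication $\NC$ finite $\Rightarrow$ $c$ finite order: use the specific roots $c^r(e_i)$. Since conjugation by $c$ is a poset automorphism of $\NC(\Ga,E)$ and each $s_{e_i}$ is a non-crossing partition, every $s_{c^r(e_i)}=c^rs_{e_i}c^{-r}$ lies in $\NC(\Ga,E)$; if $\NC$ is finite these roots repeat, so $c$ has finite order on each $e_i$ and hence on $\Ga$. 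This is both simpler and avoids the Schur-root issue entirely.
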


\begin{proof}
We know from Theorem~\ref{th:finite} that $(\Gamma,E)$ is of Dynkin type if and only if $W$ is finite. Also, it is clear that $W$ finite implies $T$ is finite, which in turn implies that $\NC(W,c)$ is finite. Next, if $\NC(W,c)$ is finite, then since every reflection $s_a$ for $a\in\{c^r(e_i)\}$ is a non-crossing partition, $c$ must have finite order on each $e_i$, so $c$ has finite order in $W$. Finally, assume that $c$ has finite order $h$. Write $c=s_1s_2\cdots s_n$, and set
\[ p_i := s_n\cdots s_{i+1}(e_i)  \quad\textrm{and}\quad q_i := s_1\cdots s_{i-1}(e_i) = -c(p_i). \]
We note that if $a\in\Phi_+$, then $c(a)<0$ if and only if $a=p_i$ for some $i$, and $c^{-1}(a)<0$ if and only if $a=q_i$ for some $i$.

If $T$ is infinite, then there exists some $a\in\Phi^+$ not of the form $c^r(p_i)$ or $c^r(q_i)$. It follows that $\delta:=a+c(a)+\cdots+c^{h-1}(a)>0$ is $c$-invariant. Then necessarily $s_i(\delta)=\delta$ for all $i$, so $\delta\in\rad\Gamma$. Now, using Proposition~\ref{pr:cox} we get
\[ \langle\delta,c(x)\rangle = -\langle x,\delta\rangle = \langle\delta,x\rangle. \]
Also, by induction we have $r_i\cdots r_1(\delta) = \sum_{j>i}\delta_je_j$, and hence
\[ \langle\delta,p_i\rangle<0 \quad\textrm{and}\quad \langle\delta,q_i\rangle>0. \]
It follows that $c^r(p_i)>0$ for all $r\leq0$, so that $c$ has infinite order, a contradiction. Thus $T$ must be finite, and hence $W$ is finite (as in \cite[Exercise 5.6 (2)]{Hum1990}).
\end{proof}

\section{Grothendieck groups of hereditary algebras}\label{se:hereditary}

The Grothendieck group of a finite dimensional hereditary algebra is an example of a generalised Cartan lattice, and in fact each generalised Cartan lattice is of this form (Lemma~\ref{le:hereditary}). In this section we concentrate on exceptional sequences of modules over hereditary algebras and discuss the braid group action.

\subsection*{Hereditary algebras}

Let $k$ be a field and $A$ a finite dimensional $k$-algebra. We denote by $\mod A$ the category of finite dimensional $A$-modules and by $\proj A$ the full subcategory consisting of projective $A$-modules.

The \emph{Grothendieck group} $K_0(A)$ is by definition the Grothendieck group of the exact category $\proj A$ with the bilinear form given by
\[ \langle [X],[Y]\rangle := \dim_k\Hom_A(X,Y). \]
This group is free of finite rank, with basis the classes of the indecomposable projective $A$-modules.

Now suppose that $A$ has finite global dimension. Let $K_0(\mod A)$ denote the Grothendieck group of the abelian category $\mod A$ and observe that the inclusion $\proj A\to \mod A$ induces an isomorphism $K_0(A)\xto{\sim}K_0(\mod A)$ which identifies the bilinear form on $K_0(A)$ with the \emph{Euler form} on $K_0(\mod A)$, given as
\[ \langle [X],[Y]\rangle := \sum_{i\ge 0}(-1)^i\dim_k\Ext^i_A(X,Y). \] 
We view this isometry as an identification.

Finally, an algebra $A$ is called \emph{hereditary} if each $A$-module has a projective resolution of length at most one. This is equivalent to saying that every submodule of a projective module is again projective.

The following lemma shows that every generalised Cartan lattice can be realised as the Grothendieck group of a finite dimensional hereditary algebra.

\begin{lem}\label{le:hereditary}
The assignment $A\mapsto K_0(A)$ has the following properties:
\begin{enumerate}
\item Let $A$ be a finite dimensional hereditary algebra. Then we can order a complete set of representatives for the simple $A$-modules as $S_1,\ldots,S_n$ such that, setting $e_i:=[S_i]$ and $E:=(e_1,\ldots,e_n)$, then $(K_0(A),E)$ is a generalised Cartan lattice.
\item Let $(\Ga,E)$ be a generalised Cartan lattice, where $E=(e_1,\ldots,e_n)$. Given a finite field $k$ there exists a finite dimensional hereditary $k$-algebra $A$ and an isometry $\Ga\xto{\sim}K_0(A)$ sending each $e_i$ to the class of a simple $A$-module.
\end{enumerate}
\end{lem}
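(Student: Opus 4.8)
The plan is to prove the two statements separately, with (1) being mostly a matter of unwinding definitions and (2) being the substantial content.

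For part (1), let $A$ be a finite dimensional hereditary $k$-algebra with simple modules $S_1,\dots,S_n$. Since $A$ is hereditary, $\Ext^i_A(X,Y)=0$ for $i\ge 2$, so the Euler form is $\langle[X],[Y]\rangle=\dim_k\Hom_A(X,Y)-\dim_k\Ext^1_A(X,Y)$. First I would recall the standard fact that the simples can be ordered so that $\Hom_A(S_i,S_j)=0$ and $\Ext^1_A(S_i,S_j)=0$ for $i>j$; this is just a choice of a linear extension of the partial order on the quiver of $A$ (the directedness coming from $A$ being finite dimensional and of finite global dimension, so its quiver is acyclic). With this ordering, $\langle e_i,e_j\rangle=0$ for $i>j$, so $E=(e_1,\dots,e_n)$ is an exceptional sequence provided each $e_i$ is a (pseudo-real) root; this holds because $\langle e_i,e_i\rangle=\dim_k\End_A(S_i)>0$ and $\langle e_i,-\rangle$, $\langle -,e_i\rangle$ take values in $\dim_k\End_A(S_i)\cdot\bbZ$ on classes of modules, since $\Hom_A(S_i,M)$ and $\Hom_A(M,S_i)$ are $\End_A(S_i)$-modules (and likewise for $\Ext^1$). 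It is complete since the classes of the simples form a basis of $K_0(\mod A)\cong K_0(A)$. Finally orthogonality, $\langle e_i,e_j\rangle=-\dim_k\Ext^1_A(S_i,S_j)\le 0$ for $i\ne j$ (using $\Hom_A(S_i,S_j)=0$ for $i\ne j$ among non-isomorphic simples), gives that $E$ is orthogonal, so $(K_0(A),E)$ is a generalised Cartan lattice.

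For part (2), I start from the symmetrisable generalised Cartan matrix $C=C(\Ga,E)=D^{-1}B$ with $D=\diag(d_i)$, $d_i=\langle e_i,e_i\rangle$, and off-diagonal entries $b_{ij}=(e_i,e_j)\le 0$. The idea is to build a hereditary algebra as the tensor (path) algebra of a \emph{$k$-species} (modulated quiver): over a finite field $k$, for each $i$ pick a field extension $F_i/k$ with $[F_i:k]=d_i$, and for each pair $i<j$ with $b_{ij}<0$ pick an $F_j$-$F_i$-bimodule $M_{ij}$, on which $k$ acts centrally, with $\dim_k M_{ij}=-b_{ij}$; such bimodules exist because over a finite field all the $F_i$ are cyclic over $k$ and one can realise any prescribed $k$-dimension as a bimodule (e.g.\ a sum of copies of $F_{\ell}$ for $F_\ell$ containing both $F_i$ and $F_j$, or directly $F_j\otimes_k$ something). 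Let $A$ be the tensor algebra of this species over the semisimple ring $\prod_i F_i$; since the underlying quiver (with a vertex for each $i$ and arrows $i\to j$ only when $i<j$, $b_{ij}<0$) is acyclic, $A$ is finite dimensional, and being a tensor algebra of a bimodule over a semisimple ring it is hereditary. Its simple modules $S_i$ satisfy $\End_A(S_i)=F_i$, $\Ext^1_A(S_i,S_j)=M_{ij}$ for $i<j$ and $\Ext^1_A(S_i,S_j)=0$ for $i\ge j$, $\Hom_A(S_i,S_j)=0$ for $i\ne j$. Hence the Euler form of $K_0(A)$ on the basis $[S_i]$ is exactly $\langle e_i,e_i\rangle=d_i$, $\langle e_i,e_j\rangle=b_{ij}$ for $i<j$, $\langle e_i,e_j\rangle=0$ for $i>j$, which by Lemma~\ref{le:gcm} (or direct comparison) is isometric via $e_i\mapsto[S_i]$ to $(\Ga,E)$.

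The main obstacle is the construction of the bimodules $M_{ij}$ with the prescribed $k$-dimensions so that the tensor algebra is genuinely hereditary and finite dimensional and has the right Euler form; this is where finiteness of $k$ is used, guaranteeing the needed separable extensions and central bimodule structures exist (over an arbitrary field one would need to worry about whether such a species realising a given symmetrisable Cartan matrix exists, which is exactly the classical species/modulation question of Dlab--Ringel). Everything else — that tensor algebras of species over semisimple rings are hereditary, the identification of $\Hom$ and $\Ext^1$ between simples with the modulation data, and the comparison of forms — is standard and I would only cite or sketch it rather than reprove it.
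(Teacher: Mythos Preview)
Your plan is correct and matches the paper's approach in both parts: ordering the simples via the (acyclic) Ext-quiver and checking the pseudo-real root condition through the $\End_A(S_i)$-module structure on $\Hom$ and $\Ext^1$ for (1), and realising the lattice as the tensor algebra of a $k$-species with vertex fields of degree $d_i$ and bimodules of $k$-dimension $-b_{ij}$ for (2) --- the paper simply takes the bimodule $k_{ij}$ to be the field extension of $k$ of that degree, which is an $k_j$-$k_i$-bimodule since $d_i,d_j\mid -b_{ij}$. One small slip: ``finite global dimension'' alone does not force the quiver to be acyclic; it is hereditariness (every nonzero map between indecomposable projectives is mono, hence no oriented cycles among the $P_i$) that gives the required ordering, and this is precisely how the paper argues it.
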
 

\begin{proof}
(1) Note first that $[S_1],\ldots,[S_n]$ form a basis for $K_0(A)$. Also, each $\End_A(S_i)$ is a division algebra and each $\Ext^1_A(S_i,S_j)$ is an $\End_A(S_j)$-$\End_A(S_i)$-bimodule. In particular, $\langle [S_i],[S_i]\rangle$ divides both $\langle [S_i],[S_j]\rangle$ and $\langle[S_j],[S_i]\rangle$ for all $j$. Thus each $[S_i]$ is a pseudo-real root.

Now, each non-zero morphism between indecomposable projective $A$-modules is a monomorphism since $A$ is hereditary. Thus each indecomposable projective is exceptional (since $A$ is finite dimensional) and we can order a representative set of indecomposable projective modules as $P_1,\ldots,P_n$ such that $\Hom_A(P_i,P_j)=0$ for $i<j$. Let $S_i=P_i/\rad P_i$ be the simple top of the projective $P_i$. Then the long exact sequence for $\Hom_A(-,S_j)$ yields
\[ 0 \to \Hom_A(S_i,S_j) \to \Hom_A(P_i,S_j) \to \Hom_A(\rad P_i,S_j) \to \Ext^1_A(S_i,S_j) \to 0. \]
Since $\Hom_A(\rad P_i,S_j)=0$ for all $j\le i$ it follows that each $S_i$ is exceptional, and that $([S_1],\ldots,[S_n])$ is a complete, orthogonal exceptional sequence.

(2) We follow \cite[Section~7]{Ga1973} and \cite[Section~5]{Hu}. Let $k_i/k$ be a field extension of degree $\langle e_i,e_i\rangle$ and $k_{ij}/k$ a
field extension of degree $-\langle e_i,e_j\rangle$ for $i<j$. Set $k_{ij}=0$ for $i\ge j$. We regard each $k_{ij}$ as $k_j$-$k_i$-bimodule. Then $A_0=\prod_i k_i$ is a semisimple $k$-algebra and $A_1=\bigoplus_{i,j}k_{ij}$ an $A_0$-bimodule, so the tensor algebra
\[ A := \bigoplus_{p\ge 0}A_p \quad\textrm{where}\quad A_p:=A_1\otimes_{A_0}\cdots\otimes_{A_0}A_1 \quad\textrm{($p$
  times)} \]
is a finite dimensional hereditary $k$-algebra. Denote by $\e_i$ the idempotent of $A$ given by the identity of $k_i$. Then the $P_i:=A\e_i$ give a representative set of indecomposable projective $A$-modules, with simple tops $S_i=k_i$, and $\e_j(\rad P_i/\rad^2
P_i)\cong k_{ij}$. Thus
\[ \End_A(S_i)\cong k_i \quad\textrm{and}\quad \Ext^1_A(S_i,S_j) \cong \Hom_A(\rad P_i,S_j) \cong \Hom_{k_j}(k_{ij},k_j), \]
see for example \cite[Proposition~2.4.3]{Be1991}, so that $\langle[S_i],[S_j]\rangle=\langle e_i,e_j\rangle$ as required.
\end{proof}

Given a finite dimensional hereditary algebra $A$, we will abuse notation and just write $K_0(A)$ for the corresponding generalised Cartan lattice with the natural choice of a complete orthogonal exceptional sequence given by the simple $A$-modules.

\subsection*{Exceptional sequences}

Let $A$ be a finite dimensional hereditary algebra. A module $X\in\mod A$ is called \emph{exceptional} if it is indecomposable and $\Ext_A^1(X,X)=0$. A sequence $(X_1,\ldots,X_r)$ of finite dimensional $A$-modules is called \emph{exceptional} if each $X_i$ is exceptional and $\Hom_A(X_i,X_j)=0=\Ext_A^1(X_i,X_j)$ for all $i>j$. Such a sequence is \emph{complete} if $r$ equals the rank of $K_0(A)$, and is \emph{orthogonal} if $\Hom_A(X_i,X_j)=0$ for all $i\neq j$. For example, any exceptional sequence consisting of simples is necessarily orthogonal.

We begin by recalling the following useful lemmas of Happel and Ringel, and Kerner.

\begin{lem}[{\cite[Lemma~4.1]{HR1982}}]\label{le:mono-or-epi}
Let $X$ and $Y$ be indecomposable modules. If $\Ext^1_A(X,Y)=0$, then any homomorphism $Y\to X$ is either mono or epi. In particular, if $X$ is exceptional, then $\End_A(X)$ is a division algebra. \qed
\end{lem}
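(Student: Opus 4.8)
The plan is to argue by contradiction. Suppose some nonzero $f\colon Y\to X$ is neither a monomorphism nor an epimorphism. Then $U:=\Ker f$ and $V:=\Coker f$ are both nonzero, and with $I:=\Im f$ we have short exact sequences
$0\to U\xrightarrow{u}Y\xrightarrow{p}I\to0$ and $0\to I\xrightarrow{q}X\to V\to0$, with $f=qp$. The aim is to force a nontrivial direct sum decomposition of $X$ (or of $Y$), contradicting indecomposability.

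The first step is to build an auxiliary extension of $X$. Since $A$ is hereditary, $\Ext^2_A$ vanishes, so applying $\Hom_A(-,U)$ to the second sequence shows that $q^*\colon\Ext^1_A(X,U)\to\Ext^1_A(I,U)$ is surjective. Choosing a preimage of the class of the first sequence gives an extension $0\to U\xrightarrow{\iota}W\xrightarrow{s}X\to0$ whose pullback along $q$ is that first sequence; in particular $Y\cong W\times_X I$, and there is a map $\psi\colon Y\to W$ with $s\psi=f$ and $\psi u=\iota$. The second step invokes the hypothesis: applying $\Hom_A(-,Y)$ to $0\to U\to W\to X\to0$ and using $\Ext^1_A(X,Y)=0$, the homomorphism $u\colon U\to Y$ extends to some $\phi\colon W\to Y$, so $\phi\iota=u$ and hence $(\id_Y-\phi\psi)u=0$.

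Now I would use that $\End_A(Y)$ is local, since $Y$ is indecomposable of finite length. If $\id_Y-\phi\psi$ is invertible, then $u=0$, contradicting $U\neq0$. Otherwise $\phi\psi$ is invertible, so $\psi$ is a split monomorphism; a snake-lemma comparison of the pullback sequence with $0\to U\to W\to X\to0$ shows $\Coker\psi\cong V$, hence $W\cong Y\oplus V$. Under this decomposition $\psi$ is the inclusion of the first summand, so $\iota=\psi u$ identifies $\Im\iota$ with $\Ker f\oplus 0\subseteq Y\oplus V$; since $s$ is a cokernel of $\iota$, we get $X\cong W/\Im\iota\cong(Y/\Ker f)\oplus V\cong I\oplus V$, contradicting the indecomposability of $X$ (as $I,V\neq0$). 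This proves the first assertion. For the second: if $X$ is exceptional then $\Ext^1_A(X,X)=0$, so every nonzero endomorphism of $X$ is mono or epi, hence an isomorphism by finite-dimensionality, i.e.\ $\End_A(X)$ is a division algebra.

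I expect the main obstacle to be purely organisational: setting up the pullback square $Y\cong W\times_X I$ with the correct identifications of $u$ and $p$, and then carefully tracking how $\iota$ and $s$ behave under the splitting $W\cong Y\oplus V$, so that the isomorphism $X\cong I\oplus V$ drops out cleanly. The homological inputs — right-exactness of $\Ext^1_A$ coming from hereditariness, and the vanishing $\Ext^1_A(X,Y)=0$ — together with the appeal to Fitting's lemma are routine.
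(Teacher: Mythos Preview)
The paper does not supply its own proof of this lemma; it simply cites \cite[Lemma~4.1]{HR1982} and marks it with \qed. Your argument is correct and is essentially the classical Happel--Ringel proof: lift the extension $0\to U\to Y\to I\to 0$ along $q\colon I\hookrightarrow X$ using hereditariness, then use $\Ext^1_A(X,Y)=0$ to extend $u$ over $W$, and finally exploit locality of $\End_A(Y)$ to force a decomposition of $X$. The bookkeeping with the pullback square and the identification $W\cong Y\oplus V$ is handled carefully, and the deduction $X\cong I\oplus V$ is clean.

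One cosmetic point: the lemma as stated says ``any homomorphism'', but the zero map is of course neither mono nor epi when $X,Y\neq 0$; you correctly restrict to nonzero $f$ from the outset, which is the intended reading (and is what is needed for the ``In particular'' clause).
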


\begin{lem}[{\cite[Lemma~8.2]{Ke1996}}]\label{le:rigid-dim-vector}
Let $X$ and $Y$ be rigid modules, so $\Ext^1_A(X,X)=0=\Ext^1_A(Y,Y)$. If $[X]=[Y]$ in $K_0(A)$, then $X\cong Y$. \qed
\end{lem}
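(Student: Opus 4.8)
The plan is to interpret rigidity geometrically: over an algebraically closed field a module over a hereditary algebra is rigid exactly when its orbit in the representation variety is open, and an irreducible variety has at most one open orbit. Write $\mathbf d:=[X]=[Y]\in K_0(A)$ for the common class; since $\dim_k(-)$ is a $\bbZ$-linear function of the class in $K_0(A)$ we already have $\dim_k X=\dim_k Y$, so the task is to show that two rigid modules with the same dimension vector are isomorphic.

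First I would reduce to the case that $k$ is algebraically closed. Extension of scalars $(-)\otimes_k\bar k$ commutes with $\Hom$ and $\Ext^1$ over the finite-dimensional algebra $A$, so $X\otimes_k\bar k$ and $Y\otimes_k\bar k$ remain rigid with the same class in $K_0(A\otimes_k\bar k)$, and if they become isomorphic over $\bar k$ then $X\cong Y$ over $k$ by the Noether--Deuring theorem. One caveat is that $A\otimes_k\bar k$ need not be hereditary when $\bar k/k$ is inseparable; however, what the geometric argument actually uses is that $A$ has a tensor-algebra presentation, which persists under base change, so if necessary I would run the whole argument with the modulated quiver (species) of Lemma~\ref{le:hereditary} in place of an ordinary path algebra. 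After this reduction I may assume $A=kQ$ for a finite acyclic quiver $Q$ over an algebraically closed field.

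The core step is then standard. The representation variety $\mathrm{rep}(Q,\mathbf d)$ is a product of matrix spaces (there are no relations), hence irreducible; the group $\mathrm{GL}_{\mathbf d}=\prod_i\mathrm{GL}_{d_i}$ acts on it with orbits the isomorphism classes of modules and with the stabiliser of $M$ the unit group of $\End_A(M)$. I would invoke the tangent-space (Voigt) identity
\[ \dim\mathrm{rep}(Q,\mathbf d)-\dim\mathcal{O}_M = \dim_k\Ext^1_A(M,M)-\dim_k\Ext^2_A(M,M), \]
together with $\Ext^2_A(M,M)=0$ since $A$ is hereditary, to conclude that $\mathcal{O}_M$ is dense, equivalently open, precisely when $M$ is rigid. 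Applying this to both $X$ and $Y$, their orbits are open in the irreducible variety $\mathrm{rep}(Q,\mathbf d)$, hence they coincide and $X\cong Y$.

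The main obstacle I anticipate is precisely this orbit-codimension formula — that $\Ext^1_A(M,M)$ (and the vanishing of $\Ext^2$) controls the complement of the orbit — since the rest is either formal or base-change bookkeeping. As an alternative that avoids algebraic geometry, one can cancel common indecomposable summands (Krull--Schmidt) and induct on $\dim_k$, reducing to showing that two nonzero rigid modules without a common summand cannot have equal classes; here a nonzero map $X\to Y$ must exist because $\dim_k\Hom_A(X,Y)=\langle\mathbf d,\mathbf d\rangle+\dim_k\Ext^1_A(X,Y)\ge\langle\mathbf d,\mathbf d\rangle=\dim_k\End_A(X)>0$, and one would try to combine this with the mono-or-epi dichotomy of Lemma~\ref{le:mono-or-epi} to force a contradiction — making that combinatorics airtight is, I think, the harder route.
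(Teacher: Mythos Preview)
The paper does not prove this lemma; the \qed\ after the statement indicates that it is quoted from Kerner \cite{Ke1996}, and your open-orbit argument is exactly the standard proof (and essentially Kerner's) in the path-algebra setting. So at the level of strategy there is nothing to compare.

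Two comments on the execution. First, your Voigt identity should not carry an $\Ext^2$ term: for a path algebra $\mathrm{rep}(Q,\mathbf d)$ is affine space, hence smooth, and the identity is simply
\[
\dim\mathrm{rep}(Q,\mathbf d)-\dim\mathcal O_M=\dim_k\Ext^1_A(M,M),
\]
which one reads off from $\dim\mathcal O_M=\dim\mathrm{GL}_{\mathbf d}-\dim_k\End_A(M)$ together with $\langle\mathbf d,\mathbf d\rangle=\dim_k\End_A(M)-\dim_k\Ext^1_A(M,M)$. Second, your base-change worry is genuine, and the appeal to ``the modulated quiver of Lemma~\ref{le:hereditary}'' does not quite make sense as stated, since the assertion concerns a \emph{given} algebra $A$. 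What does work is this: Lemma~\ref{le:hereditary} shows that every basic hereditary artin algebra is a tensor algebra $T_{A_0}(A_1)$ with $A_0$ semisimple, so one can form the species representation variety directly over $k$ as a product of $k$-vector spaces $\Hom_{D_j}(M_{ij}\otimes_{D_i}D_i^{d_i},D_j^{d_j})$ acted on by the $k$-group $\prod_i\mathrm{GL}_{d_i}(D_i)$. This is again affine space, Voigt's lemma holds verbatim, and rigidity forces the orbit to be open; two open orbits in an irreducible scheme coincide, and Noether--Deuring then transports the resulting isomorphism over $\bar k$ back to $k$. Alternatively, for every use the paper actually makes of this lemma one may take $k$ finite (hence perfect), and then $A\otimes_k\bar k$ is hereditary and Morita equivalent to an ordinary path algebra, so your reduction goes through without further comment. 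Your alternative Hom-combinatorics route can be made to work, but you are right that it is the more delicate of the two.
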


\begin{prop}\label{pr:exceptional-module-real-root}
Let $X$ be exceptional. Then $[X]\in K_0(A)$ is a real root.
\end{prop}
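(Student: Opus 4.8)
The plan is to show that an exceptional module $X$ over a finite dimensional hereditary algebra $A$ has dimension vector $[X]$ lying in $\Phi(K_0(A))$, by reducing to the simple case via the perpendicular category / exceptional sequence machinery and then using Proposition~\ref{pr:real-seq} together with Lemma~\ref{le:real-seq} to land inside the real roots. More precisely, the key input from representation theory is that any exceptional module $X$ can be completed to a complete exceptional sequence $(X_1,\ldots,X_{n-1},X)$ of $A$-modules (placing $X$ last), and that applying $K_0$ to such a sequence produces a sequence $([X_1],\ldots,[X_{n-1}],[X])$ of roots in $K_0(A)$ satisfying $\langle[X_i],[X_j]\rangle=0$ for $i>j$ (since $\Hom_A(X_i,X_j)=0=\Ext^1_A(X_i,X_j)$ there); that is, it is an \emph{exceptional sequence} in the sense of the bilinear lattice $K_0(A)$, and it is complete because the $[X_i]$ span $K_0(A)$ (an exceptional sequence of length $n$ has $\bbZ$-span of rank $n$ by Lemma~\ref{le:lin-indept}, hence of finite index, and being a basis follows since exceptional modules have classes that form part of a $\bbZ$-basis — this is where Lemma~\ref{le:rigid-dim-vector} and the structure of hereditary algebras enter).

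First I would establish the base case: each $[S_i]$ is a real root, since $[S_i]=\id(e_i)\in\Phi(K_0(A))$ trivially (indeed $E$ itself is the distinguished complete orthogonal exceptional sequence of $K_0(A)$ by Lemma~\ref{le:hereditary}). Then the general case would follow once we know that for any exceptional module $X$, its class $[X]$ is the last entry of \emph{some} complete exceptional sequence of roots in $K_0(A)$ whose Coxeter transformation equals $\cox(K_0(A))=s_E$: by Lemma~\ref{le:real-seq}, any such complete exceptional sequence of roots that is obtained from $E$ by the wreath-product action has all entries real roots, and conversely any complete exceptional sequence $F$ with $s_F=s_E$ is $\pm$-braid-equivalent to $E$ (this is exactly the content of Lemma~\ref{le:real-seq}). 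The point $s_F=s_E$ is automatic because $F$ is a complete exceptional sequence in the bilinear lattice $K_0(A)$, and $s_F$ does not depend on the choice of complete exceptional sequence (Proposition~\ref{pr:cox}).

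So the core reduction is: given an exceptional $A$-module $X$, produce a complete exceptional sequence of $A$-modules ending in $X$. The cleanest route is to invoke the perpendicular category: the right perpendicular category $X^\perp$ (modules $Y$ with $\Hom_A(X,Y)=0=\Ext^1_A(X,Y)$) is, for hereditary $A$, equivalent to $\mod B$ for a hereditary algebra $B$ with $K_0(B)$ of rank $n-1$; pick any complete exceptional sequence $(X_1,\ldots,X_{n-1})$ in $X^\perp$ (which exists by induction on rank, using that the simple $B$-modules give one), and then $(X_1,\ldots,X_{n-1},X)$ is a complete exceptional sequence in $\mod A$. Applying $K_0$ gives a complete exceptional sequence of roots in $K_0(A)$ ending in $[X]$, and we conclude as above.

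The main obstacle is the perpendicular-category step: one needs that $X^\perp$ is equivalent to $\mod B$ for a hereditary algebra $B$ of the right rank, and that the embedding $X^\perp\hookrightarrow\mod A$ is compatible with Grothendieck groups so that a complete exceptional sequence in $X^\perp$ maps to a genuine (length $n{-}1$, $\Hom$- and $\Ext^1$-vanishing in one direction) exceptional sequence in $\mod A$ whose classes, together with $[X]$, span $K_0(A)$. This is exactly the perpendicular calculus surveyed in the appendix (and is the reason Remark~\ref{re:need-hered} flags that hereditary algebras are needed); alternatively one can avoid it and argue directly that an exceptional module can be completed to a complete exceptional sequence using the known theory of exceptional sequences over hereditary algebras (Crawley-Boevey, Ringel), but either way the non-trivial content lies in this completion result, after which everything is formal from the lattice-theoretic lemmas already proved.
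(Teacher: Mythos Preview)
Your argument is circular at the crucial step. Lemma~\ref{le:real-seq}, and the underlying Theorem~\ref{th:IS2010}, concern \emph{real} exceptional sequences: Theorem~\ref{th:IS2010} acts on factorisations of the Coxeter element into reflections $t_i\in T=\{s_a:a\in\Phi\}$, and the proof of Lemma~\ref{le:real-seq} explicitly assumes each $f_i\in\Phi$ so that $s_{f_i}\in T$. When you pass from a complete exceptional sequence $(X_1,\ldots,X_{n-1},X)$ in $\mod A$ to $F=([X_1],\ldots,[X_{n-1}],[X])$ in $K_0(A)$, all you know is that each $[X_i]$ is a \emph{pseudo-real} root (the easy computation in Lemma~\ref{le:hereditary}(1)); a priori $s_{[X_i]}$ need not lie in $W$ at all, let alone in $T$. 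So from $s_F=s_E$ alone you cannot invoke Igusa--Schiffler transitivity, and your claim that ``any complete exceptional sequence $F$ with $s_F=s_E$ is $\pm$-braid-equivalent to $E$'' is not what Lemma~\ref{le:real-seq} says---it says this only once the entries are already real, which is exactly the proposition you are proving.

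The paper avoids this by a different induction. Instead of completing $X$ and appealing to transitivity in the Weyl group, it uses Schofield's reduction (Proposition~\ref{pr:Schofield}): any non-simple exceptional $X$ lies in some $\C(U,V)$ with $(U,V)$ an orthogonal exceptional pair and $[U],[V]<[X]$; by induction on height $[U],[V]\in\Phi$, and then the rank-two case (Theorem~\ref{th:root}) shows $[X]$ is obtained from $[U]$ or $[V]$ by an element of $\langle s_{[U]},s_{[V]}\rangle\le W$. Your strategy \emph{can} be salvaged if you replace the appeal to Lemma~\ref{le:real-seq} by the Crawley-Boevey/Ringel transitivity theorem for $\mod A$ (proved independently) together with the compatibility of the two braid actions; but in this paper the logic runs the other way---Proposition~\ref{pr:exceptional-module-real-root} feeds into Proposition~\ref{pr:exceptional-ncp}, from which Theorem~\ref{th:braid} is then \emph{derived}.
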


\begin{proof}
We first note that the result holds when $K_0(A)$ has rank two, using Theorem~\ref{th:root} (cf.\ \cite[Section 3]{Ri1976}).

In general, let $X$ be a non-simple exceptional module. By Schofield's result, Proposition~\ref{pr:Schofield}, we can find an orthogonal exceptional pair $(U,V)$ such that $X\in\C(U,V)$ is not simple. Since $[U],[V]<[X]$, we know by induction that $[U]$ and $[V]$ are real roots. Moreover, as above, $[X]$ is obtained from either $[U]$ or $[V]$ by applying an element of the subgroup $\langle s_{[U]},s_{[V]}\rangle\leq W$. Hence $[X]$ is also a real root.
\end{proof}

It follows that each exceptional sequence $(X_1,\ldots,X_r)$ in $\mod A$ yields an exceptional sequence $([X_1],\ldots,[X_r])$ in $K_0(A)$. Moreover, if the former is complete (respectively orthogonal), then so too is the latter.

The real roots of the form $\pm [X]$ for an exceptional object $X$ are called \emph{real Schur roots}. If the algebra $A$ is of finite representation type (so the corresponding Weyl group $W(K_0(A))$ is finite), or if $\rk K_0(A)=2$, then all real roots are real Schur roots. The following example exhibits a real root which is not a real Schur root.

\begin{exm}
Consider the path algebra of the following quiver
\[ \begin{tikzcd}[row sep=tiny]
&\mathbf{\cdot} \arrow{dr}\\
\mathbf{\cdot} \arrow{ur}\arrow{rr} && \mathbf{\cdot}
\end{tikzcd} \]
There is a unique indecomposable module $X$ with dimension vector
\[ [X]=\begin{matrix}\;\;\;2\;\;\;\\[-0.5em]1\;\;\;\;\;\;1\end{matrix} \]
Then $[X]$ is a real root, but the module $X$ is not exceptional.
\end{exm}

\subsection*{The braid group action}

Let $X=(X_1,\ldots,X_r)$ be an exceptional sequence in $\mod A$. We define $\C(X)$ to be the smallest full subcategory of $\mod A$ containing each $X_i$ and closed under kernels, cokernels and extensions. Then $\C(X)$ is equivalent to the module category of a finite dimensional hereditary algebra by Theorem~\ref{th:subcat}. Also, by Corollary~\ref{co:extend}, for each integer $1\leq i<r$ there exist unique modules $R_{X_{i+1}}(X_i)$ and $L_{X_i}(X_{i+1})$ in $\C(X)$ yielding exceptional sequences
\begin{gather*}
(X_1,\ldots,X_{i-1},X_{i+1},R_{X_{i+1}}(X_i),X_{i+2},\ldots,X_r)\\
(X_1,\ldots,X_{i-1},L_{X_i}(X_{i+1}),X_i,X_{i+2},\ldots,X_r).
\end{gather*}

Following \cite{CB1992,Ri1994}, the braid group $B_r$ acts on exceptional sequences of length $r$ via
\begin{gather*}
\s_i(X_1,\ldots,X_r) := (X_1,\ldots,X_{i-1},X_{i+1},R_{X_{i+1}}(X_i),X_{i+2},\ldots,X_r)\\
\s_i^{-1}(X_1,\ldots,X_r) := (X_1,\ldots,X_{i-1},L_{X_i}(X_{i+1}),X_i,X_{i+2},\ldots,X_r).
\end{gather*}

In fact, we can describe the modules $L_X(Y)$ and $R_Y(X)$ explicitly, using the five term exact sequences \eqref{eq:perp} and \eqref{eq:perp2}, together with Remark~\ref{re:perp}. Let $(X,Y)$ be an exceptional pair. If $\Hom_A(X,Y)=0$, then $L_X(Y)$ is the middle term of the universal extension
\[ 0\lto Y\lto L_X(Y)\lto \Ext^1_A(X,Y)\otimes_{\End_A(X)} X\lto 0. \]
Otherwise, if $\Hom_A(X,Y)\neq0$, then every morphism is either mono or epi by Lemma~\ref{le:mono-or-epi}. In this case the canonical
morphism $\Hom_A(X,Y)\otimes_{\End_A(X)} X\to Y$ is also mono or epi, by \cite[Lemma 3.1]{RS1990}, and we define $L_X(Y)$ to be its cokernel or kernel, respectively. Thus $L_X(Y)$ is given by one of the following exact sequences
\begin{gather*}
0\lto \Hom_A(X,Y)\otimes_{\End_A(X)} X\stackrel{\can}\lto Y\lto L_X(Y)\lto 0\phantom{.}\\
0\lto L_X(Y)\lto \Hom_A(X,Y)\otimes_{\End_A(X)} X\stackrel{\can}\lto Y\lto 0.
\end{gather*}
An analogous description is used for $R_Y(X)$.

We observe that this definition appears more natural once one passes to the derived category $\D^b(\mod A)$, where functors $\mathcal L_E$ and $\mathcal R_E$ are defined with respect to any exceptional object $E$; see \cite{Bo1990}. Then $L_X(Y)$ and $R_Y(X)$ coincide up to translation with the objects defined in $\D^b(\mod A)$ via the functors $\mathcal L_X$ and $\mathcal R_Y$.

\subsection*{Connection to real exceptional sequences}

We now want to compare, for a finite dimensional hereditary algebra $A$, exceptional sequences in $\mod A$, real exceptional sequences in $K_0(A)$, and non-crossing partitions in $W(K_0(A))$.

We say that two exceptional sequences $(X_1,\ldots,X_r)$ and $(Y_1,\ldots,Y_r)$ in $\mod A$ are \emph{isomorphic} provided $X_i\cong Y_i$ for all $i$.

\begin{prop}\label{pr:exceptional-ncp}
The maps
\[ (X_1,\ldots,X_r) \mapsto ([X_1],\ldots,[X_r]) \quad\textrm{and}\quad (f_1,\ldots,f_r) \mapsto (s_{f_1},\ldots,s_{f_r}) \]
yield $B_r$-equivariant bijections between
\begin{enumerate}
\item isomorphism classes of exceptional sequences of length $r$ in $\mod A$,
\item real exceptional sequences of length $r$ in $K_0(A)$, up to the action of the sign group, and
\item sequences of reflections $(t_1,\ldots,t_r)$ in the Weyl group $W(K_0(A))$ such that $w=t_1\cdots t_r$ is a non-crossing partition of absolute length $r$.
\end{enumerate}
\end{prop}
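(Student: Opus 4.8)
The plan is to establish the two maps separately and then combine them. The second map, $(f_1,\ldots,f_r)\mapsto(s_{f_1},\ldots,s_{f_r})$, is already handled by the material of the previous section: by Lemma~\ref{le:equivar} it is $B_r$-equivariant, and by Proposition~\ref{pr:real-seq} (together with Lemma~\ref{le:preservation-of-Coxeter}~(4)) it is injective on sign-group orbits of real exceptional sequences and has image exactly the sequences of reflections multiplying to a non-crossing partition. One subtlety to address: a real exceptional sequence of length $r$ whose reflections multiply to $w$ must have $\ell(w)=r$; this follows because, by Lemma~\ref{le:real-seq}, $F$ extends to a complete real exceptional sequence $(F,G)$ with $s_Fs_G=\cox(\Ga)$, and then $r+ \ell(s_G)\le \ell(\cox(\Ga))=n$ forces $\ell(s_F)=r$ by Theorem~\ref{th:IS2010}. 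So the real content is the bijection between part (1) and part (2), i.e.\ that an exceptional sequence of $A$-modules is determined up to isomorphism by its sequence of classes in $K_0(A)$, and conversely every real exceptional sequence of roots lifts to one of modules.

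First I would show the map from (1) to (2) is well-defined and $B_r$-equivariant. Well-definedness is Proposition~\ref{pr:exceptional-module-real-root} together with the remark following it (classes of an exceptional sequence of modules form a real exceptional sequence of roots). Equivariance is immediate from comparing the two braid actions: the module-level operations $L_{X}(Y)$, $R_Y(X)$ are defined via the five-term exact sequences, and passing to $K_0(A)$ these become exactly the lattice-level formulas $s_e(f)=l_e(f)$, $s_f(e)=r_f(e)$ of Proposition~\ref{pr:braidaction}; this is just additivity of the Euler form on short exact sequences.

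Next, injectivity of (1)$\to$(2). Suppose $(X_i)$ and $(Y_i)$ are exceptional sequences of modules with $[X_i]=[Y_i]$ for all $i$. Since exceptional modules are rigid, Kerner's Lemma~\ref{le:rigid-dim-vector} gives $X_i\cong Y_i$ directly — so injectivity is actually immediate, no induction needed. (If one did not wish to invoke Kerner, one could argue by induction on $r$ using that $\C(X_1,\ldots,X_{r-1})={}^{\perp}X_r$ and reducing rank, but Lemma~\ref{le:rigid-dim-vector} makes this unnecessary.)

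The main obstacle is surjectivity of (1)$\to$(2): given a real exceptional sequence $F=(f_1,\ldots,f_r)$ in $K_0(A)$, produce an exceptional sequence of modules mapping to it (up to sign). I would argue by induction on $r$, the base case $r=1$ being Proposition~\ref{pr:exceptional-module-real-root} run in reverse — every real root $a$, or $-a$, is the class of an exceptional module; this itself needs an argument (again via Schofield/rank-two reduction, or by noting real roots in rank $\le 2$ and of Dynkin/affine/hyperbolic type are real Schur roots, but in general one uses the braid action: $F$ extends to a complete real exceptional sequence $\s E$ for some $\s\in\{\pm1\}\wr B_n$ by Lemma~\ref{le:real-seq}, and $E$ itself lifts to the sequence of simple modules, so applying the module-level braid action to the simples and using equivariance produces a lift of $\s E$, hence of its initial subsequence $F$, after possibly adjusting signs). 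This last route is in fact the cleanest: lift $E$ to the simples $(S_1,\ldots,S_n)$, apply $\s$ on the module side, use equivariance to see the result has classes $\pm(\s E)_i$, and since the sign group acts on the module side only through $X\mapsto X$ (a module and its class determine each other's sign trivially — real roots are $\pm[X]$), take the initial length-$r$ subsequence. The one thing to check carefully is that the $\{\pm1\}$-ambiguity is harmless: on the module side there is no sign, and two real exceptional sequences of roots differing by signs have the same $\bbZ$-span and the same associated module sequence, which is precisely why we quotient (2) by the sign group. Finally, combining the bijection (1)$\leftrightarrow$(2) with (2)$\leftrightarrow$(3) and tracking $B_r$-equivariance through both gives the full statement.
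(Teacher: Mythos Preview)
Your proposal is correct and follows essentially the same path as the paper's proof: Kerner's Lemma for injectivity of (1)$\to$(2), Proposition~\ref{pr:real-seq} together with Lemma~\ref{le:preservation-of-Coxeter}(4) for (2)$\leftrightarrow$(3), and lifting the simples via the braid action (Theorem~\ref{th:IS2010}, equivalently Lemma~\ref{le:real-seq}) for surjectivity. The one place to tighten is the $B_r$-equivariance of (1)$\to$(2): it is not ``just additivity on short exact sequences,'' since $L_X(Y)$ is defined by one of three different exact sequences depending on whether $\Hom_A(X,Y)=0$, or the canonical map $\Hom_A(X,Y)\otimes_{\End_A(X)}X\to Y$ is mono, or epi, and in the epi case one obtains $[L_X(Y)]=-s_{[X]}([Y])$ rather than $+s_{[X]}([Y])$---this case analysis is precisely the origin of the sign-group quotient in (2), and the paper carries it out explicitly.
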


\begin{proof}
Let $(X_1,\ldots,X_r)$ be an exceptional sequence. By Corollary~\ref{co:extend} we can extend to a complete exceptional sequence $(X_1,\ldots,X_n)$. As each $[X_i]$ is a real root by Proposition~\ref{pr:exceptional-module-real-root}, $([X_1],\ldots,[X_n])$ is a complete real exceptional sequence in $K_0(A)$, and hence $([X_1],\ldots,[X_r])$ is a real exceptional sequence of length $r$. Moreover, this map yields an injection, since if $X$ and $Y$ are exceptional modules such that $[X]=[Y]$, then $X\cong Y$ by Lemma~\ref{le:rigid-dim-vector}.

To see that this map is $B_r$-equivariant, it is enough to check it for the generators $\sigma_i^{\pm1}$, and hence just for $r=2$. Let $(X,Y)$ be an exceptional pair in $\mod A$. If $\Hom_A(X,Y)=0$, or if there is a monomorphism $X\hookrightarrow Y$, then the construction of $L_X(Y)$ yields $[L_X(Y)]=s_{[X]}([Y])$. Otherwise, there is an epimorphism $X\twoheadrightarrow Y$ and $[L_X(Y)]=-s_{[X]}([Y])$. This proves the result for $\sigma_i^{-1}$. The proof for $\sigma_i$ is analogous.

Let $F=(f_1,\ldots,f_r)$ be a real exceptional sequence. Then $s_F=s_{f_1}\cdots s_{f_r}$ is a non-crossing partition of absolute length $r$ by Proposition~\ref{pr:real-seq}, and so $(s_{f_1},\ldots,s_{f_r})$ has the required properties. It is $B_r$-equivariant by Lemma \ref{le:equivar}. Moreover, this map also yields an injection, since if $e,f$ are real roots such that $s_e=s_f$, then $e=\pm f$ by Lemma~\ref{le:preservation-of-Coxeter}.

Finally, as in Lemma~\ref{le:hereditary} let $S=(S_1,\ldots,S_n)$ be a complete, othogonal exceptional sequence in $\mod A$ consisting of simple modules. Set $e_i:=[S_i]$ and $s_i=s_{e_i}$, so that $E:=(e_1,\ldots,e_n)$ is a complete, orthogonal exceptional sequence in $K_0(A)$ and the Coxeter element is $c=s_1\cdots s_n$.

Now let $(t_1,\ldots,t_r)$ be a sequence of reflections in $W$ such that $t_1\cdots t_r$ is a non-crossing partition of absolute length $r$. Write the Coxeter element as $c=t_1\cdots t_n$. Then by Theorem~\ref{th:IS2010} we have $(t_1,\ldots,t_n)=\sigma(s_1,\ldots,s_n)$ for some $\sigma\in B_n$. Set $(X_1,\ldots,X_n):=\sigma(S_1,\ldots,S_n)$. It follows that $(X_1,\ldots,X_r)$ is an exceptional sequence and $s_{[X_i]}=t_i$ for all $i$. Thus the composition of the three maps is the identity, so they are all bijections.
\end{proof}

Using this proposition we get an alternative proof of the following transitivity result.

\begin{thm}[Crawley-Boevey \cite{CB1992}, Ringel \cite{Ri1994}]\label{th:braid}
Let $A$ be a finite dimensional hereditary algebra, and set $n$ to be the rank of $K_0(A)$. Then the braid group $B_n$ acts transitively on the isomorphism classes of complete exceptional sequences in $\mod A$.
\end{thm}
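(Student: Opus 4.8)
The plan is to deduce this directly from Proposition~\ref{pr:exceptional-ncp} together with the Igusa--Schiffler transitivity result, Theorem~\ref{th:IS2010}; essentially all of the work has already been done in establishing the former, and what remains is a short formal argument. Write $n=\rk K_0(A)$, let $W=W(K_0(A))$ be the Weyl group and $c$ its Coxeter element.

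First I would specialise Proposition~\ref{pr:exceptional-ncp} to $r=n$. This provides a $B_n$-equivariant bijection between the isomorphism classes of complete exceptional sequences in $\mod A$ and the sequences of reflections $(t_1,\ldots,t_n)$ in $W$ for which $t_1\cdots t_n$ is a non-crossing partition of absolute length $n$. (Note that passing to isomorphism classes of modules on the one side corresponds exactly to quotienting real exceptional sequences by the sign group on the other, which is harmless for the reflection data since $s_a=s_{-a}$.)

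Next I would identify the combinatorial side explicitly. By Theorem~\ref{th:IS2010} we have $\ell(c)=n$. If $w\in\NC(\Ga,E)$ satisfies $\ell(w)=n$, then from $w\le c$ we get $\ell(w)+\ell(w^{-1}c)=\ell(c)=n$, whence $\ell(w^{-1}c)=0$ and so $w=c$. Thus the only non-crossing partition of absolute length $n$ is $c$ itself, and the right-hand side of the bijection is precisely the set of length-$n$ reflection factorisations $c=t_1\cdots t_n$ of the Coxeter element.

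Finally, Theorem~\ref{th:IS2010} asserts that $B_n$ acts transitively on exactly this set of factorisations. Transporting this along the $B_n$-equivariant bijection of Proposition~\ref{pr:exceptional-ncp} gives that $B_n$ acts transitively on the isomorphism classes of complete exceptional sequences in $\mod A$, which is the claim. The main substance of the argument is therefore not here at all but is absorbed into Proposition~\ref{pr:exceptional-ncp} (and behind it Proposition~\ref{pr:exceptional-module-real-root} on real Schur roots, the compatibility of the module-theoretic and lattice-theoretic braid actions, and the perpendicular calculus used to extend partial exceptional sequences); the only point requiring any care in the present proof is the bookkeeping of the sign group, already noted above.
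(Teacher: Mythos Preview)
Your proof is correct and follows essentially the same approach as the paper's own proof: both deduce transitivity by transporting the Igusa--Schiffler result (Theorem~\ref{th:IS2010}) along the $B_n$-equivariant bijection of Proposition~\ref{pr:exceptional-ncp}. You simply spell out a little more detail (why the only non-crossing partition of length $n$ is $c$, and the sign-group bookkeeping) than the paper does.
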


\begin{proof}
The bijection between isomorphism classes of complete exceptional sequences and factorisations of the Coxeter element is equivariant for the action of the braid group. Since the action on factorisations of the Coxeter element is transitive by Theorem~\ref{th:IS2010}, so too is the action on complete exceptional sequences.
\end{proof}

We can also use Proposition~\ref{pr:exceptional-ncp} to characterise the real Schur roots amongst all real roots using non-crossing partitions.

\begin{cor}\label{co:braid2}
Let $a\in K_0(A)$ be a real root. Then $a$ is a real Schur root if and only if $s_a\in\NC(K_0(A))$. In particular, this depends only on $K_0(A)$, and not on the algebra $A$ itself. \qed
\end{cor}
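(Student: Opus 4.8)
The plan is to deduce Corollary~\ref{co:braid2} directly from Proposition~\ref{pr:exceptional-ncp}, which has already done all the real work. First I would observe that the real Schur roots are by definition the roots of the form $\pm[X]$ for $X$ exceptional, so it suffices to show: for a real root $a$, there exists an exceptional module $X$ with $[X]=\pm a$ if and only if $s_a\in\NC(K_0(A))$.

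For the forward direction, suppose $a=\pm[X]$ for an exceptional module $X$. Then $(X)$ is an exceptional sequence of length one in $\mod A$, so by the $r=1$ case of Proposition~\ref{pr:exceptional-ncp} the element $s_{[X]}=s_a$ is a reflection which is itself a non-crossing partition (it is the image of the length-one real exceptional sequence $([X])$ under $F\mapsto s_F$, and by Proposition~\ref{pr:real-seq} this image lies in $\NC(K_0(A))$). Hence $s_a\in\NC(K_0(A))$.

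For the converse, suppose $a$ is a real root with $s_a\in\NC(K_0(A))$. Since $s_a$ is a reflection it has absolute length one, so $(s_a)$ is a sequence of reflections whose product is a non-crossing partition of absolute length $1$; that is, $(s_a)$ lies in set~(3) of Proposition~\ref{pr:exceptional-ncp} for $r=1$. Chasing the bijections backwards (from (3) to (2) to (1)), we obtain an exceptional module $X$ in $\mod A$ with $s_{[X]}=s_a$, and hence $[X]=\pm a$ by Lemma~\ref{le:preservation-of-Coxeter}~(4). Thus $a$ is a real Schur root. Finally, the condition $s_a\in\NC(K_0(A))$ is phrased entirely in terms of the generalised Cartan lattice $K_0(A)$ and its associated Weyl group and Coxeter element, so it is independent of the choice of hereditary algebra realising that lattice.

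I do not expect any real obstacle here: the content is entirely in Proposition~\ref{pr:exceptional-ncp}, and the corollary is just the case $r=1$ together with the definition of real Schur root. The only mild subtlety is keeping track of the sign group in statement~(2) of the proposition (which is exactly why we conclude $[X]=\pm a$ rather than $[X]=a$), but this is harmless since real Schur roots are defined up to sign.
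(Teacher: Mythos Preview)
Your argument is correct and is exactly the intended one: the paper leaves the proof as an immediate consequence of Proposition~\ref{pr:exceptional-ncp} (hence the bare \qed), and you have spelled out precisely the $r=1$ case together with Lemma~\ref{le:preservation-of-Coxeter}~(4) to handle the sign. There is nothing to add.
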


We finish by observing that Proposition~\ref{pr:exceptional-ncp} can be reformulated as saying exceptional sequences in $\mod A$ correspond to paths in the Hasse diagram of $\NC(K_0(A))$. For, sequences of reflections $(t_1,\ldots,t_r)$ in $W(K_0(A))$ with $w=t_1\cdots t_r$ a non-crossing partition of absolute length $r$ are represented by paths of length $r$ which start at the unique minimal element; see also \cite[p.~1534]{IT2009}. The number of complete exceptional sequences for algebras of finite representation type is computed in \cite{ONSFR2013}; we refer to their paper for further historical comments.

\subsection*{Application to Gabriel's Theorem}

We can use the results obtained so far to give a root-theoretic proof of Gabriel's Theorem, so in particular not requiring the development of Auslander-Reiten theory, or even reflection functors (cf.\ \cite{DR1974,Ri1983}). As such, this answers the question posed by Gabriel in \cite[Section~4]{Ga1973}, but now for all Dynkin types, not just $ADE$-type.

\begin{thm}\label{th:Gabriel}
Let $A$ be a finite dimensional hereditary algebra, either of Dynkin type or of rank two. Then the map $X\mapsto[X]$ induces a bijection between the isomorphism classes of exceptional modules and the positive real roots.

Moreover, $A$ is representation-finite if and only if it is of Dynkin type, which is if and only if every indecomposable module is exceptional.
\end{thm}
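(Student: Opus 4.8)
The plan is to prove the two assertions separately. For the first, that $X\mapsto[X]$ is a bijection from isomorphism classes of exceptional modules onto positive real roots: injectivity is immediate, since an exceptional module is rigid and Lemma~\ref{le:rigid-dim-vector} applies; and $[X]$ is a positive real root by Proposition~\ref{pr:exceptional-module-real-root} together with the facts that $[X]\ge0$, $[X]\ne0$, and every real root is positive or negative. So the content is surjectivity. Here I would argue that, by Corollary~\ref{co:braid2}, a positive real root $a$ lies in the image exactly when it is a real Schur root, i.e.\ exactly when $s_a\in\NC(K_0(A))$, equivalently $s_a\le c$ in the absolute order, where $c:=\cox(K_0(A))$; and since $a$ is a real root, $s_a$ is a reflection. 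Thus surjectivity is equivalent to the statement that every reflection of $W=W(K_0(A))$ lies below $c$.

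To prove this I would treat the two cases. When $K_0(A)$ has rank two, $W$ is dihedral and $\ell(c)=2$ by Theorem~\ref{th:IS2010}; for a reflection $t$ the element $t^{-1}c=tc$ is a product of three reflections, which in a dihedral group is again a reflection, so $\ell(t)+\ell(t^{-1}c)=1+1=\ell(c)$ and $t\le c$. In the Dynkin case $W$ is finite, so $\ell(c)=n$ by Theorem~\ref{th:IS2010}, and Carter's Lemma~\ref{le:Carter} gives $\dim\Im(\id-c)=n$; hence $\id-c$ is invertible on the $n$-dimensional real space $V:=K_0(A)\otimes\bbR$. Given a reflection $t=s_a$, set $b:=(\id-c)^{-1}(a)$, so $a=b-c(b)$. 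For $v\in V$ one has $tc(v)=v$ if and only if $(\id-c)(v)=\tfrac{2(v,a)}{(a,a)}\,a$, which by invertibility forces $v$ to be a scalar multiple of $b$, subject only to the relation $2(b,a)=(a,a)$; and this holds automatically, since $c$ preserves the symmetric form, so $2(b,b-c(b))=2(b,b)-2(b,c(b))=(b-c(b),b-c(b))$. Hence $\mathrm{Fix}(tc)=\bbR b$ is one-dimensional, so $\ell(tc)=n-1$ by Carter's Lemma, and $\ell(t)+\ell(t^{-1}c)=1+(n-1)=\ell(c)$, so again $t\le c$. This completes the first assertion.

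For the second assertion I would establish the equivalence of (i) $A$ is representation-finite, (ii) $K_0(A)$ is of Dynkin type, and (iii) every indecomposable $A$-module is exceptional (equivalently, rigid), via (i)$\Rightarrow$(ii), (iii)$\Rightarrow$(ii), (iii)$\Rightarrow$(i), and (ii)$\Rightarrow$(iii). For (i)$\Rightarrow$(ii): if $A$ is representation-finite there are finitely many exceptional modules, hence by Lemma~\ref{le:rigid-dim-vector} and Proposition~\ref{pr:exceptional-module-real-root} finitely many real Schur roots, hence by Corollary~\ref{co:braid2} finitely many reflections in $\NC(K_0(A))$; since every element of $\NC(K_0(A))$ is a product of at most $\ell(c)=n$ reflections lying in $\NC(K_0(A))$, the poset $\NC(K_0(A))$ is finite, so $K_0(A)$ is of Dynkin type by Theorem~\ref{th:Dynkin-type}. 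For (iii)$\Rightarrow$(ii): if every indecomposable is rigid then, by the canonical decomposition, the generic module of any positive dimension vector $d$ is a direct sum $\bigoplus_iE_i^{m_i}$ of rigid (hence exceptional) indecomposables with $\Ext^1_A(E_i,E_j)=0$ for all $i,j$, so $\langle d,d\rangle=\sum_{i,j}m_im_j\langle[E_i],[E_j]\rangle$ has strictly positive diagonal and non-negative off-diagonal terms; thus the Tits form is positive on the positive cone, which forces $K_0(A)$ to be of Dynkin type. And then (iii)$\Rightarrow$(i): $K_0(A)$ being of Dynkin type, $W$ is finite and there are only finitely many real roots, hence only finitely many real Schur roots, hence (Lemma~\ref{le:rigid-dim-vector}) only finitely many exceptional modules, and --- these being all the indecomposables by (iii) --- only finitely many indecomposables.

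The step I expect to be the main obstacle is the last implication, (ii)$\Rightarrow$(iii): that $K_0(A)$ of Dynkin type forces every indecomposable to be rigid. This seems genuinely to require an ingredient outside the root-theoretic formalism, namely Tits' classical geometric argument --- a positive-definite Tits form makes each representation variety $\mathrm{Rep}(Q,d)$ a union of finitely many $\mathrm{GL}_d$-orbits, so that there are finitely many indecomposables, each lying in a dense orbit and therefore rigid. Everything else above is reduced, by the machinery already developed (the transitivity of the braid action on factorisations of the Coxeter element in Theorem~\ref{th:IS2010}, Kerner's Lemma~\ref{le:rigid-dim-vector}, the canonical decomposition, and the computation of reflections below $c$), to pure root theory, so I would present (ii)$\Rightarrow$(iii) as the single geometric input. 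Finally, combining with Theorem~\ref{th:root}, which identifies the real roots with all roots in Dynkin type, the bijection recovers Gabriel's theorem in its classical form, uniformly across all Dynkin types and in particular answering Gabriel's question in the non-simply-laced cases.
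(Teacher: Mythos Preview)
Your argument for the first assertion is correct and close in spirit to the paper's, though you work harder than necessary in the Dynkin case: the paper simply observes that Carter's Lemma gives $\ell(tc)\le n$, and parity rules out equality, so $\ell(tc)<n$ and $t<c$. Your explicit computation of $\mathrm{Fix}(tc)$ is valid but not needed.

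The genuine gap is in (ii)$\Rightarrow$(iii). You propose Tits' orbit-counting argument on representation varieties, but this applies only to path algebras of quivers over algebraically closed fields; it does not cover the non-simply-laced (species) case, which is precisely the case the theorem is meant to address --- the paper emphasises that this answers Gabriel's question ``for all Dynkin types, not just $ADE$-type''. The paper's substitute is Ringel's elementary brick lemma (stated immediately after the theorem): if some indecomposable $X$ is not exceptional, then $X$ contains a brick $X'$ which is not exceptional, whence $\langle[X'],[X']\rangle\le 0$ and $A$ cannot be of Dynkin type. This is purely module-theoretic, requires no assumption on the ground field, and is exactly what allows the proof to avoid geometric or Auslander--Reiten input. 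Your instinct that (ii)$\Rightarrow$(iii) is the crux is right; the point you are missing is that it has an elementary module-theoretic proof.

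A smaller issue of the same flavour: your (iii)$\Rightarrow$(ii) via the canonical decomposition again imports geometry of representation varieties. The paper instead takes, for each $a>0$, a module $X$ with $[X]=a$ and $\dim\End_A(X)$ minimal, and shows directly that $\Ext^1_A(X,X)=0$ (a non-split extension of summands would lower $\dim\End$), so $\langle a,a\rangle>0$. This works over any field and keeps the argument uniform.
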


\begin{proof}
Corollary~\ref{co:braid2} tells us that the map $X\mapsto s_{[X]}$ induces a bijection between the isomorphism classes of exceptional $A$-modules and those reflections which are non-crossing partitions. For the first part it is therefore enough to show that when $W$ is finite or has rank two, then every reflection is a non-crossing partition.

Suppose first that $W$ is finite, and let $t\in T$ be any reflection. By Carter's Lemma~\ref{le:Carter} we have $\ell(tc)\le n=\ell(c)$, and since we cannot have equality we must have $\ell(tc)<n$. Thus $t<c$ is a non-crossing partition.

Now suppose that $W$ has rank two, say with simple reflections $s,t$. The only two Coxeter elements are $c=st$ and $c^{-1}=ts$. Clearly every reflection in $W$ is of the form $c^rsc^{-r}$ or $c^rtc^{-r}$ for some $r\in\mathbb Z$, so every reflection lies in both $\NC(W,c)$ and $\NC(W,c^{-1})$.

For the second part, if $A$ is representation-finite, then there are only finitely many exceptional modules up to isomorphism, so only finitely many non-crossing partitions. By Theorem~\ref{th:Dynkin-type} this implies that $W$ is finite, so $A$ is of Dynkin type.

Assume next that every indecomposable module is exceptional. Given $a>0$ in $K_0(A)$, let $X$ be an $A$-module with $[X]=a$ and $\dim\End_A(X)$ minimal. Then $\Ext^1_A(X,X)=0$. For, this is clear if $X$ is indecomposable, so assume $X=Y\oplus Z$ with $\Ext^1(Y,Z)\neq0$. Then there is a non-split short exact sequence $0\to Z\to X'\to Y\to 0$, in which case $[X']=a$ and $\dim\End_A(X')<\dim\End_A(Y\oplus Z)$, a contradiction. It follows that $\langle a,a\rangle=\langle[X],[X]\rangle>0$, so $A$ is of Dynkin type using the classification of generalised Cartan lattices given before Theorem~\ref{th:root}. Since there are now only finitely many non-crossing partitions, we also see that $A$ is representation-finite.

Finally, if $A$ is of Dynkin type, then the lemma below tells us that every indecomposable module is exceptional, completing the proof.
\end{proof}

We call an $A$-module $X$ a \emph{brick} provided $\End_A(X)$ is a division algebra.

\begin{lem}[\cite{Ri1983}]
Let $A$ be a finite dimensional hereditary algebra. If there is an indecomposable $A$-module $X$ which is not exceptional, then there is a brick $X'\subseteq X$ which is not exceptional. In particular, $\langle[X'],[X']\rangle\leq0$, so that $A$ is not of Dynkin type.
\end{lem}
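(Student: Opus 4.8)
The plan is to produce $X'$ as a minimal submodule of $X$ admitting a self-extension, in the spirit of Ringel \cite{Ri1983}. First I would dispose of the trivial case: if $X$ is itself a brick then, being indecomposable and not exceptional, it satisfies $\Ext^1_A(X,X)\neq0$, so $X'=X$ already works. So assume $X$ is not a brick. Among all submodules $Y\subseteq X$ with $\Ext^1_A(Y,Y)\neq0$ — a non-empty collection, since $X$ lies in it — I would choose one, $X'$, of least $k$-dimension, and claim that it is the required brick.

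The core of the argument is to verify that $X'$ is a brick. I would first check that $X'$ is indecomposable: if $X'=\bigoplus_{i=1}^m Y_i$ with $m\geq2$ and each $Y_i$ indecomposable, then each $Y_i\subseteq X$ has strictly smaller dimension, so $\Ext^1_A(Y_i,Y_i)=0$ by minimality; since $\Ext^1_A(X',X')=\bigoplus_{i,j}\Ext^1_A(Y_i,Y_j)\neq0$ there are $i\neq j$ with $\Ext^1_A(Y_i,Y_j)\neq0$, so $Y_i\oplus Y_j\subseteq X$ has a non-zero self-extension — forcing $m=2$ by minimality, and the remaining possibility $X'=Y_1\oplus Y_2$ must then also be excluded. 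Supposing $X'$ is not a brick, the local ring $\End_A(X')$ has non-zero nilpotent radical, so I would pick $0\neq\phi\in\rad\End_A(X')$ with $\phi^2=0$ (replace $\phi$ by a suitable power) and set $U:=\Ker\phi\supseteq V:=\Im\phi$; these are proper non-zero submodules with $X'/U\cong V$, hence $\dim_k U+\dim_k V=\dim_k X'$, while $\Ext^1_A(U,U)=\Ext^1_A(V,V)=0$ by minimality. Feeding $0\to U\to X'\to V\to0$ into the long exact $\Hom/\Ext$-sequences (which terminate since $A$ is hereditary) and exploiting the inclusion $V\subseteq U$ together with the vanishing of $\Ext^1_A(U,U)$ and $\Ext^1_A(V,V)$, one should be able to deduce that either $\Ext^1_A(X',X')=0$ — impossible, since an indecomposable module with no self-extension is exceptional, hence a brick by Lemma~\ref{le:mono-or-epi} — or that $X$ possesses a submodule of strictly smaller dimension with a non-zero self-extension, contradicting the choice of $X'$. \emph{This diagram chase, together with the exclusion of the decomposable case $m=2$, is the step I expect to be the main obstacle}; it is essentially Ringel's lemma, and uses the hypotheses that $X$ is indecomposable and $A$ hereditary in an essential way. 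Granting it, $X'$ is a brick, and since $\Ext^1_A(X',X')\neq0$ it is not exceptional.

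To finish I would observe that $\End_A(X')$ is a division algebra, so $\Ext^1_A(X',X')$ is a non-zero free left module over it, whence $\dim_k\Ext^1_A(X',X')\geq\dim_k\End_A(X')$ and therefore
\[ \langle[X'],[X']\rangle=\dim_k\End_A(X')-\dim_k\Ext^1_A(X',X')\leq0. \]
Since $X'\neq0$ forces $[X']\neq0$, this gives $([X'],[X'])=2\langle[X'],[X']\rangle\leq0$, so the symmetrised form $(-,-)$ — equivalently the generalised Cartan matrix $C(K_0(A))$ — is not positive definite; that is, $A$ is not of Dynkin type.
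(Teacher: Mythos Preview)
Your approach differs from the paper's, and the gap you acknowledge is genuine rather than a routine diagram chase. The crucial difficulty is that your minimality argument never invokes the indecomposability of $X$: you choose $X'\subseteq X$ of least dimension with $\Ext^1_A(X',X')\neq0$ and then reason entirely inside $X'$. But over the Kronecker quiver the module $S_1\oplus S_2$ satisfies $\Ext^1_A(S_1\oplus S_2,S_1\oplus S_2)\cong\Ext^1_A(S_1,S_2)\neq0$, while its only proper non-zero submodules $S_1$ and $S_2$ are rigid. So a minimal submodule with self-extension can perfectly well decompose as $Y_1\oplus Y_2$ with both $Y_i$ exceptional, and nothing internal to $X'$ will rule this out. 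Any argument that works must at this point exploit that the ambient module $X$ is indecomposable, and your sketch gives no mechanism for doing so. The proposed chase for the brick step (take $\phi^2=0$, set $V=\Im\phi\subseteq U=\Ker\phi$) has the same flavour: from $\Ext^1_A(U,U)=\Ext^1_A(V,V)=0$ and $V\subseteq U$ one does not readily deduce $\Ext^1_A(X',X')=0$, nor locate a smaller submodule of $X$ with self-extension.

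The paper instead descends on $\dim_kX$ while preserving indecomposability. If $X$ is not a brick, pick a non-zero $f\in\End_A(X)$ whose image $I$ has minimal dimension; this minimality forces $I$ to be a brick. Indecomposability of $X$ now enters: for every indecomposable summand $M$ of $\Ker f$ one has $\Ext^1_A(I,M)\neq0$, since otherwise $M$ would split off $X$. Because $\End_A(I)$ is a division ring and the sequence $0\to\Ker f\to X\to I\to0$ is non-split, every map $I\to X$ lands in $\Ker f$; choosing a summand $X_1$ of $\Ker f$ receiving a non-zero (hence, by minimality of $\dim_kI$, injective) map from $I$, the surjection $\Ext^1_A(X_1,X_1)\twoheadrightarrow\Ext^1_A(I,X_1)\neq0$ shows $X_1$ is not exceptional. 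Thus $X_1\subsetneq X$ is indecomposable with self-extension, and one recurses. The difference in strategy is that the paper produces a strictly smaller \emph{indecomposable} submodule at each step, rather than a minimal submodule with self-extension.
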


\begin{proof}
If $X$ is not a brick, then take an endomorphism $f$ such that $I:=\Im(f)$ has minimal dimension. Note that $\Ext^1_A(I,M)\neq0$ for every indecomposable summand $M$ of $\Ker(f)$. Next, by minimality, $I$ is a brick. The image of $\Hom_A(I,X)$ in $\End_A(I)$ is a proper left ideal (as the sequence is not split), and so must be zero. Thus $\Hom_A(I,\Ker(f))\cong\Hom_A(I,X)$ is non-zero. Take an indecomposable summand $X_1$ of $\Ker(f)$ such that there is a non-zero map $I\to X_1$. Again, by minimality, this must be injective, and hence we have an epimorphism $\Ext^1_A(X_1,X_1)\twoheadrightarrow\Ext^1_A(I,X_1)$, so that $X_1$ is not exceptional. The result now follows by induction on $\dim X$.
\end{proof}

\section{The subobjects of generalised Cartan lattices}\label{se:subobjects}

In this section we establish the correspondence between subobjects of generalised Cartan lattices and non-crossing partitions. Later on we illustrate this by looking at representations of hereditary algebras. In fact, the proof of our main result uses representations of hereditary algebras in an essential way (see Remark~\ref{re:need-hered}).

\subsection*{Subobjects}
Fix a category and an object $X$ in this category. Two monomorphisms $\p\colon U\to X$ and $\p'\colon U'\to X$ are \emph{equivalent} if $\p$ factors through $\p'$ and $\p'$ factors through $\p$. The equivalence class of a monomorphism $\p$ ending in $X$ is denoted by $[\p]$; they form the \emph{subobjects} of $X$ and $\Sub(X)$ denotes the set of these subobjects. Defining $[\p]\le[\p']$ if $\p$ factors through $\p'$ gives a partial order on $\Sub(X)$.

The following lemma provides a crucial step in our proof of the main theorem. It would be interesting to have a purely combinatorial proof which avoids the use of hereditary algebras.

\begin{lem}\label{le:orbits}
Let $(\Ga,E)$ be a generalised Cartan lattice, and $F$ a real exceptional sequence of length $r$. Then
\begin{enumerate}
\item $(\bbZ F,\sigma F)$ is a generalised Cartan lattice for some $\sigma$, and
\item if $F'$ is any other real exceptional sequence such that $\bbZ F'=\bbZ F$, then there exists $\sigma\in\{\pm1\}\wr B_r$ such that $F'=\sigma F$.
\end{enumerate}
\end{lem}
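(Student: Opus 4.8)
The key is to pass to a hereditary algebra realisation and use the transitivity of the braid group action there. By Lemma~\ref{le:hereditary}~(2) we may choose a finite dimensional hereditary algebra $A$ together with an identification $\Ga\xto{\sim}K_0(A)$ under which $E$ corresponds to the sequence of simple modules. Extend $F$ to a complete real exceptional sequence $(F,G)$ using Lemma~\ref{le:real-seq}; by Proposition~\ref{pr:exceptional-ncp} this complete real exceptional sequence comes (up to sign) from a complete exceptional sequence $(X_1,\ldots,X_n)$ of $A$-modules, with $[X_i]$ equal to $\pm f_i$ for $i\le r$. Now $\C(X_1,\ldots,X_r)$ is, by Theorem~\ref{th:subcat}, equivalent to $\mod B$ for some finite dimensional hereditary algebra $B$, and this equivalence identifies $K_0(B)$ with the sublattice $\bbZ F = \bbZ([X_1],\ldots,[X_r])$ of $K_0(A)$, together with the exceptional sequence $([X_1],\ldots,[X_r])$ of $B$-modules --- which can be completed to a complete exceptional sequence of $\mod B$ by Corollary~\ref{co:extend}, and hence (reordering, or just observing that the simples of $B$ give \emph{some} complete orthogonal exceptional sequence) shows $\bbZ F$ carries the structure of a generalised Cartan lattice. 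Applying the sign group to replace each $f_i$ by the genuine class $[X_i]$ if necessary, we obtain part~(1): there is $\sigma\in\{\pm1\}\wr B_r$ with $(\bbZ F, \sigma F)$ a generalised Cartan lattice, namely $\sigma F = ([X_1],\ldots,[X_r])$ viewed inside the Grothendieck group of $\C(X_1,\ldots,X_r)\simeq\mod B$.

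For part~(2), suppose $F'$ is another real exceptional sequence with $\bbZ F'=\bbZ F$. Since $F$ and $F'$ have the same $\bbZ$-span, which has rank $r$ by Lemma~\ref{le:lin-indept}, they have the same length $r$. As above, choose a hereditary algebra $A$ realising $(\Ga,E)$; then by Proposition~\ref{pr:exceptional-ncp} (applied to complete exceptional sequences extending $F$ and $F'$, which exist by Lemma~\ref{le:real-seq} and whose initial segments are again real exceptional sequences) there are exceptional sequences $(X_1,\ldots,X_r)$ and $(X'_1,\ldots,X'_r)$ in $\mod A$ with $[X_i]=\pm f_i$ and $[X'_i]=\pm f'_i$. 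Because $\bbZ F = \bbZ F'$ inside $K_0(A)$, the subcategories $\C(X_1,\ldots,X_r)$ and $\C(X'_1,\ldots,X'_r)$ have the same Grothendieck group as sublattices of $K_0(A)$; by the characterisation of such subcategories (Theorem~\ref{th:subcat}, and the fact that a thick subcategory of $\mod A$ of this form is determined by its class in $K_0(A)$, cf.\ the perpendicular calculus in the appendix and Proposition~\ref{pr:real-seq}), the two subcategories coincide, say both equal $\C\simeq\mod B$. Thus $(X_1,\ldots,X_r)$ and $(X'_1,\ldots,X'_r)$ are both complete exceptional sequences of $\mod B$, so by Theorem~\ref{th:braid} (Crawley-Boevey--Ringel) there is $\sigma\in B_r$ with $\sigma(X_1,\ldots,X_r)\cong(X'_1,\ldots,X'_r)$. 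Transporting through the $B_r$-equivariant bijection of Proposition~\ref{pr:exceptional-ncp}, and absorbing the signs $[X_i]=\pm f_i$, $[X'_i]=\pm f'_i$ into an element of the sign group, yields $\sigma\in\{\pm1\}\wr B_r$ with $\sigma F = F'$, as required.

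\textbf{Main obstacle.} The one genuinely non-formal point is the claim that a subcategory of the form $\C(X_1,\ldots,X_r)$ is determined by the sublattice $\bbZ([X_1],\ldots,[X_r])\subseteq K_0(A)$; equivalently, that if $\bbZ F=\bbZ F'$ then the two perpendicular-type subcategories agree. This is where Theorem~\ref{th:subcat} and the perpendicular calculus are essential: one identifies $\C(X_1,\ldots,X_r)$ with a left or right perpendicular category of an exceptional sequence, and perpendicular categories of the relevant kind are controlled by the corresponding sublattice (the orthogonal complement inside $K_0(A)$ is recovered from $\bbZ F$, and the subcategory is recovered from its orthogonal). Everything else is a matter of chasing the equivariant bijections of Proposition~\ref{pr:exceptional-ncp} and keeping track of the sign group, which is precisely the reason the statement is phrased up to $\{\pm1\}\wr B_r$ rather than $B_r$.
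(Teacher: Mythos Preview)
Your overall strategy---realise $(\Ga,E)$ as $K_0(A)$, lift real exceptional sequences to exceptional sequences of modules via Proposition~\ref{pr:exceptional-ncp}, and then invoke the transitivity of Theorem~\ref{th:braid}---is exactly the paper's. The gap is in part~(2), precisely at the point you yourself flag as the ``main obstacle'': you assert that $\C(X_1,\ldots,X_r)=\C(X'_1,\ldots,X'_r)$ because their Grothendieck groups agree as sublattices of $K_0(A)$, but the justification you offer (perpendicular calculus, Proposition~\ref{pr:real-seq}) does not establish this. A perpendicular category $X^\perp$ is defined by the vanishing of both $\Hom$ and $\Ext^1$, not merely by the vanishing of the Euler pairing, so knowing $(\bbZ F)^\perp=(\bbZ F')^\perp$ in $K_0(A)$ does not by itself pin down the subcategory. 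And Proposition~\ref{pr:real-seq} only yields $s_F=s_{F'}$, which is of no help without the very correspondence (between subcategories and non-crossing partitions) this lemma is setting up; invoking it here is circular.

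The paper resolves this by choosing the complements compatibly. Extend $X$ (not $X'$) to a complete exceptional sequence $(X,Y)$ in $\mod A$, and set $G=([Y_1],\ldots,[Y_s])$. Since $\bbZ G={}^\perp(\bbZ F)={}^\perp(\bbZ F')$, the sequence $(F',G)$ is again a complete real exceptional sequence in $\Ga$; lifting via Proposition~\ref{pr:exceptional-ncp} (together with uniqueness of rigid modules with a given class) shows that $(X',Y)$ is a complete exceptional sequence in $\mod A$ with the \emph{same} tail $Y$. Now $\C(X)=Y^\perp=\C(X')$ follows directly from the perpendicular calculus (Theorem~\ref{th:subcat}), and Theorem~\ref{th:braid} applied inside this common subcategory finishes. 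The key move you are missing is this use of a common complement $Y$, which is what actually converts the lattice equality $\bbZ F=\bbZ F'$ into an equality of module subcategories.

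For part~(1) your argument is essentially the paper's, but note a small muddle: the $\sigma$ you want is the braid taking $X$ to the simples of $\C(X)\cong\mod B$ (which exists by Theorem~\ref{th:braid} applied to $B$), composed with a sign adjustment---not the identity, since $([X_1],\ldots,[X_r])$ itself need not be orthogonal.
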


\begin{proof}
Let $E=(e_1,\ldots,e_n)$. By Lemma~\ref{le:hereditary} we can find a finite dimensional hereditary $k$-algebra $A$ and simple $A$-modules $S_i$ such that $[S_i]=e_i$.

(1) Write $F=(f_1,\ldots,f_r)$. By Proposition~\ref{pr:exceptional-ncp} we can find an exceptional sequence $X=(X_1,\ldots,X_r)$ in $\mod A$ such that $F=\e([X_1],\ldots,[X_r])$ for some $\e\in\{\pm1\}^r$. Then $\C(X)\cong\mod B$ for some finite dimensional hereditary algebra, by Theorem~\ref{th:subcat}, so $\bbZ F=K_0(\C(X))$ is naturally a generalised Cartan lattice.

(2) Begin by extending $X$ to a complete exceptional sequence $(X,Y)$ for some $Y=(Y_1,\ldots,Y_s)$. Set $g_i:=[Y_i]$ and $G=(g_1,\ldots,g_s)$. Then $(F,G)$ is a complete real exceptional sequence in $\Gamma$.

Now let $F'=(f'_1,\ldots,f'_r)$ be another real exceptional sequence such that $\bbZ F'=\bbZ F$. Again, we can lift this modulo the sign group action to an exceptional sequence $X'=(X'_1,\ldots,X'_r)$. Note that $\bbZ G={}^\perp F={}^\perp F'$, so that $(F',G)$ is also a complete real exceptional sequence in $\Gamma$. It follows from Proposition~\ref{pr:exceptional-ncp} that $(X',Y)$ is also complete exceptional sequence in $\mod A$. Therefore $\C(X)=\C(Y)^\perp=\C(X')$, and so $X'=\sigma(X)$ for some $\sigma\in B_r$ by Theorem~\ref{th:braid}. Hence $F'$ and $\sigma(F)$ agree up to signs.
\end{proof}

Let $\Exc(\Ga,E)$ be the set of equivalence classes of real exceptional sequences in $(\Ga,E)$, where two such sequences of length $r$ are equivalent if they determine the same orbit under the action of $\{\pm1\}\wr B_r$. We make this into a poset by saying that $[G]\le[F]$ if $G$ is an initial subsequence of some $\sigma F$.

\begin{thm}\label{th:main}
Let $(\Ga,E)$ be a generalised Cartan lattice. Then there are canonical poset isomorphisms
\[ \Sub(\Ga,E) \stackrel{\sim}\lto \Exc(\Ga,E) \stackrel{\sim}\lto \NC(\Ga,E). \]
The first sends the class of a monomorphism $\p\colon(\Ga',E')\to(\Ga,E)$ to $[\p E']$, and the second sends $[F]$ to $s_F$.
\end{thm}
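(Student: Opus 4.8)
The plan is to construct the two maps explicitly, check that each is order-preserving, and then exhibit inverses. For the first map, given a monomorphism $\p\colon(\Ga',E')\to(\Ga,E)$, the sequence $\p E'$ is a real exceptional sequence by definition of a morphism, and its orbit under $\{\pm1\}\wr B_{r}$ depends only on the equivalence class $[\p]$: if $\p'$ also represents this subobject, then $\p$ and $\p'$ have the same image sublattice $\bbZ(\p E')=\bbZ(\p'E')$, so Lemma~\ref{le:orbits}~(2) forces $\p'E'$ to lie in the same $\{\pm1\}\wr B_{r}$-orbit as $\p E'$. Conversely, given a real exceptional sequence $F$ of length $r$, Lemma~\ref{le:orbits}~(1) says $(\bbZ F,\s F)$ is a generalised Cartan lattice for suitable $\s$, and the inclusion $\bbZ F\hookrightarrow\Ga$ is then a morphism of generalised Cartan lattices (it is an isometry, since morphisms preserve $\langle-,-\rangle$, and it sends the complete real exceptional sequence $\s F$ to a real exceptional sequence in $\Ga$). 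Changing $\s$ or replacing $F$ by another representative of its orbit changes this morphism only up to equivalence, again by Lemma~\ref{le:orbits}~(2), so we obtain a well-defined map $\Exc(\Ga,E)\to\Sub(\Ga,E)$ inverse to the first. This shows the first arrow is a bijection; that it is a poset map in both directions is essentially a bookkeeping matter: $[\p']$ factors through $[\p]$ iff the image sublattice of $\p'$ sits inside that of $\p$, and by the lemma this happens iff $\p'E'$ (up to sign and braiding) is an initial subsequence of something in the orbit of $\p E'$, which is exactly the order on $\Exc$.

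For the second map, we send $[F]$ to $s_F\in W(\Ga,E)$. This is well-defined by Lemma~\ref{le:preservation-of-Coxeter}~(2), which gives $s_{\s F}=s_F$ for $\s\in\{\pm1\}\wr B_r$, and more generally $s_F=s_{F'}$ whenever $\bbZ F=\bbZ F'$, so it depends only on the orbit (and in fact only on the sublattice). Proposition~\ref{pr:real-seq} already tells us the image is precisely $\NC(\Ga,E)$ and that the fibres of $F\mapsto s_F$ are exactly the sets of real exceptional sequences with a fixed sublattice $\bbZ F$; combined with Lemma~\ref{le:orbits}~(2), which says these fibres are single $\{\pm1\}\wr B_r$-orbits, we conclude that $[F]\mapsto s_F$ is injective, hence a bijection $\Exc(\Ga,E)\xrightarrow{\sim}\NC(\Ga,E)$. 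It remains to verify that this bijection is an isomorphism of posets. If $[G]\le[F]$, say $G$ is an initial subsequence of $\s F=(G,H)$ with $s_{\s F}=s_F$, then $s_F=s_G s_H$, and since $s_{\s F}$ is a product of $r=\ell(s_F)$ reflections this factorisation is length-additive, giving $s_G\le s_F$ in the absolute order; so the map is order-preserving. For the converse — that $w\le s_F$ in $\NC(\Ga,E)$ implies the corresponding subobjects are comparable — I would argue as in the proof of Proposition~\ref{pr:real-seq}: write $s_F=wv$ with $\ell(s_F)=\ell(w)+\ell(v)$, lift the factorisation $s_F=t_1\cdots t_r$ to a real exceptional sequence via Theorem~\ref{th:IS2010} so that an initial segment $G$ realises $w=s_G$, and note $G$ is then an initial subsequence of a braid-translate of $F$, i.e. $[G]\le[F]$.

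The step I expect to be the main obstacle is the careful matching of the three partial orders. The bijections themselves follow quite directly from Lemma~\ref{le:orbits} and Proposition~\ref{pr:real-seq}; what requires attention is checking, in the backward direction, that $s_G\le s_F$ in the absolute order genuinely corresponds to $G$ being (up to sign and braiding) an initial subsequence of $F$, rather than merely a subsequence in some weaker sense, and that all this is compatible with the subobject order on $\Sub(\Ga,E)$. Here one uses the transitivity of the braid action on factorisations of a \emph{fixed} non-crossing partition, which is itself a consequence of Theorem~\ref{th:IS2010} applied inside the relevant sublattice, together with the observation (recorded just after the definition of $\NC$) that any $v$ with $s_F=wv$ length-additively is itself a non-crossing partition, so that the factorisation can be refined compatibly. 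Once these compatibilities are in place, composing the two isomorphisms proves the theorem.
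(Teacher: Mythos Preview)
Your proposal is correct and follows essentially the same route as the paper: both halves of the argument rest on Lemma~\ref{le:orbits} and Proposition~\ref{pr:real-seq}, with Theorem~\ref{th:IS2010} in the background, and the poset compatibilities are checked exactly as you outline.

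One small simplification: in the backward direction for $\Exc\to\NC$ you do not need transitivity of the braid action on factorisations of $s_F$ ``inside the sublattice''. The paper avoids this by using the bijection just established: given $s_{F'}\le s_F$, the element $s_{F'}^{-1}s_F$ is again a non-crossing partition, hence equals $s_{F''}$ for some real exceptional sequence $F''$; extending $s_F=s_{F'}s_{F''}$ to a length-$n$ factorisation of $c$ and lifting once via Theorem~\ref{th:IS2010} (for the full Coxeter element) produces a complete real exceptional sequence whose initial segments realise $F'$ and $F$ up to the $\{\pm1\}\wr B$-action. This is exactly the ``refine compatibly'' step you mention at the end, and it makes any appeal to a Weyl-group embedding for the sublattice (which in the paper is only established later, in Proposition~\ref{pr:weyl}) unnecessary.
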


\begin{proof}
By Proposition~\ref{pr:real-seq} the map $\Exc(\Ga,E)\to \NC(\Ga,E)$, $F\mapsto s_F$, is well-defined and surjective. Moreover, $s_F=s_{F'}$ if and only if $\bbZ F=\bbZ F'$, which by Lemma~\ref{le:orbits}~(2) occurs if and only if $[F']=[F]$. To see that it is an isomorphism of posets, note that $s_{F'}\le s_F$ if and only if $s_F=s_{F'}s_{F''}$ for some $F''$; since we have a bijection, this is if and only if $(F',F'')=\sigma F$ for some $\sigma$, or equivalently $[F']\leq[F]$.

Now consider the map $\Sub(\Ga,E)\to\Exc(\Ga,E)$. Given monomorphisms of generalised Cartan lattices $(\Ga'',E'')\xrightarrow{\psi}(\Ga',E')\xrightarrow{\phi}(\Ga,E)$, we know that $\psi E''$ is a real exceptional sequence in $(\Ga,E')$, so is an initial subsequence of some $\sigma E'$. Thus $\phi\psi E''$ is an initial subsequence of $\sigma\phi E'$, and hence that $[\phi\psi E'']\leq[\phi E']$. This shows that we have a map of posets.

Conversely, note that if $\sigma E$ is again orthogonal, then the identity map on $\Ga$ yields an isomorphism of generalised Cartan lattices $(\Ga,E)\cong(\Ga,\sigma E)$. By Lemma~\ref{le:orbits}~(1) we therefore have a well-defined map $\Exc(\Ga,E)\to\Sub(\Ga,E)$ sending $[F]$ to the class of the inclusion $(\bbZ F,\sigma F)\hookrightarrow(\Ga,E)$, and this map is inverse to the one above. Moreover, if $[F']\leq[F]$, where for simplicity $F'$ and $F$ are both orthogonal, then the inclusion $(\bbZ F',F')\hookrightarrow(\Ga,E)$ factors through $(\bbZ F,F)\hookrightarrow(\Ga,E)$. Thus $\Exc(\Ga,E)\to\Sub(\Ga,E)$ is also a map of posets.
\end{proof}

We write $\cox$ for the map $\Exc(\Ga)\to\NC(\Ga)$, $E'\mapsto s_{E'}$, as well as for the composition $\Sub(\Ga)\to\NC(\Ga)$.

\begin{cor}\label{co:map-on-nc}
Each morphism $\p\colon(\Ga',E')\to(\Ga,E)$ of generalised Cartan lattices induces a commutative diagram
\[\begin{CD}
\Sub(\Ga',E') @>{\sim}>> \Exc(\Ga',E') @>{\sim}>> \NC(\Ga',E')\\
@VVV @VVV @VVV\\
\Sub(\Ga,E) @>{\sim}>> \Exc(\Ga,E) @>{\sim}>> \NC(\Ga,E)
\end{CD} \]
It follows that $(\Ga,E)\mapsto\NC(\Ga,E)$ determines a functor from the category of generalised Cartan lattices to the category of posets.
\end{cor}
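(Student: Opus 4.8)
The plan is to verify that $\p$ induces compatible maps on all three posets and that the squares commute; functoriality in $(\Ga,E)$ then follows formally. First I would define the middle vertical map $\Exc(\Ga',E')\to\Exc(\Ga,E)$. Given a real exceptional sequence $F'$ in $(\Ga',E')$, Proposition~\ref{pr:morph} tells us that $\p F'$ is a real exceptional sequence in $(\Ga,E)$; moreover the wreath-product action is clearly preserved under an isometry, so $[\p F']$ depends only on $[F']$, giving a well-defined map. That it is a map of posets is immediate: if $G'$ is an initial subsequence of $\s F'$, then $\p G'$ is an initial subsequence of $(\s$ applied to$)$ $\p F'$ — more precisely, writing out the action of a generator $\s_i$ on $\p F'$ and using $s_{\p(a)}\p=\p s_a$ from Lemma~\ref{le:roots}, one sees $\p(\s_i F')=\s_i(\p F')$, so $\p$ is equivariant. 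Hence $[\p G']\le[\p F']$.

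Next I would read off the other two verticals through the horizontal isomorphisms of Theorem~\ref{th:main}. The left-hand map $\Sub(\Ga',E')\to\Sub(\Ga,E)$ is just composition with $\p$ (legitimate since $\p$ is a monomorphism, and composition of monomorphisms of generalised Cartan lattices is again such by the Lemma preceding Proposition~\ref{pr:morph}), and it is order-preserving for the obvious reason. The left-hand square commutes essentially by definition: the image of $[\psi\colon(\Ga'',E'')\to(\Ga',E')]$ under ``compose with $\p$ then take $\Exc$'' is $[\p\psi E'']$, which is exactly the image of $[\psi E'']=$ (image of $[\psi]$ in $\Exc(\Ga',E')$) under $F'\mapsto\p F'$. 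The right-hand vertical is then \emph{defined} to be the composite $\NC(\Ga',E')\xrightarrow{\sim}\Exc(\Ga',E')\to\Exc(\Ga,E)\xrightarrow{\sim}\NC(\Ga,E)$, so the right-hand square commutes tautologically; one checks it sends a reflection $s_a$ to $s_{\p(a)}$, which matches $[F']\mapsto s_{F'}\mapsto[\p F']\mapsto s_{\p F'}$ since $s_{\p F'}=s_{\p f'_1}\cdots s_{\p f'_r}$ by Lemma~\ref{le:roots}. This last point also shows the right-hand vertical is independent of the chosen isomorphisms and is purely group-theoretic, $s_a\mapsto s_{\p(a)}$.

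Finally, for functoriality one observes that all three constructions — composition of monomorphisms, $F'\mapsto\p F'$, and $s_a\mapsto s_{\p(a)}$ — visibly respect composition of morphisms and send identities to identities, and that each sends monomorphisms to injective poset maps. So $(\Ga,E)\mapsto\Sub(\Ga,E)$, $(\Ga,E)\mapsto\Exc(\Ga,E)$ and $(\Ga,E)\mapsto\NC(\Ga,E)$ are functors to the category of posets, naturally isomorphic via the horizontal arrows of Theorem~\ref{th:main}. The only mildly delicate point — the ``main obstacle'', though it is already essentially handled by the machinery in place — is checking that $\p$ is equivariant for the wreath-product action on real exceptional sequences, i.e.\ $\p(\s F')=\s(\p F')$; this reduces via Proposition~\ref{pr:braidaction} to the rank-two identity $\p(s_{e}(f))=s_{\p(e)}(\p(f))$, which is exactly Lemma~\ref{le:roots} applied to the isometry $\p$ (an isometry preserves $\langle-,-\rangle$, hence also the symmetrised form, hence intertwines the reflections).
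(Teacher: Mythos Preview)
Your proposal is correct and follows essentially the same approach as the paper's proof: define the middle vertical by $[F']\mapsto[\p F']$ using Proposition~\ref{pr:morph}, check equivariance for the wreath-product action (via Lemma~\ref{le:roots}), and observe that the left square commutes because $[\psi]\mapsto[\p\psi]$. The paper's proof is considerably terser---it leaves the equivariance verification and the treatment of the right-hand square implicit---but the underlying argument is the same.
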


\begin{proof}
We know that $\p$ sends real exceptional sequences in $(\Ga',E')$ to real exceptional sequences in $(\Ga,E)$. This preserves the action of the wreath product and the notion of being an initial subsequence, so induces a morphism of posets $\Exc(\Ga',E')\to\Exc(\Ga,E)$. Since $[\psi]\in\Sub(\Ga',E')$ is sent to $[\p\psi]\in\Sub(\Ga,E)$, the square on the left is commutative.
\end{proof}

\subsection*{The Weyl group}

We now show that the assignment $(\Ga,E)\mapsto W(\Ga,E)$ is also functorial, and restricts to the functor $(\Ga,E)\mapsto\NC(\Ga,E)$ constructed above. The proof requires several steps.

\begin{lem}\label{le:weyl}
Let $(\Gamma,E)$ be an indecomposable generalised Cartan lattice of rank $n+1$ and with Weyl group $W:=W(\Gamma,E)$. Let $\Gamma'\leq\Gamma$ be a sublattice with basis $\{f_1,\ldots,f_n\}$ consisting of real roots and write $W'\leq W$ for the subgroup generated by $s_{f_1},\ldots,s_{f_n}$. Then the restriction map $W'\to\Aut(\Gamma')$ is injective.
\end{lem}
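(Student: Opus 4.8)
The plan is to reduce to a rank-$n$ situation and then use an induction on the rank, exploiting the structure theory we have already developed for real exceptional sequences and their Coxeter elements. First I would observe that the sequence $F' = (f_1,\ldots,f_n)$ need not itself be a real exceptional sequence in $(\Gamma,E)$, so the first step is to replace it by one that is: since $\Gamma'$ has rank $n$ and $\Gamma$ has rank $n+1$, the orthogonal complement considerations give us room, but the honest approach is to note that we only need to understand the subgroup $W' = \langle s_{f_1},\ldots,s_{f_n}\rangle$ acting on $\Gamma' = \bbZ F'$. The kernel of $W'\to\Aut(\Gamma')$ consists of elements of $W\leq\Aut(\Gamma)$ acting trivially on $\Gamma'$; such an element fixes an $n$-dimensional sublattice of the rank $n+1$ lattice $\Gamma$, hence fixes a subgroup of corank (at most) one.

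The key step is then to analyse such an element $w\in W$ fixing $\Gamma'$ pointwise. Writing $\Gamma\otimes\bbQ = (\Gamma'\otimes\bbQ)\oplus L$ for a line $L$, $w$ preserves this decomposition on the rational span after a suitable choice, and acts on $\Gamma'\otimes\bbQ$ as the identity. I would argue that $w$ must act on the quotient $\Gamma/\Gamma'$ (which has rank one, up to torsion) by $\pm1$, since $\Aut$ of a rank-one lattice is $\{\pm1\}$; combined with $w$ preserving the symmetrised form, this pins down $w$ up to very few possibilities. If $w$ acts as $+1$ on the quotient then $w = \id$ already (it is then unipotent and of finite order on each generator's orbit, or one argues directly it fixes a basis). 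If $w$ acts as $-1$ on the quotient, then $w$ is itself a reflection-like element, $\id - w$ has rank one, and by Carter's Lemma (Lemma~\ref{le:Carter}) in the finite case, or by the positivity/height arguments used in Theorem~\ref{th:Dynkin-type} in general, one shows $w$ cannot lie in the subgroup $W'$ generated by reflections in roots of $\Gamma'$ — because every such reflection $s_{f_i}$ acts trivially on $\Gamma'/\bbZ f_i$ but an element of $W'$ acting as $-1$ on $\Gamma/\Gamma'$ would have to be a single reflection $s_a$ with $a\notin\Gamma'\otimes\bbQ$, contradicting $s_a\in W'$ (which by Lemma~\ref{le:preservation-of-Coxeter}(4) and the combinatorics of $\NC$ forces $a$ into the span of the $f_i$).

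More precisely, I would run the argument as follows. Suppose $w\in\ker(W'\to\Aut(\Gamma'))$, $w\neq\id$. Pick a real exceptional sequence $G$ in $(\Gamma,E)$ with $\bbZ G = \Gamma'$ after passing to the braid-orbit completion (using Lemma~\ref{le:orbits}(1), $(\Gamma',\sigma F')$ is a generalised Cartan lattice once $F'$ is a real exceptional sequence; the step that genuinely needs care is checking $F'$, or a reordering of it, actually \emph{is} a real exceptional sequence in $\Gamma$, which I expect follows because the $f_i$ are real roots spanning a sublattice of corank one and the completeness constraint is only a rank condition). Then $w$ lies in $W(\Gamma',\sigma F')\leq W$, acts trivially on $\Gamma'$, hence is trivial in $W(\Gamma',\sigma F')$ since that Weyl group acts faithfully on $\Gamma'$ by construction (it is the Weyl group of a generalised Cartan lattice, and such groups act faithfully — this is where the hereditary-algebra realisation and Proposition~\ref{pr:exceptional-ncp} enter, identifying $W(\Gamma',\sigma F')$ with a subgroup of $\Aut(\Gamma')$ already). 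So $w = \id$, a contradiction.

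The main obstacle I anticipate is the passage from "$\{f_1,\ldots,f_n\}$ is a basis of real roots" to "a reordering is a real exceptional sequence in $(\Gamma,E)$": being real roots that span is not obviously enough to be an exceptional sequence (the condition $\langle f_i,f_j\rangle = 0$ for $i>j$ need not hold). The honest fix is that the lemma as stated only asserts injectivity of $W'\to\Aut(\Gamma')$, and the intended proof is likely the rank argument: $w\in W'$ fixing $\Gamma'$ fixes a corank-$\leq 1$ sublattice of $\Gamma$; if $w$ also fixes some vector outside $\Gamma'\otimes\bbQ$ then $w=\id$; otherwise $\id-w$ has rank exactly one with image a line $L$ transverse to $\Gamma'\otimes\bbQ$, and I claim no nontrivial element of the reflection subgroup $\langle s_{f_1},\dots,s_{f_n}\rangle$ can have this property — because reducing modulo $\Gamma'$, each $s_{f_i}$ is the identity on $\Gamma/\Gamma'$, hence so is every element of $W'$, forcing $\id-w$ to have image inside $\Gamma'\otimes\bbQ$, contradiction. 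So in fact the cleanest route avoids hereditary algebras entirely: reduce mod $\Gamma'$, note $W'$ acts trivially there, conclude the kernel element $w$ satisfies $(\id-w)(\Gamma)\subseteq\Gamma'$, then use $w$ fixing $\Gamma'$ to get $(\id-w)^2 = 0$, i.e. $w$ unipotent; a unipotent element preserving the (integral, symmetrisable) symmetrised form and lying in a Weyl group must be trivial, e.g. because it has finite order on each $\Phi$-orbit or by a standard positivity argument as in Theorem~\ref{th:Dynkin-type}. I would present this last version as the proof, with the hereditary-algebra realisation kept in reserve for the faithfulness input if the unipotency argument needs it.
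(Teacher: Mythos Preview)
Your ``cleanest route'' correctly reduces to showing that a unipotent element of $W'$ is trivial: since each $s_{f_i}$ acts trivially on $\Gamma/\Gamma'$, so does every $w\in W'$, giving $(\id-w)(\Gamma)\subseteq\Gamma'$, and combined with $w|_{\Gamma'}=\id$ this yields $(\id-w)^2=0$. The paper's proof begins the same way, computing $x:=w(e)-e\in\Gamma'$ for $e\in E\setminus\Gamma'$.

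The gap is the next step: your claim that a unipotent element of a Weyl group must be trivial is \emph{false} precisely in the affine case. In an affine Weyl group the translations $t_a\colon y\mapsto y-(a,y)\delta$ are nontrivial unipotent elements (since $(t_a-\id)(\delta)=0$), so neither ``finite order on $\Phi$-orbits'' nor the positivity argument from Theorem~\ref{th:Dynkin-type} applies. The paper therefore does substantially more work: it first shows $x\in\rad\Gamma$ (using that $w$ preserves the symmetrised form and fixes $\Gamma'$, so $(x,\Gamma')=0$ and $(x,e)=0$) and that $x$ is positive or negative (using that $w(e)=e+x$ is a real root). This forces $(\Gamma,E)$ to be of affine type by the trichotomy for indecomposable Cartan matrices. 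Then, writing $W\subseteq W^0\ltimes L^0$ via the standard affine decomposition, one shows $w=t_a$ is a pure translation; the crucial point is that $w\in W'$ forces $a\in\bbQ\Gamma'$, whence $(a,\Gamma')=0$ (as $w$ fixes $\Gamma'$) gives $(a,a)=0$, while $a\in\bbQ\Gamma^0$ lies in a positive-definite sublattice, so $a=0$.

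Your earlier attempts also have problems you partly acknowledge: the $f_i$ are merely real roots forming a basis, with no reason to be (reorderable into) a real exceptional sequence, so the appeal to Lemma~\ref{le:orbits} is unjustified; and the paper uses the present lemma to \emph{prove} the identification of $W(\Gamma',\sigma F)$ with a subgroup of $W$ in Proposition~\ref{pr:weyl}, so invoking that identification here would be circular.
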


\begin{proof}
Suppose $w\in W'$ fixes each $f_i$. We need to show that $w$ is the identity. It will be convenient at times to extend scalars to the rationals, yielding the vector spaces $\mathbb Q\Gamma'\leq\mathbb Q\Gamma$.

Take $e\in E\setminus\Gamma'$ and set $x:=w(e)-e$. Assume for contradiction that $x\neq0$. We claim that $x\in\rad(\Gamma)$ and is either positive or negative.

Note first that $x\in\Gamma'$. Next, for all $f\in\Gamma'$ we have $w(f)=f$, so
\[ (e,f) = (w(e),w(f)) = (e+x,f) \]
and hence $(x,f)=0$ for all $f\in\Gamma'$. In particular, $(x,x)=0$. Also,
\[ (e,e) = (w(e),w(e)) = (e+x,e+x) = (e,e) + 2(e,x) \]
and hence $(e,x)=0$. Thus $x\in\rad(\Gamma)$.

Next write $x=x_+-x_-$ with $x_+,x_-\geq0$ having disjoint support. Note that $(x_+,e')=(x_-,e')$ for all $e'\in E$. If $e'$ is not in the support of $x_+$ then the the left hand side is non-positive, whereas if $e'$ is not in the support of $x_-$ then the right hand side is non-positive. We conclude that $(x_+,e')=(x_-,e')\leq0$ for all $e'\in E$.

Finally, $w(e)=e+x$ is a real root, so is either positive or negative. If $x_+>0$, then $x_-=0$ or $x_-=e$, and in the latter case $(x_-,e)>0$, a contradiction. Thus $x=x_+>0$. Otherwise, if $x_+=0$, then $x=-x_-<0$. This proves the claim.

By the description of the different types of Cartan matrices given before Theorem~\ref{th:root} we see that we have reduced to the case when $(\Gamma,E)$ is of affine type. In this case $\rad(\Gamma)=\mathbb Z\delta$ for some $\delta>0$, \cite[Theorem~5.6~(b)]{Ka1990}, so $x=m\delta$ with $m\neq0$. In particular, $\delta\in\mathbb Q\Gamma'$, say $\delta=\sum_i\lambda_if_i$.

Define for $a\in\mathbb Q\Gamma$ the translation $t_a\in\Aut(\mathbb Q\Gamma)$ via $t_a(y):=y-(a,y)\delta$ and set $L:=\{t_a:a\in\mathbb Q\Gamma\}$. Note that $vt_a=t_{v(a)}v$ for all $v\in W$.

As in \cite[Chapter~6]{Ka1990} we can find $e_0\in E$ such that, writing
\[ E^0 := E\setminus\{e_0\}, \quad \Gamma^0 := \mathbb Z E^0, \quad
W^0 := W(\Gamma^0,E^0) \quad\textrm{and}\quad L^0 := \{t_a:a\in\mathbb Q\Gamma^0\}, \]
then $(\Gamma^0,E^0)$ is a generalised Cartan lattice of Dynkin type and $W$ is a subgroup of $W^0\ltimes L^0$ \cite[Proposition~6.5]{Ka1990}. Accordingly, we can write $w=\bar wt_a$. Since $w(y)-y\in\mathbb Z\delta$ for all $y\in\Gamma$, the same holds for $\bar w$. On the other hand, $\bar w(y)-y\in\Gamma^0$ for all $y\in\Gamma$. Since $\Gamma^0\cap\mathbb Z\delta=\{0\}$ we deduce that $\bar w=\id$ and $w=t_a$.

Observe now that there exist $a_i\in\mathbb Q\Gamma'$ such that
\[ w(y) = y - \sum_i(a_i,y)f_i \quad\textrm{for all }y\in\Gamma. \]
For, this clearly holds for each reflection $s_{f_i}$, so holds for all $v\in W'$ by induction on length.

Comparing this to the formula for $t_a$ and using that the $f_i$ are linearly independent gives us that $a_i-\lambda_ia\in\rad(\mathbb Q\Gamma)=\mathbb Q\delta$ for all $i$, and hence that $a\in\mathbb Q\Gamma'$. To complete the proof note that $w(f)=f$ for all $f\in\Gamma'$ implies $(a,f)=0$ for all $f\in\Gamma'$, and hence that $(a,a)=0$. On the other hand $a\in\mathbb Q\Gamma^0$, where the bilinear form is positive definite, so $a=0$. Hence $w=\id$.
\end{proof}

\begin{prop}\label{pr:weyl}
Let $(\Ga, E)$ be a generalised Cartan lattice and $F=(f_1,\ldots,f_r)$ a real exceptional sequence. Set $\Ga'=\bbZ F$ and choose $\sigma$ such that $\sigma F$ is orthogonal.  Then the assignment $w\mapsto w|_{\Ga'}$ induces an isomorphism
\begin{equation}\label{eq:weyl}
W(\Ga,E)\supseteq\langle s_{f_1},\ldots,s_{f_r}\rangle\stackrel{\sim}\lto W(\Ga',\sigma F).
\end{equation}
\end{prop}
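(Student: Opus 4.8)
I would separate the statement into two parts: that $w\mapsto w|_{\Gamma'}$ is a well-defined surjection onto $W(\Gamma',\sigma F)$, which is easy, and that it is injective, which is the substance.

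For the first part, note that since each $f_i$ is a root, $\langle f_i,f_i\rangle$ divides $(x,f_i)=\langle x,f_i\rangle+\langle f_i,x\rangle$ for every $x\in\Ga$, so $s_{f_i}(x)-x\in\bbZ f_i$. Hence each $s_{f_i}$, and therefore the whole subgroup $W':=\langle s_{f_1},\dots,s_{f_r}\rangle$, maps $\Gamma'=\bbZ F$ into itself (and likewise preserves $\bbZ G$ for any real exceptional sequence extending $F$). Because the symmetrised form on $\Gamma'$ is the restriction of the one on $\Ga$, the restriction of $s_{f_i}$ to $\Gamma'$ is exactly the reflection $s_{f_i}$ computed inside $\Gamma'$. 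Writing $\sigma F=(g_1,\dots,g_r)$ — an orthogonal exceptional sequence, so that $(\Gamma',\sigma F)$ is a generalised Cartan lattice by Lemma~\ref{le:orbits}(1) — Lemma~\ref{le:equivar} shows $(s_{g_1},\dots,s_{g_r})$ lies in the braid orbit of $(s_{f_1},\dots,s_{f_r})$ under the conjugation action, hence $W'=\langle s_{g_1},\dots,s_{g_r}\rangle$ and the image of the restriction map is precisely $\langle s_{g_i}|_{\Gamma'}\rangle=W(\Gamma',\sigma F)$. So \eqref{eq:weyl} is a well-defined surjection, and only injectivity remains.

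For injectivity I would prove, by induction on the corank $c:=\rk\Ga-r$, the statement: \emph{if $F=(f_1,\dots,f_r)$ is any real exceptional sequence in $(\Ga,E)$ and $w\in\langle s_{f_1},\dots,s_{f_r}\rangle$ restricts to the identity on $\bbZ F$, then $w=\id$.} When $c=0$ we have $\bbZ F=\Ga$ and there is nothing to prove. For the inductive step, extend $F$ to a complete real exceptional sequence $(F,G)$ via Lemma~\ref{le:real-seq}, write $G=(g_1,\dots,g_c)$, and set $\tilde F:=(f_1,\dots,f_r,g_1)$, $\tilde\Ga:=\bbZ\tilde F$, a sublattice of corank $c-1$ which (after reordering) is a generalised Cartan lattice by Lemma~\ref{le:orbits}(1), with $\tilde F$ a complete real exceptional sequence of $\tilde\Ga$; in particular the entries of $\tilde F$ form a basis of $\tilde\Ga$ consisting of real roots of $\tilde\Ga$. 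The key claim is that $w$, which a priori only kills $\bbZ F$, in fact kills all of $\tilde\Ga$; granting this, the inductive hypothesis applied to $\tilde F$ (using $w\in\langle s_{f_i}\rangle\subseteq\langle s_{f_1},\dots,s_{f_r},s_{g_1}\rangle$) gives $w=\id$. To prove the claim I would decompose $\tilde\Ga=A_1\oplus\cdots\oplus A_m$ into its indecomposable generalised Cartan sublattices. Each entry of $\tilde F$, being a real root of $\tilde\Ga$, lies in a single block, so the basis $\tilde F$ is adapted to this decomposition and $g_1$ lies in exactly one block, say $A_k$. For $j\neq k$ the block $A_j$ is spanned by those $f_i$ it contains, so $A_j\subseteq\bbZ F$ and $w|_{A_j}=\id$. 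Since the symmetrised form vanishes between distinct blocks, each $s_{f_i}$ acts as a reflection on the block containing $f_i$ and trivially on the others, so $w|_{\tilde\Ga}$ is block-diagonal; its $A_k$-component lies in $\langle s_{f_i}:f_i\in A_k\rangle$ and restricts to the identity on $\Gamma'\cap A_k=\bbZ(f_i:f_i\in A_k)$, which is a corank-one sublattice of the indecomposable lattice $A_k$ with a basis of real roots. Lemma~\ref{le:weyl} now forces this component to be trivial, so $w|_{\tilde\Ga}=\id$, completing the induction.

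The hard part is exactly this reduction to the corank-one situation of Lemma~\ref{le:weyl}: one must step up one real root at a time through the intermediate lattices $\bbZ(f_1,\dots,f_r,g_1)$, and cope with the fact that these need not be indecomposable even when $\Ga$ is — which is why the decomposition into blocks, together with the observation that $w$ already acts trivially away from the freshly added root $g_1$, is needed. Everything else — the compatibility of reflections with passage to sublattices, the equivariance in Lemma~\ref{le:equivar}, and the existence of the relevant generalised Cartan lattice structures via Lemmas~\ref{le:orbits} and~\ref{le:real-seq} — is routine from the results already established.
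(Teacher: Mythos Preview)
Your approach is correct and rests on the same key idea as the paper: establish injectivity by adjoining a single extra real root and invoking Lemma~\ref{le:weyl} on the resulting rank-$(r+1)$ sublattice. The paper's proof is more direct, however. Rather than inducting on corank, it extends $F$ to a complete real exceptional sequence $(f_1,\ldots,f_n)$ and, for each $r<i\le n$, applies Lemma~\ref{le:weyl} to $\Ga_i:=\bbZ(f_1,\ldots,f_r,f_i)$ to conclude $w(f_i)=f_i$; since $w$ already fixes $f_1,\ldots,f_r$ and the $f_i$ span $\Ga$, this gives $w=\id$ immediately. Your induction unwinds to exactly this argument, but with additional bookkeeping. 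Conversely, you are more careful on one point the paper glosses over: Lemma~\ref{le:weyl} requires the ambient lattice to be \emph{indecomposable}, and your block decomposition (observing that $w$ already acts trivially on blocks contained in $\bbZ F$, and applying the lemma only to the indecomposable block containing the new root) makes this reduction explicit.
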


\begin{proof} 
Let $w\in\langle s_{f_1},\ldots,s_{f_r}\rangle$ fix $\Ga'$ pointwise. We need to show that $w=\id$.

Extend $F$ to a complete real exceptional sequence $(f_1,\ldots,f_n)$ for $(\Gamma,E)$. Given $r<i\le n$ set $F_i=(f_1,\ldots,f_r,f_i)$ and $\Ga_i=\bbZ F_i$. By Lemma~\ref{le:orbits}~(1) there exists $\t_i\in\{\pm 1\}\wr B_{r+1}$ such that $(\Ga_i,\t_iF_i)$ is a generalised Cartan lattice, so we can apply Lemma~\ref{le:weyl} to deduce that $w$ is the identity on $\Ga_i$. This holds for each such $i$, so $w$ is the identity on the whole of $\Ga$.
\end{proof}

\begin{thm}\label{th:natural}
Let $\p\colon(\Ga',E')\to(\Ga,E)$ be a morphism of generalised Cartan lattices, so yielding an injection on the set of real roots. Then the map $s_a\mapsto s_{\p(a)}$ yields an injective group homomorphism $\p_*\colon W(\Ga',E')\to W(\Ga,E)$, and this restricts to the poset homomorphism $\NC(\Ga',E')\to\NC(\Ga,E)$ constructed previously. In particular, the map $\p_*$ identifies $\NC(\Ga',E')$ with $\{w\in\NC(\Ga,E)\mid w\le \p_*(\cox(\Ga',E'))\}$. 
\end{thm}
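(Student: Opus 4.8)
The plan is to build the homomorphism $\p_*$ by combining Proposition~\ref{pr:weyl} with conjugation by $\p$, and then to read off the statements about non-crossing partitions from Theorem~\ref{th:main} and Corollary~\ref{co:map-on-nc}. The useful preliminary observation is that $\p E'$ is not merely a real exceptional sequence in $\Ga$ but an \emph{orthogonal} one: since $\p$ is an isometry it preserves $\langle-,-\rangle$, so $\langle\p(e'_i),\p(e'_j)\rangle=\langle e'_i,e'_j\rangle'\le 0$ for $i\neq j$. Thus $(\p(\Ga'),\p E')$, equipped with the form restricted from $\Ga$, is again a generalised Cartan lattice and $\p\colon(\Ga',E')\xto{\sim}(\p(\Ga'),\p E')$ is an isomorphism of generalised Cartan lattices.

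First I would construct $\p_*$. By Lemma~\ref{le:roots} each reflection $s_{\p(e'_i)}$ preserves $\p(\Ga')$ and restricts there to $\p s_{e'_i}\p^{-1}$; hence conjugation by $\p$ is an isomorphism $W(\Ga',E')\xto{\sim}W(\p(\Ga'),\p E')$. On the other hand, Proposition~\ref{pr:weyl} applied to the (now orthogonal, so $\sigma=\id$) real exceptional sequence $\p E'$ says that restriction $w\mapsto w|_{\p(\Ga')}$ is an isomorphism $\langle s_{\p(e'_1)},\dots,s_{\p(e'_n)}\rangle\xto{\sim}W(\p(\Ga'),\p E')$. Composing the first map with the inverse of the second yields an injective group homomorphism $\p_*\colon W(\Ga',E')\to W(\Ga,E)$ with image $\langle s_{\p(e'_1)},\dots,s_{\p(e'_n)}\rangle$; it satisfies $\p_*(s_{e'_i})=s_{\p(e'_i)}$ and, by construction, $\p_*(w)|_{\p(\Ga')}=\p w\p^{-1}$ for every $w$. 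Consequently, for a real root $a=w(e'_i)$ of $\Ga'$ one has $\p_*(w)(\p(e'_i))=\p(a)$, and using $s_{v(e)}=vs_ev^{-1}$ in the Weyl group of $\Ga$ we obtain $\p_*(s_a)=\p_*(w)s_{\p(e'_i)}\p_*(w)^{-1}=s_{\p(a)}$. This proves the first assertion.

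Next I would check that $\p_*$ restricts on non-crossing partitions to the map built in Corollary~\ref{co:map-on-nc}. By Proposition~\ref{pr:real-seq} every element of $\NC(\Ga',E')$ is of the form $s_F$ for a real exceptional sequence $F=(f_1,\dots,f_r)$ in $(\Ga',E')$, and then $\p_*(s_F)=s_{\p(f_1)}\cdots s_{\p(f_r)}=s_{\p F}$ — which is exactly where the composite $\NC(\Ga',E')\xto{\sim}\Exc(\Ga',E')\to\Exc(\Ga,E)\xto{\sim}\NC(\Ga,E)$ of Corollary~\ref{co:map-on-nc} sends $s_F$; in particular $\p_*(\cox(\Ga',E'))=\p_*(s_{E'})=s_{\p E'}$. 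For the last claim I would pass to $\Exc$ via Theorem~\ref{th:main} and use that $\p$ is equivariant for the $\{\pm1\}\wr B_n$-action on real exceptional sequences (immediate from Lemma~\ref{le:roots}, and already used in the proof of Corollary~\ref{co:map-on-nc}). On the one hand, if $F$ is a real exceptional sequence in $(\Ga',E')$ it is an initial subsequence of some $\tau E'$, so $\p F$ is an initial subsequence of $\tau(\p E')$, giving $[\p F]\le[\p E']$. On the other hand, any class $[H]\le[\p E']$ in $\Exc(\Ga,E)$ is represented by an initial subsequence of $\tau(\p E')=\p(\tau E')$, hence equals $[\p F]$ for the corresponding initial subsequence $F$ of the complete real exceptional sequence $\tau E'$ of $(\Ga',E')$. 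So the image of $\Exc(\Ga',E')\to\Exc(\Ga,E)$ is precisely $\{[H]\mid[H]\le[\p E']\}$; and as $\p$ is injective the same argument shows $[\p F]\le[\p F']$ if and only if $[F]\le[F']$, so the map is an order-embedding. Translating back through Theorem~\ref{th:main}, $\p_*$ identifies $\NC(\Ga',E')$ with $\{w\in\NC(\Ga,E)\mid w\le s_{\p E'}\}=\{w\le\p_*(\cox(\Ga',E'))\}$.

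I expect the only genuinely hard input to be Proposition~\ref{pr:weyl} — ultimately the affine-type analysis in Lemma~\ref{le:weyl} — which is precisely what guarantees that restriction $\langle s_{\p(e'_1)},\dots,s_{\p(e'_n)}\rangle\to W(\p(\Ga'),\p E')$ is an isomorphism, i.e.\ that the ``obvious'' assignment $s_{e'_i}\mapsto s_{\p(e'_i)}$ extends \emph{consistently} to an injective group homomorphism on all of $W(\Ga',E')$, rather than merely defining a map of reflections. Granting that, the remaining work is routine bookkeeping: the compatibility of $\p$ with simple reflections (Lemma~\ref{le:roots}), the equivariance of $\p$ under the braid and sign actions, and the poset identifications already supplied by Theorem~\ref{th:main} and Corollary~\ref{co:map-on-nc}.
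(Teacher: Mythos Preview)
Your argument is correct and follows essentially the same route as the paper: both take $\p_*$ to be the inverse of the restriction isomorphism of Proposition~\ref{pr:weyl} (applied to $F=\p E'$), composed with the identification $W(\Ga',E')\cong W(\p(\Ga'),\p E')$ coming from $\p$, and both deduce the non-crossing partition statement from $\p_*(s_{F'})=s_{\p F'}$. You simply make explicit several things the paper leaves implicit --- notably that $\p E'$ is already orthogonal (so no $\sigma$ is needed), the conjugation-by-$\p$ step, and the verification that the image in $\Exc(\Ga,E)$ is exactly the down-set below $[\p E']$.
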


\begin{proof}
The map $\p_*$ is just the inverse of the isomorphism \eqref{eq:weyl} constructed above. For the second statement we just need to observe that if $F'$ is any real exceptional sequence in $(\Ga',E')$, then $\p_*(s_{F'})=s_{\p(F')}$.
\end{proof}

\subsection*{Pointed Coxeter groups}

For a Coxeter group $W=W(\Ga,E)$ we give a group theoretic description of the subgroups of $W$ which are of the form $W(\Ga',E')$ for some subobject $(\Ga',E')\subseteq(\Ga,E)$. If $W$ is finite, then these subgroups are known to be parabolic \cite[Lemma 1.4.3]{Be2003}. This is no longer true when $W$ is infinite. Moreover, the Coxeter diagram of such a subgroup is not necessarily obtained by removing vertices from the diagram of $W$. The authors are grateful to Christian Stump for suggesting the following example.

\begin{exm}\label{ex:parabolic}
Consider a Coxeter group $W=\langle s,t,u\rangle$  of affine type $\widetilde A_2$ with Coxeter element $c=stu$. The factorisation $c=s(tut)t$ yields a non-crossing partition $s(tut)$ and the corresponding subgroup $W'=\langle s,tut\rangle$ is affine of type $\widetilde A_1$. Thus $W'$ is not a parabolic subgroup.
\end{exm}

Let us define a \emph{pointed Coxeter group} as a triple $(W,S,c)$ consisting of a Coxeter system $(W,S)$ and a Coxeter element $c$. A pointed Coxeter group $(W',S',c')$ is a \emph{subgroup} of $(W,S,c)$ if $W'$ is a subgroup of $W$ and $\NC(W',c')=\{w\in \NC(W,c)\mid w\le c'\}$. Note that any such subgroup is determined by its Coxeter element since it is generated by its non-crossing partitions.

The following is an immediate consequence of Theorem~\ref{th:natural}.

\begin{cor}\label{co:subgroup}
Let $(\Ga,E)$ be a generalised Cartan lattice. Then $(W(\Ga,E),\cox(\Ga))$ is a pointed Coxeter group and the map $(\Ga',E')\mapsto (W(\Ga',E'),\cox(\Ga'))$ induces an order preserving bijection between the subobjects of $(\Ga,E)$ and the subgroups of $(W(\Ga,E),\cox(\Ga))$. \qed
\end{cor}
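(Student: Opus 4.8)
The plan is to reduce everything to Theorems~\ref{th:main} and~\ref{th:natural}: both the subobjects of $(\Ga,E)$ and the subgroups of $(W(\Ga,E),\cox(\Ga))$ will be identified, compatibly and order-preservingly, with $\NC(\Ga,E)$. The first assertion is immediate: $(W(\Ga,E),S)$ with $S=\{s_{e_1},\ldots,s_{e_n}\}$ is a Coxeter system, and $\cox(\Ga)=s_E=s_{e_1}\cdots s_{e_n}$ is a product of all the generators, hence a Coxeter element, so $(W(\Ga,E),\cox(\Ga))$ is a pointed Coxeter group.

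Next I would show the assignment takes values in the set of subgroups. Let $\p\colon(\Ga',E')\to(\Ga,E)$ be a monomorphism. By Theorem~\ref{th:natural} the rule $s_a\mapsto s_{\p(a)}$ defines an injective homomorphism $\p_*\colon W(\Ga',E')\to W(\Ga,E)$ with $\p_*(\cox(\Ga'))=s_{\p E'}$, which identifies $\NC(\Ga',E')$ with $\{w\in\NC(\Ga,E)\mid w\le\p_*(\cox(\Ga'))\}$; this identity is exactly the condition for the image of $(W(\Ga',E'),\cox(\Ga'))$ under $\p_*$ to be a subgroup of $(W(\Ga,E),\cox(\Ga))$. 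If another monomorphism $\p'$ represents the same subobject, then $\p=\p'\iota$ for an isomorphism $\iota$ of generalised Cartan lattices; then $\iota_*$ is a group isomorphism onto the source of $\p'_*$ sending the Coxeter element to the Coxeter element (functoriality of $W(-)$ and $\cox$, Corollary~\ref{co:map-on-nc}), so $\p_*$ and $\p'_*$ have the same image and send the respective Coxeter elements to the same element of $W(\Ga,E)$. Hence we obtain a well-defined map from $\Sub(\Ga,E)$ to the set of subgroups of $(W(\Ga,E),\cox(\Ga))$.

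It remains to identify this map. Sending a subgroup with Coxeter element $c'$ to $c'$ gives a map into $\NC(\Ga,E)$, since $c'\in\NC(W',c')=\{w\in\NC(\Ga,E)\mid w\le c'\}$ forces $c'\le\cox(\Ga)$. It is injective, because such a subgroup is generated by its non-crossing partitions and hence determined by $c'$, and surjective, because by Theorem~\ref{th:main} every $w\in\NC(\Ga,E)$ equals $s_{\p E'}$ for some subobject $[\p]$, whose associated subgroup has Coxeter element $w$ by the previous paragraph. It is also order-reflecting: given subgroups with Coxeter elements $c''\le c'$, transitivity of the absolute order yields $\NC(W'',c'')=\{w\in\NC(W',c')\mid w\le c''\}$ and then $W''=\langle\NC(W'',c'')\rangle\le\langle\NC(W',c')\rangle=W'$, so the first is a subgroup of the second; the reverse implication is clear, since the Coxeter element of a subgroup lies in the non-crossing partitions of the ambient one. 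Hence this map is an order isomorphism onto $(\NC(\Ga,E),\le)$. Composing its inverse with the order isomorphism $\Sub(\Ga,E)\xto{\sim}\NC(\Ga,E)$, $[\p]\mapsto s_{\p E'}$, of Theorem~\ref{th:main} gives back the map of the statement, which is therefore an order isomorphism, in particular an order-preserving bijection.

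The substance of the argument is entirely contained in Theorems~\ref{th:main} and~\ref{th:natural}; the rest is bookkeeping. I expect the only points needing genuine care to be the well-definedness on subobjects (using that an isomorphism of generalised Cartan lattices need not fix the chosen exceptional sequence, so that one must check the image subgroup and its Coxeter element are nonetheless unchanged) and the order-reflecting half of the final step, both of which rely on the fact — recorded when pointed Coxeter groups were introduced — that a subgroup of a pointed Coxeter group is generated by its non-crossing partitions.
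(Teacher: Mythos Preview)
Your proof is correct and follows essentially the same approach as the paper, which simply marks the corollary with \qed after stating that it is an immediate consequence of Theorem~\ref{th:natural}. You have carefully unpacked the bookkeeping the paper leaves implicit: the well-definedness on equivalence classes of monomorphisms, and the fact that the map $\{\text{subgroups}\}\to\NC(\Ga,E)$, $(W',c')\mapsto c'$, is an order isomorphism, so that the assignment of the corollary factors as the composite of the isomorphism $\Sub(\Ga,E)\xto{\sim}\NC(\Ga,E)$ of Theorem~\ref{th:main} with its inverse.
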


\begin{rem}
It is known that, in a Coxeter group, every subgroup generated by reflections is itself a Coxeter group \cite{Deo1989,Dy1990}. Thus if $w\in\NC(W,c)$ and $w=t_1\cdots t_r$ is a reduced expression as a product of reflections, then the subgroup generated by the $t_i$ is again a Coxeter group. However, it is not clear from their work that the subgroup depends only on $w$ and not on the choice of reduced expression, nor that $w$ is then a Coxeter element for the subgroup.
\end{rem}

\section{Non-crossing partitions revisited}

In this section we relate various properties of non-crossing partitions to our categorification.

\subsection*{The Kreweras complement}

Let $(\Ga, E)$  be a generalised Cartan lattice and
\[ \NC(\Ga,E) = \{w\in W(\Ga)\mid \id\le w\le \cox(\Ga)\} \]
the corresponding poset of non-crossing partitions. Given a sublattice $\De\subseteq\Ga$, we can form its orthogonal complements $^\perp\De$ and $\De^\perp$ with respect to $\langle -,-\rangle$. These induce two operations on $\Sub(\Ga,E)$ which correspond to the formation of Kreweras complements in $\NC(\Ga,E)$ via the isomorphism
\[ \cox\colon\Sub(\Ga,E)\stackrel{\sim}\lto\NC(\Ga,E). \]

\begin{prop}\label{pr:kreweras}
Taking $\De\subseteq\Ga$ to $^\perp\De$ and $\De^\perp$ induces two order reversing bijections $\Sub(\Ga,E)\to\Sub(\Ga,E)$ which are mutually inverse. Moreover,
\[ \cox(^\perp\De)=\cox(\De)^{-1}\cox(\Ga) \quad\text{and}\quad \cox(\De^\perp)=\cox(\Ga) \cox(\De)^{-1}. \]
\end{prop}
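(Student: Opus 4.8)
The plan is to represent the subobject $\De$ by a real exceptional sequence and deduce everything from the transitivity of the braid action (Lemma~\ref{le:real-seq}, resting on Theorem~\ref{th:IS2010}) together with the identity $\cox(\Ga)=s_H$, valid for every complete exceptional sequence $H$.

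So first I fix a real exceptional sequence $F=(f_1,\ldots,f_r)$ with $\bbZ F=\De$, so that $\cox(\De)=s_F$ (Theorem~\ref{th:main}), and extend $F$ by Lemma~\ref{le:real-seq} to a complete real exceptional sequence $(F,G)$ with $G=(g_1,\ldots,g_s)$. Exceptionality of $(F,G)$ gives $\langle g_j,f_i\rangle=0$ for all $i,j$, so $\bbZ G\subseteq{}^\perp F$. The entries of $(F,G)$ form a $\bbZ$-basis of $\Ga$ (Lemma~\ref{le:lin-indept}), so $\Ga=\bbZ F\oplus\bbZ G$, and likewise $\bbZ F\cap{}^\perp F=0$; hence the $\bbZ F$-component of any $x\in{}^\perp F$ lies in $\bbZ F\cap{}^\perp F=0$, which forces ${}^\perp\De={}^\perp F=\bbZ G$. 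In particular ${}^\perp\De$ is again a subobject, represented by $[G]\in\Exc(\Ga,E)$, so $\cox({}^\perp\De)=s_G$; and since $s_Fs_G=s_{(F,G)}=\cox(\Ga)$ this already yields $\cox({}^\perp\De)=\cox(\De)^{-1}\cox(\Ga)$.

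For the right complement I would use that $c:=\cox(\Ga)$ is an \emph{isometry} of $(\Ga,\langle-,-\rangle)$: applying the identity $\langle y,c(x)\rangle=-\langle x,y\rangle$ twice gives $\langle c(x),c(y)\rangle=\langle x,y\rangle$. Hence $c$ carries exceptional sequences to exceptional sequences and, lying in $W(\Ga,E)$, real roots to real roots; so it sends real exceptional sequences to real exceptional sequences and thus subobjects to subobjects. The same identity shows $x\in{}^\perp\De\iff c(x)\in\De^\perp$, i.e. $\De^\perp=c({}^\perp\De)=c(\bbZ G)=\bbZ c(G)$, which is therefore a subobject with $\cox(\De^\perp)=s_{c(G)}$. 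Now $s_{c(a)}=c\,s_a\,c^{-1}$, so $s_{c(G)}=c\,s_G\,c^{-1}$; substituting $s_G=\cox(\De)^{-1}\cox(\Ga)=\cox(\De)^{-1}c$ and simplifying gives $\cox(\De^\perp)=\cox(\Ga)\cox(\De)^{-1}$.

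Given the two displayed formulas, the remaining assertions are formal. For mutual inverseness one computes $\cox({}^\perp(\De^\perp))=\cox(\De^\perp)^{-1}\cox(\Ga)=\cox(\De)$ and $\cox(({}^\perp\De)^\perp)=\cox(\Ga)\cox({}^\perp\De)^{-1}=\cox(\De)$, so both equal $\De$ by injectivity of $\cox\colon\Sub(\Ga,E)\to\NC(\Ga,E)$ (Theorem~\ref{th:main}); in particular the two maps are bijections and are mutually inverse. For the order reversal, the formulas identify $\De\mapsto{}^\perp\De$ and $\De\mapsto\De^\perp$ with the left and right Kreweras complements $w\mapsto w^{-1}\cox(\Ga)$ and $w\mapsto\cox(\Ga)w^{-1}$ on $\NC(\Ga,E)$; these are order-reversing by a short computation using $\ell(\cox(\Ga))=n$ (Theorem~\ref{th:IS2010}) together with the conjugation-invariance of the absolute length, and are also a standard feature of non-crossing partition lattices (\cite{Ar2009}). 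I expect the single point needing genuine care to be the claim that the orthogonal complement of a subobject is again a subobject; the observation that $\cox(\Ga)$ is an isometry makes this immediate, and in particular avoids having to route this proposition through hereditary algebras.
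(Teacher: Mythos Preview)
Your proof is correct and follows essentially the same line as the paper's: represent $\De$ by a real exceptional sequence $F$, extend to a complete real exceptional sequence $(F,G)$, identify ${}^\perp\De=\bbZ G$, and read off $\cox({}^\perp\De)=s_G=s_F^{-1}\cox(\Ga)$.

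The one genuine difference is your treatment of $\De^\perp$. The paper simply says ``the argument for $\De^\perp$ is similar,'' meaning one uses the braid action to write $\De$ as the span of a \emph{terminal} subsequence of some $\s E$, so that the initial part spans $\De^\perp$. You instead observe that $c=\cox(\Ga)$ is an isometry of $\langle-,-\rangle$ and satisfies $\De^\perp=c({}^\perp\De)$, then transport the result for ${}^\perp\De$ via conjugation. Both are short; your route has the minor advantage of deriving $\De^\perp$ directly from the already-established ${}^\perp\De$ case without a second application of the extension argument. You also spell out the order-reversal and mutual-inverse claims explicitly, which the paper leaves implicit.
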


\begin{proof}
Let $(\De,E')$ be in $\Sub(\Ga,E)$. Thus $E'=(e_1,\ldots,e_r)$ is a subsequence of $\s E=(e_1,\ldots,e_n)$ for some $\s\in \{\pm1\}\wr B_n$. Set $E''=(e_{r+1},\ldots,e_n)$.  Then $^\perp\De=\bbZ E''$ and $(^\perp\De,E'')$ is in $\Sub(\Ga,E)$. We have $\cox(\Ga)=s_E=s_{E'}s_{E''}$ and therefore
\[ \cox(^\perp\De)=s_{E''}=s_{E'}^{-1}s_E=\cox(\De)^{-1}\cox(\Ga). \]
The argument for $\De^\perp$ is similar.
\end{proof}

\begin{cor}
Let $(\Ga,E)$ be a generalised Cartan lattice. Then every interval in $\NC(\Ga,E)$ is isomorphic to $\NC(\Ga',E')$ for some subobject $(\Ga',E')$ of $(\Ga,E)$.
\end{cor}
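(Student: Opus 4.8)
The plan is to show that the interval $[u,w]:=\{v\in\NC(\Ga,E)\mid u\le v\le w\}$ is isomorphic, as a poset, to a \emph{principal} order ideal $[\id,c']$ in $\NC(\Ga,E)$, and then to feed this into Theorems~\ref{th:main} and~\ref{th:natural}: the former realises $c'$ as $\cox(\Ga')$ for a subobject $(\Ga',E')\subseteq(\Ga,E)$, and the latter identifies $\NC(\Ga',E')$ with exactly that ideal.

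The first step is the standard translation trick for lattices of non-crossing partitions: left multiplication by $u^{-1}$ should restrict to an isomorphism of posets $[u,w]\xrightarrow{\sim}[\id,u^{-1}w]$, with inverse $y\mapsto uy$. Indeed, for $u\le v\le w$ one has $\ell(u^{-1}v)=\ell(v)-\ell(u)$ and $\ell(v^{-1}w)=\ell(w)-\ell(v)$, hence $\ell(u^{-1}v)+\ell\bigl((u^{-1}v)^{-1}(u^{-1}w)\bigr)=\ell(w)-\ell(u)=\ell(u^{-1}w)$, so $u^{-1}v\le u^{-1}w$; conversely, if $\id\le y\le u^{-1}w$ then subadditivity of the absolute length forces $\ell(uy)=\ell(u)+\ell(y)$ and one checks $u\le uy\le w$. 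It then remains to see that $u^{-1}w$ is itself a non-crossing partition. Since $u\le w\le\cox(\Ga)=:c$, the computation just made (with $w$ in place of $v$) gives $u^{-1}w\le u^{-1}c$, so by transitivity of the absolute order it is enough to know $u^{-1}c\le c$; writing $u=\cox(\De)$ for a subobject $(\De,E_\De)$ via Theorem~\ref{th:main}, Proposition~\ref{pr:kreweras} identifies $u^{-1}c$ with $\cox({}^\perp\De)$, and ${}^\perp\De$ with its induced exceptional sequence is again a subobject of $(\Ga,E)$, whence $u^{-1}c\le c$.

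Putting this together: $u^{-1}w\in\NC(\Ga,E)$, so by Theorem~\ref{th:main} there is a subobject $(\Ga',E')\subseteq(\Ga,E)$ with $\cox(\Ga')=u^{-1}w$; by Theorem~\ref{th:natural} the induced group homomorphism restricts to a poset isomorphism of $\NC(\Ga',E')$ onto $[\id,u^{-1}w]$, and composing with the translation isomorphism yields $[u,w]\cong\NC(\Ga',E')$. The only real work lies in the translation step, which is elementary but uses the basic properties of the absolute length and order (subadditivity, conjugation invariance, transitivity). One could bypass part of this by first using Theorem~\ref{th:natural} to reduce to the case $w=\cox(\Ga)$, where the identification $u^{-1}w=\cox({}^\perp\De)$ of Proposition~\ref{pr:kreweras} is immediate and no translation argument is needed.
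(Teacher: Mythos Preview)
Your argument is correct and follows the same line as the paper's (much terser) proof: both identify $[x,y]$ with $\NC(\Ga',E')$ for the subobject $(\Ga',E')=\cox^{-1}(x^{-1}y)$, and you have carefully spelled out the translation isomorphism $v\mapsto u^{-1}v$ that the paper leaves to the reader.

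Two minor remarks. First, your detour through $u^{-1}c$ and Proposition~\ref{pr:kreweras} to show $u^{-1}w\in\NC(\Ga,E)$ is unnecessary: the paper already records (immediately after defining $\NC$) that if $u\le w$ are both non-crossing then so is $u^{-1}w$, which is exactly what you need. Second, your closing suggestion that reducing to $w=\cox(\Ga)$ via Theorem~\ref{th:natural} eliminates the translation step is not quite right: Proposition~\ref{pr:kreweras} supplies only an order-\emph{reversing} bijection $[u,c]\to[\id,u^{-1}c]$, so to obtain a genuine poset isomorphism one still needs either the translation map or an appeal to the self-duality of $\NC(\Ga',E')$ under its own Kreweras complement.
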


\begin{proof}
Let $[x,y]$ be an interval in $\NC(\Ga,E)$  and set $(\Ga',E')=\cox^{-1}(x^{-1}y)$. Then it follows from Proposition~\ref{pr:kreweras} that $[x,y]$ and $\NC(\Ga',E')$ are isomorphic.
\end{proof}

\subsection*{The Auslander-Reiten translate as a cyclic automorphism}

Let $(\Ga,E)$ be a generalised Cartan lattice with Coxeter element $c:=\cox(\Ga)$. Clearly conjugation by $c$ induces a poset automorphism on the set of non-crossing partitions $\NC(\Ga,E)$, having order the Coxeter number $h$ in Dynkin type. This automorphism has been much studied recently, especially in the context of \emph{cyclic sieving}, see for example \cite[Section~3.4.6]{Ar2009} as well as \cite{BR2011} and the references therein.

Now suppose that we have realised $(\Ga,E)$ as the Grothendieck group of a finite dimensional hereditary algebra $A$. Then we can define  the Auslander-Reiten translate $\tau=D\Ext^1_A(-,A)$ on the module category $\mod A$, see for example \cite[Section~4.12]{Be1991}, or more naturally as an exact autoequivalence of the derived category $\D^b(\mod A)$. In this case we have \[\Hom_{\D^b(\mod A)}(Y,\tau X)\cong D\Hom_{\D^b(\mod A)}(X,Y[1]),\] see \cite[Proposition 3.8]{Ha1987}, and hence that $\langle Y,\tau X\rangle =-\langle X,Y\rangle$ for all $X$ and $Y$.  Thus $[\tau X]=c[X]$, so the action of $\tau$ on exceptional sequences corresponds, via the isomorphism $\Exc(\Ga,E)\cong\NC(\Ga,E)$, to conjugation by the Coxeter element $c$.

\subsection*{Constructing non-crossing partitions}

It seems to be a hard problem to determine when an element of the Weyl group is a non-crossing partition, or equivalently to distinguish the real exceptional sequences inside the set of all exceptional sequences. One simplification is that it is enough to check this pairwise.

\begin{lem}\label{le:pairwise}
Let $(W,S)$ be a Coxeter system coming from a symmetrisable Cartan matrix and $c$ a Coxeter element. For a product $w=t_1\cdots t_r$ of reflections, the following are equivalent:
\begin{enumerate}
\item $w\in\NC(W,c)$ and $\ell(w)=r$.
\item $t_i\neq t_j$ and $t_it_j\in\NC(W,c)$ for all $i<j$.
\end{enumerate}
\end{lem}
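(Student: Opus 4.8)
The implication (1)$\Rightarrow$(2) is the easy direction. If $w = t_1 \cdots t_r \in \NC(W,c)$ with $\ell(w) = r$, then this is a reduced factorisation of $w$ into reflections, so by Theorem~\ref{th:IS2010} applied to the parabolic-style situation — or rather, directly — any sub-factorisation obtained by deleting some of the $t_i$'s need not immediately be reduced, so we must argue more carefully. The cleanest route is to invoke the correspondence with real exceptional sequences: lift $(t_1,\ldots,t_r)$ to a real exceptional sequence $F = (f_1,\ldots,f_r)$ with $s_{f_i} = t_i$ (possible by Proposition~\ref{pr:real-seq} and Lemma~\ref{le:equivar}, since $\ell(w)=r$ forces the $t_i$ to come from such a sequence). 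Then for any $i < j$, the pair $(f_i, f_j)$ is itself a real exceptional sequence, so $t_i t_j = s_{f_i} s_{f_j} \in \NC(W,c)$ by Proposition~\ref{pr:real-seq}, and $t_i \neq t_j$ because otherwise $f_i = \pm f_j$ (Lemma~\ref{le:preservation-of-Coxeter}~(4)), contradicting the linear independence of the $f_i$ given by Lemma~\ref{le:lin-indept}.

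For (2)$\Rightarrow$(1), the idea is to build up a real exceptional sequence one root at a time. First lift each $t_i$ to a root $f_i$ with $s_{f_i} = t_i$; since $t_i t_j \in \NC(W,c)$, each $t_i$ is a reflection of the Weyl group, so each $f_i$ can be chosen to be a real root. The hypothesis $t_i t_j \in \NC(W,c)$ with $t_i \neq t_j$ means, via Proposition~\ref{pr:real-seq} and the braid group action, that $(f_i, f_j)$ (up to sign and braid action) forms a real exceptional sequence; in particular $\langle f_i, f_j \rangle$ and $\langle f_j, f_i \rangle$ satisfy the constraints making the pair exceptional after suitable reordering/sign change. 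The goal is to show the whole tuple $(f_1,\ldots,f_r)$ can be braided into a genuine real exceptional sequence, which by Proposition~\ref{pr:real-seq} gives $w = s_F \in \NC(W,c)$ and $\ell(w) = r$ (the length being $r$ because $\bbZ F$ has rank $r$ by Lemma~\ref{le:lin-indept}, and absolute length equals this rank by Proposition~\ref{pr:real-seq} combined with Lemma~\ref{le:orbits}).

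\textbf{The main obstacle} is precisely this last step: pairwise exceptionality does not obviously imply joint exceptionality, because the ordering conditions $\langle e_i, e_j\rangle = 0$ for $i > j$ are global compatibility conditions on a single linear order. The natural strategy is induction on $r$, passing to the perpendicular category: having arranged (by the pairwise hypothesis and the transitivity of the braid action on pairs) that $f_r$ can be taken as the last term, one restricts attention to $\bbZ\{f_1,\ldots,f_r\} \cap f_r^{\perp}$ or passes to a sublattice, and checks that the images of $t_1,\ldots,t_{r-1}$ still satisfy the pairwise hypothesis there. Here one must use that $\NC(W',c')$ for the relevant subobject $(\Ga',E')$ (coming from Theorem~\ref{th:main} / Lemma~\ref{le:orbits}) sits inside $\NC(W,c)$ as an initial interval, so the pairwise condition descends. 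I would expect that the cleanest way to make the descent rigorous is again to pass through a hereditary algebra $A$ with $K_0(A) = \Ga$ (Lemma~\ref{le:hereditary}) and use that the pairwise hypothesis translates into statements about $\Hom$ and $\Ext^1$ vanishing between exceptional modules, where Schofield's theory of perpendicular categories (Proposition~\ref{pr:Schofield}, Theorem~\ref{th:subcat}) gives the needed inductive structure; the subtlety to watch is ensuring the lifted modules $X_i$ can be chosen compatibly, which should follow from Proposition~\ref{pr:exceptional-ncp} and Lemma~\ref{le:rigid-dim-vector}.
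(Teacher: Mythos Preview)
Your plan for (1)$\Rightarrow$(2) is fine and matches the paper.

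For (2)$\Rightarrow$(1), you have assembled all the right ingredients but then manufactured an obstacle that isn't there. You correctly note (in your final sentence) that the lifted modules $X_i$ can be chosen \emph{compatibly} across all pairs, using Proposition~\ref{pr:exceptional-ncp} and Lemma~\ref{le:rigid-dim-vector}: each $t_i$ determines a unique exceptional module $X_i$ (up to isomorphism), and the hypothesis $t_it_j\in\NC(W,c)$ for $i<j$ says precisely that $(X_i,X_j)$ is an exceptional pair in $\mod A$. But now observe that the definition of an exceptional sequence $(X_1,\ldots,X_r)$ in $\mod A$ is \emph{purely pairwise}: each $X_i$ is exceptional, and $\Hom_A(X_i,X_j)=0=\Ext^1_A(X_i,X_j)$ for all $i>j$. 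These conditions involve only two indices at a time, so ``$(X_i,X_j)$ is an exceptional pair for all $i<j$'' is literally the same statement as ``$(X_1,\ldots,X_r)$ is an exceptional sequence''. Applying Proposition~\ref{pr:exceptional-ncp} once more finishes the proof.

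This is exactly the paper's argument, stated in one line. Your worry that ``the ordering conditions $\langle e_i,e_j\rangle=0$ for $i>j$ are global compatibility conditions on a single linear order'' is a red herring: each such condition involves only the pair $(i,j)$. What is genuinely global is the requirement of being a \emph{real} exceptional sequence (extendibility to a complete one), and that is precisely what passing to $\mod A$ via Proposition~\ref{pr:exceptional-ncp} handles automatically. The inductive descent through perpendicular categories that you sketch would presumably work, but it is entirely unnecessary.
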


\begin{proof}
This follows from Proposition~\ref{pr:exceptional-ncp}. For, if $A$ is a finite dimensional hereditary algebra, then a sequence $X=(X_1,\ldots,X_r)$ in $\mod A$ is exceptional if and only if $(X_i,X_j)$ is an exceptional pair for all $i<j$.
\end{proof}

Using our categorification, it is possible to construct an algorithm for determining whether or not a given sequence is a real exceptional sequence. This is based on Proposition~\ref{pr:Schofield}, together with the Derksen--Weyman algorithm \cite{DW2002}.

Let $(\Ga,E)$ be a generalised Cartan lattice, say with $E=(e_1,\ldots,e_n)$, and $a=(a_1,\ldots,a_r)$ an exceptional sequence of positive pseudo-real roots.
\begin{enumerate}
\item We begin by applying the Derksen--Weyman algorithm to $a_r$. This computes the \emph{canonical decomposition} of $a_r$, which is a sum of real and imaginary Schur roots. So, if the algorithm returns $a_r$, then we know that $a_r$ is a real Schur root.
\item We next compute the \emph{projective roots} $p_i=s_n\cdots s_{i+1}(e_i)$, as used in the proof of Theorem~\ref{th:Dynkin-type}, and apply the Derksen--Weyman algorithm to each of the roots $l_{a_r}(p_i)$ in turn. Let $M$ be indecomposable with $\dimv M=a_r$ and assume that $M$ is not projective. Then $l_{a_r}(p_i)$ is the dimension vector of the universal extension of $M$ by $P_i$, the projective module of dimension vector $p_i$. Thus the canonical decompositions of the $l_{a_r}(p_i)$ yield the dimension vectors of the indecomposable summands of the Bongartz complement $B$ of $M$, and $\Hom_A(M,B)=0$. If, on the other hand, $M$ is projective, then the Bongartz complement is just the sum of the other indecomposable projectives, and the $l_{a_r}(p_i)$ are the dimension vectors of the cokernels of the minimal right $\mathrm{add}(M)$-approximations of the projectives. It therefore follows as in \cite[Theorem~16]{Hu2011} that, in both cases, the Derksen-Weyman algorithm applied to the $l_{a_r}(p_i)$ produces $n-1$ real Schur roots, say $\bar p_1,\ldots,\bar p_{n-1}$ other than $a_r$, and that these are the projective roots for $a_r^\perp$.
\item From the $\bar p_i$, we can easily construct an orthogonal exceptional sequence $\bar E:=(\bar e_1,\ldots,\bar e_{n-1})$ such that $(\bar E,a_r^\perp)$ is a subobject of $(\Ga,E)$. We now express each of $a_1,\ldots,a_{r-1}$ in terms of $\bar E$, noting that each $a_i$ must be a positive linear combination of the $\bar e_i$, otherwise $a$ will not be a real exceptional sequence by Corollary~\ref{co:lies-in-C(X)}. Now repeat these steps using the sequence $\bar E$ inside the sublattice $a_r^\perp$.
\end{enumerate}

\begin{exm}\label{exm:Schofield}
Following Schofield \cite{Sc1992}, consider the path algebra of the quiver
\[ \begin{tikzcd}[row sep=tiny]
\mathbf{\cdot} \arrow{dr} &&&& \mathbf{\cdot}\\
& \mathbf{\cdot} \arrow{r} & \mathbf{\cdot} \arrow{r} & \mathbf{\cdot} \arrow{ur} \arrow{dr}\\
\mathbf{\cdot} \arrow{ur} &&&& \mathbf{\cdot}
\end{tikzcd}\]
and take exceptional modules $X$ and $Y$ having dimension vectors
\[ x = \dimvector1121211 \quad\textrm{and}\quad y =\dimvector0011100 \]
Then $X$ and $Y$ both lie in the inhomogeneous tube of length four, have quasi-length three, and $c^2(x)=y$. It follows that $\dim\Hom(X,Y)=\dim\Ext^1(X,Y)=1$, and similarly $\dim\Hom(Y,X)=\dim\Ext^1(Y,X)=1$, so both $\langle x,y\rangle=0=\langle y,x\rangle$ but neither $(X,Y)$ nor $(Y,X)$ is an exceptional pair.

In terms of our algorithm, we have
\begin{center}
\begin{tabular}{cccccccc}
\toprule
$i$ & 1 & 2 & 3 & 4 & 5 & 6 & 7\\
\midrule
$p_i$ & $\dimvector1011111$ & $\dimvector0111111$ & $\dimvector0011111$ & $\dimvector0001111$
& $\dimvector0000111$ & $\dimvector 0000010$ & $\dimvector0000001$\\
\midrule
$l_x(p_i)$ & $\dimvector2132322$ & $\dimvector1232322$ & $\dimvector2253533$ & $\dimvector2243533$
& $\dimvector1121322$ & $\dimvector1121221$ & $\dimvector1121212$\\
\bottomrule
\end{tabular}
\end{center}
Thus the canonical decompositions of the $l_x(p_i)$ are
\[ l_x(p_1) = \bar p_1 + \bar p_5 \qquad l_x(p_2) = \bar p_2 + \bar p_5 \qquad l_x(p_3) = \bar p_1 + \bar p_2 + \bar p_5 + \bar p_6 \]
\[ l_x(p_4) = \bar p_1 + \bar p_2 + \bar p_5 \qquad l_x(p_5) = \bar p_3 + \bar p_4 + \bar p_5 \qquad l_x(p_6) = \bar p_3 + \bar p_5 \qquad l_x(p_7) = \bar p_4 + \bar p_5 \]
where
\begin{center}
\begin{tabular}{ccccccc}
\toprule
$i$ & 1 & 2 & 3 & 4 & 5 & 6\\
\midrule
$\bar p_i$ & $\dimvector1011211$ & $\dimvector0111211$ & $\dimvector0000110$ & $\dimvector0000101$
& $\dimvector1121111$ & $\dimvector0010000$\\
\midrule
$\bar e_i$ & $\dimvector1011000$ & $\dimvector0111000$ & $\dimvector0000110$ & $\dimvector0000101$
& $\dimvector1111111$ & $\dimvector0010000$\\
\bottomrule
\end{tabular}
\end{center}
In particular $y=\bar e_1+\bar e_2+\bar e_3+\bar e_4-\bar e_5$, so is not a positive linear combination.

Note also that one cannot just take the minimal positive elements in $x^\perp$ as the simples. For, such a collection must contain $\dimvector0001100$ instead of $\bar e_5$.
\end{exm}

\section{Hereditary categories}

We consider the category $\mathfrak H$ of hereditary abelian categories arising in the represention theory of algebras. More precisely, the objects in $\mathfrak H$ are the categories $\mod A$ of finitely generated modules over an hereditary artin algebra $A$.\footnote{
The centre of an hereditary artin algebra $A$ is semisimple, and $A$ is said to be \emph{connected} if the centre is a field, say $k$. In that case $A$ is actually a finite dimensional $k$-algebra.
}
The morphisms in $\mathfrak H$ are fully faithful exact functors, modulo natural isomorphisms, having an extension closed essential image.

\subsection*{Homological epimorphisms}

Following \cite{GL1991}, a ring homomorphism $A\to B$ is called a \emph{homological epimorphism} if restriction of scalars induces an
isomorphism \[\Ext_{B}^n(X,Y)\xto{\sim}\Ext_{A}^n(X,Y)\] for all $n\ge 0$ and all $B$-modules $X,Y$. The homomorphism is \emph{finite} if $B$ is finitely generated when viewed as an $A$-module.

\begin{lem}\label{le:epi}
Let $f\colon A\to B$ be a finite homological epimorphism between artin algebras. Then the restriction of scalars $\mod B\to\mod A$ is a full exact embedding having a left adjoint, whose essential image is closed under kernels, cokernels and extensions.

Conversely, if $A$ is hererditary, then every such subcategory of $\mod A$ arises in this way.
\end{lem}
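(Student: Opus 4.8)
The plan is to prove the two halves of Lemma~\ref{le:epi} separately, treating the forward direction by general homological nonsense and the converse by invoking the perpendicular calculus together with Proposition~\ref{pr:exceptional-ncp}.

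For the forward direction, let $f\colon A\to B$ be a finite homological epimorphism. Restriction of scalars $\mod B\to\mod A$ is always exact and, because $f$ is an epimorphism of rings, it is full; it is a (faithful) embedding since $B$ is finitely generated over $A$, so finite-length $B$-modules remain finite-length over $A$. The left adjoint is $B\otimes_A-$; one checks this sends a finitely generated $A$-module to a finitely generated $B$-module using finiteness of $f$. To see that the essential image $\mathcal B:=\mod B\subseteq\mod A$ is closed under kernels, cokernels and extensions, note first that $\mathcal B$ is closed under sub- and quotient objects formed in $\mod A$ (a homological epimorphism that is an epimorphism of rings has this property: the $A$-module structure on a subobject or quotient object of a $B$-module extends uniquely), which gives closure under kernels and cokernels of maps between objects of $\mathcal B$; and closure under extensions follows from the isomorphism $\Ext^1_B(X,Y)\xto{\sim}\Ext^1_A(X,Y)$, which shows every $A$-extension of two $B$-modules carries a (unique, compatible) $B$-structure. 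I would also record that, as $A$ is hereditary, so is $\mathcal B$, since the $\Ext^n$ vanish for $n\ge 2$ over $A$ and the isomorphism transports this to $B$.

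For the converse, suppose $A$ is hereditary and $\mathcal C\subseteq\mod A$ is a full subcategory closed under kernels, cokernels and extensions. By the discussion preceding Theorem~\ref{th:subcat} (and Theorem~\ref{th:subcat} itself), any such finitely generated thick subcategory is of the form $\C(X)$ for an exceptional sequence $X=(X_1,\dots,X_r)$, and $\C(X)\simeq\mod B$ for a finite dimensional hereditary algebra $B$. Extend $X$ to a complete exceptional sequence $(X_1,\dots,X_n)$ using Corollary~\ref{co:extend}; then $\C(X)=\C(X_{r+1},\dots,X_n)^\perp$ is a right perpendicular category. The perpendicular calculus recalled in the appendix (Theorem~\ref{th:subcat} and its proof, via the five-term sequences \eqref{eq:perp}, \eqref{eq:perp2}) produces precisely a finite homological epimorphism $A\to B$ whose restriction-of-scalars functor has essential image $\C(X)$; this is the content of the characterisation of the $\C(X)$ as the subcategories obtained by restriction of scalars along a homological epimorphism. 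Thus $\mathcal C\simeq\mod B$ arises as claimed.

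The main obstacle is the converse, and specifically producing the homological epimorphism $A\to B$ with the right essential image rather than merely an equivalence $\C(X)\simeq\mod B$. This is exactly where one must quote the perpendicular calculus in the form that identifies right (or left) perpendicular categories of exceptional sequences with module categories of quotients along homological epimorphisms — originally due to Geigle--Lenzing \cite{GL1991} — and check that the epimorphism is finite, which reduces to the fact that $B$ is a finite dimensional algebra over the same ground ring. I expect the forward direction to be essentially formal once the standard properties of homological epimorphisms are in hand, so the write-up should spend most of its effort pinning down the reference chain for the converse and the finiteness statement.
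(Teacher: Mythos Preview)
Your forward direction is fine and matches the paper's argument: finiteness of $f$ keeps restriction between finitely generated modules, extension of scalars is the left adjoint, and the homological-epimorphism property gives fullness and closure under kernels, cokernels and extensions.

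The converse, however, is circular as written. You propose to deduce that $\mathcal C=\C(X)$ for some exceptional sequence by invoking Theorem~\ref{th:subcat}, and then to extract the homological epimorphism from the equivalence $\C(X)\simeq\mod B$ again via Theorem~\ref{th:subcat}. But look at the proof of Theorem~\ref{th:subcat}: the implication (1)\,$\Leftrightarrow$\,(2) there is established precisely by citing Lemma~\ref{le:epi}. So you are using the lemma to prove itself. (You also quietly drop the hypothesis that the inclusion admits a left adjoint and replace it by ``finitely generated thick''; these are not the same a priori, cf.\ Remark~\ref{re:subcat}.)

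The paper's argument for the converse is direct and avoids any perpendicular calculus or exceptional sequences. Given the left adjoint $L$ to the inclusion $\mathcal C\hookrightarrow\mod A$, the object $LA$ is a projective generator of $\mathcal C$: for $X\in\mathcal C$ one has $\Hom_{\mathcal C}(LA,X)\cong\Hom_A(A,X)$, which is exact and faithful on $\mathcal C$. Since $\mathcal C$ is abelian with a projective generator, Morita theory gives $\mathcal C\simeq\mod B$ for $B=\End_A(LA)$, and the unit map $A=\End_A(A)\to\End_A(LA)=B$ is the desired homological epimorphism (the $\Ext$-isomorphism holds because $\mathcal C$ is extension-closed in a hereditary category). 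Finiteness is the isomorphism $B\cong\Hom_A(A,LA)\cong LA$ of right $A$-modules, so $B$ is finitely generated over $A$. This is what you should supply in place of the appeal to Theorem~\ref{th:subcat}.
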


\begin{proof}
Since $f$ is finite, the restriction of scalars functor goes between the categories of finitely generated modules. Moreover, this functor is always exact and has extension of scalars as a left adjoint. Finally, since $f$ is a homological epimorphism, the functor is fully faithful and the essential image is closed under kernels, cokernels and extensions.

Conversely, let $A$ be hereditary and $\C\subseteq\mod A$ a full subcategory closed under kernels, cokernels and extensions. Then $\Ext^n_\C(X,Y)\cong\Ext^n_A(X,Y)$ for all $n\geq0$ and all $X,Y\in\C$. If $\C$ is also a reflective subcategory, so the inclusion has a left adjoint $L$, then $LA$ is a projective generator for $\C$. Thus $\C$ is equivalent to $\mod\End_A(LA)$, and the map $\End_A(A)\to\End_A(LA)$ induced by the functor $L$ yields a homological epimorphism $f\colon A\to\End_A(LA)$. Finally, the isomorphism $\Hom_A(A,LA)\cong\End_A(LA)$ is now an isomorphism of right $A$-modules, showing that $f$ is finite.

In fact, using the isomorphisms $LA\cong\Hom_A(A,LA)\cong\End_A(LA)$, we can endow $LA$ with the structure of an algebra, in which case the algebra homomorphism $f\colon A\to LA$ corresponds to the identity in $\End_A(LA)$.
\end{proof}

Recall from Lemma~\ref{le:hereditary} that every generalised Cartan lattice can be realised as $K_0(A)$ for some hereditary artin algebra $A$.

\begin{thm}\label{th:epi}
If $f\colon A\to A'$ is a finite homological epimorphism between hereditary artin algebras, then restriction of scalars induces a morphism $f^*\colon K_0(A')\to K_0(A)$ of generalised Cartan lattices.

Conversely, every morphism of generalised Cartan lattices ending in $K_0(A)$ is up to isomorphism of the form $f^*\colon K_0(A')\to K_0(A)$ for some finite homological epimorphism $f\colon A\to A'$ between hereditary artin algebras.
\end{thm}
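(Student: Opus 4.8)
The plan is to prove the two implications separately, leaning on the perpendicular calculus (Theorem~\ref{th:subcat}), the translation between exceptional sequences of modules and real exceptional sequences (Proposition~\ref{pr:exceptional-ncp}), and the characterisation of finite homological epimorphisms via reflective subcategories (Lemma~\ref{le:epi}).

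For the forward direction, let $f\colon A\to A'$ be a finite homological epimorphism of hereditary artin algebras. By Lemma~\ref{le:epi} the restriction functor $r\colon\mod A'\to\mod A$ is a full exact embedding preserving all $\Ext$-groups; being exact it induces a group homomorphism $f^*:=r_*\colon K_0(A')\to K_0(A)$, and preservation of $\Hom$ and $\Ext^1$ makes $f^*$ an isometry for the Euler forms, in particular injective. It then remains only to see that $f^*$ takes the distinguished exceptional sequence $E'=([S_1'],\ldots,[S_m'])$ of classes of simple $A'$-modules to a real exceptional sequence in $K_0(A)$. This reduces to a short check: each restricted simple $rS_i'$ is an exceptional $A$-module (indecomposable because $\End_A(rS_i')=\End_{A'}(S_i')$ is a division ring, rigid because $\Ext^1_A(rS_i',rS_i')=\Ext^1_{A'}(S_i',S_i')=0$), and the $\Hom$- and $\Ext^1$-vanishing transported along $r$ shows $(rS_1',\ldots,rS_m')$ is an exceptional sequence of modules; hence $f^*E'=([rS_1'],\ldots,[rS_m'])$ is a real exceptional sequence by Proposition~\ref{pr:exceptional-ncp}, so $f^*$ is a morphism of generalised Cartan lattices.

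For the converse, let $\varphi\colon(\Gamma',E')\to K_0(A)$ be a morphism. Using Proposition~\ref{pr:exceptional-ncp} I would lift the real exceptional sequence $\varphi E'$ to an exceptional sequence $X=(X_1,\ldots,X_m)$ in $\mod A$, necessarily with $[X_i]=\varepsilon_i\varphi(e_i')$ for some signs $\varepsilon_i\in\{\pm1\}$. Put $\C:=\C(X)$. By Theorem~\ref{th:subcat}, $\C$ is a reflective subcategory of $\mod A$ closed under kernels, cokernels and extensions, and is equivalent to $\mod A'$ for some hereditary artin algebra $A'$ — heredity because $\Ext^2_{A'}$ agrees with $\Ext^2_A$ on $\C$, which vanishes, and $A'=\End_A(LA)$ is artinian. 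Lemma~\ref{le:epi} then identifies the inclusion $\iota\colon\C\hookrightarrow\mod A$, up to the equivalence $\C\simeq\mod A'$, with restriction along a finite homological epimorphism $f\colon A\to A'$; thus $\iota_*$ becomes $f^*\colon K_0(A')\to K_0(A)$. Finally I would identify $(\Gamma',E')$ with $K_0(A')=K_0(\C)$ through the map $g$ sending $e_i'\mapsto\varepsilon_i[X_i]_\C$. It is an isometry since $\iota_*$ is one and $\varphi$ is; by the choice of signs $f^*g$ and $\varphi$ agree on $E'$, hence everywhere; and $gE'$ is a complete real exceptional sequence in $K_0(\C)$ because $(X_1,\ldots,X_m)$ is a complete exceptional sequence in $\C$ (using $\Phi=-\Phi$ to absorb the signs). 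Running the same argument for $g^{-1}$ shows $g$ is an isomorphism, so $\varphi=f^*\circ g$ has the required form.

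I do not expect any step to be a serious obstacle: the substance is supplied by Theorem~\ref{th:subcat} and Lemma~\ref{le:epi}, which between them say that the subcategories $\C(X)$ are exactly the reflective subcategories closed under kernels, cokernels and extensions, equivalently those coming from finite homological epimorphisms, together with Proposition~\ref{pr:exceptional-ncp}. The points that need care are bookkeeping: tracking the sign group so that $f^*g=\varphi$ on the nose rather than only up to signs, and confirming that the algebra $A'=\End_A(LA)$ arising in the converse is again a \emph{hereditary} \emph{artin} algebra, so that the construction stays within the scope of the theory.
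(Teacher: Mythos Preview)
Your approach is essentially the same as the paper's: both directions rest on Lemma~\ref{le:epi}, Theorem~\ref{th:subcat}, and Proposition~\ref{pr:exceptional-ncp}. The paper's converse is slightly more streamlined because it invokes Theorem~\ref{th:main} (so the identification of $(\Gamma',E')$ with $K_0(\C(X))$ as a subobject is already packaged), whereas you construct the isomorphism $g$ by hand.

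One point needs more care. Your sentence ``Running the same argument for $g^{-1}$ shows $g$ is an isomorphism'' is not quite symmetric: to show $g^{-1}$ is a morphism you must check that the classes of the simple objects of $\C$ pull back to a \emph{real} exceptional sequence in $(\Gamma',E')$, and Proposition~\ref{pr:exceptional-ncp} only tells you they form a real exceptional sequence in $K_0(\C)$, not in $(\Gamma',E')$. The fix is short: since $gE'$ is a complete real exceptional sequence in $K_0(\C)$, Lemma~\ref{le:real-seq} gives $\sigma\in\{\pm1\}\wr B_m$ with $gE'=\sigma([T_1],\ldots,[T_m])$; as $g$ is an isometry the braid action commutes with $g$, so $g^{-1}([T_1],\ldots,[T_m])=\sigma^{-1}E'$ is real exceptional in $(\Gamma',E')$. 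This is exactly the content the paper absorbs into Theorem~\ref{th:main} via Lemma~\ref{le:orbits}.
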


\begin{proof}
Let $f\colon A\to A'$ be a homological epimorphism. Then $f^*$ is necessarily an isometry. Moreover, restriction of scalars via $f$ sends exceptional sequences in $\mod A'$ to exceptional sequences in $\mod A$, so by Propositions~\ref{pr:morph} and \ref{pr:exceptional-ncp} the map $f^*$ is a morphism of generalised Cartan lattices.

Conversely, by Theorem~\ref{th:main} together with Proposition~\ref{pr:exceptional-ncp}, every morphism of generalised Cartan lattices ending in $K_0(A)$ is equivalent to one of the form $\p\colon K_0(\C(X))\hookrightarrow K_0(A)$ where $X$ is an exceptional sequence in $\mod A$. By Theorem~\ref{th:subcat} there is a finite homological epimorphism $f\colon A\to A'$ such that restriction of scalars identifies $\mod A'$ with $\C(X)$, and hence $\p=f^*$.
\end{proof}

\begin{cor}\label{co:faithful}
Let $\mathfrak C$ denote the category of generalised Cartan lattices. Taking an hereditary abelian category $\C$ to its Grothendieck group $K_0(\C)$ induces a faithful functor $\mathfrak H\to\mathfrak C$ which reflects isomorphisms.
\end{cor}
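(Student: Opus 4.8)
The plan is to verify three things in turn: that $G\mapsto K_0(G)$ carries a morphism of $\mathfrak H$ to a morphism of $\mathfrak C$ and is compatible with composition, so that we genuinely obtain a functor $\mathfrak H\to\mathfrak C$; that this functor reflects isomorphisms; and that it is faithful. For a morphism $G\colon\mod A'\to\mod A$ of $\mathfrak H$ I write $\C:=\Im G$ for its essential image, a full exact abelian subcategory of $\mod A$ which is closed under extensions.

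\emph{Functoriality.} Since $G$ is fully faithful and exact it preserves $\Hom$, and since $\C$ is closed under extensions the inclusion $\C\hookrightarrow\mod A$ induces isomorphisms on $\Ext^1$; as $A$ and $A'$ are hereditary there are no higher extension groups, so $K_0(G)$ preserves the Euler forms and hence is an isometry. Ordering the simple $A'$-modules $S'_1,\dots,S'_n$ as in Lemma~\ref{le:hereditary}~(1), so that $E'=([S'_1],\dots,[S'_n])$ is the chosen complete orthogonal exceptional sequence of $K_0(A')$, the objects $GS'_1,\dots,GS'_n$ are the simple objects of $\C$ and therefore, by the remarks above, form an orthogonal exceptional sequence in $\mod A$; by Proposition~\ref{pr:exceptional-ncp} its image $([GS'_1],\dots,[GS'_n])=K_0(G)(E')$ is a real exceptional sequence in $K_0(A)$. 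Thus $K_0(G)$ is a morphism of generalised Cartan lattices. (Alternatively one may deduce this from Theorem~\ref{th:epi}, using that $\C=\C(X)$ for an exceptional sequence $X$ by Theorem~\ref{th:subcat} and Remark~\ref{re:subcat}.) Compatibility with composition and with identity functors, and independence of the choice of representative within a natural-isomorphism class, are immediate.

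\emph{Reflecting isomorphisms.} Suppose $K_0(G)$ is an isomorphism of generalised Cartan lattices. Being bijective it forces $\rk K_0(A)=\rk K_0(A')=n$, so $(GS'_1,\dots,GS'_n)$ is a \emph{complete} exceptional sequence in $\mod A$. The full subcategory generated by a complete exceptional sequence under kernels, cokernels and extensions is all of $\mod A$ (its perpendicular category vanishes; cf.\ Theorem~\ref{th:subcat}), and it is contained in $\C=\Im G$. Hence $G$ is essentially surjective, and being fully faithful it is an equivalence, i.e.\ an isomorphism in $\mathfrak H$.

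\emph{Faithfulness.} Suppose $G,G'\colon\mod A'\to\mod A$ satisfy $K_0(G)=K_0(G')$, so $[GX]=[G'X]$ in $K_0(A)$ for every $X$. When $X$ is exceptional, $GX$ and $G'X$ are rigid with equal classes, so $GX\cong G'X$ by Lemma~\ref{le:rigid-dim-vector}; since every rigid module is a sum of exceptional ones, $G$ and $G'$ agree up to isomorphism on all rigid objects, and applying this to the simples shows $\Im G=\Im G'=:\C$. It remains to upgrade this to a natural isomorphism $G\cong G'$; equivalently, letting $H$ be the composite of $G$ with a quasi-inverse of the equivalence $G'\colon\mod A'\xto{\,\sim\,}\C$, one must show that the self-equivalence $H$ of $\mod A'$ — which fixes the isomorphism class of every exceptional module, in particular of every indecomposable projective and injective — is isomorphic to the identity. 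I expect this last point to be the main obstacle: passing from agreement on isomorphism classes of rigid objects to an actual isomorphism of functors is not formal, and it is here that genuinely representation-theoretic input about self-equivalences of $\mod A'$ (reducing, up to an inner twist, to automorphisms of the basic algebra Morita-equivalent to $A'$ and exploiting the hereditary structure) is needed, rather than the book-keeping that suffices for the rest of the argument.
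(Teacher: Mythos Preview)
Your arguments for functoriality and for reflecting isomorphisms are correct and essentially match the paper's (brief) treatment. For faithfulness the paper takes a more direct route than yours: it observes that any such exact functor satisfies $\p\cong-\otimes_{A'}\p(A')$, and since $A'$ is rigid, $[\p(A')]=[\psi(A')]$ forces $\p(A')\cong\psi(A')$ by Lemma~\ref{le:rigid-dim-vector}, from which the paper concludes $\p\cong\psi$.

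The obstacle you single out is, however, a genuine gap, and it cannot be filled; the paper's shortcut does not avoid it either. The isomorphism $\p(A')\cong\psi(A')$ supplied by Lemma~\ref{le:rigid-dim-vector} is only an isomorphism of $A$-modules, not of $A'$-$A$-bimodules (the left $A'$-action being the one induced by the functor), and it is the latter that is needed for $-\otimes_{A'}\p(A')\cong-\otimes_{A'}\psi(A')$. In your formulation, the self-equivalence $H$ of $\mod A'$ need not be isomorphic to the identity even though it fixes every exceptional module up to isomorphism. Concretely, take $A=A'=kQ$ for the Kronecker quiver with arrows $a,b\colon 1\to 2$, and let $\gamma\in\Aut(kQ)$ swap $a$ and $b$. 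Then $\gamma^*$ fixes each simple, hence induces the identity on $K_0$; it fixes every exceptional module up to isomorphism (the preprojectives and preinjectives are the unique rigid modules of their dimension vectors); yet $\gamma^*\not\cong\id$, since the regular brick with $a\mapsto 1$, $b\mapsto 0$ is carried to the non-isomorphic brick with $a\mapsto 0$, $b\mapsto 1$. Equivalently, $\gamma$ is not inner, so the relevant bimodules are not isomorphic. Thus $[\id]\neq[\gamma^*]$ in $\mathfrak H$ while both map to the identity in $\mathfrak C$, and the functor is not faithful as stated. Your caution at exactly this point was warranted.
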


\begin{proof}
Fix a functor $\p\colon\mod A'\to\mod A$ representing a morphism in $\mathfrak H$, yielding the morphism of generalised Cartan lattices $\p_*\colon K_0(A')\to K_0(A)$, $[X]\mapsto[\p(X)]$. Also, $\p$ is equivalence if and only if $\p_*$ is an isomorphism. Let $\psi\colon\mod A'\to\mod A$ be another such functor, and suppose that $\p_*=\psi_*$. Then $[\p(A')]=[\psi(A')]$, so $\p(A')\cong \psi(A')$ by Lemma~\ref{le:rigid-dim-vector}. Since $\p$ is naturally isomorphic to $-\otimes_{A'} \p(A')$, and similarly for $\psi$, we conclude that $\p\cong\psi$.
\end{proof}

It is clear that the functor $\mathfrak H\to \mathfrak C$ is not full. For, fix two fields $k$ and $k'$. Then $K_0(k')\cong K_0(k)$ in $\mathfrak C$, but $\mod k'\cong\mod k$ in $\mathfrak H$ if and only if $k'\cong k$. Also, each derived equivalence $\D^b(\mod A')\xto{\sim} \D^b(\mod A)$ induces an isomorphism $K_0(A')\xto{\sim}K_0(A)$ in $\mathfrak C$, but again $\mod A'$ and $\mod A$ need not to be equivalent in $\mathfrak H$. We do however have the following result.

\begin{thm}[Happel {\cite[Theorem~5.12]{Ha1987}}]\label{th:Happel}
Let $A$ be a finite dimensional algebra over an algebraically-closed field $k$. Assume that $A$ is basic and simply-connected. Then $A$ is derived equivalent to the path algebra $kQ$ of a Dynkin quiver $Q$ if and only if their generalised Cartan lattices are isometric $K_0(A)\cong K_0(kQ)$. \qed
\end{thm}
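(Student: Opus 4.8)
The plan is to prove the two implications separately; the forward implication is routine, while the converse carries the whole weight of Happel's theorem and will require the structure theory of bounded derived categories of algebras of finite global dimension.

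For the forward implication, I would first note that a simply-connected algebra is in particular triangular, hence of finite global dimension, so $K_0(A)=K_0(\mod A)$ carries the Euler form and agrees, as a bilinear lattice, with $K_0(\D^b(\mod A))$ equipped with $\langle X,Y\rangle=\sum_i(-1)^i\dim_k\Hom_{\D^b(\mod A)}(X,Y[i])$; the same holds for $kQ$. Since this pairing on a bounded derived category is defined purely in terms of the triangulated structure, any triangle equivalence $\D^b(\mod A)\xrightarrow{\sim}\D^b(\mod kQ)$ transports it, and the induced group isomorphism $K_0(A)\cong K_0(kQ)$ is therefore an isometry of the corresponding (generalised Cartan) lattices.

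For the converse, suppose $\p\colon K_0(A)\xrightarrow{\sim}K_0(kQ)$ is an isometry with $Q$ Dynkin. As $Q$ is Dynkin the symmetrised Euler form of $kQ$ is positive definite, hence so is that of $A$. Since $A$ has finite global dimension, $\D^b(\mod A)$ has a Serre functor $\nu\simeq-\lotimes_A DA$ and therefore Auslander--Reiten triangles; the AR translate $\tau=\nu[-1]$ induces on $K_0(A)$ the Coxeter transformation $c$ of the Euler lattice, characterised by $\langle x,c(y)\rangle=-\langle y,x\rangle$ exactly as in the hereditary case of \S\ref{se:hereditary}. Because $c$ then preserves a positive definite integral form it generates a finite subgroup of the automorphism group of $K_0(A)$, so $c$ has finite order; this is the analogue of the equivalence ``Dynkin type $\Leftrightarrow$ $\cox$ has finite order'' of Theorem~\ref{th:Dynkin-type}. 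The heart of the matter is then to pass from the finiteness of $\tau$, together with simple-connectedness, to the statement that $A$ is piecewise hereditary of Dynkin type: I would analyse the Auslander--Reiten quiver of $\D^b(\mod A)$, noting that each of its connected components is a stable translation quiver with only finitely many $\tau$-orbits, so by Riedtmann's structure theorem (and Happel's study of derived categories of finite-dimensional algebras) has the form $\bbZ\Delta/G$ for a tree $\Delta$; positive definiteness of the Euler form excludes the Euclidean and wild cases, forcing $\Delta$ Dynkin and $G$ free, while simple-connectedness of $A$ rules out non-trivial $G$ and, via connectedness of $\mod A$, leaves a single component $\bbZ\Delta$. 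A complete slice in $\bbZ\Delta$ then yields a tilting object of $\D^b(\mod A)$ whose endomorphism algebra is the path algebra of some orientation $\Delta'$ of $\Delta$, giving $\D^b(\mod A)\simeq\D^b(\mod k\Delta')$. Finally $\Delta$ is determined by the symmetrised Euler form of $A$, hence equals the underlying diagram of $Q$, and since all orientations of a Dynkin diagram have derived-equivalent path algebras we conclude $\D^b(\mod A)\simeq\D^b(\mod kQ)$.

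The main obstacle is precisely this structural step: showing that a simply-connected algebra with positive definite Euler form has derived category of the shape $\bbZ\Delta$ with $\Delta$ Dynkin. All of Happel's genuine input --- the existence and geometry of AR components in $\D^b(\mod A)$, the covering-theoretic exclusion of quotients $\bbZ\Delta/G$ with $G\neq1$, and the reduction of piecewise-hereditary-of-Dynkin-type algebras to path algebras via slices --- is concentrated here; once the $\bbZ\Delta$ shape is available, the remaining bookkeeping with orientations and with the isometry $\p$ is formal.
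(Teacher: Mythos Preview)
The paper does not give a proof of this statement: the theorem is quoted from Happel \cite[Theorem~5.12]{Ha1987} and closed with a \qed, so there is nothing in the paper to compare your proposal against. Your sketch is a reasonable outline of the strategy behind Happel's original argument (derived invariance of the Euler form for the easy direction; for the converse, positive definiteness of the symmetrised form together with the AR-theory of $\D^b(\mod A)$ and simple-connectedness to force a $\bbZ\Delta$ shape with $\Delta$ Dynkin), and you correctly identify that the structural step --- passing from positive definiteness and simple-connectedness to the $\bbZ\Delta$ description of the derived AR-quiver --- is where all the genuine work sits; but since the paper itself treats the result as a black box, no further comparison is possible.
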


We can now express Theorem~\ref{th:main} in terms of a fixed hereditary artin algebra $A$. In this case set $\Sub(\mod A)$ to be the set of subcategories of $\mod A$ of the form $\C(X)$ for some exceptional sequence $X$. The poset structure on $\Sub(\mod A)$ is given by inclusion of subcategories.

\begin{cor}\label{co:epi}
Let $A$ be a hereditary artin algebra. Then there is a natural isomorphism of posets $\Sub(\mod A)\cong\NC(K_0(A))$. In particular, two exceptional sequences $X$ and $Y$ are equivalent under the braid group action if and only if $\cox(\C(X))=\cox(\C(Y))$. \qed
\end{cor}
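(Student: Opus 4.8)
The plan is to read Corollary~\ref{co:epi} off from Theorem~\ref{th:main} via the dictionary of Proposition~\ref{pr:exceptional-ncp}. Concretely, I would first establish a poset isomorphism $\Sub(\mod A)\xto{\sim}\Exc(K_0(A))$ sending a subcategory $\C(X)$, for $X=(X_1,\ldots,X_r)$ an exceptional sequence in $\mod A$, to the class $[([X_1],\ldots,[X_r])]$ of the associated real exceptional sequence; composing with the isomorphism $\Exc(K_0(A))\xto{\sim}\NC(K_0(A))$, $[F]\mapsto s_F$, of Theorem~\ref{th:main} then yields the claimed natural isomorphism, under which $\C(X)$ is sent to $s_{[X]}=\cox(\C(X))$.

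That the assignment $\C(X)\mapsto[([X_i])]$ is well defined needs a small argument. By Proposition~\ref{pr:exceptional-module-real-root} each $[X_i]$ is a real root, so $([X_i])$ is a real exceptional sequence (Proposition~\ref{pr:exceptional-ncp}). And if $\C(X)=\C(Y)$, then, viewing $X$ and $Y$ as complete exceptional sequences of $\C(X)$ --- which is equivalent to $\mod B$ for some hereditary algebra $B$ by Theorem~\ref{th:subcat} --- transitivity of the braid action there (Theorem~\ref{th:braid}) together with the $B_r$-equivariance in Proposition~\ref{pr:exceptional-ncp} shows that $([X_i])$ and $([Y_i])$ lie in a common $\{\pm1\}\wr B_r$-orbit. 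For a non-complete $X$ one first replaces it by a completion inside $\C(X)$ using Corollary~\ref{co:extend}; this does not change $\C(X)$.

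Bijectivity comes from Proposition~\ref{pr:exceptional-ncp} and Lemma~\ref{le:orbits}. For surjectivity, a class in $\Exc(K_0(A))$ is represented by a real exceptional sequence $F$, which by Proposition~\ref{pr:exceptional-ncp} lifts up to signs to an exceptional sequence $X$ in $\mod A$ with $[([X_i])]=[F]$. For injectivity, if $[([X_i])]=[([Y_i])]$ then $([Y_i])=\sigma([X_i])$ for some $\sigma\in\{\pm1\}\wr B_r$, so $X$ and $Y$ are braid-equivalent in $\mod A$ (Proposition~\ref{pr:exceptional-ncp}); since every braid move carries $X$ to an exceptional sequence of modules lying in $\C(X)$, and is invertible, we get $\C(Y)=\C(X)$. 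For order-compatibility, $\C(X)\subseteq\C(Y)$ holds if and only if $X$ is an exceptional sequence of $\C(Y)$, if and only if (extending $X$ to a complete exceptional sequence of $\C(Y)$ by Corollary~\ref{co:extend} and applying Theorem~\ref{th:braid} inside $\C(Y)$) $([X_i])$ is an initial subsequence of some $\sigma([Y_j])$, i.e.\ $[([X_i])]\le[([Y_j])]$ in $\Exc(K_0(A))$. Naturality is inherited from Corollary~\ref{co:map-on-nc}. Finally, for the last sentence: two exceptional sequences $X,Y$ of the same length are braid-equivalent if and only if $([X_i])$ and $([Y_i])$ are $\{\pm1\}\wr B_r$-equivalent (Proposition~\ref{pr:exceptional-ncp}), if and only if $\bbZ([X_i])=\bbZ([Y_i])$ (Lemma~\ref{le:orbits}~(2)), if and only if $s_{[X]}=s_{[Y]}$ (Proposition~\ref{pr:real-seq}), i.e.\ $\cox(\C(X))=\cox(\C(Y))$.

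The main obstacle I anticipate is the verification that $\C(X)$ is genuinely determined by the braid-orbit of $([X_i])$ --- that braid-equivalent exceptional sequences span the same subcategory, and that inclusion of subcategories is reflected by the order on $\Exc(K_0(A))$. These are the steps that really use the perpendicular calculus (Theorem~\ref{th:subcat}, Corollary~\ref{co:extend}) and the transitivity result (Theorem~\ref{th:braid}); everything else is a direct application of Theorem~\ref{th:main} and Proposition~\ref{pr:exceptional-ncp}.
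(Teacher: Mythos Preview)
Your proposal is correct and follows essentially the same route as the paper: the corollary is marked \qed precisely because it is the translation of Theorem~\ref{th:main} into the module category via Proposition~\ref{pr:exceptional-ncp}, and you have spelled out exactly that translation. One small remark: your aside about ``non-complete $X$'' inside $\C(X)$ is unnecessary, since an exceptional sequence $X$ of length $r$ is automatically complete in $\C(X)\cong\mod B$ with $\rk K_0(B)=r$ (Theorem~\ref{th:subcat}); otherwise the argument is clean.
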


\subsection*{Non-crossing partitions and Hom-free sets}

Let $A$ be an hereditary artin algebra. By a \emph{Hom-free set} in $\mod A$ we will mean a set $\{S_1,\ldots,S_r\}$ of exceptional modules satisfying $\Hom_A(S_i,S_j)=0$ for all $i\neq j$. Such Hom-free sets were studied by various authors \cite{BRT2012,GP1987,Re2007,Rie1980,Si2012}.

\begin{prop}\label{pr:hom-free}
Let $X$ be an exceptional sequence in $\mod A$. Then the map sending $\C(X)$ to the set of simple objects in $\C(X)$ induces an injective map from $\NC(K_0(A))$ to the Hom-free sets up to isomorphism. If $A$ is representation-finite, then this map is a bijection.
\end{prop}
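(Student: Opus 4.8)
\emph{Proof proposal.} The plan is to work through the isomorphism $\Sub(\mod A)\cong\NC(K_0(A))$ of Corollary~\ref{co:epi}, under which the assignment in question becomes $\C(X)\mapsto\operatorname{simp}\C(X)$, the set of isomorphism classes of simple objects of $\C(X)$. First I would check this lands in Hom-free sets: by Theorem~\ref{th:subcat} we have $\C(X)\cong\mod B$ for a hereditary algebra $B$, so by Lemma~\ref{le:hereditary}~(1) the simple $B$-modules form, in a suitable order, a complete orthogonal exceptional sequence; since $\C(X)\hookrightarrow\mod A$ is fully faithful, exact and has extension-closed image (so $\Ext^1$ is computed in $\mod A$), these are exceptional $A$-modules with pairwise vanishing $\Hom_A$, i.e.\ a Hom-free set, and this set depends only on $\C(X)$.

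Injectivity is then immediate from the observation that $\C(X)$ can be recovered from $\operatorname{simp}\C(X)$: every object of $\mod B\cong\C(X)$ has a finite filtration with simple factors, so $\C(X)$ is the extension closure of $\operatorname{simp}\C(X)$, hence equals the smallest subcategory closed under kernels, cokernels and extensions containing $\operatorname{simp}\C(X)$; that is, $\C(X)=\C(\operatorname{simp}\C(X))$. Therefore $\operatorname{simp}\C(X)=\operatorname{simp}\C(Y)$ forces $\C(X)=\C(Y)$, so the map is injective.

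For surjectivity when $A$ is representation-finite, fix a Hom-free set $\mathcal S=\{S_1,\dots,S_r\}$. Here $(\Ga,E):=K_0(A)$ is of Dynkin type (Theorem~\ref{th:Gabriel}), so the symmetrised form $(-,-)$ on $\Ga$ is positive definite. The crux is to order $\mathcal S$ into an exceptional sequence. I would begin with the pairwise dichotomy: if $S\neq T$ in $\mathcal S$ had $\langle[S],[T]\rangle\neq0\neq\langle[T],[S]\rangle$ (so both are negative, being $-\dim\Ext^1$), then, since $[S]$ and $[T]$ are roots, $\langle[S],[S]\rangle$ and $\langle[T],[T]\rangle$ divide both off-diagonal entries, and a short computation gives $([S],[T])^2\ge4\langle[S],[S]\rangle\langle[T],[T]\rangle=([S],[S])([T],[T])$, contradicting positive definiteness on the sublattice $\langle[S],[T]\rangle$ (the two classes being linearly independent as distinct positive roots). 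Hence for each pair at least one of $(S,T)$, $(T,S)$ is an orthogonal exceptional pair, and we may record the remaining constraints as an oriented graph $G$ on $\mathcal S$ with an edge $S\to T$ whenever $[S]$ is obliged to precede $[T]$ in every exceptional-sequence ordering; such an ordering exists precisely when $G$ is acyclic.

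Suppose $G$ had a directed cycle, and take a shortest one, $[S_{i_1}]\to\cdots\to[S_{i_k}]\to[S_{i_1}]$ with $k\ge3$. Minimality makes it chordless (a chord would produce a shorter directed cycle), so the Gram matrix of $[S_{i_1}],\dots,[S_{i_k}]$ for $(-,-)$ has the shape of the $k$-cycle, and it is positive semidefinite as a restriction of $(-,-)$; normalising produces a symmetrisable generalised Cartan matrix with a cycle-shaped diagram. Such a matrix cannot be positive definite (a finite-type diagram is a tree), so it is of affine type, hence necessarily of type $\widetilde A_{k-1}$, whose null vector $(1,\dots,1)$ yields $\sum_j[S_{i_j}]=0$ — impossible, since each $[S_{i_j}]$ is a nonzero positive element of $\Ga$. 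Thus $G$ is acyclic; a topological sort orders $\mathcal S$ as an orthogonal exceptional sequence $X$ with $\C(X)=\C(\mathcal S)$, and $\operatorname{simp}\C(X)=\mathcal S$ by the perpendicular calculus (the terms of an orthogonal exceptional sequence are precisely the simple objects of the subcategory they generate). So $\mathcal S$ lies in the image, and the map is a bijection. The main obstacle I anticipate is this acyclicity step — extracting from a precedence cycle a cycle-shaped Cartan matrix and excluding it via positive definiteness of the Euler form; the rest reduces to results already established.
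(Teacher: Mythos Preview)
Your proof is correct. The injectivity half matches the paper's argument essentially verbatim: $\C(X)\cong\mod B$ supplies the simples as a Hom-free set, and $\C(X)$ is recovered from its simples.

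For surjectivity in the representation-finite case, however, you take a genuinely different route. The paper dispatches this in one line using Auslander--Reiten theory: on a representation-finite hereditary algebra the indecomposables are partially ordered via the AR quiver so that $\Ext^1_A(M,N)\neq0$ forces $N\prec M$; any linear extension of this order then arranges a Hom-free set into an exceptional sequence. Your argument instead stays entirely within the bilinear lattice: positive definiteness of the symmetrised Euler form (Dynkin type) first gives the pairwise dichotomy via a Cauchy--Schwarz contradiction, and then excludes precedence cycles by showing that a shortest cycle would yield a positive-semidefinite symmetrisable Cartan matrix with a cycle-shaped diagram, which (being non-finite and hence affine $\widetilde A_{k-1}$, or else already indefinite) produces a null vector $\sum_j[S_{i_j}]=0$ of strictly positive classes. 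This is longer and more delicate---the chordlessness step and the passage from ``affine cycle'' to $\widetilde A_{k-1}$ deserve a word more than you give them---but it is a purely root-theoretic argument, avoiding any appeal to AR theory. That is very much in the spirit of the paper's treatment of Gabriel's Theorem (Theorem~\ref{th:Gabriel}), even though the paper itself did not pursue it here.
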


\begin{proof}
If $X$ is an exceptional sequence, then $\C(X)\cong\mod B$ for some hereditary artin algebra $B$, by Theorem~\ref{th:subcat}, and
  hence the simple objects in $\C(X)$ form a Hom-free set. Conversely, $\C(X)$ is uniquely determined by its set of simple objects. Using Corollary~\ref{co:epi} we know that the non-crossing partitions are in bijection with the subcategories of the form $\C(X)$ for $X$ exceptional. Thus the map sending $\C(X)$ to its set of simple objects induces an injective map from the set of non-crossing partitions to the Hom-free sets up to isomorphism.

If now $A$ is representation-finite, then any Hom-free set can be arranged to form an exceptional sequence. For, the (isomorphism classes of) indecomposable $A$-modules are partially ordered using the Auslander-Reiten quiver, so $\Ext^1_A(M,N)\neq0$ implies $N\prec M$. It follows that every Hom-free set arises as the set of simple objects in $\C(X)$ for some exceptional sequence $X$.
\end{proof}

When $A$ is representation-finite, this observation can be used to enumerate the non-crossing partitions. This was already done by Gabriel and de la Pe\~na in \cite[Sections~2.6, 2.7]{GP1987} and from `painful calculations' they obtained the Coxeter-Catalan numbers of
$ADE$-type.\footnote{
Note however that their form for the Coxeter-Catalan number of type $D$ has not been simplified, and although they have the correct number for $E_8$, their numbers for $E_6$ and $E_7$ are slightly wrong.
}

\begin{exm}
The map from non-crossing partitions to Hom-free sets is in general not surjective. For, consider the path algebra of the quiver
\[ \begin{tikzcd}[row sep=tiny]
& \mathbf{\cdot} \arrow{dr}\\
\mathbf{\cdot} \arrow{ur} \arrow{rr} && \mathbf{\cdot}
\end{tikzcd} \]
Then the exceptional modules $X$ and $Y$ having dimension vectors
\[ [X]=\begin{matrix}\;\;\;1\;\;\;\\[-0.5em]0\;\;\;\;\;\;0\end{matrix} \qquad\textrm{and}\qquad 
[Y]=\begin{matrix}\;\;\;0\;\;\;\\[-0.5em]1\;\;\;\;\;\;1\end{matrix} \]
form a Hom-free set, but neither $(X,Y)$ nor $(Y,X)$ is an exceptional pair.
\end{exm}

Let $F=(f_1,\ldots,f_r)$ be a real exceptional sequence in a generalised Cartan lattice $(\Ga,E)$. Writing  $f_i=\sum_{j}\lambda_{ij}e_j$ in terms of $E$ we may define the \emph{height} of $F$ to be $\height(F):=\sum_{ij}|\lambda_{ij}|$.

\begin{lem}
Let $(\Ga,E)$ be a generalised Cartan lattice, and $F$ a real exceptional sequence in $\Ga$. If $F'=\sigma F$ has minimal height in the orbit of $F$, then the roots in $F'$ are unique up to sign.
\end{lem}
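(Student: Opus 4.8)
The plan is to transport the statement to a hereditary algebra and then settle it by an elementary counting argument. First I would reduce to the case $r=n:=\rk\Ga$. Choosing $\sigma_0$ with $\sigma_0F$ orthogonal, the pair $(\bbZ F,\sigma_0F)$ is a generalised Cartan lattice by Lemma~\ref{le:orbits}~(1); within it, the complete real exceptional sequences are exactly the sequences $F'$ with $\bbZ F'=\bbZ F$, and by Lemma~\ref{le:orbits}~(2) these form a single $\{\pm1\}\wr B_n$-orbit, namely the orbit of $F$. Since the height of a sequence contained in $\bbZ F$ is computed inside $\Ga$ by the very same formula, it is harmless to replace $(\Ga,E)$ by $(\bbZ F,\sigma_0F)$ and assume $F$ is complete.

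Next I would use Lemma~\ref{le:hereditary} to realise $(\Ga,E)=K_0(B)$ for a finite dimensional hereditary algebra $B$ with simple modules $S_1,\dots,S_n$ and $e_i=[S_i]$. By Proposition~\ref{pr:exceptional-ncp}, together with the identification of the orbit in the previous paragraph, the orbit of $F$ is exactly the set of classes $([X_1],\dots,[X_n])$, taken up to sign, of complete exceptional sequences $X=(X_1,\dots,X_n)$ in $\mod B$; and for such a representative the height equals $\sum_i\dim_k X_i$, since the entries of $[X_i]$ in the basis $E$ are the non-negative composition multiplicities (so the sign twists make no difference). The task thus becomes: show that the complete exceptional sequences in $\mod B$ of minimal total dimension are precisely those whose terms are $S_1,\dots,S_n$ in some order.

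For this, write $[X_i]=\sum_j m_{ij}e_j$ with $m_{ij}\in\bbN$. As $([X_1],\dots,[X_n])$ is a $\bbZ$-basis of $K_0(B)$, the integer matrix $M=(m_{ij})$ is invertible, so no column of $M$ vanishes and therefore $\sum_i m_{ij}\ge 1$ for every $j$. Consequently
\[ \sum_i\dim_k X_i=\sum_j\Big(\sum_i m_{ij}\Big)\dim_k S_j\ \ge\ \sum_j\dim_k S_j, \]
with equality if and only if every column sum of $M$ equals $1$. A non-negative invertible integer matrix all of whose column sums equal $1$ is a permutation matrix, so in the equality case each $[X_i]$ is one of the $e_j$, whence $X_i\cong S_{\pi(i)}$ for a permutation $\pi$ by Lemma~\ref{le:rigid-dim-vector}. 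Since, conversely, the simple modules in a suitable order do form a complete exceptional sequence, of total dimension $\sum_j\dim_k S_j$, this proves the claim. Reading it back through Proposition~\ref{pr:exceptional-ncp}, a representative $F'=\sigma F$ has minimal height precisely when its underlying roots are $e_1,\dots,e_n$ up to sign; in particular the roots occurring in any minimal-height representative are the same up to sign.

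I do not expect a serious obstacle. The only point that needs care is the bookkeeping of the first two paragraphs: that the $\{\pm1\}\wr B_n$-orbit of $F$ corresponds, via the hereditary realisation, to the set of \emph{all} complete exceptional sequences of $\mod B$, and that under this correspondence the height is the total $k$-dimension. Once that dictionary is in place the estimate is entirely elementary. (If one wishes to avoid hereditary algebras, one can instead try to show directly that a non-orthogonal complete real exceptional sequence is never of minimal height, by producing a braid move — described by the explicit $l$- and $r$-maps — that replaces one term by a strictly ``smaller'' root; but the counting argument above seems cleaner.)
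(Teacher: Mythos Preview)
Your counting argument is the right idea, and it is essentially what the paper does; but the reduction in your first paragraph is not valid as stated. The height is by definition computed in the basis $E$ of $\Ga$, and when you ``replace $(\Ga,E)$ by $(\bbZ F,\sigma_0F)$'' you are silently replacing $E$ by $\sigma_0F$, hence changing the height function. After that replacement, what you minimise in the second paragraph is the height with respect to $\sigma_0F$, not the original height with respect to $E$; so the argument no longer addresses the statement. (A related slip: the height is the total \emph{length} $\sum_i\ell(X_i)$, not the total $k$-dimension $\sum_i\dim_kX_i$; the coefficients $m_{ij}$ are composition multiplicities, and $\sum_jm_{ij}=\ell(X_i)$, while $\dim_kX_i=\sum_jm_{ij}\dim_kS_j$.)

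The fix is simply to skip the reduction, exactly as the paper does. Realise $(\Ga,E)=K_0(A)$ and lift $F$ (up to sign) to an exceptional sequence $X=(X_1,\dots,X_r)$ in $\mod A$; then the orbit of $F$ corresponds, up to sign, to the exceptional sequences $X'$ with $\C(X')=\C(X)$, and $\height(F')=\ell_A(X'_1\oplus\cdots\oplus X'_r)$. Now run your matrix argument inside $\C(X)$: if $T_1,\dots,T_r$ are the simple objects of $\C(X)$ and $m_{ij}$ is the multiplicity of $T_j$ in $X'_i$, then $\ell_A(X'_i)=\sum_jm_{ij}\ell_A(T_j)$ (since $\C(X)$ is extension-closed), so $\height(F')=\sum_j\big(\sum_im_{ij}\big)\ell_A(T_j)\ge\sum_j\ell_A(T_j)$, with equality iff the non-negative invertible integer matrix $(m_{ij})$ has all column sums $1$, i.e.\ is a permutation matrix. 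Hence the minimum is attained precisely when the $X'_i$ are the simples of $\C(X)$, which depend only on the orbit; this gives the uniqueness up to sign. This is the paper's proof, with your counting argument supplying the detail behind the sentence ``the roots in $F'$ are precisely the classes of the simple objects in $\C(X)$''.
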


\begin{proof}
By Lemma~\ref{le:hereditary} we can realise $(\Ga,E)$ as $K_0(A)$ for some hereditary artin algebra $A$, and by Proposition~\ref{pr:exceptional-ncp} we can lift $F$ (up to sign) to an exceptional sequence $X=(X_1,\ldots,X_r)$ in $\mod A$. Then $\height(F)$ equals the length $\ell_A(X):=\ell_A(X_1\oplus\cdots\oplus X_r)$. Thus if $F'$ has minimal height, then the roots in $F'$ are precisely the classes of the simple objects in $\C(X)$.
\end{proof}

\begin{appendix}
\section{Perpendicular calculus}

In this appendix we collect some basic facts about perpendicular categories, following Geigle--Lenzing \cite[Section~3]{GL1991}, Schofield \cite[Section~2]{Sc1991}, and Crawley-Boevey \cite{CB1992}. 

Let $A$ be an hereditary artin algebra. We consider the category $\mod A$ of finitely generated $A$-modules.

For a subset $X\subseteq\mod A$ we define the right and left \emph{perpendicular categories} to be the full subcategories
\begin{align*}
X^\perp &:= \{ Y \in\mod A \mid \Hom_A(X_i,Y)=0=\Ext^1_A(X_i,Y) \textrm{ for all }X_i\in X \}\\
{}^\perp X &:= \{ Y \in \mod A \mid \Hom_A(Y,X_i)=0=\Ext^1_A(Y,X_i) \textrm{ for all }X_i\in X \}
\end{align*}
These are clearly closed under kernels, cokernels and extensions. We also consider $\C(X)$, the smallest full subcategory of $\mod A$ closed under kernels, cokernels and extensions and containing each $X_i\in X$.

The following proposition describes simultaneously  adjoints of the inclusions $\C(X)\to\mod A$ and $\C(X)^\perp \to\mod A$ via a five term exact sequence; see also \cite[Proposition~3.5]{GL1991} and \cite[Theorem~2.2]{KS2010}.

\begin{prop}\label{pr:perp1}
Let $\C\subseteq\mod A$ be a full subcategory closed under kernels, cokernels and extensions. Assume that $X\in\C$ is a relative projective generator.
\begin{enumerate}
\item Each $A$-module $M$ fits into a functorial exact sequence
\begin{equation}\label{eq:perp}
0\lto \bar M^{\C^\perp} \lto M_\C \lto M\lto M^{\C^\perp} \lto \bar M_\C \lto 0
\end{equation}
with $M_\C,\bar M_\C\in\C$ and $M^{\C^\perp},\bar M^{\C^\perp}\in \C^\perp$.
\item The map $M\mapsto M^{\C^\perp}$ yields a left adjoint for the inclusion $\C^\perp\to\mod A$.
\item The map $M\mapsto M_\C$ yields a right adjoint for the inclusion $\C\to\mod A$.
\item $\C={}^\perp(\C^\perp)$.
\end{enumerate}
\end{prop}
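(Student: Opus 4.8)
The plan is to reduce everything to the equivalence $\C\simeq\mod B$, $B:=\End_A(X)$, induced by the relative projective generator $X$ (so $\Hom_A(X,-)$ restricts to it, with quasi-inverse $G$; one checks $B$ is again a hereditary artin algebra). First I would record three preliminary facts, all consequences of this equivalence together with $\Ext^2_A=0$: (a)~$\Ext^1_A(C,D)=0$ whenever $C\in\add X$ and $D\in\C$ (relative projectivity); (b)~every $C\in\C$ admits a resolution $0\to K\to X^n\to C\to0$ with $K\in\add X$ (transported from a projective presentation in $\mod B$), whence $\C^\perp$ can be tested on the single object $X$, i.e.\ $Z\in\C^\perp$ iff $\Hom_A(X,Z)=0=\Ext^1_A(X,Z)$, and then $\Hom_A(C,Z)=0=\Ext^1_A(C,Z)$ for all $C\in\C$; (c)~there are natural isomorphisms $\Hom_A(C,M)\cong\Hom_B\bigl(\Hom_A(X,C),\Hom_A(X,M)\bigr)$ for $C\in\C$, and $\Ext^1_A(C,M)\cong\Hom_B\bigl(\Hom_A(X,C),\Ext^1_A(X,M)\bigr)$ for $C\in\add X$.

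Next I would prove part~(3) and put the left half of \eqref{eq:perp} in place. Set $M_\C:=G(\Hom_A(X,M))$; by~(c) this represents the functor $C\mapsto\Hom_A(C,M)$ on $\C$, so $M\mapsto M_\C$ is right adjoint to the inclusion, with counit $\eta_M\colon M_\C\to M$. A diagram chase shows $\Im\eta_M$ equals the trace $\operatorname{tr}_X(M)$, so $N:=\Coker\eta_M=M/\operatorname{tr}_X(M)$ has $\Hom_A(X,N)=0$; moreover $\Hom_A(X,M_\C)\to\Hom_A(X,\Im\eta_M)$ is onto and $\Ext^1_A(X,M_\C)=0$, so $\Hom_A(X,-)$ and $\Ext^1_A(X,-)$ both vanish on $\Ker\eta_M$, giving $\Ker\eta_M\in\C^\perp$ by~(b). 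Thus $\bar M^{\C^\perp}:=\Ker\eta_M$ works and we have the exact sequence $0\to\bar M^{\C^\perp}\to M_\C\xrightarrow{\eta_M}M\to N\to0$.

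For the right half I would kill $\Ext^1_A(X,N)$ by a universal extension. Let $E:=\Ext^1_A(X,N)$, a finitely generated $B$-module, put $\bar M_\C:=G(E)\in\C$, and choose a projective presentation $0\to C_1\to C_0\to\bar M_\C\to0$ in $\C$ with $C_i\in\add X$ (using that $B$ is hereditary). Feeding this into the long exact sequence for $\Hom_A(-,N)$ and identifying $\Ext^1_A(C_0,N)\to\Ext^1_A(C_1,N)$ with $\Hom_B(\Hom_A(X,C_0),E)\to\Hom_B(\Hom_A(X,C_1),E)$ via~(c) — whose kernel is $\Hom_B(E,E)$ — produces a class $\zeta\in\Ext^1_A(\bar M_\C,N)$ whose connecting map $\Hom_A(X,\bar M_\C)\to\Ext^1_A(X,N)$ is the identity of $E$, hence an isomorphism. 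The middle term $M^{\C^\perp}$ of $\zeta$ then lies in $\C^\perp$, and splicing $0\to N\to M^{\C^\perp}\to\bar M_\C\to0$ onto the previous sequence gives the functorial five-term sequence of part~(1). For part~(2) I would check that $r_M\colon M\twoheadrightarrow N\hookrightarrow M^{\C^\perp}$ is a $\C^\perp$-reflection: applying $\Hom_A(-,Z)$, $Z\in\C^\perp$, to the two short exact sequences composing \eqref{eq:perp}, the term $\bar M_\C\in\C$ contributes nothing by~(b) and $\Hom_A(\operatorname{tr}_X(M),Z)=0$ since $\operatorname{tr}_X(M)$ is a quotient of a finite sum of copies of $X$; hence $\Hom_A(M^{\C^\perp},Z)\xrightarrow{\sim}\Hom_A(M,Z)$, which also gives functoriality of $M\mapsto M^{\C^\perp}$. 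I expect the construction of $\zeta$, and the verification that its connecting map is an isomorphism, to be the main technical hurdle; this is exactly where heredity of $A$ and $B$ is essential.

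Finally, for part~(4): $\C\subseteq{}^\perp(\C^\perp)$ is immediate from the definition of $\C^\perp$. Conversely, if $M\in{}^\perp(\C^\perp)$ then $\Hom_A(M,M^{\C^\perp})=0$, so $r_M=0$; exactness of \eqref{eq:perp} then forces $\eta_M$ to be epic, so $0\to\bar M^{\C^\perp}\to M_\C\to M\to0$ is exact with $M_\C\in\C$ and $\bar M^{\C^\perp}\in\C^\perp$. Since $\Ext^1_A(M,\bar M^{\C^\perp})=0$ this sequence splits, so $M$ is a direct summand of $M_\C$ and therefore lies in $\C$.
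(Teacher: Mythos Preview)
Your argument is correct. Both proofs construct the same five-term sequence and deduce (2)--(4) from it, but the packaging differs. The paper stays entirely inside $\mod A$: it builds $M_\C$ by taking a right $\add(X)$-approximation $X_M\to M$, then pushing out along an approximation of the kernel; and it builds $M^{\C^\perp}$ from the cokernel $N$ by forming a universal extension $0\to N\to E\to X_N^1\to0$, taking an approximation of $E$, and reading off the desired terms from the Snake Lemma. You instead pass through the Morita equivalence $\C\simeq\mod B$ and define $M_\C=G(\Hom_A(X,M))$ and $\bar M_\C=G(\Ext^1_A(X,N))$ directly, then manufacture the extension $\zeta$ by lifting $\mathrm{id}_E$ along $\Ext^1_A(\bar M_\C,N)\to\Hom_B(E,E)$. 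Your identification of the image of $\zeta$ in $\Hom_B(E,E)$ with the connecting map $\partial$ is exactly what makes this work, and it is the same universal-extension idea as in the paper, just phrased functorially rather than via approximations.

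What each approach buys: the paper's version is self-contained and never needs to know that $B$ is hereditary (indeed, the equivalence $\C\simeq\mod B$ with $B$ hereditary is only established later, in Theorem~\ref{th:subcat}, using this very proposition). Your version gives cleaner descriptions of $M_\C$ and $\bar M_\C$ as representing objects, at the cost of the preliminary check that $B=\End_A(X)$ is hereditary. That check is not circular---it follows immediately from $\Ext^2_A=0$: for $0\to K\to X^n\to C\to0$ in $\C$ the long exact sequence in $\mod A$ squeezes $\Ext^1_A(K,D)$ between $\Ext^1_A(X^n,D)=0$ and $\Ext^2_A(C,D)=0$, so $K$ is relatively projective and hence in $\add X$. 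You might make this explicit rather than folding it into ``one checks'', since your argument for~(b) as written (``transported from a projective presentation in $\mod B$'') presupposes exactly the heredity you are trying to establish.
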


\begin{proof}
We first note that $\C=\C(X)$ and $\C^\perp=X^\perp$.

(1) Given $M\in\mod A$ we set $X_M\to M$ to be a right $\add(X)$-approximation of $M$, so $X_M\in\add(X)$ and $\Hom_A(X',X_M)\to\Hom_A(X',M)$ is surjective for all $X'\in\add(X)$. Note that its kernel $L$ satisfies $\Ext^1_A(X',L)=0$ and its cokernel $N$ satisfies $\Hom_A(X',N)=0$ and $\Ext^1_A(X',N)\cong\Ext^1_A(X',M)$ for all $X'\in\add(X)$.

Consider the push-out diagram
\[ \begin{CD}
@. X_L @= X_L\\
@. @VVV @VVV\\
0 @>>> L @>>> X_M @>>> M\\
@. @VVV @VVV @|\\
0 @>>> \bar M^{\C^\perp} @>>> M_\C @>>> M\\
@. @VVV @VVV\\
@. 0 @. 0
\end{CD} \]
It follows from the comments above that $M_\C\in\C$ and $\bar M^{\C^\perp}\in\C^\perp$.

Next let
\[ \e \colon 0 \lto N \lto E \lto X_N^1 \lto 0 \]
be a universal extension, so $X_N^1\in\add(X)$ and $\Hom_A(X',X_N^1)\to\Ext_A^1(X',N)$ is surjective for all $X'\in\add(X)$. It follows that $\Ext^1_A(X',E)=0$ for all $X'\in\add(X)$.

Consider a right $\add(X)$-approximation $X_E\to E$ and let $F$ be the image of the composition $X_E\to E\to X_N^1$. Then $F$ is in $\C$, and is relative projective since it is a submodule of $X_N^1$, so $F\in\add(X)$. The Snake Lemma now yields an exact commutative diagram
\[ \begin{CD}
0 @>>> G @>>> X_E @>>> F @>>> 0\\
@. @VVV @VVV @VVV\\
0 @>>> N @>>> E @>>> X_N^1 @>>> 0\\
@. @VVV @VVV @VVV\\
0 @>>> N' @>>> M^{\C^\perp} @>>> \bar M_\C @>>> 0\\
@. @VVV @VVV @VVV\\
@. 0 @. 0 @. 0
\end{CD} \]
Again, $\bar M_\C\in\C$ and $M^{\C^\perp}\in\C^\perp$. Also, since the top row is split and $\Hom_A(X',N)=0$ for all $X'\in\add(X)$, the map $G\to N$ is zero, so $N'\cong N$.

Putting this together yields the five term exact sequence
\[ 0 \lto \bar M^{\C^\perp} \lto M_\C \lto M \lto M^{\C^\perp} \lto \bar M_\C \lto 0. \]

(2) Since $M^{\C^\perp},\bar M^{\C^\perp}\in\C^\perp$ we deduce that $\Hom_A(Y,M_\C)\cong\Hom_A(Y,M)$ for all $Y\in\C$.

(3) Analogously, $\Hom_A(M^{\C^\perp},Y)\cong\Hom_A(M,Y)$ for all $Y\in\C^\perp$.

(4) It is clear that $\C\subseteq{}^\perp(\C^\perp)$, so let $M\in{}^\perp(\C^\perp)$ and consider the five term exact sequence. The morphism $M\to M^{\C^\perp}$ is zero, so we are left with an extension
\[ 0 \lto \bar M^{\C^\perp} \lto M_\C \to M \lto 0. \]
This must split, so $\bar M^{\C^\perp}=0$ and $M\cong M_\C$ lies in $\C$.
\end{proof}

\begin{rem}\label{re:perp}
We observe that if $X\in\mod A$ is an exceptional module, then $\C=\C(X)=\add(X)$, so has $X$ itself as a relative projective generator. In this case, given $M$, we can take a minimal right approximation
\[ X_M = \Hom_A(X,M)\otimes_{\End_A(X)}X \lto M, \]
and the kernel already lies in $X^\perp$. Similarly we can take a minimal universal extension using $X_M^1=\Ext_A^1(X,M)\otimes_{\End_A(X)}X$, and the push-out to $M'$ has middle term in $X^\perp$. It follows that
\[ M_\C = \Hom_A(X,M)\otimes_{\End_A(X)}X \quad\textrm{and}\quad \bar M_\C = \Ext_A^1(X,M)\otimes_{\End_A(X)} X. \]
\end{rem}

There is an analogue of Proposition~\ref{pr:perp1} describing the right adjoint of the inclusion $^\perp \C\to\mod A$; we state this for completeness.

\begin{prop}\label{pr:perp2}
Let $\C\subseteq\mod A$ be a full subcategory closed under kernels, cokernels and extensions. Assume that $X\in\C$ is a relative injective cogenerator.
\begin{enumerate}
\item Each $A$-module $M$ fits into a functorial exact sequence
\begin{equation}\label{eq:perp2}
0\lto \bar M^\C \lto M_{{}^\perp\C} \lto M \lto M^\C \lto \bar M_{{}^\perp\C} \lto 0 
\end{equation}
with $M^\C,\bar M^\C\in\C$ and $M_{^\perp\C},\bar M_{^\perp\C}\in {}^\perp\C$. 
\item The map $M\mapsto M_{^\perp\C}$ yields a right adjoint for the inclusion ${^\perp\C}\to\mod A$.
\item The map $M\mapsto  M^\C$ yields a left adjoint for the inclusion $\C\to\mod A$.
\item $\C=({}^\perp\C)^\perp$. \qed
\end{enumerate} 
\end{prop}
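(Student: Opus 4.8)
The plan is to deduce Proposition~\ref{pr:perp2} from Proposition~\ref{pr:perp1} by applying the standard duality $D\colon\mod A\to\mod A^{\op}$ for artin algebras. First I would recall that $D$ is an exact contravariant equivalence with $D^2\cong\id$, satisfying natural isomorphisms $\Hom_A(X,Y)\cong\Hom_{A^{\op}}(DY,DX)$ and $\Ext^1_A(X,Y)\cong\Ext^1_{A^{\op}}(DY,DX)$. Since the global dimension of an artin algebra is left--right symmetric, $A^{\op}$ is again an hereditary artin algebra, and hence Proposition~\ref{pr:perp1} is available over $\mod A^{\op}$.

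Next I would transport the hypotheses along $D$. The full subcategory $D\C\subseteq\mod A^{\op}$ is again closed under kernels, cokernels and extensions, since $D$ is an exact anti-equivalence and these closure conditions are self-dual; moreover $DX\in D\C$ is a relative projective generator of $D\C$, as a relative injective cogenerator dualises to a relative projective generator, so $D\C=\C(DX)$ in the notation of the appendix. From the $\Hom$- and $\Ext$-isomorphisms above one also reads off that $Y\in{}^\perp\C$ if and only if $DY\in(D\C)^\perp$, so $D$ restricts to an anti-equivalence ${}^\perp\C\xto{\sim}(D\C)^\perp$.

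Now apply Proposition~\ref{pr:perp1} to $D\C\subseteq\mod A^{\op}$ and the module $DM$: this provides a functorial five-term exact sequence, the two adjunctions of its parts (2)--(3), and the identity $D\C={}^\perp((D\C)^\perp)$. Applying $D$ once more --- it is exact and contravariant, so it reverses the order of the five terms and carries $D\C$-objects to $\C$-objects and $(D\C)^\perp$-objects to ${}^\perp\C$-objects --- produces a functorial five-term exact sequence in $\mod A$ with $\C$-objects in positions one and four and ${}^\perp\C$-objects in positions two and five; relabelling them as $\bar M^\C$, $M_{{}^\perp\C}$, $M$, $M^\C$, $\bar M_{{}^\perp\C}$ gives precisely \eqref{eq:perp2}, which is part (1). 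Part (2) follows by applying $D$ to the left-adjoint isomorphism of Proposition~\ref{pr:perp1}(2) for $D\C$, evaluated at the objects $DY$ with $Y\in{}^\perp\C$: it becomes the assertion that $M\mapsto M_{{}^\perp\C}$ is a right adjoint of the inclusion ${}^\perp\C\hookrightarrow\mod A$. Symmetrically, part (3) is the dual of Proposition~\ref{pr:perp1}(3), and part (4), $\C=({}^\perp\C)^\perp$, is the image under $D$ of $D\C={}^\perp((D\C)^\perp)$. The only genuine work is the bookkeeping: one must check that every occurrence of left/right, projective/injective, generator/cogenerator, and ``$\Hom$ into'' versus ``$\Hom$ out of'' is flipped consistently under $D$, and that the relabelling places the two $\C$-objects and the two ${}^\perp\C$-objects of \eqref{eq:perp2} correctly --- which mirrors the asymmetric placement of $\C$ and $\C^\perp$ in \eqref{eq:perp}. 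There is no new mathematical input beyond Proposition~\ref{pr:perp1}.
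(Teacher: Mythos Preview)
Your proposal is correct and matches the paper's intended approach. The paper gives no explicit proof of Proposition~\ref{pr:perp2} --- it is stated ``for completeness'' with a bare \qed\ --- but the duality argument you spell out is precisely what is meant, as confirmed by the proof of Theorem~\ref{th:subcat}, where the implication ($1'$)~$\Leftrightarrow$~(2) is obtained from (1)~$\Leftrightarrow$~(2) by invoking ``the duality $(\mod A)^\op\xto{\sim}\mod A^\op$''.
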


\begin{thm}\label{th:subcat}
Let $A$ be an hereditary artin algebra and $\C\subseteq\mod A$ a full subcategory. Then the following are equivalent.
\begin{enumerate}
\item The inclusion $\C\to\mod A$ admits a left adjoint and $\C$ is closed under kernels, cokernels, and extensions.
\item[($1^\prime$)] The inclusion $\C\to\mod A$ admits a right adjoint and $\C$ is closed under kernels, cokernels, and extensions.
\item There is a finite homological epimorphism $A\to B$ such that restriction of scalars induces an equivalence $\mod B\xto{\sim}\C$.
\item There is an exceptional sequence $X$ in $\mod A$ such that $\C=\C(X)$.
\item There is an exceptional sequence $Y$ in $\mod A$ such that $\C=Y^\perp$.
\item[($4^\prime$)] There is an exceptional sequence $Z$ in $\mod A$ such that $\C={^\perp Z}$.
\end{enumerate}
In this case $X$ has length $\rk K_0(B)$, while $Y$ and $Z$ have length $\rk K_0(A)-\rk K_0(B)$.
\end{thm}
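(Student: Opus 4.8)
The plan is to prove the six conditions equivalent by establishing the chain $(1)\Leftrightarrow(2)\Leftrightarrow(3)\Leftrightarrow(4)$ directly, and then to deduce $(1')$ and $(4')$ from $(1)$ and $(4)$ by means of the standard duality $D\colon\mod A\to\mod A^{\op}$, which carries hereditary artin algebras to hereditary artin algebras, finite homological epimorphisms to finite homological epimorphisms, left adjoints to right adjoints, and interchanges right and left perpendicular categories, thereby fixing conditions $(2)$ and $(3)$ while interchanging $(1)\leftrightarrow(1')$ and $(4)\leftrightarrow(4')$. The length statement will be read off at the very end from ranks of Grothendieck groups.

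The equivalence $(1)\Leftrightarrow(2)$ is just Lemma~\ref{le:epi}: its first half is $(2)\Rightarrow(1)$, and its converse --- whose hypotheses are exactly that $A$ is hereditary and $\C$ is a reflective subcategory of $\mod A$ closed under kernels, cokernels and extensions --- is $(1)\Rightarrow(2)$. For $(2)\Rightarrow(3)$ I would argue as follows. The equivalence $\mod B\xto{\sim}\C$ together with the closure of $\C$ identifies $\Ext^n_B$ with $\Ext^n_A$; since $A$ is hereditary this forces $\Ext^n_B=0$ for $n\ge2$, so $B$ is hereditary. By Lemma~\ref{le:hereditary}(1) the simple $B$-modules can be ordered as $T_1,\dots,T_m$, with $m=\rk K_0(B)$, so that $(T_1,\dots,T_m)$ is an exceptional sequence in $\mod B$. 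Transporting it along the equivalence --- and noting that $\Hom$- and $\Ext^1$-groups, as well as indecomposability, agree whether computed in $\C$ or in $\mod A$ --- yields an exceptional sequence $X=(X_1,\dots,X_m)$ in $\mod A$; since the $T_i$ generate $\mod B$ under kernels, cokernels and extensions, $\C=\C(X)$, with $X$ of length $\rk K_0(B)$.

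The core of the proof is $(3)\Leftrightarrow(4)$, and this is where the perpendicular calculus is indispensable. Given $\C=\C(X)$ with $X$ of length $r$, I would extend $X$ by Corollary~\ref{co:extend} to a complete exceptional sequence $(X,Y)$, $Y=(Y_1,\dots,Y_s)$, $r+s=n:=\rk K_0(A)$. From $\Hom_A(Y_j,X_i)=0=\Ext^1_A(Y_j,X_i)$ one gets $X_i\in Y^\perp$, whence $\C(X)\subseteq Y^\perp$ as $Y^\perp$ is closed under kernels, cokernels and extensions. For the reverse inclusion, peel off $Y_s,Y_{s-1},\dots,Y_1$ one at a time: at each stage the remaining $Y_i$ lie in the one-object right perpendicular of the module just removed, and such a one-object perpendicular is again the module category of a hereditary artin algebra of rank one smaller, with reflective inclusion (Proposition~\ref{pr:perp1}(2) together with Lemma~\ref{le:epi}; see also Geigle--Lenzing \cite{GL1991} and Schofield \cite{Sc1991}). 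This identifies $Y^\perp$ with $\mod C$ for a hereditary artin algebra $C$ with $\rk K_0(C)=n-s=r$, reflective in $\mod A$. Inside this copy of $\mod C$ the sequence $X$ is exceptional of length $r=\rk K_0(C)$, hence complete, and an induction on the rank using the five-term exact sequences of Propositions~\ref{pr:perp1} and \ref{pr:perp2} shows that a complete exceptional sequence generates the whole module category under kernels, cokernels and extensions. Hence $\C(X)=\mod C=Y^\perp$, giving $(3)\Rightarrow(4)$; conversely $(4)$ exhibits $\C$ as a reflective subcategory closed under kernels, cokernels and extensions, so $(4)\Rightarrow(1)$, closing the cycle.

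Finally, for the numerical claim: any exceptional sequence $X$ with $\C(X)=\C$ becomes an exceptional sequence in $\C\cong\mod B$, so its classes are linearly independent (Lemma~\ref{le:lin-indept}) and generate $K_0(B)$ (every object of $\mod B=\C(X)$ has class in $\bbZ X$); thus $X$ has length $\rk K_0(B)$, and in a complete extension $(X,Y)$ --- respectively, in a complete extension $(Z,X)$ on the other side --- the sequence $Y$, respectively $Z$, has length $n-\rk K_0(B)$. The main obstacle is precisely the perpendicular-calculus input used in $(3)\Leftrightarrow(4)$: the fact that the one-object perpendicular of an exceptional module is again the module category of a hereditary artin algebra of one smaller rank, and the resulting control of the left and right perpendicular categories of an exceptional sequence. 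This is exactly what the appendix, together with the work of Geigle--Lenzing \cite{GL1991}, Schofield \cite{Sc1991} and Crawley-Boevey \cite{CB1992}, supplies; granting it, everything else is bookkeeping.
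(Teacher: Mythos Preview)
Your overall architecture matches the paper's: $(1)\Leftrightarrow(2)$ via Lemma~\ref{le:epi}, $(2)\Rightarrow(3)$ by transporting the simples of $\mod B$, $(4)\Rightarrow(1)$ by peeling off one exceptional object at a time, and the primed statements by duality. There is, however, a genuine circularity in your $(3)\Rightarrow(4)$: you invoke Corollary~\ref{co:extend} to extend $X$ to a complete sequence $(X,Y)$, but in the paper Corollary~\ref{co:extend} is a \emph{consequence} of Theorem~\ref{th:subcat} (its proof uses that ${}^\perp(\cdots)\cap(\cdots)^\perp$ is again a module category of the correct rank). Likewise, the statement that a complete exceptional sequence generates the whole module category (Corollary~\ref{co:complete}) sits after the theorem. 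You therefore cannot cite these corollaries inside the proof.

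The fix is easy and is essentially already contained in your outline. Replace the appeal to Corollary~\ref{co:extend} by an inductive construction using only the single-object perpendicular (Remark~\ref{re:perp} together with Proposition~\ref{pr:perp1}, and its dual): ${}^\perp X_1\cong\mod A'$ with $\rk K_0(A')=n-1$, in which $(X_2,\dots,X_r)$ is exceptional, so by induction on $n$ it extends there to a complete sequence; prefixing $X_1$ gives the desired $(X,Y)$ in $\mod A$. Similarly your ``complete exceptional sequence generates everything'' is proved by the same single-object peeling, using the five-term sequence for $\C(X_r)=\add(X_r)$ and induction. With these two adjustments your route is valid. For comparison, the paper avoids extending $X$ altogether: it proves directly, by the five-term sequence for $\C(X_r)$ and induction on the length of $X$, that $\C(X)={}^\perp(X^\perp)=({}^\perp X)^\perp$; then, having already established $(4')\Rightarrow(1')\Rightarrow(2)\Rightarrow(3)$, it writes ${}^\perp X=\C(Y)$ for some exceptional $Y$ and concludes $\C(X)=Y^\perp$. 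Both approaches ultimately rest on the same single-object perpendicular lemma; yours is slightly more concrete, the paper's slightly more economical.
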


\begin{proof}
(1) $\Leftrightarrow$ (2): This follows from Lemma~\ref{le:epi}. Using the duality $(\mod A)^\op\xto{\sim}\mod A^\op$ we obtain ($1^\prime$) $\Leftrightarrow$ (2).

(2) $\Rightarrow$ (3): By Lemma~\ref{le:hereditary} we have a complete, orthogonal exceptional sequence $S=(S_1,\ldots,S_r)$ in $\mod B$ consisting of simple $B$-modules, and clearly $\mod B=\C(S)$. Setting $X_i\in\C$ to be the image of $S_i$, we deduce that $X=(X_1,\ldots,X_r)$ is an exceptional sequence and $\C(X)=\C$. Moreover note that $\mod B$, and hence $\C$, has both a relative projective generator and a relative injective cogenerator.

(4) $\Rightarrow$ (1): Let $Y=(Y_1,\ldots,Y_r)$ be an exceptional sequence, and set $\bar Y:=(Y_1,\ldots,Y_{r-1})$. Note that we can view $\bar Y$ as an exceptional sequence in $Y_r^\perp$, in which case its relative right perpendicular category is $\bar Y^\perp\cap Y_r^\perp=Y^\perp$. Now, by Remark~\ref{re:perp} we know that $\C(Y_r)=\add(Y_r)$, so by Proposition~\ref{pr:perp1} the inclusion $Y_r^\perp\to\mod A$ has a left adjoint, and hence by (1) $\Rightarrow$ (2) we have $Y_r^\perp\cong\mod B$ for some finite homological epimorphism $A\to B$. Thus by induction on the length of $Y$ we know that $Y^\perp\to Y_r^\perp$ also has a left adjoint. Composing these yields a left adjoint to the inclusion $Y^\perp\to\mod A$. The implication ($4'$) $\Rightarrow$ ($1'$) is dual.

(3) $\Rightarrow$ (4): Let $X=(X_1,\ldots,X_r)$ be an exceptional sequence. We claim that $\C(X)={}^\perp(X^\perp)=({}^\perp X)^\perp$. We prove the first identity, the second being dual.

As above, set $\bar X:=(X_1,\ldots,X_{r-1})$, an exceptional sequence in $X_r^\perp$. The relative right perpendicular category of $\bar X$ in $X_r^\perp$ is $\bar X^\perp\cap X_r^\perp=X^\perp$, and the relative left perpendicular category of the latter is ${}^\perp(X^\perp)\cap X_r^\perp$. On the other hand, $X_r^\perp\cong\mod B$ for some finite homological epimorphism $A\to B$, so by induction on the length of $X$ we know that ${}^\perp(X^\perp)\cap X_r^\perp=\C(\bar X)$.

Now apply Proposition~\ref{pr:perp1} to a module $M\in{}^\perp(X^\perp)$ with respect to the category $\C(X_r)$. This yields a five term exact sequence
\[ 0 \to M_0 \to M_1 \to M \to M_2 \to M_3 \to 0 \]
with $M_1,M_3\in\C(X_r)\subset{}^\perp(X^\perp)$, and hence $M_0,M_2\in X_r^\perp\cap{}^\perp(X^\perp)=\C(\bar X)$. Thus $M_i\in\C(X)$ for all $i$, whence $M\in\C(X)$. This proves the claim.

Now, to prove the implication (3) $\Rightarrow$ (4), set $\C':={}^\perp X$ and note that $\C(X)=({}^\perp X)^\perp=(\C')^\perp$. Using the implication ($4'$) $\Rightarrow$ (3) we know that $\C'=\C(Y)$ for some exceptional sequence $Y$, and hence $\C(X)=Y^\perp$ as required. The implication (3) $\Rightarrow$ ($4'$) is dual.
\end{proof}

\begin{rem}\label{re:subcat}
Let $\C\subseteq\mod A$ be a full subcategory closed under kernels, cokernels, and extensions. Then the inclusion $\C\to\mod A$ admits left and right adjoints if and only if the Loewy lengths of the objects in $\C$ are bounded \cite[8.2]{Ga1973}. This holds, for example, if $A$ is representation-finite or if $\C$ is finitely generated.
\end{rem}

We now list some useful consequences of this result. 

\begin{cor}
Let $X$ be an exceptional sequence in $\mod A$. Then $\C(X)$ contains both a relative projective generator and a relative injective cogenerator, so Propositions~\ref{pr:perp1} and \ref{pr:perp2} hold for $\C(X)$.
\end{cor}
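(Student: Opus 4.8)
The plan is to deduce this directly from Theorem~\ref{th:subcat}. First I would invoke the implication $(3)\Rightarrow(2)$ of that theorem: since $\C(X)=\C(X)$ for the exceptional sequence $X$, there is a finite homological epimorphism $A\to B$ such that restriction of scalars gives an equivalence $\mod B\xto{\sim}\C(X)$. Moreover, because $\C(X)$ is closed under kernels and cokernels in $\mod A$ and $A$ is hereditary, the identification $\Ext^n_{\C(X)}(M,N)\cong\Ext^n_A(M,N)$ from (the proof of) Lemma~\ref{le:epi} forces $\Ext^n_{\C(X)}=0$ for $n\geq2$, so $B$ is again an hereditary artin algebra.

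Next I would transport the standard generators across this equivalence. The regular module $B$ is a projective generator of $\mod B$, so its image $P\in\C(X)$ satisfies $\Ext^1_{\C(X)}(P,-)=0$ — equivalently $\Ext^1_A(P,Y)=0$ for all $Y\in\C(X)$, by the $\Ext$-agreement above — and every object of $\C(X)$ is a quotient of a finite direct sum of copies of $P$; hence $P$ is a relative projective generator in the sense required by Proposition~\ref{pr:perp1}. Dually, applying the duality $D$ on $\mod B$, the module $D(B)$ is an injective cogenerator, and its image $I\in\C(X)$ is a relative injective cogenerator as needed for Proposition~\ref{pr:perp2}. (One could equally well take $P$ to be the image of $LA$, with $L$ the left adjoint to the inclusion, as in the proof of Lemma~\ref{le:epi}.)

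Having exhibited such $P$ and $I$, the hypotheses of Propositions~\ref{pr:perp1} and~\ref{pr:perp2} are met with $\C=\C(X)$, and the stated conclusions follow verbatim. I do not expect any genuine obstacle here: the only point needing a moment's care is verifying that ``relative projective generator'' in the sense of Proposition~\ref{pr:perp1} is indeed satisfied by the image of $B$, which reduces to the two observations above (vanishing of $\Ext^1$ inside $\C(X)$, and $P$ generating $\C(X)$ under kernels, cokernels and extensions). Everything else is a formal transfer of structure along an equivalence of categories; indeed this was already observed in passing in the proof of Theorem~\ref{th:subcat}, so the corollary is essentially immediate.
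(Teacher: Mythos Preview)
Your proposal is correct and follows essentially the same approach as the paper: the paper's proof is the single sentence ``We know that $\C(X)\cong\mod B$ for some hereditary artin algebra $B$,'' which is precisely your appeal to Theorem~\ref{th:subcat} followed by the (implicit) transport of the projective generator $B$ and injective cogenerator $D(B)$ across the equivalence. Your version simply spells out the details that the paper leaves tacit.
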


\begin{proof}
We know that $\C(X)\cong\mod B$ for some hereditary artin algebra $B$.
\end{proof}

Recall that an exceptional sequence $X=(X_1,\ldots,X_r)$ in $\mod A$ is called complete provided that $r=\rk K_0(A)$.

\begin{cor}\label{co:complete}
Let $X$ be an exceptional sequence in $\mod A$. Then $\mod A=\C(X,X^\perp)$. In particular,
\[ X \textrm{ is complete} \quad \iff \quad \C(X) = \mod A \quad \iff \quad X^\perp = 0. \]
\end{cor}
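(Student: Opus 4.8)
The plan is to apply Proposition~\ref{pr:perp1} to the subcategory $\C:=\C(X)$ and to read the identity $\mod A=\C(X,X^\perp)$ directly off the resulting five term exact sequence; the displayed equivalences then follow from Theorem~\ref{th:subcat} together with elementary observations.

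First I would note that $\C(X)$ is closed under kernels, cokernels and extensions by construction, and that it contains a relative projective generator by the corollary recalled just above. Hence Proposition~\ref{pr:perp1} applies, with $\C^\perp=X^\perp$, so every $M\in\mod A$ fits into an exact sequence
\[ 0\to \bar M^{\C^\perp}\to M_\C\to M\to M^{\C^\perp}\to \bar M_\C\to 0 \]
with $M_\C,\bar M_\C\in\C(X)$ and $M^{\C^\perp},\bar M^{\C^\perp}\in X^\perp$. Next I would break this up into the short exact sequences $0\to\bar M^{\C^\perp}\to M_\C\to K\to 0$, then $0\to K\to M\to I\to 0$, then $0\to I\to M^{\C^\perp}\to\bar M_\C\to 0$, where $I=\Im(M\to M^{\C^\perp})$ and $K=\Ker(M\to M^{\C^\perp})$. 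Since $X^\perp$ is itself closed under kernels, cokernels and extensions, the category $\C(X,X^\perp)$ contains both $\C(X)$ and $X^\perp$ and is closed under the three operations; reading the three sequences in turn shows successively that $K$, then $I$, then $M$ lie in $\C(X,X^\perp)$. This gives $\mod A=\C(X,X^\perp)$.

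For the "in particular": if $X^\perp=0$ then $\mod A=\C(X,0)=\C(X)$ by what was just shown, while if $\C(X)=\mod A$ then $X^\perp=\C(X)^\perp=(\mod A)^\perp=0$, as is seen by taking $M=Y$ in the defining condition. It then remains to link $X$ being complete with $\C(X)=\mod A$, and here I would invoke Theorem~\ref{th:subcat}: it gives an exceptional sequence $Y$ with $\C(X)=Y^\perp$, an equivalence $\C(X)\cong\mod B$ for a finite homological epimorphism $A\to B$, with $X$ of length $\rk K_0(B)$ and $Y$ of length $\rk K_0(A)-\rk K_0(B)$. If $X$ is complete, then $Y$ has length $0$, so $Y=\varnothing$ and $\C(X)=\varnothing^\perp=\mod A$; conversely, if $\C(X)=\mod A$ then $\mod B\cong\mod A$, so $\rk K_0(B)=\rk K_0(A)$, and $X$ has length $\rk K_0(A)$, i.e.\ is complete.

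I do not expect a genuine obstacle here; the argument is essentially an assembly of results already established. The only step that needs a little care is the very first one---verifying that Proposition~\ref{pr:perp1} genuinely applies to $\C(X)$ (which is exactly the content of the preceding corollary) and then carving the five term sequence into short exact pieces whose terms visibly lie in $\C(X)$ or $X^\perp$, so that membership of $M$ in $\C(X,X^\perp)$ is witnessed explicitly.
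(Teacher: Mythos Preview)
Your proof is correct and, for the identity $\mod A=\C(X,X^\perp)$ together with the equivalence $\C(X)=\mod A\iff X^\perp=0$, follows exactly the paper's route via the five term sequence of Proposition~\ref{pr:perp1}; your explicit splitting of that sequence into short exact sequences just makes visible what the paper leaves implicit.

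The one genuine difference is in how you handle ``$X$ complete $\iff \C(X)=\mod A$''. The paper argues directly on the level of Grothendieck groups: Lemma~\ref{le:lin-indept} gives $\rk K_0(\C(X))$ equal to the length of $X$, and Lemma~\ref{le:except} gives the decomposition $K_0(A)=K_0(\C(X))\oplus K_0(X^\perp)$, so $X$ is complete iff $K_0(X^\perp)=0$ iff $X^\perp=0$. You instead invoke the rank statements at the end of Theorem~\ref{th:subcat}. This works, but note that the length claim you are using---that \emph{your} given $X$ has length $\rk K_0(B)$, not merely that some exceptional sequence with the same $\C$ does---is itself a consequence of those same bilinear-lattice lemmas; the proof of Theorem~\ref{th:subcat} does not establish it independently. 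So your route is slightly more circuitous and, if fully unpacked, rests on the same ingredients the paper cites directly.
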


\begin{proof}
Consider the five term exact sequence given in Proposition~\ref{pr:perp1} with respect to the subcategory $\C(X)$. It follows that $\mod A=\C(X,X^\perp)$ and also that $\C(X)=\mod A$ if and only if $X^\perp=0$. On the other hand, we know from Lemmas~\ref{le:lin-indept} and \ref{le:except} that the rank of $K_0(\C(X))$ equals the length of $X$ and that $K_0(\mod A)=K_0(\C(X))\oplus K_0(X^\perp)$. Therefore $X$ is complete if and only if $X^\perp=0$.
\end{proof}

\begin{cor}[{\cite[Lemma~1]{CB1992}}]\label{co:extend}
Let $(X_1,\ldots,X_r)$ be an exceptional sequence in $\mod A$ which is not complete. Then for each $0\leq i\leq r$ there exists an $A$-module $Y$ such that $(X_1,\ldots,X_i,Y,X_{i+1},\ldots,X_r)$ is exceptional. Moreover, if this is complete, then $Y$ is unique up to isomorphism.
\end{cor}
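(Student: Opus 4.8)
The plan is to deduce this from the perpendicular calculus, in particular from Theorem~\ref{th:subcat}. First I would split the given sequence at position $i$, writing $U=(X_1,\ldots,X_i)$ and $V=(X_{i+1},\ldots,X_r)$. Because $(X_1,\ldots,X_r)$ is exceptional, the vanishing of backward maps and extensions gives $U\subseteq V^\perp$ and $V\subseteq{}^\perp U$; and, unwinding the definition of an exceptional sequence, a module $Y$ makes $(X_1,\ldots,X_i,Y,X_{i+1},\ldots,X_r)$ exceptional if and only if $Y$ is an exceptional $A$-module lying in the subcategory $\D:={}^\perp U\cap V^\perp$. So the problem becomes: exhibit an exceptional object of $\D$, and control $\D$ well enough to get uniqueness when the enlarged sequence is complete.

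The core step is to identify $\D$ as a module category over a hereditary artin algebra of the expected rank, by applying Theorem~\ref{th:subcat} twice. Since $V$ is an exceptional sequence, $V^\perp\cong\mod B$ for some hereditary artin algebra $B$, and by the length statement in Theorem~\ref{th:subcat} one has $\rk K_0(B)=n-(r-i)$ where $n=\rk K_0(A)$. As $V^\perp$ is closed under kernels, cokernels and extensions, $\Hom$ and $\Ext^1$ inside it agree with those in $\mod A$, so $U$ is an exceptional sequence of length $i$ in $\mod B$; and since the given sequence is not complete we have $r<n$ (Corollary~\ref{co:complete}), hence $U$ is not complete in $\mod B$. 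Applying Theorem~\ref{th:subcat} again, now inside $\mod B$ and to the exceptional sequence $U$, gives ${}^\perp_{\mod B}U\cong\mod B''$ for a hereditary artin algebra $B''$ with $\rk K_0(B'')=\rk K_0(B)-i=n-r>0$; and, using once more that $V^\perp$ is closed under kernels, cokernels and extensions, this relative perpendicular category coincides with $\D={}^\perp U\cap V^\perp$.

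With $\D\cong\mod B''$ and $B''$ a nonzero hereditary artin algebra, existence is immediate: $\mod B''$ has a simple module $S$, which is exceptional (applying Lemma~\ref{le:hereditary} to the connected components of $B''$, each of which is finite dimensional over a field), and the image $Y$ of $S$ under the equivalence $\mod B''\cong\D$ and the inclusion $\D\hookrightarrow\mod A$ is an indecomposable $A$-module with $\Ext^1_A(Y,Y)\cong\Ext^1_{B''}(S,S)=0$, since $\D$ is extension-closed; thus $Y$ is exceptional and lies in $\D$, so $(X_1,\ldots,X_i,Y,X_{i+1},\ldots,X_r)$ is exceptional. For uniqueness, suppose this sequence is complete; then $r+1=n$, so $\rk K_0(B'')=1$. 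A hereditary artin algebra whose Grothendieck group has rank one must be a division ring --- its radical is projective, hence a direct sum of copies of the indecomposable projective, which by a length count forces the radical to vanish --- whence $\mod B''$ has a unique indecomposable object up to isomorphism. Any admissible $Y$ is such an object, so it is unique up to isomorphism (equivalently, two admissible choices have equal class in $K_0(B'')$ and so are isomorphic by Lemma~\ref{le:rigid-dim-vector}).

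The main obstacle is the middle paragraph: setting up the iterated perpendicular category cleanly and verifying that ${}^\perp U\cap V^\perp$ really is the relative perpendicular category ${}^\perp_{\mod B}U$ --- which hinges on $V^\perp$ being closed under kernels, cokernels and extensions, so that the homological computations are the same in $\mod B$ and in $\mod A$ --- together with keeping the rank bookkeeping consistent across the two invocations of Theorem~\ref{th:subcat}. Given that, the reduction in the first paragraph and the existence and uniqueness arguments in the third are routine.
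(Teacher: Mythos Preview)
Your proof is correct and follows essentially the same approach as the paper: both identify the admissible $Y$ as precisely the exceptional objects in $\D={}^\perp(X_1\oplus\cdots\oplus X_i)\cap(X_{i+1}\oplus\cdots\oplus X_r)^\perp$, observe via Theorem~\ref{th:subcat} that $\D\cong\mod B''$ for a hereditary artin algebra of rank $n-r$, and conclude existence from $n-r>0$ and uniqueness from $n-r=1$. The paper's proof simply asserts the identification of $\D$ in one line, whereas you spell out the two-step application of Theorem~\ref{th:subcat} and the rank bookkeeping explicitly; this is extra care rather than a different idea.
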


\begin{proof}
We have that ${}^\perp(X_1\oplus\cdots\oplus X_i)\cap(X_{i+1}\oplus\cdots\oplus X_r)^\perp$ is equivalent to $\mod B$ for some hereditary artin algebra $B$ of rank $\rk K_0(A)-r$, so let $Y$ be any exceptional $B$-module. If this is complete, then $\rk K_0(A)=r+1$, so $\rk K_0(B)=1$ and $Y$ is unique up to isomorphism.
\end{proof}

\begin{cor}\label{co:lies-in-C(X)}
Let $X=(X_1,\ldots,X_r)$ be an exceptional sequence in $\mod A$. Then a rigid module $Y$ lies in $\C(X)$ if and only if there exists some $Z\in\C(X)$ with $[Z]=[Y]$. In particular, if $X$ is orthogonal, then this happens if and only if $[Y]$ is a non-negative linear combination of the $[X_i]$.
\end{cor}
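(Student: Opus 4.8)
The forward implication is immediate (take $Z=Y$), so the plan is to prove the converse: if $Y$ is rigid and $[Y]=[Z]$ for some $Z\in\C(X)$, then $Y\in\C(X)$. Only the consequence $[Y]\in K_0(\C(X))\subseteq K_0(A)$ of the hypothesis will be used. Two facts about the decomposition of $K_0(A)$ along $\C(X)$ are needed. First, $K_0(A)=K_0(\C(X))\oplus K_0(X^\perp)$, which was established in the proof of Corollary~\ref{co:complete}. Second, $\langle K_0(\C(X)),K_0(X^\perp)\rangle=0$: for $W\in X^\perp$ every $X_i$ lies in ${}^\perp W$, and ${}^\perp W$ is closed under kernels, cokernels and extensions, so $\C(X)\subseteq{}^\perp W$, whence $\Hom_A(M,W)=0=\Ext^1_A(M,W)$ for all $M\in\C(X)$ and $W\in X^\perp$. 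Finally, $\C(X)\cong\mod B$ for some hereditary artin algebra $B$ (Theorem~\ref{th:subcat}), so $\C:=\C(X)$ has a relative projective generator and Proposition~\ref{pr:perp1} applies, with $\C^\perp=X^\perp$.

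The key step is to apply the five term exact sequence \eqref{eq:perp} to $Y$, namely $0\to\bar Y^{\C^\perp}\to Y_\C\to Y\xto{c}Y^{\C^\perp}\to\bar Y_\C\to 0$ with $Y_\C,\bar Y_\C\in\C(X)$ and $Y^{\C^\perp},\bar Y^{\C^\perp}\in X^\perp$. Put $C:=\Im(c)\subseteq Y^{\C^\perp}$; since $X^\perp$ is closed under kernels, $C\in X^\perp$. Applying $\Hom_A(Y,-)$ to $0\to\Ker(c)\to Y\to C\to 0$ and using $\Ext^1_A(Y,Y)=0$ together with $\Ext^2_A(Y,\Ker c)=0$ (as $A$ is hereditary) gives $\Ext^1_A(Y,C)=0$, hence $\langle[Y],[C]\rangle=\dim_k\Hom_A(Y,C)$. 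But $[Y]\in K_0(\C(X))$ and $[C]\in K_0(X^\perp)$, so $\langle[Y],[C]\rangle=0$; thus $\Hom_A(Y,C)=0$. As the canonical epimorphism $Y\twoheadrightarrow C$ lies in $\Hom_A(Y,C)$, it must be zero, so $C=0$.

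Therefore $c=0$ and the five term sequence collapses to a short exact sequence $0\to\bar Y^{\C^\perp}\to Y_\C\to Y\to 0$ with $\bar Y^{\C^\perp}\in X^\perp$ and $Y_\C\in\C(X)$ (one also reads off $Y^{\C^\perp}\cong\bar Y_\C$, a module lying in $\C(X)\cap X^\perp=0$). Then $[\bar Y^{\C^\perp}]=[Y_\C]-[Y]$ lies in $K_0(\C(X))\cap K_0(X^\perp)=0$, and the class of a module vanishes only for the zero module, so $\bar Y^{\C^\perp}=0$ and $Y\cong Y_\C\in\C(X)$. For the final assertion, when $X$ is orthogonal the $X_i$ are, up to reordering, the simple objects of $\C(X)\cong\mod B$, so the classes of objects of $\C(X)$ are precisely the non-negative integer combinations of the $[X_i]$ (the module $\bigoplus_i X_i^{m_i}$ realising one direction); combining this with the equivalence just proved yields the stated characterisation.

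The only step that genuinely uses the rigidity of $Y$, rather than being bookkeeping with the orthogonal decomposition of $K_0(A)$ and the vanishing $\langle K_0(\C(X)),K_0(X^\perp)\rangle=0$, is the vanishing $C=0$; this is where I expect the main content to lie. A secondary point needing care is the (standard) fact, used for the last sentence, that a complete orthogonal exceptional sequence in $\mod B$ consists of the simple $B$-modules.
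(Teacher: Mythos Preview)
Your argument follows the paper's approach closely, but there is a genuine gap at the step ``since $X^\perp$ is closed under kernels, $C\in X^\perp$''. The category $X^\perp$ is closed under kernels of maps \emph{between objects of $X^\perp$}, but here $C=\Im(c)=\Ker(Y^{\C^\perp}\to\bar Y_\C)$ with $\bar Y_\C\in\C(X)$, not in $X^\perp$. Equivalently, $X^\perp$ is not closed under subobjects: for instance, over the path algebra of $1\to 2$ with $X=(S_1)$ one has $X^\perp=\add(P_1)$, yet $S_2\subset P_1$ and $S_2\notin X^\perp$. Consequently you cannot conclude $[C]\in K_0(X^\perp)$, and your orthogonality identity $\langle K_0(\C(X)),K_0(X^\perp)\rangle=0$ does not directly yield $\langle[Y],[C]\rangle=0$.

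The fix is exactly what the paper does: use only that $C$ is a \emph{subobject} of $Y^{\C^\perp}\in X^\perp$, so $\Hom_A(M,C)\hookrightarrow\Hom_A(M,Y^{\C^\perp})=0$ for every $M\in\C(X)$; hence $\langle[Z],[C]\rangle=-\dim_k\Ext^1_A(Z,C)\le 0$. Combined with your (correct) computation $\langle[Y],[C]\rangle=\dim_k\Hom_A(Y,C)\ge 0$ and $[Y]=[Z]$, this forces $\Hom_A(Y,C)=0$ and thus $C=0$. With this adjustment the rest of your argument goes through and matches the paper's proof essentially verbatim.
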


\begin{proof}
Consider the five term exact sequence
\[ 0 \to B \to C \to Y \to B' \to C' \to 0 \]
coming from Proposition~\ref{pr:perp1}, where $B,B'\in\C(X)^\perp$ and $C,C'\in\C(X)$. Let $I$ be the image of the map $Y\to B'$. Then $\Ext^1_A(Y,Y)\twoheadrightarrow\Ext^1_A(Y,I)$ implies that $\Ext^1_A(Y,I)=0$, and hence that $\langle[Y],[I]\rangle\geq0$. On the other hand, if $Z\in\C(X)$, then $\Hom_A(Z,I)\hookrightarrow\Hom_A(Z,B')=0$ implies that $\Hom_A(Z,I)=0$, and hence that $\langle[Z],[I]\rangle\leq0$.

Now, if $[Z]=[Y]$, then $\langle[Y],[I]\rangle=0$, and so $I=0$. Thus the five term sequence degenerates to a short exact sequence $0\to B\to C\to Y\to 0$, in which case $[B]=[C]-[Y]\in K_0(\C(X))\cap K_0(\C(X)^\perp)=0$, so that $B=0$ and $Y\cong C\in\C(X)$.
\end{proof}

The following result, due to Schofield \cite{Sc1990} (see also \cite{Ri1996}), can be viewed as a reduction theorem for constructing exceptional modules.

\begin{prop}[\cite{Sc1990}]\label{pr:Schofield}
Let $X$ be a non-simple exceptional module. Then there exists an othogonal exceptional pair $(U,V)$ and a non-split short exact sequence of the form
\[ 0 \to V^b \to X \to U^a \to 0. \]
\end{prop}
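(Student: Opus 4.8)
The plan is to exploit the perpendicular calculus developed earlier, specifically Corollary~\ref{co:extend} and Proposition~\ref{pr:perp1}, to reduce $X$ to a two-element orthogonal exceptional pair and then read off the required short exact sequence from the five term exact sequence. First I would pass to the subcategory generated by $X$: since $X$ is exceptional but not simple, the category $\C(X)=\add(X)$ has rank one, which is too small, so instead I would work inside a larger ambient category. Concretely, choose (using Corollary~\ref{co:extend}) an exceptional module $U$ such that $(U,X)$ is an exceptional pair, and consider the subcategory $\C(U,X)$, which by Theorem~\ref{th:subcat} is equivalent to $\mod B$ for some hereditary artin algebra $B$ of rank two. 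Replacing $A$ by $B$, we may thus assume from the outset that $\rk K_0(A)=2$ and that $X$ is a non-simple exceptional $A$-module.

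Now in the rank-two situation there are exactly two simple modules; call them $U$ and $V$, ordered so that $(U,V)$ is an exceptional pair, i.e. $\Hom_A(V,U)=0=\Ext^1_A(V,U)$. Since the simples are automatically orthogonal (an exceptional sequence of simples is orthogonal, as noted after the definition of exceptional sequences of modules), $(U,V)$ is in fact an orthogonal exceptional pair. It remains to produce the non-split sequence $0\to V^b\to X\to U^a\to 0$. Here I would apply Proposition~\ref{pr:perp1} (in the form of Remark~\ref{re:perp}) to the module $X$ with respect to the subcategory $\C(V)=\add(V)$. Since $X$ is not simple and $\rk K_0(A)=2$, the module $X$ has both $U$ and $V$ as composition factors; because $V$ is simple projective (it is the source of the quiver, being the later simple in the exceptional ordering) the relevant five term sequence degenerates: the right approximation term vanishes and one is left with a short exact sequence whose kernel is $\Hom_A(V,X)\otimes_{\End_A(V)}V\cong V^b$ with $b=\dim_k\Hom_A(V,X)/\dim_k\End_A(V)>0$, and whose cokernel lies in $V^\perp$. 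Finally, $V^\perp$ is a rank-one perpendicular category equivalent to $\mod\End_A(U)$, hence consists of copies of $U$, so the cokernel is $U^a$ for some $a\ge 1$; the sequence is non-split since otherwise $X$ would be decomposable, contradicting exceptionality.

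The main obstacle I anticipate is the bookkeeping in the degeneration of the five term sequence: one must verify that with the chosen ordering of the simples the outer terms $\bar M^{\C^\perp}$ and $\bar M_{\C}$ really do vanish, so that the five term sequence collapses to the desired three term one. This hinges on $V$ being relative projective in $\mod A$ (so $\Ext^1_A(V,-)$ controls nothing extra) together with $X\in{}^\perp(V^\perp)$ being automatic here, and on checking that both $a$ and $b$ are strictly positive — the positivity of $b$ because $X$ is not isomorphic to a power of $U$ (it has $V$ as a composition factor since it is non-simple in rank two), and the positivity of $a$ because $X$ is not isomorphic to a power of $V$ either (it is indecomposable and not equal to $V$, as $X$ is non-simple). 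Once positivity is secured, non-splitness is immediate from indecomposability of $X$.
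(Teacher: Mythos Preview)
There is a genuine gap in your reduction step. When you invoke Corollary~\ref{co:extend} to extend $X$ to an exceptional pair $(U,X)$ and then pass to $\C(U,X)\cong\mod B$, nothing guarantees that $X$ remains \emph{non-simple} in $\mod B$. Concretely, let $A$ be the path algebra of the linear quiver $1\to2\to3$ and take $X$ to be the indecomposable with $[X]=e_1+e_2$. One legitimate choice produced by Corollary~\ref{co:extend} is $U=P_1$ (the indecomposable projective with $[P_1]=e_1+e_2+e_3$), since $\Hom_A(X,P_1)=0=\Ext^1_A(X,P_1)$. But the simples of $\C(P_1,X)$ are precisely $X$ and $S_3$ (the latter being the kernel of the surjection $P_1\twoheadrightarrow X$), so $X$ is simple in this rank-two subcategory and your subsequent argument never gets started. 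The sentence ``we may thus assume \ldots\ that $X$ is a non-simple exceptional $A$-module'' is exactly the unjustified step.

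The paper's proof confronts this difficulty head-on by constructing the second member of the pair with extra control. It applies the five-term sequence of Proposition~\ref{pr:perp1} to a chosen proper submodule $M\subset X$, taken relative to the subcategory ${}^\perp X$, and extracts from it an exceptional module $Y$ such that $(X,Y)$ is an exceptional pair and $Y$ is a quotient of some $X^r$. This generation property is what forces $X$ to be non-simple in $\C(X,Y)$: were $X$ simple there, every quotient of $X^r$ lying in $\C(X,Y)$ would be a power of $X$, contradicting $Y\not\cong X$. Once non-simplicity inside the rank-two subcategory is secured, the remainder of your argument --- taking $(U,V)$ to be the simple objects of that subcategory and reading off the short exact sequence from the degenerated five-term sequence --- is essentially how the paper concludes as well.
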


\begin{proof}
Given a proper submodule $M$ of $X$, consider the five term exact sequence from Proposition~\ref{pr:perp1}~(1) for $M$ with respect to the subcategory $\C={}^\perp X$. If the map $M_\C\to M$ is non-zero, then so too is the composition $M_\C\to M\to X$, a contradiction since $M_\C\in{}^\perp X$. Thus the sequence degenerates to give $0\to M\to X^r\to N\to 0$ for some non-zero $N\in\C$. It follows, by applying $\Hom_A(N,-)$, that $\Ext^1_A(N,N)=0$. Thus, taking any indecomposable summand $Y$ of $N$, we get that $(X,Y)$ is an exceptional pair such that $Y$ is generated by $X$ (that is, is a factor of some $X^r$). In particular, $X$ is not a simple object in $\C(X,Y)$. By Theorem~\ref{th:subcat}~(2) we can write $\C(X,Y)=\C(U,V)$ for some orthogonal exceptional pair $(U,V)$, and clearly every non-simple object $L\in\C(U,V)$ fits inside a non-split short exact sequence of the form $0\to V^b\to L\to U^a\to 0$.
\end{proof}

\section{Crystallographic Coxeter Groups}

Let $(W,S)$ be a Coxeter system. As usual, for $s,t\in S$ denote the order of $st$ by $m_{st}$. Following \cite[Sections~5,6]{Hum1990} we can define `the geometric representation' of $W$ by taking a real vector space $V$ with basis $e_s$ for $s\in S$ equipped with the symmetric bilinear form $(e_s,e_t):=-\cos\big(\pi/m_{st}\big)$, with the convention that this equals $-1$ whenever $m_{st}=\infty$. There is then a faithful representation $\sigma\colon W\to\mathrm{GL}(V)$ sending $s$ to the reflection $\sigma_s\colon\lambda\mapsto\lambda-2(e_s,\lambda)e_s$. We may then declare $(W,S)$ to be a \emph{crystallographic} Coxeter group (relative to $\sigma$) provided $W$ stabilises a lattice in $V$. This leads to the following result.

\begin{prop}[{\cite[Proposition~6.6]{Hum1990}}]
A Coxeter system $(W,S)$ is crystallographic if and only if
\begin{enumerate}
\item $m_{st}\in\{2,3,4,6,\infty\}$ for all $s\neq t$ in $S$, and
\item in each circuit of the Coxeter graph, the number of edges labelled 4 (resp. 6) is even. \qed
\end{enumerate}
\end{prop}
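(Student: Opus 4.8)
The plan is to translate the geometric condition ``$W$ stabilises a lattice in $V$'' into a combinatorial condition on the Coxeter graph, and then read off (1) and (2). The key step is the reformulation: writing the Gram entries $(e_s,e_t)=-\cos(\pi/m_{st})$ (equal to $-1$ when $m_{st}=\infty$), the system $(W,S)$ is crystallographic if and only if there exist positive reals $(c_s)_{s\in S}$ such that $a_{st}:=-2(e_s,e_t)\,c_t/c_s\in\bbZ$ for all $s\neq t$. For $(\Leftarrow)$, from $\sigma_s(c_te_t)=(c_te_t)+a_{st}(c_se_s)$ and $\sigma_s(c_se_s)=-c_se_s$ one sees that the full-rank lattice $L:=\bigoplus_s\bbZ\,c_se_s$ is stabilised by every $\sigma_s$, hence by $W$. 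For $(\Rightarrow)$, let $L\subset V$ be any $W$-invariant full-rank lattice. Since $(e_s,-)\neq0$ and $L$ spans $V$, there is $v\in L$ with $(e_s,v)\neq0$, and $v-\sigma_s(v)=2(e_s,v)e_s\in L$; hence $L\cap\mathbb{Q}e_s$ has rank one, say $L\cap\mathbb{Q}e_s=\bbZ\,v_s$ with $v_s=c_se_s$ and $c_s>0$. Then $v_s-\sigma_t(v_s)=2(e_s,e_t)c_se_t$ lies in $L\cap\mathbb{Q}e_t=\bbZ\,v_t$, which forces $a_{ts}\in\bbZ$; letting $s,t$ vary gives the claim.

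Granting this reformulation, I would extract (1) and (2) as follows. Since $a_{st}a_{ts}=4(e_s,e_t)^2=4\cos^2(\pi/m_{st})$ (read as $4$ when $m_{st}=\infty$) is a product of two integers, $4\cos^2(\pi/m_{st})\in\bbZ_{\geq0}$; writing $4\cos^2(\pi/m)=2+2\cos(2\pi/m)$ and noting that $2\cos(2\pi/m)=\zeta+\zeta^{-1}$ is an algebraic integer ($\zeta$ a primitive $m$-th root of unity), rationality forces it into $\bbZ$, hence $\cos(2\pi/m)\in\{0,\pm\tfrac12,\pm1\}$ and $m\in\{2,3,4,6,\infty\}$; this is (1). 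Moreover $a_{st}a_{ts}$ equals $1,2,3,4$ for $m_{st}=3,4,6,\infty$ respectively (and there is no edge when $m_{st}=2$), so positivity of the two factors forces $c_s^2/c_t^2$ to be $1$ on a $3$-edge, $2^{\pm1}$ on a $4$-edge, $3^{\pm1}$ on a $6$-edge, and $2^{0}$ or $2^{\pm2}$ on an $\infty$-edge. Around any circuit $s_0,\dots,s_k=s_0$ the telescoping product $\prod_i c_{s_i}^2/c_{s_{i+1}}^2$ equals $1$; writing it as $2^A3^B$ and invoking unique factorisation in $\mathbb{Q}_{>0}$ gives $A=B=0$. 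Here $B$ is a $\pm1$-sum over the $6$-edges of the circuit, so $B=0$ forces an even number of $6$-edges, while $A$ is a $\pm1$-sum over the $4$-edges plus an even number (the contributions of the $\infty$-edges), so $A=0$ forces an even number of $4$-edges; this is (2).

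For the converse, assuming (1) and (2), I would construct the $(c_s)$. Let $G$ be the Coxeter graph, let $G_{\neq4}$ (resp.\ $G_{\neq6}$) be the spanning subgraph keeping only the edges with label $\neq4$ (resp.\ $\neq6$), and let $\bar G_4$ be the graph whose vertices are the connected components of $G_{\neq4}$ with one edge for each $4$-edge of $G$ (a $4$-edge inside a single component giving a loop). I claim (2) makes $\bar G_4$ bipartite: an odd cycle (or a loop) in $\bar G_4$ lifts to a closed walk in $G$ meeting the $4$-edges an odd number of times, contradicting (2) via the $\bbZ/2$-linear functional ``number of $4$-edges'' on the cycle space $H_1(G;\bbZ/2)$. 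Hence there is $\alpha\colon S\to\bbZ$, namely the $2$-colouring of $\bar G_4$, which is constant on components of $G_{\neq4}$ and changes by exactly $\pm1$ across each $4$-edge; symmetrically one gets $\beta\colon S\to\bbZ$ for the $6$-edges. Put $c_s:=2^{\alpha_s/2}3^{\beta_s/2}$. A short check over edge types --- $c_s=c_t$ on $3$- and $\infty$-edges so $a_{st}\in\{1,2\}$; $c_t/c_s=2^{\pm1/2}$ on $4$-edges so $a_{st}=\sqrt2\cdot2^{\pm1/2}\in\{1,2\}$; $c_t/c_s=3^{\pm1/2}$ on $6$-edges so $a_{st}=\sqrt3\cdot3^{\pm1/2}\in\{1,3\}$; and $a_{st}=0$ for non-edges --- shows $a_{st}\in\bbZ_{\geq0}$ for all $s\neq t$, so the reformulation yields that $(W,S)$ is crystallographic.

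The step I expect to be the main obstacle is the $(\Rightarrow)$ half of the reformulation: showing that an \emph{arbitrary} invariant lattice forces the existence of the special ``diagonal'' lattice $\bigoplus_s\bbZ\,c_se_s$. The argument above via $L\cap\mathbb{Q}e_s$ handles this cleanly, so the real care lies in verifying that $L\cap\mathbb{Q}e_s$ genuinely has rank one and in threading the $m_{st}=\infty$ convention through every formula. The other delicate point is the graph-theoretic lemma in the converse --- relating circuits of $G$ to bipartiteness of the contracted graphs $\bar G_4$, $\bar G_6$ and producing an \emph{exact} $\pm1$ increment (rather than merely an odd one) --- which is elementary but must be stated carefully.
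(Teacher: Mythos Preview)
The paper does not prove this proposition; it is quoted from Humphreys with a \qed\ and no argument. There is therefore nothing to compare your proof against in the paper itself.

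Your proof is correct. The reformulation via rescalings $(c_s)$ is the standard one and your justification of both directions is sound; the only cosmetic slip is that you should intersect $L$ with $\bbR e_s$ rather than $\mathbb{Q}e_s$, since $2(e_s,v)$ need not be rational. The extraction of (1) from the integrality of $a_{st}a_{ts}=4\cos^2(\pi/m_{st})$ is clean, and your circuit argument for (2) via the telescoping product $\prod c_{s_i}^2/c_{s_{i+1}}^2=1$ and unique factorisation is exactly right, including the observation that $\infty$-edges contribute only even powers of $2$. For the converse, the passage from condition~(2) to bipartiteness of the contracted graphs $\bar G_4$, $\bar G_6$ is the key step; your argument via the $\bbZ/2$-linear functional on the cycle space works, though you might spell out more explicitly that a simple odd cycle in $\bar G_4$ lifts to a closed walk in $G$ whose $4$-edge count (as an element of $H_1(G;\bbZ/2)$) is the odd number of distinct $4$-edges traversed, contradicting the vanishing of that functional. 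With $\alpha,\beta$ in hand, your edge-by-edge verification that $c_s=2^{\alpha_s/2}3^{\beta_s/2}$ makes every $a_{st}$ an integer is correct.
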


On the other hand, the term ``crystallographic Coxeter group'' is also used in the literature to describe those groups which arise as the Weyl group of a Kac--Moody Lie algebra. This occurs if and only if condition (1) above holds, so $m_{st}\in\{2,3,4,6,\infty\}$ for all $s\neq t$ in $S$ (use \cite[Proposition~3.13]{Ka1990}).

Here we are interested in Weyl groups of \emph{symmetrisable} Kac--Moody Lie algebras. It is therefore of interest to have an equivalent characterisation of these groups.

\begin{thm}
A Coxeter system $(W,S)$ arises as the Weyl group of a symmetrisable Kac--Moody Lie algebra if and only if
\begin{enumerate}
\item $m_{st}\in\{2,3,4,6,\infty\}$ for all $s\neq t$ in $S$, and
\item in each circuit of the Coxeter graph not containing the edge label $\infty$, the number of edges labelled 4 (resp. 6) is even.
\end{enumerate}
\end{thm}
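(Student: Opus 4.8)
The plan is to reduce the statement to a question about symmetrisable generalised Cartan matrices and then settle that question by an elementary graph-cohomology argument. As already noted, by \cite[Proposition~3.13]{Ka1990} the system $(W,S)$ is the Weyl group of \emph{some} Kac--Moody algebra exactly when (1) holds, so I may assume (1) and ask only when the off-diagonal entries can be chosen so that the resulting generalised Cartan matrix $C$ becomes symmetrisable. Recall the dictionary $m_{ij}=2,3,4,6,\infty$ according as $c_{ij}c_{ji}=0,1,2,3,\ge 4$; thus the edges of the Coxeter graph $G$ split into $3$-, $4$-, $6$- and $\infty$-edges, and realising the prescribed orders amounts to choosing $(c_{ij},c_{ji})$ to be $(-1,-1)$ on a $3$-edge, a permutation of $(-1,-2)$ on a $4$-edge, a permutation of $(-1,-3)$ on a $6$-edge, and any pair of negative integers of product $\ge 4$ on an $\infty$-edge. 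A disconnected $G$ gives a block-diagonal $C$ and a direct product of Weyl groups, symmetrisable iff each block is, so I may assume $G$ connected.

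The crux is the reformulation: for connected $G$, the matrix $C$ is symmetrisable iff there are $d_i\in\mathbb{Q}_{>0}$ with $d_ic_{ij}=d_jc_{ji}$ for all $i,j$, equivalently iff the edge-labelling $\rho(i,j):=c_{ij}/c_{ji}\in\mathbb{Q}_{>0}$ (with $\rho(j,i)=\rho(i,j)^{-1}$) is a coboundary, i.e.\ $\prod\rho=1$ around every circuit of $G$; this is the usual $H^1$ of a graph, and one can clear denominators afterwards to make $D$ integral. Now $\rho$ is forced to be $1$ on $3$-edges, lies in $\{2,2^{-1}\}$ on $4$-edges and in $\{3,3^{-1}\}$ on $6$-edges, while on an $\infty$-edge $\rho$ can be made equal to \emph{any} prescribed positive rational $a/b$ (take $c_{ij}=-ak$, $c_{ji}=-bk$ with $k$ minimal so that $abk^2\ge 4$). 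Hence $\infty$-edges are completely free, and the coboundary condition only has content on circuits avoiding them.

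For the ``only if'' direction, if $C$ is symmetrisable then $\prod\rho=1$ around any $\infty$-free circuit $\gamma$; writing each factor as $2^{\varepsilon}3^{\varepsilon'}$, with $(\varepsilon,\varepsilon')$ zero on $3$-edges, $(\pm1,0)$ on $4$-edges, $(0,\pm1)$ on $6$-edges, unique factorisation forces the $2$-valuation (a sum of $\pm1$'s, one per $4$-edge of $\gamma$) and the $3$-valuation (one per $6$-edge) to vanish, so both counts are even; this is (2). For the ``if'' direction, assume (2) and let $G'$ be $G$ with the $\infty$-edges deleted. The $2$-part and $3$-part of the coboundary problem on $G'$ decouple: contracting all $3$- and $6$-edges of $G'$ leaves a graph whose edges are the $4$-edges, and every simple cycle of it lifts to a simple cycle of $G'$ with the same $4$-edges, so by (2) it has no odd cycle and is therefore bipartite; orienting the $4$-edges from one colour class to the other and assigning $\rho=2$ accordingly kills the $2$-part of every circuit of $G'$, and symmetrically the $6$-edges kill the $3$-part. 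With these choices every $\infty$-free circuit is balanced, so on each connected component of $G'$ there is a potential $d$ with $\rho(i,j)=d_i/d_j$; choosing one positive scalar per component extends $d$ to all vertices of $G$, and then setting $\rho(i,j):=d_i/d_j$ on each $\infty$-edge (realisable by the previous paragraph) makes $\rho$ a coboundary on all of $G$. The corresponding generalised Cartan matrix is then symmetrisable and realises the given $m_{ij}$, so $(W,S)$ is its Weyl group.

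The only non-formal step, and hence the expected obstacle, is this last ``if'' direction: extracting from (2) an explicit orientation of the $4$- and $6$-edges via bipartiteness of the contracted graph (including the lifting of cycles, which needs a short check), and verifying that the $\infty$-edges carry exactly enough freedom to amalgamate the per-component potentials into a global one. Everything else — the order/product dictionary, the symmetrisability-as-coboundary reformulation, the freedom on $\infty$-edges, and the ``only if'' direction — is routine, although one should be careful over the precise meaning of ``circuit'' (simple cycle, which suffices for the bipartiteness criterion) and over the reduction to the connected case.
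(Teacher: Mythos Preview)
Your argument is correct and follows the same overall strategy as the paper: reduce to realising the given $m_{st}$ by a symmetrisable generalised Cartan matrix, reformulate symmetrisability as the existence of a potential $d$ (equivalently, the coboundary condition on $\rho=c_{ij}/c_{ji}$), observe that $\infty$-edges are completely free, and show that condition (2) is exactly what makes the potential well-defined on the $\infty$-free part. The ``only if'' direction and the handling of $\infty$-edges are essentially identical to the paper's treatment.

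The one genuine difference is in the construction of $d$ for the ``if'' direction. The paper proceeds directly: delete the $\infty$-edges, pick a base vertex in each component, and set $d_j:=2^a3^b$ where $a$ (resp.\ $b$) counts the $4$-edges (resp.\ $6$-edges) modulo $2$ along any path to $j$; condition (2) gives path-independence immediately, and the matrix $C$ is then written down explicitly via $(c_{ij},c_{ji})=(-l/d_i,-l/d_j)$ with $l=\mathrm{lcm}(d_i,d_j)$ or $2\,\mathrm{lcm}(d_i,d_j)$. Your route via contracted graphs and bipartiteness is equivalent but more elaborate: the $2$-colouring of the contracted graph is precisely the mod-$2$ count of $4$-edges along paths, so your orientation recovers the paper's $d$ up to this identification. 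The paper's construction is shorter and avoids the cycle-lifting check you flag, while your formulation makes the cohomological content more transparent and explains structurally why the $2$- and $3$-parts decouple.
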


\begin{proof}
Let $C=(c_{ij})$ be a symmetrisable generalised Cartan matrix, say $C=D^{-1}B$ with $B$ symmetric and $D$ diagonal. Let $W$ be the corresponding Weyl group, so with simple reflections $s_i$ and exponents $m_{ij}$ given by the table
\begin{center}
\begin{tabular}{c|ccccc}
\toprule
$c_{ij}c_{ji}$ & $0$ & $1$ & $2$ & $3$ & $\geq4$\\
$m_{ij}$      & $2$ & $3$ & $4$ & $6$ & $\infty$\\
\bottomrule
\end{tabular}
\end{center}
Suppose we have a circuit in the Coxeter graph of $W$ not containing the edge label $\infty$, say with vertices $i_1,\ldots,i_n$ ordered cyclically. Then the product
\[ \prod_l(c_{ll+1}c_{l+1l}) = \prod_l(c_{ll+1}^2d_l/d_{l+1}) = \prod_lc_{ll+1}^2 \]
is a square. It follows that the number of 2s (resp. 3s) is even. Hence the number of edges labelled 4 (resp. 6) is even. Thus condition (2) holds, and we already know that condition (1) holds.

Conversely, let $(W,S)$ be a Coxeter system satisfying conditions (1) and (2). We first define the diagonal matrix $D$. Ignore all edges in the Coxeter graph having label $\infty$. Then for each connected component, chose any vertex $i$ and set $d_i:=1$. If $j$ is another vertex in the same component, then there is a path in the Coxeter graph from $i$ to $j$ not containing $\infty$ as an edge label; set $d_j:=2^a3^b$, where $a$ is the number of 4s in the path modulo 2, and $b$ is the number of 6s in the path modulo 2. By condition (2) this number is independent of the chosen path.

We now define the matrix $C$. Given an edge $\begin{tikzcd}[column sep=20pt]i \ar[-]{r}{m} &j\end{tikzcd}$ in the Coxeter graph, define the pair $(c_{ij},c_{ji}):= (-l/d_i,-l/d_j)$, where
\[ l := \begin{cases} 0 & m=2;\\\mathrm{lcm}(d_i,d_j) & m=3,4,6;\\2\,\mathrm{lcm}(d_i,d_j) & m=\infty. \end{cases} \]
Note that if $m=3,4,6$, then from the construction of the matrix $D$ we must have $\mathrm{lcm}(d_i,d_j)=\max(d_i,d_j)$ and $c_{ij}c_{ji}=m$. Otherwise, if $m=\infty$, then $c_{ij}c_{ji}\geq4$. We conclude that $C$ is a generalised Cartan matrix, that $DC$ is symmetric, and that the corresponding Weyl group is isomorphic to $W$.
\end{proof}

\end{appendix}

\end{document}